 \definecolor{darkred}{rgb}{0.9,0,0.3}
 \definecolor{darkblue}{rgb}{0,0.3,0.9}
 \definecolor{test}{rgb}{1,0,1}
\numberwithin{equation}{section}
\theoremstyle{plain} 
\newtheorem{thm}{Theorem}[section]
\newtheorem{lem}[thm]{Lemma}
\newtheorem{cor}[thm]{Corollary}
\newtheorem{pro}[thm]{Proposition}
\newtheorem{assumption}[thm]{Assumption}
\newtheorem{defn}[thm]{Definition}
\newtheorem{example}[thm]{Example}
\newtheorem*{example*}{Example}
\newtheorem{rem}[thm]{Remark}
\renewcommand{\Im}{\mathrm{Im}\,}
\newcommand{\re}{\mathrm{Re}\,}
\newcommand{\im}{\mathrm{Im}\,}
\newcommand{\E}{{\mathbb E }}
\newcommand{\R}{{\mathbb R }}
\newcommand{\N}{{\mathbb N}}
\newcommand{\C}{{\mathbb C}}
\newcommand{\ii}{\mathrm{i}}
\newcommand{\ee}{\mathrm{e}}
\newcommand{\deq}{\mathrel{\mathop:}=}
\newcommand{\eqd}{\mathrel={\mathop:}}
\newcommand{\dd}{\mathrm{d}}
\newcommand{\bs}{\boldsymbol}
\newcommand{\la}{\langle}
\newcommand{\ra}{\rangle}
\newcommand{\nc}{\normalcolor}
\renewcommand{\mathbf}[1]{\bs{#1}}
\renewcommand{\leq}{\leqslant}
\renewcommand{\le}{\leqslant}
\renewcommand{\geq}{\geqslant}
\renewcommand{\ge}{\geqslant}
\renewcommand{\epsilon}{\varepsilon}
\renewcommand{\b}[1]{\boldsymbol{\mathrm{#1}}} 
\newcommand{\bb}{\mathbb} 
\newcommand{\cal}{\mathcal}
\newcommand{\ul}[1]{\underline{#1} \!\,} 
\newcommand{\ol}[1]{\overline{#1} \!\,} 
\newcommand{\wh}{\widehat}
\newcommand{\wt}{\widetilde}
\DeclareMathOperator{\tr}{Tr}
\DeclareMathOperator{\sym}{Sym}
\DeclareMathOperator{\var}{Var}
\DeclareMathOperator{\supp}{supp}
\begin{document}

\begin{center}
\large\bf
Quantitative CLT for  linear eigenvalue statistics of Wigner matrices 
\end{center}

\vspace{0.5cm}
\renewcommand{\thefootnote}{\fnsymbol{footnote}}
\hspace{5ex}	
\begin{center}
 \begin{minipage}[t]{0.35\textwidth}
\begin{center}
Zhigang Bao\footnotemark[1]  \\
\footnotesize {HKUST}\\
{\it mazgbao@ust.hk}
\end{center}
\end{minipage}
\hspace{8ex}
\begin{minipage}[t]{0.35\textwidth}
\begin{center}
Yukun He\footnotemark[2]  \\ 
\footnotesize {University of Z\"{u}rich}\\
{\it yukun.he@math.uzh.ch}
\end{center}
\end{minipage}
\end{center}

\footnotetext[1]{Supported by Hong Kong RGC  GRF 16300618,  GRF 16301519, GRF 16301520 and NSFC 11871425.}
\footnotetext[2]{Supported by ERC Advanced Grant “Correlations in Large Quantum Systems”, and UZH Forschungskredit grant FK-20-113.}

\vspace{0.8cm}
\begin{center}
	\begin{minipage}{0.8\textwidth}\footnotesize{
			In this article, we establish a near-optimal convergence rate  for the CLT of linear eigenvalue statistics of $N\times N$ Wigner matrices, in Kolmogorov-Smirnov distance. For all test functions $f\in C^5(\bb R) $, we show that  the convergence rate is  either $N^{-1/2+\varepsilon}$ or $N^{-1+\varepsilon}$,  depending on the first Chebyshev coefficient of $f$ and the third moment of the diagonal matrix entries. The condition that distinguishes these two rates is necessary and sufficient. For a general class of test functions, we further identify matching lower bounds for the convergence rates. In addition, we identify an explicit, non-universal contribution in the linear eigenvalue statistics, which is  responsible for the slow  rate  $N^{-1/2+\varepsilon}$ for non-Gaussian ensembles. By removing this non-universal part, we show that the shifted linear eigenvalue statistics have the unified  convergence rate  $N^{-1+\varepsilon}$ for all test functions.  
		}
	\end{minipage}
\end{center}

\thispagestyle{headings}

\vspace{2em}

\section{Introduction and main result}

In Random matrix Theory (RMT), there are various limiting laws about the fluctuations of eigenvalue statistics.  
However, most of these laws were derived in the limiting form without a quantitative description on the speed of the weak convergence.   In this paper, we will establish a near-optimal  convergence rate of the CLT for the linear eigenvalue statistics, 
in Kolmogorov-Smirnov distance, for a fundamental Hermitian random matrix model, Wigner matrix, whose definition is detailed below.  

\begin{defn}[Wigner matrix] \label{def:Wigner}
	Let $h_d$ be a real random variable, and $h_o$ be a complex random variable. They satisfy
	\[
	\bb E h_d=\bb E h_o=0\,,\quad	\bb E |h_o|^2=1 \quad \mbox{and} \quad \bb E |h_{o}|^p+\bb E |h_{d}|^p \leq C_p
	\]
	for all fixed $p \in \bb N_+$. We set
	\[
	a_2 \deq \bb Eh_d^2\,,\quad a_3\deq \bb E h_d^3 \quad \mbox{and}\quad  m_4 \deq \bb E|h_o|^4\,.
	\]
	A \textit{Wigner matrix} is a  Hermitian  matrix $H=(H_{ij})_{i,j=1}^N\in \bb C^{N\times N}$ with independent upper triangular entries $H_{ij}(1\leq i \leq j \leq N)$, and
	\[
	H_{ii}\overset{d}{=}h_d/\sqrt{N}\,, \quad H_{ij}\overset{d}{=}h_o/\sqrt{N}, \quad \forall i < j\,.
	\]
	We distinguish the \textit{real symmetric} case $(\beta=1)$, where $h_o\in \bb R$ and $H_{ij}=H_{ji}$, from the \textit{complex Hermitian} case $(\beta=2)$, where $h_o\in \mathbb{C}$, $\bb Eh_o^2=0$ and $H_{ji}=\overline{H_{ij}}$. Set $s_4= m_4+\beta-4$.
\end{defn}

For a test function $f:\mathbb{R}\to \mathbb{R}$, the linear eigenvalue statistic (LES) of $H$  is defined as 
\begin{align*}
	\tr f(H)=\sum_{i=1}^Nf(\lambda_i), 
\end{align*}
where $\lambda_1\geq \lambda_2\geq \ldots\geq\lambda_N$ are the ordered eigenvalues of $H$.

\subsection{Reference review on non-quantitative and quantitative  CLTs for LES} 
The CLT for LES of Wigner matrices is a classical result in RMT; see e.g.\,\cite{LP09,SW13, BaiWangZhou, BY05, C-D01, Chatterjee09,  KKP, KKP2, Shcherbina}. It states that for test functions $f$ satisfying certain regularity assumptions and $\var \tr f(H)>c>0$, we have
\begin{align} \label{1.1}
	\frac{\mathrm{Tr} f(H)-\mathbb{E}\mathrm{Tr} f(H)}{\sqrt{\mathrm{Var}(\mathrm{Tr} f(H))}}\stackrel{d}\longrightarrow \cal N(0,1)\,.
\end{align}
Differently from the classical CLT for sum of i.i.d. order 1 random variables,  a prominent feature of  CLT for LES is that $\text{Var}(\text{Tr} f(H))$ is order $1$ for sufficiently regular $f$. This is essentially due to the strong correlation among eigenvalues.  
Similar results have been obtained also for sample covariance matrices \cite{BS05, Jonsson}, deformed Wigner matrices \cite{JL20,DF19}, random band matrices \cite{AZ06, G02, Shcherbina15, LS13, JSS16}, heavy tailed matrices \cite{BGM14}, polynomials in random matrices \cite{MS06, MSS07, CMSS06}, random matrices on compact groups \cite{Johansson97,DS94, DE01}, Hermite beta ensembles \cite{Johansson98, DE06},   and also non-Hermitian random matrices \cite{RS06, CES19}. We also refer to the references in these papers for related study.

The convergence rate is a natural question following the CLT, which provides a quantitative description of the weak convergence. A quantitative CLT is especially important in applications, since in reality the random matrices often have a large but given size $N$, and the limiting laws may be achieved in  a rather slow rate so that it may deviate significantly from the law for the non-asymptotic system of size $N$.  In the context of RMT, most of the references mentioned above provide non-quantitative CLTs only. To the best of our knowledge, the first  few works in this direction is on the random matrices on compact groups \cite{DS94, Stein95, Johansson97}. Especially, in \cite{Johansson97}, a super-exponential rate of convergence $O(N^{-cN})$, in Kolmogorov-Smirnov distance,  was obtained for the circular unitary ensembles, and an exponential rate $O(e^{-cN})$ was obtained for the circular real and quaternion ensembles. Such fast rates are essentially due to the Gaussian nature of the circular ensembles. We also refer to \cite{DS11, JLambert20, CJ21} for related studies on circular ensembles. In contrast, the study of the convergence rate of LES for Hermitian ensembles, emerged only very recently.   In \cite{LLW19}, the authors considered the LES of the $\beta$-ensembles with one-cut potentials and established a convergence rate of CLT, in quadratic Kantorovich distance. In \cite{BB19}, the authors studied the convergence rate of LES for the models GUE/LUE/JUE, in Kolmogorov-Smirnov distance; in particular, they obtained a rate of $O(N^{-1/5})$ for GUE. Both the work \cite{LLW19} and \cite{BB19} considered the invariant ensembles and the results do not seem to be optimal in general. Furthermore, the approaches used in \cite{LLW19} and \cite{BB19} are more analytical  than probabilistic, which are both based on the explicit formulas for the joint probability density functions of the eigenvalues. 
In a recent work \cite{ECS20}, a CLT for the generalized linear statistics $\text{Tr} f(H)A$ with deterministic matrix $A$ was established for $f\in H_0^2(\mathbb{R})$, on macroscopic and mesoscopic scales. Particularly, on macroscopic scale, the result \cite{ECS20} indicates a $N^{-\frac12}$ convergence rate, in the sense of moment.
In this work, we establish a near-optimal rate for the CLT of LES in a much stronger distance, Kolmogorov-Smirnov distance, for  Wigner matrices, via a  probabilistic approach. 
Observe that, in two toy cases, $f(x)=x$ and $f(x)=x^2$, the statistic $\text{Tr} f(H)$ is simply the sum of $c_1N$ and $c_2N^2$ independent random variables, respectively. Therefore, according to the  Berry-Esseen bound for the classical CLT of sum of independent random variables with third moments, we can easily conclude that the convergence rate of LES is of order $N^{-1/2}$ or  $N^{-1}$ when $f(x)=x$ or $f(x)=x^2$, respectively. From these toy examples, we can raise the following questions

\textit{Question 1}: Do the rates $O(N^{-\frac12})$ and $O(N^{-1})$ also apply to general test functions? 

\textit{Question 2}: Are $O(N^{-\frac12})$ and $O(N^{-1})$ the best possible convergence rates, i.e., can one obtain matching lower bounds?

\textit{Question 3}: If the answers to the previous questions are positive, what is the necessary and sufficient condition for the rate to be $O(N^{-1})$?

This paper answers these three questions. The main results will be detailed in Section \ref{s.main result}, and the proof strategy and novelties will be stated in Section \ref{s. strategy}.

\subsection{Main result} \label{s.main result}
Our aim is to provide a  quantitative rate  for the convergence \eqref{1.1}, or its variant with $\mathbb{E}\text{Tr} f(H)$ and $\text{Var} (\text{Tr}f(H))$ replaced by their estimates. Recall  the Kolmogorov-Smirnov distance of two real random variables $X$ and $Y$
\[
\Delta(X,Y)\deq \sup_{x \in \bb R}  |\mathbb P(X\leq x)-\mathbb P(Y \leq x)|\,.
\]
We shall always use $ Z$ to denote the standard Gaussian random variable ${\cal N}(0,1)$.  For $k \in \bb N$ and a real test function $g$ which is integrable  w.r.t. to the weight function $\frac{1}{\sqrt{4-x^2}}$ on $[-2,2]$, we set 
\begin{align}
	c_k^g:= \frac{1}{\pi}\int_{-\pi}^{\pi} g(2\cos\theta) \cos k\theta {\rm d} \theta=\frac{2}{\pi} \int_{-1}^1 g(2x) T_k(x)\frac{{\rm d} x}{\sqrt{1-x^2}},  \label{def of ck}
\end{align}
where $T_k(x)= \cos(k \cos^{-1}x)$ is the k-th Chebyshev polynomial of the first kind. 
Hence, $c_k^g$ can be regarded as the $k$-th coefficient of the Fourier-Chebyshev expansion of $g(2x), x\in [-1,1]$. Further, let $\mu_f,\sigma^2_f $ be as in \eqref{mu_f}, \eqref{sigma_f} below. For any fixed $\gamma \in \bb R$, we define
\begin{align}
	\sigma_{f,\gamma}^2\deq \sigma^2_f+\frac{1}{4}a_{2}(\gamma-1)^2(c_1^f)^2 
	\quad \mbox{and} \quad
	\cal Z_{f,\gamma}\deq \frac{\tr f(H)-\mu_f-\frac12\gamma c_1^f \tr {H}}{\sigma_{f,\gamma}}\,. \label{def of Z_{f,gamma}}
\end{align}
We shall regard $\frac12 c_1^f \tr {H}$  and $\tr f(H)-\frac12 c_1^f \tr {H}$ as the diagonal part and the off-diagonal part  of $\tr f(H)$, respectively. In the shifted LES $Z_{f,\gamma}$, we subtract $\gamma$-portion of the diagonal part from $\text{Tr} f(H)$. In particular, when $\gamma=0$, we have the original LES, while when $\gamma=1$, we have the pure off-diagonal part. Our main finding is that the diagonal part and the off-diagonal have different convergence rates towards Gaussian in general, and thus it is expected that the convergence rate of  $Z_{f,\gamma}$ depends on $(1-\gamma)$.  The dependence is nevertheless more subtle in the sense two more  factors will determine the convergence rate together with $1-\gamma$.  To state our result, we first introduce the following notation. 
\begin{equation} \label{cal X}
	\cal X\equiv \cal X(\gamma, f, H)\deq \begin{cases}
		0 &\mbox{if}  \quad (1-\gamma)c_1^f \bb E h_d^3=0\\
		1 &\mbox{otherwise}.
	\end{cases} 
\end{equation}
Since the result for linear function $f(x)=ax+b$ follows from the classical Berry-Esseen bound directly, we exclude this trivial case from our discussion. We may now state our main theorem.

\begin{thm}  \label{thmmain}
	Let $f\in C^5(\bb R)$ be independent of $N$ and suppose that $f$ is not linear. For any fixed $\kappa>0$, there exists fixed $C_{f,\kappa}>0$ such that 
	\begin{align*}
		\Delta(\cal Z_{f,\gamma},Z)\leq C_{f,\kappa}\big(\cal X N^{-1/2+\kappa}+N^{-1+\kappa}\big)\,. 
	\end{align*}
\end{thm}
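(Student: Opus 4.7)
The plan is to analyze the characteristic function $\phi(t):=\bb E e^{\ii t \cal Z_{f,\gamma}}$ via Esseen's smoothing inequality
\[
\Delta(\cal Z_{f,\gamma},Z) \;\lesssim\; \int_{-T}^{T} \frac{|\phi(t)-e^{-t^2/2}|}{|t|}\,\dd t + \frac{1}{T},
\]
which reduces the theorem to proving $|\phi(t)-e^{-t^2/2}|\leq C_{f,\kappa}|t|(\cal X N^{-1/2+\kappa}+N^{-1+\kappa})$ uniformly on $|t|\leq N^{1-\kappa}$, and then choosing $T=N^{1-\kappa}$ or $T=N^{1/2-\kappa}$ according as $\cal X=0$ or $\cal X=1$. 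To obtain the last display I would establish the approximate Gaussian ODE
\[
\phi'(t)+t\phi(t) = O\bigl(|t|(\cal X N^{-1/2+\kappa}+N^{-1+\kappa})\bigr),
\]
whose integration from $0$ yields the desired decay to $e^{-t^2/2}$.

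The starting identity for $\phi'(t)$ is
\[
\phi'(t) = \tfrac{\ii}{\sigma_{f,\gamma}}\bb E\bigl[(\tr f(H)-\mu_f-\tfrac{1}{2}\gamma c_1^f \tr H)\,e^{\ii t \cal Z_{f,\gamma}}\bigr],
\]
which I would open up by combining the Helffer--Sj\"ostrand representation of $f(H)$ with a Chebyshev expansion $f(x)=\sum_{k\geq 0}c_k^f T_k(x/2)$; under the $C^5$ hypothesis the coefficients decay as $|c_k^f|\lesssim k^{-5}$, so the expansion can be truncated at a fixed $k=K$ with negligible error. This cleanly isolates the Chebyshev-$1$ contribution $\tfrac12 c_1^f\tr H=\tfrac12 c_1^f\sum_i H_{ii}$, a sum of iid diagonal entries. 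The shift in $\cal Z_{f,\gamma}$ leaves the residual $\tfrac12(1-\gamma)c_1^f\tr H$, for which the classical Berry--Esseen bound alone produces the $O(\cal X N^{-1/2})$ deviation, driven exactly by the third cumulant $(1-\gamma)c_1^f a_3/\sqrt N$ of the sum.

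For everything else I would analyze $\bb E[H_{ij} F(H)]$, with $F$ built from resolvents $G(z)=(H-z)^{-1}$ and from the exponential $e^{\ii t \cal Z_{f,\gamma}}$, via the cumulant expansion
\[
\bb E[H_{ij}F(H)] = \sum_{p=1}^{P}\tfrac{\kappa_{p+1}(H_{ij})}{p!}\bb E[\partial_{H_{ij}}^{p}F(H)]+R_P.
\]
The $p=1$ (second cumulant) term, after using the resolvent identity and integrating by parts in the exponential, reproduces the leading $-t\phi(t)$ provided the subleading pieces recombine into the exact variance $\sigma_{f,\gamma}^2$ (this is where $\tfrac14 a_2(\gamma-1)^2(c_1^f)^2$ and the negligibility of the cross-covariance between the diagonal and off-diagonal parts enter). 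The $p=2$ (third cumulant) term carries $a_3$; its only contribution not already accounted for in the Chebyshev-$1$ part must be shown to be $O(N^{-1+\kappa})$, so that the sole source of the $O(N^{-1/2})$ rate is the isolated diagonal sum $\tr H$. The remaining pieces---cumulants $p\geq 3$, the truncation remainder $R_P$, and all subleading parts of the $p=1,2$ terms---should contribute $O(N^{-1+\kappa})$ via the optimal local semicircle law and the moment assumptions on $H_{ij}$.

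The main obstacle is pushing every error to $N^{-1+\kappa}$ uniformly over $|t|\leq N^{1-\kappa}$: each derivative of $e^{\ii t \cal Z_{f,\gamma}}$ hitting the cumulant expansion brings a factor $|t|/\sigma_{f,\gamma}$ multiplied by a resolvent-trace fluctuation, so an \emph{a priori} loss of $N$ must be recovered through cancellations. This will rely on the sharp form of the local semicircle law, careful bookkeeping of the Chebyshev second moments $\bb E[\tr T_j(H/2)\tr T_k(H/2)]$ (which govern the precise form of $\sigma_{f,\gamma}^2$ and its corrections), and uniform control over the Helffer--Sj\"ostrand spectral parameter $z$ so that errors do not accumulate across scales. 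Identifying the exact cancellations that keep the off-diagonal part insensitive to $a_3$ at order $N^{-1}$ is the most delicate step.
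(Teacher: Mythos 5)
Your high-level framework (Esseen's inequality, a Gaussian ODE for the characteristic function, cumulant expansions for $\bb E[H_{ij}F(H)]$) matches the paper's, but the route you sketch has two genuine gaps.

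First, the Chebyshev truncation step is flawed as stated. For $f\in C^5$ the coefficients decay only as $|c_k^f|\lesssim k^{-5}$, so the tail $\sum_{k>K}c_k^f T_k(x/2)$ contributes a linear statistic with variance $\sim\sum_{k>K}k(c_k^f)^2\sim K^{-8}$. For this to be negligible at the $N^{-1+\kappa}$ level of Kolmogorov--Smirnov distance you need $K$ to grow polynomially with $N$, at which point the truncation does not simplify the problem (you are back to analyzing $\tr f(H)$ with only local-law control on the Green function). The paper instead isolates the $\tfrac12 c_1^f\tr H$ contribution through a \emph{matrix} decomposition, $H=\widehat H+H_{\mathrm d}$, and the resolvent expansion of Proposition \ref{lem 2148}, which gives
$\la\tr G\ra\approx\la\tr\widehat G\ra-\sum_i\la(\widehat G^2)_{ii}\ra H_{ii}-m'(z)\tr H+\cdots$
with an error controlled uniformly on the Helffer--Sj\"ostrand domain. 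This is cleaner because it also produces the conditional structure ($\bb E_{\rm d}$ vs.\ $\bb E_{\rm o}$) that the paper uses to separate the $\cal XN^{-1/2}$ rate (from the diagonal sum, Lemma \ref{lem1558}) from the $N^{-1}$ rate (from the off-diagonal part, Proposition \ref{lem. tf=0}). Your single unconditional ODE $\phi'+t\phi=O(|t|(\cal XN^{-1/2+\kappa}+N^{-1+\kappa}))$ is not what the paper proves; the paper's ODE has a separate, non-absorbable drift $b_\xi(t)\phi_\xi(t)$ with $b_\xi(t)\prec t^2/N$ (which is not small for $t$ near $N^{1-\kappa}$ and must be carried inside the exponential), and it is established for the conditional characteristic function only.

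Second, and more importantly, the core technical content is missing. You correctly identify that ``an a priori loss of $N$ must be recovered through cancellations,'' but this is precisely the hard part: proving $\cal E(t)\prec(t+1)/N$ up to $t\sim N^{1-\kappa}$ requires a new fluctuation-averaging estimate for \emph{off-diagonal} Green function entries (the paper's ``lone factor''/``happy trio'' mechanism and the abstract-polynomial bookkeeping of Sections \ref{sec4}--\ref{sec5}). Neither the local semicircle law alone, nor sharp control of Chebyshev second moments, nor the one-step cumulant expansion you describe, produces this gain; a naive application gives only $O_{\prec}((1+|t|^5)/\sqrt N)$ as in earlier mesoscopic CLT works, which is an $N^{1/2}$ short of what is needed. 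Without a concrete mechanism to extract the extra $N^{-1}$ from $\bb E\la\cdot\ra^2$-type sums of Green-function monomials, the proposed ODE cannot be established.
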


\begin{rem} (i) Theorem \ref{thmmain} provides a positive answer for Question 1. It shows that up to an $N^{\kappa}$ factor, the rates $O(N^{-\frac12})$ and $O(N^{-1})$ apply to general test functions.
	
	(ii) From the definition of $\cal X$ in \eqref{cal X}, the conditions to have the rate $O(N^{-1})$ is three-fold. First, the slow rate $O(N^{-1/2})$ comes from the  diagonal part $\frac12c_1^f \tr H$ of the LES. Once we fully subtract this term from $\tr f(H)$, i.e.\,when $\gamma=1$, the remaining part of the LES will have a unified $O(N^{-1+\kappa})$ convergence rate.
	
	Second, in case $\gamma\ne 1$ but the test function $f$ satisfies $c_1^f=0$, the rate is again $O(N^{-1+\kappa})$. Especially, it recovers the rate for $\text{Tr} f(H)$ in the toy case $f(x)=x^2$.
	
	Third, if $\gamma\ne 1$ and $c_1^f\neq 0$, the diagonal part $\frac12c_1^f \tr H$ will play a role in the LES.  The object $\tr {H}$, is simply a sum of i.i.d.\,random variable, and in general it has a slow convergence rate $O(N^{-1/2})$ towards Gaussian distribution. This is true even if $\mathbb{E}h_d^3=0$, as it is easy to check the case when $\mathbb{P}(h_d=1)=\mathbb{P}(h_d=-1)=\frac12$.  However, Theorem \ref{thmmain} shows that if $\mathbb{E}h_d^3=0$, the convergence rate of our shifted LES will still degenerate to $O(N^{-1+\kappa})$. This is due to the fact that the Gaussianity of the off-diagonal distribution $\text{Tr}f(H)-\frac12 c_1^f \text{Tr} H$ can further smooth out the difference between the distribution of $\frac12 c_1^f \text{Tr} H$ and Gaussian, as long as $\mathbb{E}h_d^3=0$. This does not happen in case $f(x)=ax+b$, due to the absence of the off-diagonal part.  
	
	(iii) As we shall see in Remark \ref{rmk1.6} below, the condition $\cal X=0$ for the rate to be $O(N^{-1})$ is necessary and sufficient for general test function $f$. This answers Question 3.
\end{rem}

In order to verify the optimality of our upper bound for the convergence rate. In the sequel, we present a companion result on the lower bound.  
Let us denote 
\[
\mathring{\cal Z}_{f,\gamma} \deq \frac{\tr f(H)-\frac12\gamma c_1^f \tr {H}-\bb E \tr f(H)}{\sqrt{\var( \tr f(H)-\frac12\gamma c_1^f \tr {H})}}\,.
\]
For the lower bound of  the convergence rate, we study the above  quantity with mean $0$ and variance $1$, instead of  $\cal Z_{f,\gamma}$ in (\ref{def of Z_{f,gamma}}). Otherwise, one needs to exclude the possibility that the bias of the centralization or the scaling  may be responsible for the lower bound of the convergence rate. Since our lower bound is mainly used  to confirm that our upper bound is near-optimal, we do not aim for the lightest assumptions for the matrix and the test function in this part. We will further make  the following additional assumptions in order to simplify the discussion.  

\begin{assumption} \label{122910} We make the following additional assumptions on $H$ and $f$:
	
	{\rm (i)  (on $H$)}  In addition to the basic assumptions in Definition \ref{def:Wigner}, we further assume that the diagonal entries and off-diagonal entries of $H$ match those of GOE ($\beta=1$) or GUE ($\beta=2$) up to the second and fourth moments respectively. 
	
	{\rm (ii) (on $f$)}  We assume that $f(x):\mathbb{R}\to \mathbb{R}$ is analytic in a neighborhood of $[-2-\varepsilon, 2+\varepsilon]$ for some fixed $\varepsilon>0$, and we further assume that $|f(x)|$ does not grow faster than polynomials when $|x|\to \infty$. 
\end{assumption}
We remark here that the assumption on the growth of $|f(x)|$ is made to ensure the existence of $\mathbb{E}\text{Tr}f(H)$.

For any test function $f$, we introduce the notation
$
	f_\gamma(x)\deq f(x)-\frac{1}{2}\gamma c_1^f x.
$
Furthermore, define 
$
r_1^{f_\gamma} :=\frac{1}{8}(c_1^{f_\gamma})^3\mathbb{E}h_d^3\,,
$
and
\begin{align*}
	&r_{2,\beta}^{f_\gamma} := \beta^{-2}\sum_{\alpha, \tau, \gamma,\sigma, \psi=0}^{\infty}(\psi+1)\Big( c_{\tau-\alpha+\sigma-\gamma+2\psi+2}^{f_\gamma} c_{\alpha+\tau+1}^{f_\gamma}c_{\gamma+\sigma+1}^{f_\gamma}+c_{\sigma-\gamma}^{f_\gamma} c_{\tau-\alpha}^{f_\gamma} c_{\alpha+\tau+\gamma+\sigma+2\psi+4}^{f_\gamma}\notag\\
	&\qquad\qquad\qquad\qquad-c_{\tau-\alpha}^{f_\gamma}c_{\gamma+\sigma+1}^{f_\gamma} c_{\alpha+\tau-\gamma+\sigma+2\psi+3}^{f_\gamma}\Big)+\frac{3}{8}c_2^{f_\gamma}(c_1^{f_\gamma})^2\mathcal{C}_4(h_d),
\end{align*}
where $\beta\in \{1,2\}$ represents the symmetric class, $c_k^g$ is defined in (\ref{def of ck}) and here we also set $c_{-k}^g=c_k^g$ for $k\in \mathbb{N}$. 

\begin{thm} \label{thmlowerbound} Suppose that Assumption \ref{122910} holds. We have
	\begin{equation} \label{www}
		\mathbb{E} \mathring{\mathcal{Z}}_{f,\gamma}^3= (\var(\tr f_{\gamma}(H)))^{-\frac32}\big(-r_1^{f_\gamma}N^{-\frac12}+r_{2,\beta}^{f_\gamma} N^{-1}\big)+O(N^{-\frac32})\,.
	\end{equation}
	As a consequence, for any fixed $\kappa>0$, we have 
	\begin{align}
		\Delta(\mathring{\cal Z}_{f,\gamma},Z)\geq C'_{f,\kappa}\Big(|r_1^{f_\gamma}|N^{-1/2-\kappa}+|r_{2,\beta}^{f_\gamma}|N^{-1-\kappa}\Big) \label{121101}
	\end{align}
	for some constant $C'_{f,\kappa}$, when $N$ is sufficiently large. 
\end{thm}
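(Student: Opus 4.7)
The plan is to first establish the asymptotic expansion (\ref{www}) by computing the third cumulant of $\tr f_\gamma(H)$, and then deduce the Kolmogorov--Smirnov lower bound (\ref{121101}) from it by a standard smoothing argument. Since $\mathring{\mathcal{Z}}_{f,\gamma}$ is centred to mean zero and normalised to unit variance, one has $\bb E\mathring{\mathcal{Z}}_{f,\gamma}^3 = \kappa_3(\tr f_\gamma(H))/(\var \tr f_\gamma(H))^{3/2}$, so the content of (\ref{www}) is an identification of the third cumulant $\kappa_3(\tr f_\gamma(H))$ up to error $O(N^{-3/2})$.

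For this computation I would expand $f_\gamma$ in the Chebyshev basis, writing $f_\gamma(x) = \tfrac{1}{2}c_0^{f_\gamma} + \sum_{k \geq 1} c_k^{f_\gamma}\, T_k(x/2)$, so that $\tr f_\gamma(H) - \tfrac{N}{2} c_0^{f_\gamma} = \sum_{k\geq 1} c_k^{f_\gamma}\, \tr T_k(H/2)$ and the third cumulant decomposes as a triple sum of joint cumulants of the traces $\tr T_k(H/2)$. The advantage of this basis is that these joint cumulants can be evaluated asymptotically via the standard cumulant expansion (generalised Stein identity) applied to the independent entries $H_{ij}$, because the Chebyshev polynomials of $H/2$ diagonalise the leading-order covariance structure of Wigner matrices. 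Under Assumption \ref{122910}, all joint cumulants of the off-diagonal entries agree with those of GOE/GUE up to fourth order, so the only non-Gaussian ingredients that survive come from $\mathcal{C}_3(h_d) = \bb E h_d^3$ and $\mathcal{C}_4(h_d)$, and these can only enter through insertions at diagonal sites.

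I would then sort the surviving contributions by their order in $N$. The leading $N^{-1/2}$ term arises from the third cumulant of the diagonal piece $\tfrac{1}{2}c_1^{f_\gamma} \tr H$, together with cross-cumulants tying the $T_1$-mode to the other Chebyshev modes through diagonal insertions; after bookkeeping, these combine into the coefficient $-r_1^{f_\gamma}$ of $N^{-1/2}$. The subleading $N^{-1}$ term has two sources: a $\mathcal{C}_4(h_d)$ contribution localised on the diagonal, which produces the summand $\tfrac{3}{8}c_2^{f_\gamma}(c_1^{f_\gamma})^2 \mathcal{C}_4(h_d)$ inside $r_{2,\beta}^{f_\gamma}$; and a purely Gaussian off-diagonal contribution, obtained from the Wick contractions of the entries of $H$ inside the cyclic products arising from $T_{k_1}T_{k_2}T_{k_3}$, whose combinatorial evaluation yields the $\beta^{-2}$-multiple sum over Chebyshev coefficients that appears in $r_{2,\beta}^{f_\gamma}$. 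The hardest step will be matching this combinatorial expansion exactly to the stated closed-form sum; a cleaner route is to first use Assumption \ref{122910}(i) to reduce the Gaussian piece to an explicit GOE/GUE computation, where the covariance of $\tr T_k(H/2)$ and $\tr T_l(H/2)$ is known in closed form, and then treat the $\mathcal{C}_3(h_d)$ and $\mathcal{C}_4(h_d)$ corrections as non-universal perturbations via a Lindeberg-type swap.

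For the second conclusion (\ref{121101}), I would invoke a standard truncation argument. For any truncation level $T>0$, decompose $x^3 = x^3 \mathbf{1}_{|x|\leq T} + x^3 \mathbf{1}_{|x|>T}$; integration by parts against the two distribution functions bounds the truncated contribution to $|\bb E\mathring{\mathcal{Z}}_{f,\gamma}^3 - \bb E Z^3|$ by $C T^3 \Delta(\mathring{\mathcal{Z}}_{f,\gamma}, Z)$, while the tail contribution is controlled by $\bb E|\mathring{\mathcal{Z}}_{f,\gamma}|^p + \bb E|Z|^p \leq C_p$ for arbitrary fixed $p$ (available from Markov's inequality together with the uniform polynomial-moment bounds for LES of Wigner matrices valid under the analyticity assumption on $f$), giving a tail contribution of order $T^{3-p}$. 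Optimising $T$ and taking $p$ sufficiently large in terms of $\kappa$ yields $|\bb E\mathring{\mathcal{Z}}_{f,\gamma}^3| \leq C_\kappa \Delta(\mathring{\mathcal{Z}}_{f,\gamma}, Z)^{1-\kappa}$; inverting this bound and using (\ref{www}) to lower-bound $|\bb E\mathring{\mathcal{Z}}_{f,\gamma}^3|$ by a constant multiple of $|r_1^{f_\gamma}| N^{-1/2} + |r_{2,\beta}^{f_\gamma}| N^{-1}$ for $N$ large produces the claimed inequality (\ref{121101}).
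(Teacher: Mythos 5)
Your plan for the second conclusion, \eqref{121101}, is essentially the paper's own argument: truncate $x^3$, integrate by parts against the two distribution functions, control the tail by moment/rigidity bounds, and deduce a lower bound on the Kolmogorov--Smirnov distance by contradiction. The paper fixes the cutoff at $T = N^{\kappa/4}$ and controls the tail via $\|H\| \leq 2+N^{-1/2}$ and rigidity, whereas you optimize $T$ against a $p$-th moment bound; this is a cosmetic difference and both work.

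For \eqref{www}, however, your route is genuinely different from the paper's. The paper does \emph{not} expand $f_\gamma$ in Chebyshev modes and compute joint cumulants of the traces $\tr T_k(H/2)$. Instead, it (a) writes $\langle \tr f_\gamma(H)\rangle = \frac{\ii}{2\pi}\oint f_\gamma(z)\langle\ul{G}(z)\rangle\,\mathrm{d}z$ by Cauchy's formula on a contour a constant distance from $[-2,2]$ (Assumption \ref{122910}(ii) is exactly what makes this contour available); (b) proves a precise expansion of the three-point function $\mathbb{E}\langle\ul{G}(z_1)\rangle\langle\ul{G}(z_2)\rangle\langle\ul{G}(z_3)\rangle$ to order $N^{-9/2}$ via resolvent cumulant expansion (Proposition \ref{proGFK}), where moment matching to GOE/GUE is used to suppress the non-universal third- and fourth-cumulant corrections to the off-diagonal entries; and (c) evaluates the resulting triple contour integrals $\mathcal{L}_1,\mathcal{L}_2,\mathcal{L}_3$ by the substitution $z = -(m+m^{-1})$, $m = \rho e^{\ii\theta}$, which turns each contour integral directly into a Fourier integral and yields the Chebyshev coefficients $c_k^{f_\gamma}$ of \eqref{def of ck} automatically. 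Your proposal to expand $f_\gamma$ in the Chebyshev basis first and then compute the joint cumulants $\kappa(\tr T_{k_1},\tr T_{k_2},\tr T_{k_3})$ to order $N^{-1}$, with a Lindeberg swap to isolate the diagonal non-universal terms, is feasible in principle and would buy a more "probabilistic" presentation, but you yourself flag that identifying the explicit triple sum $r_{2,\beta}^{f_\gamma}$ this way is the hardest step and leave it unresolved; the resolvent-contour route makes that identification mechanical once the $m$-parametrization is introduced, so the two approaches trade a combinatorial difficulty for an analytic one. In its present form the proposal for \eqref{www} is a sketch with a genuine gap (the $r_{2,\beta}^{f_\gamma}$ matching), not a complete proof.
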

\begin{rem} \label{rmk1.6}
	Observe that 
	$
	r_1^{f_\gamma}=\frac{1}{8}(1-\gamma)^3(c_1^f)^3 \mathbb{E}h_d^3.
	$
	Apparently, we can replace $r_1^{f_\gamma}$ by $\cal X$ in (\ref{cal X}) when $f$ is not linear.  Hence, our lower bound matches the upper bound (up to $N^{2\kappa}$) in case $r_1^{f_\gamma}\neq 0$. Further, $r_2^{f_\gamma}$ is nonzero in general. For instance, if $f(x)=T_k(\frac{x}{2})$, the $k$-th  Chebyshev polynomial of the first  kind, it is easy to check 
	$r_1^{f_\gamma}=0$ and $r_{2,\beta}^{f_\gamma}=r_{2,\beta}^f= \beta^{-2}(\frac12 k^3+\frac{1}{6}k)$ if  $k$ is even and $k\geq 4$. In this case, our lower bound also matches the upper bound (up to an $N^{2\kappa}$ factor).  As a result, Theorem \ref{thmlowerbound} gives a positive answer for Question 2, that $O(N^{-\frac12})$ and $O(N^{-1})$ are indeed the best possible convergence rates in general. In addition, it also shows that $\cal X=0$ is a necessary and sufficient condition for the general linear statistics $\tr f(H)$ to converge to Gaussian with speed $O(N^{-1})$, which answers Question 3, up to an $N^{2\kappa}$ factor. 
	
\end{rem}

\begin{rem}
	We remark here that our discussion on lower bound can be extended to  general setting without the moment matching condition in Assumption \ref{122910} (i), and the regularity assumption on $f$ in (ii) can also be largely weakened. However, on one hand, the calculation and presentation under more general  assumptions will be much more involved, and on the other hand, the result under more general  assumptions will not be too much more informative for the purpose of checking the optimality of our upper bound.  Hence, in order to simplify the presentation,  we are not trying to optimize the conditions in the lower bound part. 
\end{rem}

\subsection*{Organization} The paper is organized as follows. In Section \ref{s. strategy}, we provide an outline of our proofs, with a highlight on heuristics and novelties. In Section \ref{s.pre}, we state some preliminaries including known estimates or basic notions for the subsequent sections. Section \ref{s.pfmainthm} is devoted to the proof of our main result, Theorem  \ref{thmmain}, based on Proposition \ref{lem 2148}, Lemma \ref{lem1558}, and Proposition \ref{lem. tf=0}. Proposition \ref{lem 2148} and Lemma \ref{lem1558} will be proved in Section \ref{sec6} and Appendix \ref{app B}, respectively. The main technical result, Proposition \ref{lem. tf=0}, will be proved in Sections \ref{sec4} and \ref{sec5}. In Section \ref{s. lowbound}, we prove the lower bound, i.e., Theorem \ref{thmlowerbound}. Some other technical results are proved in the appendices.

\subsection*{Notations and Conventions}
Throughout this paper, we regard $N$ as our fundamental large parameter. Any quantities that are not explicit constant or fixed may depend on N; we almost always omit the argument $N$ from our notation. We use $\|A\|$ to denote the operator norm of a matrix $A$ and use $\|\mathbf{u}\|_2$ to denote the $L^2$-norm of a vector $\mathbf{u}$. We use $c$ to denote some generic (small) positive constant, whose value may change from one expression to the next. Similarly, we use $C$ to denote some generic (large) positive constant. For $A \in \bb C$, $B>0$ and parameter $a$, we use $A = O_a(B)$ to denote $|A|\leq 
C_aB$ with some positive constant $C_a$ which may depend on $a$.

\subsection*{Acknowledgment} We would like to thank Xiao Fang and Gaultier Lambert for helpful discussion.

\vspace{2em}

\section{Heuristics, proof strategy, and novelties} \label{s. strategy}

In this section, we provide an outline of our proofs, with a highlight on the heuristics and novelties. 
Our starting point is the Helffer-Sj\"{o}strand formula in Lemma \ref{HS}, which allows us to rewrite the LES into 
\begin{align*}
	\langle \text{Tr} f(H)\rangle=\frac{1}{\pi}  \int_{\mathbb{C}} \frac{\partial}{\partial \bar{z}}\tilde{f}(z)\langle \text{Tr}G(z)\rangle{\rm d}^2 z\,,
\end{align*}
where $G(z)\deq (H-z)^{-1}$ is the Green function, $\langle\xi \rangle\deq\xi-\mathbb{E}\xi$ (c.f. \eqref{2.111}), and $\tilde{f}$ is the almost-holomorphic extension of $f$ defined in (\ref{def of f tilde}) below.

\textbf{Decomposition into diagonal and off-diagonal parts}.
The first key  observation is that the LES can be decomposed into two parts, which rely on the diagonal and off-diagonal entries of $H$, respectively. In general, these two parts have different convergence rates towards Gaussian. More precisely, let $\wh{H}=((1-\delta_{ij})H_{ij})_{N,N}$ be the off-diagonal part of $H$ and $\wh{G}(z)=(\wh{H}-z)^{-1}$ be its Green function. By Proposition \ref{lem 2148}, one can approximately write 
\begin{align*}
	\langle	\text{Tr} f(H)\rangle&=\frac{1}{\pi}  \int_{\mathbb{C}} \frac{\partial}{\partial \bar{z}}\tilde{f}(z)(\langle\tr\wh{G}(z)\rangle+m'(z)\tr H){\rm d}^2 z+\text{Error}\\
	&=\frac{1}{\pi}  \int_{\mathbb{C}} \frac{\partial}{\partial \bar{z}}\tilde{f}(z)\langle\tr\wh{G}(z)\rangle{\rm d}^2 z+\frac{1}{2}c_1^f\tr H+\text{Error}\eqd Z_o+\frac{1}{2}c_1^f\tr H+\text{Error}\,,
\end{align*}
where $m(z)$ and $c_1^f$ are defined in (\ref{2.5}) and \eqref{def of ck} respectively. From the classical Berry-Esseen bound, one knows that $\tr H$ approaches  Gaussian with a rate $O(N^{-1/2})$ in general. On the other hand, $Z_o$ is contributed ``equally" by $ O(N^2)$ independent random variables, which makes it possible to expect for a convergence rate of $O(N^{-1})$. Heuristically, by Schur complement,  the leading part of $\la \wh{G}_{ii} \ra$ is proportional to the centered quadratic form $x_i^*\wh{G}^{(i)}x_i-\frac{1}{N}\text{Tr}\widehat{G}^{(i)}$, 
where $x_i$ is the $i$-th column of $H$ with $H_{ii}$ removed, and $\wh{G}^{(i)}=(\wh{H}^{(i)}-z)^{-1}$. Here $\wh{H}^{(i)}$ is the minor of $\wh{H}$ with $i$-th row and column crossed out.   It is known from 
\cite{GT99, GT02} that a single quadratic form itself is already close to Gaussian up to a $O(\frac{1}{\sqrt{N}})$ Kolmogorov-Smirnov distance. Summing up $N$ such quadratic forms may then further reduce  this distance to $O(\frac{1}{N})$. This indicates the faster rate for $Z_o$. 

The above heuristic reasoning  motivates us to consider the shifted LES $\cal Z_{f,\gamma}$ in (\ref{def of Z_{f,gamma}}). When $\gamma=1$ or $c_1^f=0$, the leading contribution of the diagonal part of $H$ to $\cal Z_{f,\gamma}$ is removed, and one expects the convergence rate $O(N^{-1})$. In case $(1-\gamma)c_1^f\ne 0$ but $\mathbb{E}h_d^3=0$, the mechanism for gaining a $N^{-1}$ rate is a bit more subtle. Nevertheless,  one can compare this case with the toy model
$Z+\tr H$, where $Z$ is a Gaussian random variable that can be regarded as a replacement of the off-diagonal part up to a $N^{-1}$ error in distribution. A simple estimate of the  characteristic function for this toy model leads to the dependence of the convergence rate on $\mathbb{E}h_d^3$. For the above reasons, we introduce the parameter $\cal X$ in \eqref{cal X}, which identifies that  the slow convergence rate comes  from the diagonal part of $H$.

\textbf{Near-optimal estimate of the off-diagonal part}. 
The previous heuristic reasoning gives the correct prediction, it is nevertheless highly nontrivial to carry out rigorously.  Especially, the entries $\widehat{G}_{ii}$'s are correlated. A priori, there is no obvious evidence that summing up $N$ of them can reduce convergence rate by $\frac{1}{\sqrt{N}}$. 
By Esseen's inequality Lemma \ref{lem:essen}, in order to obtain the fast convergence of the off-diagonal part $Z_o$, the main step of our proof is to compute the characteristic function $\psi(t)\deq\bb E \exp(\ii t Z_o)$ up to a precision of $O_\prec(N^{-1})$ for all $t\in [0,N^{1-\varepsilon}]$. More precisely, we need to show that
\[
\Big|\psi(t)-\exp\Big(\frac{-\sigma_{f}^2t^2}{2}\Big)\Big| \leq C N^{-1+\varepsilon}
\] 
for all $t \in [0,N^{1-\varepsilon}]$. In order to do this, we shall show that $\psi$ satisfies the differential equation
\begin{align*}
	\psi'(t)=(-\sigma_{f}^2t+b(t))\psi(t)+\mathcal{E}(t) \,.
\end{align*}
The main difficulty lies at obtaining optimal estimate for the error terms $b(t)$, $\mathcal{E}(t)$, namely
\begin{align}
	b(t)\prec \frac{t^2}{N}, \qquad \mathcal{E}(t)\prec \frac{t+1}{N} \label{3.222}
\end{align}
for all $t \in [0,N^{1-\varepsilon}]$. Let us define the integration operator $\{\cdot\}_n\equiv \{\cdot\}_{n,f}$ as in \eqref{int f} below, such that $\psi'(t)=\{\ii \bb E \langle \tr \widehat{G} \rangle \exp(\ii t Z_o)\}$. Our starting point is 
\[
z\bb E \langle \tr \widehat{G} \rangle \exp(\ii t Z_o)={\sum_{i,j
}}^{\hspace{-0cm}*}\bb E \big[H_{ij}\widehat{G}_{ji} \langle \exp(\ii t Z_o)\rangle\big]\,,
\]
and we expand the RHS  to get a self-consistent equation, using the cumulant expansion formula in Lemma \ref{lem:cumulant_expansion}. By doing so, we will  arrive at 
\begin{align*}
	\psi'(t)&= -\sigma_{f}^2 \psi(t)+\sigma_{f,1}^2\psi(t)\notag\\
	&\qquad+\sum_{k\geq2} \ii \bigg\{\frac{s_{k+1} }{k!} \frac{1}{N^{(k+1)/2}} \frac{1}{z+2m(z)}{\sum_{i,j}}^* \bb E \frac{\partial^k( \widehat{G}_{ji} \langle \exp(\ii t Z_o) \rangle)}{\partial H_{ij}^k}\bigg\}+\text{error}
	\\ 
	&\eqd -\sigma_{f}^2 \psi(t)+\sigma_{f,1}^2\psi(t)+\sum_{k\geq2}\cal L_k+\text{error}\,,
\end{align*}
where $s_k$'s are defined in (\ref{022701}) and
\[
\sigma_{f,1}^2=\frac{s_4}{2\pi^2}\Big(\int_{-2}^2 f(x)\frac{2-x^2}{\sqrt{4-x^2}} {\rm d}x\Big)^2\,.
\]
The term $\sigma_{f,1}^2\psi(t)$ shall cancel the leading term in $\cal L_3$. From a straightforward computation, it can be proved, as previously in \cite{LS18} that 
\[
\cal L_k+\delta_{k4}\sigma_{f,1}^2\psi(t)= O_{\prec} (1+|t|^5)/\sqrt{N}\,,
\]
which apparently falls far short of our needs. In order to get the sharp estimate \eqref{3.222}, we need a much more careful treatment of $\cal L_k$. For brevity, in the remaining discussion of this section, for a sequence of  parameters $z_a$'s in $\mathbb{C}$, we write $\widehat{G}_a=\widehat{G}(z_a)$.  By the differential rule \eqref{diff} and local laws on Green function entries (c.f. Theorem \ref{refthm1}, Lemmas \ref{prop4.4} and \ref{lem0diag}), it will be seen that one type of the terms in $\cal L_k$ which is difficult to estimate is of the form 
\begin{align} 
	&\qquad\frac{t^k}{N^{(k+1)/2}}\bigg\{{\sum_{i,j}}^* \bb E (\widehat{G}_1)_{ij}(\widehat{G}^2_2)_{ji} \cdots (\widehat{G}^2_{k+1})_{ji} \exp(\ii t Z_o) \bigg\}_{k+1}  \label{031601} \\
	&=\, \frac{t^k}{N^{(k+1)/2}}\bigg\{{\sum_{i,j}}^* \bb E \Big\langle (\widehat{G}_1)_{ij}(\widehat{G}^2_2)_{ji} \cdots (\widehat{G}^2_{k+1})_{ji} \Big\rangle \exp(\ii t Z_o) \bigg\}_{k+1} \notag \\
	& \quad+\frac{t^k}{N^{(k+1)/2}}\bigg\{{\sum_{i,j}}^* \bb E (\widehat{G}_1)_{ij}(\widehat{G}^2_2)_{ji} \cdots (\widehat{G}^2_{k+1})_{ji}  \bigg\}_{k+1}  \psi(t) \eqd \cal L_{k,1}+O_{\prec}\Big(\frac{t^k}{N^{k-1}}\Big) \cdot \psi(t)\,,\notag 
\end{align}
where $\sum_{i,j}^*=\sum_{i\neq j}$ (c.f. (\ref{sum*})) and in the last step we used Lemma \ref{lem0diag} to estimate the off-diagonal entries of the Green functions and Lemma \ref{lem:integration} for the integral.  We emphasize that what we actually need to deal with is a class of more general and complicated variants of (\ref{031601}). Nevertheless, for simplicity, we focus on the toy case in (\ref{031601}) to illustrate the main mechanism. 
As $k\geq 2$, and we have the crucial condition $t\leq N^{1-\varepsilon}$, we see that
\[
O_{\prec}\Big(\frac{t^k}{N^{k-1}}\Big) \cdot \psi(t)=O_{\prec}\Big(\frac{t^2}{N}\Big) \cdot \psi(t)
\]
as desired. Hence it remains to show $\cal L_{k,1}=O_{\prec}(t^kN^{-k})=O_{\prec}(tN^{-1})$. By Cauchy-Schwarz inequality, it suffices to prove
\begin{equation} \label{666}
	\cal L_{k,2}\deq\bb E \bigg\{{\sum_{i,j}}^*\Big\langle (\widehat{G}_1)_{ij}(\widehat{G}^2_2)_{ji} \cdots (\widehat{G}^2_{k+1})_{ji} \Big\rangle\bigg\}^2_{k+1}=O_{\prec}(N^{-k+1})\,.
\end{equation}
A direct application of Lemma \ref{lem0diag} only leads to $\cal L_{k,2}=O_{\prec}(N^{-k+3})$, and thus we need to gain an additional  factor of $N^{-2}$. In order to achieve this, we need to exploit the smallness induced by the centering operator $``\langle \cdot \rangle"$ in $\cal L_{k,2}$. Such a mechanism of gaining additional smallness will be referred to  as {\it fluctuation averaging of the off-diagonal entries} ( of the Green function) in the sequel.

\textbf{Fluctuation averaging of the off-diagonal entries of the Green function}.
The fluctuation averaging of the diagonal entries of the Green function is well-understood by the local law. For instance, by Lemma \ref{prop4.4}, one can show that
\begin{equation} \label{3333}
	\bb E \bigg\{{\sum_{i}}^*\Big\langle (\widehat{G}_1)_{ii}(\widehat{G}^2_2)_{ii} \cdots (\widehat{G}^2_{k+1})_{ii} \Big\rangle\bigg\}^2_{k+1}=O_{\prec}(1)\,.
\end{equation}
On the other hand, fluctuation averaging for the off-diagonal entries of the Green function has not been fully studied in the literature. To prove \eqref{666}, let us denote
\[
Y_{k+1}\deq \bigg\{{\sum_{i,j}}^*(\widehat{G}_1)_{ij}(\widehat{G}^2_2)_{ji} \cdots (\widehat{G}^2_{k+1})_{ji}\bigg\}_{k+1}\,,
\] 
and the leading contribution of $\bb E \langle Y_{k+1}\rangle^2$ is contained in 
\begin{align*}
	&\bb E \bigg\{{\sum_{l\ne i}\sum_{i,j}}^* {H}_{il}(\widehat{G}_1)_{lj}(\widehat{G}^2_2)_{ji} \cdots (\widehat{G}^2_{k+1})_{ji} \bigg\}_{k+1}\langle Y_{k+1}\rangle \\
	=\ &\sum_{n\geq 1}\frac{s_{n+1}}{n!} \frac{1}{N^{(n+1)/2}} {\sum_{l\ne i}\sum_{i,j}}^*\bb E \bigg\{ \frac{\partial^n ((\widehat{G}_1)_{lj}(\widehat{G}^2_2)_{ji} \cdots (\widehat{G}^2_{k+1})_{ji})}{H_{il}^n}\bigg\}_{k+1}\langle Y_{k+1}\rangle \\
	&\hspace{-0.5cm}+	\sum_{n\geq 1}\sum_{r=1}^n{n \choose r}\frac{s_{n+1}}{n!} \frac{1}{N^{(n+1)/2}} {\sum_{l\ne i}\sum_{i,j}}^* \bb E\bigg\{ \frac{\partial^{n-r} ((\widehat{G}_1)_{lj}(\widehat{G}^2_2)_{ji} \cdots (\widehat{G}^2_{k+1})_{ji})}{H_{il}^{n-r}}\frac{\partial^r \langle Y_{k+1}\rangle }{\partial H_{il}^r}\bigg\}_{k+1}\\
	\eqd&\  \sum_{n\geq 1}\cal L^{(1)}_n+\sum_{n\geq 1} \cal L^{(2)}_n\,,
\end{align*}
where we expand the LHS of above via Lemma \ref{lem:cumulant_expansion}.  By repeatedly applying the cumulant expansion formula to $\mathcal{L}_n^{(1)}$, one can express it as a sum of two types of terms. The first type of terms contain enough off-diagonal Green function entries so that they can be neglected directly by applying the local laws in Theorem \ref{refthm1}, Lemmas \ref{prop4.4} and \ref{lem0diag}, thanks to the smallness of off-diagonal Green function entries. The other type of terms contain no off-diagonal entry except for those in the second $\la Y_{k+1}\ra$ factor. Thanks to the elementary fact $\mathbb{E}\xi \la Y_{k+1}\ra=\mathbb{E}\la \xi\ra\la Y_{k+1}\ra$ for any word $\xi$ in diagonal Green function entries, we can apply  the fluctuation averaging of diagonal entries such as (\ref{3333}). This mechanism will then be enough to bound $\mathcal{L}_n^{(1)}$.

The terms in $\cal L^{(2)}_n$ are more complicated, as the centering of $Y_{k+1}$ is destroyed by the derivatives. We are then forced to deal with the joint behavior of two $Y_{k+1}$. For example, one of the leading terms in $\cal L_{1}^{(2)}$ is of the form
\begin{align*}
-\frac{1}{N}{\sum_{i,j}}^*{\sum_{u,v}}^* \bb E \big\{(\widehat{G}^2_2)_{ji} \cdots (\widehat{G}^2_{k+1})_{ji}(\widehat{G}_1\widehat{G}_{k+2})_{uj}(\widehat{G}_{k+2})_{iv}(\widehat{G}^2_{k+3})_{uv} \cdots (\widehat{G}^2_{2k+2})_{uv}\big\}_{2k+2}\,.
\end{align*}
By Lemma \ref{lem0diag}, the above can only be estimated by $O_{\prec}(N^{-k+2})$, which means we are still in short by a factor $N^{-1}$. In order to exploit extra smallness for this types of terms, we identify several special cases for the monomials of the Green functions in Section \ref{sec6.1} below. More specifically, we will introduce the notion of {\it lone factor}, which is an off-diagonal Green function entry $\widehat{G}_{ij}$ whose index $(i,j)$ is not shared by the other entries in the monomial. Heuristically, a lone factor is weakly correlated with the other Green function entries in the monomial and thus it can bring additional smallness when one apply expectation to the monomial. We then further discuss the case when there are two lone factors $\widehat{G}_{ij}, \widehat{G}_{uv}$ with $\{i,j\}\cap \{u,v\}=\emptyset$ or the case there is a trio $\widehat{G}_{ij}, \widehat{G}_{iu}, \widehat{G}_{iv}$ with distinct $j,u,v$.  
We prove the improved estimates for these cases, which finally enable us to conclude \eqref{666}. The heuristics of the lone factor and related notions are explained in Remark \ref{remK} below.

Observe that the size of a sum of monomials in Green function entries essentially depends on a few key parameters. For instance, first,  if an index shows up exactly twice in the monomial, the sum over such an index will reduce the number of terms by $N$. Second, each off-diagonal Green function entry can contribute an $\frac{1}{\sqrt{N}}$ factor due to the local law. Third, since we will need to work in the regime $\eta=\Im z\gtrsim N^{-\frac14}$ due to the assumption $f\in C^5(\mathbb{R})$, we also need to monitor the power of $\eta^{-1}$, which may contribute additional $N$-factors. There are several other factors crucial for the estimate, such as the $t$-factor which may be large since we are working with $t\in [0,N^{1-\varepsilon}]$. Therefore, we will introduce a uniform bookkeeping system to keep tracking on the evolution of the above key parameters during the cumulant expansions. We refer to the notion {\it abstract polynomial} and its  key parameters in Section \ref{sec8} for details.

\textbf{The lower bound.}
Finally, in order to obtain a matching lower bound of the convergence rate, we turn to estimate the third moment of the centered LES, i.e. $\mathbb{E}[(\text{Tr} f(H)-\mathbb{E} \text{Tr} f(H)-\frac12 \gamma c_1^f \text{Tr} H)]^3$. It turns out the third moment has precise leading term $r_1\mathcal{X}N^{-\frac12}+r_2 N^{-1}$, where $r_1$ and  $r_2$ are constants depending on $f$ and they are nonzero in general. This suggests that in general, we cannot approximate the LES with any Gaussian random variable, with a  precision better than $O(\cal X N^{-1/2}+N^{-1})$. A simple argument using the eigenvalue rigidity implies that the convergence rate has a lower bound  $|r_1|\mathcal{X}N^{-\frac12-\kappa}+|r_2| N^{-1-\kappa}$ in Kolmogorov-Smirnov distance. 

The computation of the third moment of the LES boils down to that of the three point function of Green functions, which is also derived by the cumulant expansion. Using a naive estimate by Theorem \ref{refthm1}, the third moment can only be bounded by $O(1)$. In order to reach the true scale $O((\cal X N^{-1/2}+N^{-1})$, we shall discover several nontrivial cancellations to obtain a more precise estimate than what has been done in previous works.

\section{Preliminaries} \label{s.pre}

Throughout  the paper, for an $N\times N$ matrix $M$, we write $\ul{M}\deq \frac{1}{N} \tr M$, and we abbreviate $M^{*n}\deq (M^{*})^n$, $M^{*}_{ij}\deq (M^{*})_{ij} = \ol M_{ji}$, $M_{ij}^n\deq (M_{ij})^n$.  We emphasize here that $M_{ij}^n$ is different from $(M^n)_{ij}$ in general, where the latter apparently means the $(i,j)$ entry of $M^n$. We use 
\begin{equation} \label{2.111}
	\langle X \rangle\deq X-\bb E X
\end{equation} 
to denote the centering of the random variable $X$. For any positive integer $n$, we use the notation $\llbracket 1, n\rrbracket\deq \{1,2,...,n\}$.  For indices $i_1,...,i_k\in\llbracket 1, N\rrbracket$, we use
\begin{equation}\label{sum*}
	{\sum_{i_1,...,i_k}}^{\hspace{-0.15cm}*}
\end{equation}
to denote the sum over all $k$-tuples $(i_1,...,i_k)\in \llbracket 1, N\rrbracket^k$ with distinct indices.
We further introduce the following notations for the conditional expectations
\begin{equation} \label{2.1}
	\bb E_{\rm o}(\cdot) \deq \bb E(\cdot | H_{ii}, i=1,...,N) \quad \mbox{and} \quad  \bb E_{\rm d}(\cdot) \deq \bb E(\cdot | H_{ij}, i,j=1,...,N, i \ne j)\,.
\end{equation}
Let $z \in \bb C \backslash \bb R$. We denote the \emph{Green function of $H$} and its Stieltjes transform  by
\begin{equation*} 
	G \equiv G(z)\deq (H-z)^{-1}\quad  \mbox{and} \quad s(z) \deq \frac{1}{N} \tr G(z)\,=\ul{G(z)}.
\end{equation*}
Let $\rho_{sc}$ be the semicircle density, i.e.
$
\rho_{sc}(x) =\frac{1}{2\pi} \sqrt{(4-x^2)_+}.
$
The Stieltjes transform $m$ of the semicircle density $\rho_{sc}$ is given by
\begin{equation} \label{2.5}
	m(z)\deq \int \frac{\rho_{sc}(x)}{x-z}\,\dd x=\frac{1}{2\pi}\int_{-2}^{2}\frac{\sqrt{4-x^2}}{x-z}\dd x\,,
\end{equation}
so that
$
m(z)=\frac{-z+\sqrt{z^2-4}}{2}.
$
Here the square root $\sqrt{z^2-4}$ is chosen with a branch cut in the segment $[-2,2]$ so that $\sqrt{z^2-1}\sim z$
as $z\to \infty$. For $z=E+\ii \eta$,  $\eta>0$, it is easy to check
\begin{equation*}
	m(z)=O(1) \quad \mbox{ and }\quad m'(z)=O\Big(\big(|E^2-4|+\eta\big)^{-1/2}\Big)\,.
\end{equation*}
For $f\in C^1(\bb R)$, we recall the parameters defined in Definition  \ref{def:Wigner} and further set
\begin{align} \label{mu_f}
		&\mu_f\deq N\int_{-2}^2 f(x)\rho_{sc}(x) {\rm d}x-\frac{1}{2\pi} \Big(\frac{2}{\beta}-1\Big)\int_{-2}^2 \frac{f(x)}{\sqrt{4-x^2}}{\rm d}x+\frac{1}{2\beta}(f(2)+f(-2)) \notag\\
		&+\frac{a_2-2\beta^{-1}}{2\pi}\int_{-2}^2 f(x) \frac{2-x^2}{\sqrt{4-x^2}}{\rm d}x+\frac{s_4}{2\pi} \int_{-2}^2 f(x)\frac{x^4-4x^2+2}{\sqrt{4-x^2}}{\rm d}x \notag\\
		&+\frac{a_3}{4 \pi \sqrt{N}}\int_{-2}^2 f(x)\frac{x^3-2x}{\sqrt{4-x^2}}\dd x\,,  
\end{align}
and 
\begin{equation} \label{sigma_f}
	\begin{aligned}
		\sigma^2_f\deq&\frac{1}{2\beta\pi^2}\int_{-2}^2\int_2^2\frac{(f(y)-f(x))^2}{(x-y)^2}\frac{4-xy}{\sqrt{4-x^2}\sqrt{4-y^2}}{\rm d}x{\rm d}y\\
		&-\frac{1}{2\beta\pi^2}\Big(\int_{-2}^2 f(x)\frac{x}{\sqrt{4-x^2}}{\rm d}x\Big)^2
		+\frac{s_4}{2\pi^2}\Big(\int_{-2}^2 f(x)\frac{2-x^2}{\sqrt{4-x^2}} {\rm d}x\Big)^2\,.
	\end{aligned}
\end{equation}
Note that by \cite[Equation (1.5)]{BY05}, we can rewrite the variance into
\begin{equation*} 
	\sigma^2_f=\frac{1}{2\beta}\sum_{k=2}^{\infty} k(c_k^f)^2+\frac{s_4}{2} (c_2^f)^2\,,
\end{equation*}
which implies that $\sigma_f^2 \geq 0$, and $\sigma_f^2=0$ if and only if $f$ is linear.

Let $h$ be a real-valued random variable with finite moments
of all order. We use $\cal C_n(h)$ to denote the $n$th cumulant of $h$, i.e.
\[
{\cal C}_n(h)\deq (-\ii)^n \cdot\big(\partial_{\lambda}^n \log \bb E \ee^{\ii \lambda h}\big) \big{|}_{\lambda=0}\,.
\]
The following is a simple consequence of Definition \ref{def:Wigner}.

\begin{lem} \label{lemh}
	For every fixed $n \in \bb N$ we have
	\begin{equation*}
		\cal C_{n}(H_{ij})=O_{n}(N^{-n/2})
	\end{equation*}
	uniformly for all $i,j$.
\end{lem}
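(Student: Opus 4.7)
The plan is a one-line reduction followed by the standard moment–cumulant relation. By Definition~\ref{def:Wigner}, $H_{ij}$ is distributed as $h_d/\sqrt{N}$ when $i=j$ and as $h_o/\sqrt{N}$ when $i\ne j$, where $h_d, h_o$ are fixed (\(N\)-independent) random variables. I would then invoke the $n$-homogeneity of the $n$-th cumulant, $\mathcal{C}_n(\alpha X)=\alpha^n \mathcal{C}_n(X)$, applied with $\alpha=N^{-1/2}$, to obtain
\[
\mathcal{C}_n(H_{ij})=N^{-n/2}\,\mathcal{C}_n(h_{ij}),\qquad h_{ij}\in\{h_d,h_o\}.
\]
The claim therefore reduces to showing $|\mathcal{C}_n(h_{ij})|\le C_n$ uniformly in $N$ (and in $i,j$).

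For the uniform bound on $\mathcal{C}_n(h_{ij})$, I would use the standard fact that the $n$-th cumulant is a fixed polynomial (depending only on $n$, not on the distribution) in the moments of order at most $n$, obtained by expanding $\log \mathbb{E}\, e^{\mathrm{i}\lambda h}$ in powers of $\lambda$ at $\lambda=0$. Since Definition~\ref{def:Wigner} guarantees $\mathbb{E}|h_d|^p+\mathbb{E}|h_o|^p\le C_p$ for every fixed $p\in\mathbb{N}_+$, the relevant moments of $h_d$ (and, in the complex Hermitian case, the joint moments of $\Re h_o,\Im h_o$, obtained by viewing $h_o$ as an $\mathbb{R}^2$-valued vector and applying the multivariate version of the same polynomial identity) are bounded by constants depending only on $n$. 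Substituting into the moment–cumulant polynomial yields $|\mathcal{C}_n(h_{ij})|\le C_n$, and combining with the scaling above gives $\mathcal{C}_n(H_{ij})=O_n(N^{-n/2})$. There is no substantive obstacle; the only minor bookkeeping point is extending the convention of $\mathcal{C}_n$ from real to complex random variables for the off-diagonal entries, which is handled by the split into real and imaginary parts just described.
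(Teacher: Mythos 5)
Your proof is correct and is exactly the intended argument: the paper marks this as ``a simple consequence of Definition \ref{def:Wigner}'' and gives no proof, and the homogeneity $\mathcal{C}_n(\alpha X)=\alpha^n\mathcal{C}_n(X)$ plus the moment--cumulant polynomial identity combined with the uniform moment bounds from Definition \ref{def:Wigner} is precisely that simple consequence. The remark about treating $h_o$ as an $\mathbb{R}^2$-valued variable in the complex Hermitian case is the right way to make the bookkeeping precise.
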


We will need the following expansion formula, due to Andrew Barbour \cite{Barbour}. It was first applied to random matrix theory in \cite{KKP2}, and it has been widely used by the random matrix community in recent works, e.g.\,\cite{HK,LeeSchnelli18,HKR,EKS19,HLY17,HK2,H19}. A proof of a slightly different version can be found in \cite[Lemma 2.4]{HKR}.

\begin{lem}[Barbour's cumulant expansion formula] \label{lem:cumulant_expansion}
	Let $\cal F:\R\to\C$ be a smooth function, and denote by $\cal F^{(n)}$ its $n$th derivative. Then, for every fixed $\ell \in\N$, we have 
	\begin{equation}\label{eq:cumulant_expansion}
		\mathbb{E}\big[h\cdot \cal F(h)\big]=\sum_{n=0}^{\ell}\frac{1}{n!}\mathcal{C}_{n+1}(h)\mathbb{E}[\cal F^{(n)}(h)]+\cal R_{\ell+1},
	\end{equation}	
	assuming that all expectations in \eqref{eq:cumulant_expansion} exist, where $\cal R_{\ell+1}$ is a remainder term (depending on $\cal F$ and $h$), such that for any $s>0$,
	\begin{equation*} 
		\cal R_{\ell+1} = O(1) \cdot \bigg(\E\sup_{|x| \le |h|} \big|\cal F^{(\ell+1)}(x)\big|^2 \cdot \E \,\big| h^{2\ell+4} \mathbf{1}_{|h|>s} \big| \bigg)^{1/2} +O(1) \cdot \bb E |h|^{\ell+2} \cdot  \sup_{|x| \le s}\big|\cal F^{(\ell+1)}(x)\big|\,.
	\end{equation*}
\end{lem}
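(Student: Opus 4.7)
The plan is to reduce \eqref{eq:cumulant_expansion} to the elementary moment-cumulant identity by Taylor-expanding $\cal F$ around $0$, and then obtain the quantitative two-term remainder bound by splitting the distribution of $h$ at $|h|=s$. First, observe that \eqref{eq:cumulant_expansion} holds \emph{exactly} (with $\cal R_{\ell+1}=0$) whenever $\cal F$ is a polynomial of degree at most $\ell$: by linearity it suffices to verify this for $\cal F(h)=h^k$, $0\le k\le \ell$, in which case the identity collapses to the moment-cumulant recursion
\begin{equation*}
\E[h^{k+1}] = \sum_{n=0}^{k}\binom{k}{n}\cal C_{n+1}(h)\,\E[h^{k-n}],
\end{equation*}
derivable from the formal identity $\sum_m \E[h^m]\lambda^m/m! = \exp\bigl(\sum_{n\ge 1}\cal C_n(h)\lambda^n/n!\bigr)$ by differentiating in $\lambda$ and comparing Cauchy products. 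Writing $\cal F = P_\ell+R_\ell$ with $P_\ell$ the degree-$\ell$ Taylor polynomial of $\cal F$ at $0$ and $R_\ell$ its integral remainder, the contribution from $P_\ell$ cancels on both sides of \eqref{eq:cumulant_expansion}, leaving
\begin{equation*}
\cal R_{\ell+1} = \E[h\,R_\ell(h)] \,-\, \sum_{n=0}^{\ell}\frac{\cal C_{n+1}(h)}{n!}\,\E[R_\ell^{(n)}(h)].
\end{equation*}

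The next step is a pointwise control of each $R_\ell^{(n)}$ through its integral form
\begin{equation*}
R_\ell^{(n)}(h) = \frac{h^{\ell-n+1}}{(\ell-n)!}\int_0^1 (1-u)^{\ell-n}\,\cal F^{(\ell+1)}(uh)\,du,
\end{equation*}
which yields $|R_\ell^{(n)}(h)| \le |h|^{\ell-n+1}\sup_{|x|\le|h|}|\cal F^{(\ell+1)}(x)|/(\ell-n+1)!$. Combined with the standard moment-based bound $|\cal C_{n+1}(h)|\lesssim \E|h|^{n+1}$ (valid under the finite-moment hypothesis on $h$), the triangle inequality reduces $|\cal R_{\ell+1}|$ to a finite linear combination of expressions of the form $\E|h|^{\alpha}\cdot\E[|h|^{\beta}\sup_{|x|\le|h|}|\cal F^{(\ell+1)}(x)|]$ with $\alpha+\beta=\ell+2$ and $\alpha,\beta\ge 0$.

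The final step produces the advertised two-term bound by splitting each such expectation according to $\mathbf{1}_{|h|\le s}+\mathbf{1}_{|h|>s}$. On $\{|h|\le s\}$ the inner supremum simplifies to $\sup_{|x|\le s}|\cal F^{(\ell+1)}(x)|$, which factors out, and the H\"{o}lder bound $\E|h|^{\alpha}\cdot\E|h|^{\beta}\le\E|h|^{\ell+2}$ collapses every summand onto the second advertised term $O(1)\cdot\E|h|^{\ell+2}\sup_{|x|\le s}|\cal F^{(\ell+1)}|$. On $\{|h|>s\}$, the pointwise inequality $|h|^{\beta}\mathbf{1}_{|h|>s}\le s^{-\alpha}|h|^{\ell+2}\mathbf{1}_{|h|>s}$ followed by Cauchy--Schwarz separates $\sup_{|x|\le|h|}|\cal F^{(\ell+1)}(x)|$ from $|h|^{\ell+2}\mathbf{1}_{|h|>s}$, and the leftover factor $s^{-\alpha}\E|h|^{\alpha}$ is absorbed into the $O(1)$ constant using the uniform moment bounds on $h$, producing the first advertised term $O(1)\cdot(\E|h|^{2\ell+4}\mathbf{1}_{|h|>s}\cdot\E\sup_{|x|\le|h|}|\cal F^{(\ell+1)}(x)|^2)^{1/2}$. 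The main obstacle I anticipate lies in Step~1, namely verifying carefully that the degree-$\ell$ polynomial parts on the two sides of \eqref{eq:cumulant_expansion} cancel exactly via the moment-cumulant recursion so that only the $\cal F^{(\ell+1)}$-valued remainder survives; once that cancellation is in place, the pointwise integral bound of Step~2 and the dyadic splitting of Step~3 follow standard arguments.
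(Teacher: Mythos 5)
Your Steps 1 and 2 are sound: the exact cancellation of the degree-$\ell$ Taylor polynomial via the moment--cumulant recursion, the reduction to a Taylor remainder, and the pointwise integral bound on $R_\ell^{(n)}$ together with $|\cal C_{n+1}(h)|\lesssim\E|h|^{n+1}$ (each monomial $m_{k_1}\cdots m_{k_j}$ with $\sum k_i=n+1$ is controlled by $\E|h|^{n+1}$ via Jensen) are all correct and constitute the standard route. The gap lies in the last step, specifically in the treatment of the terms with $\alpha\ge 1$ on $\{|h|>s\}$. After the pointwise inequality $|h|^\beta\mathbf{1}_{|h|>s}\le s^{-\alpha}|h|^{\ell+2}\mathbf{1}_{|h|>s}$ and Cauchy--Schwarz you are left with the prefactor $s^{-\alpha}\E|h|^\alpha$, which you claim "is absorbed into the $O(1)$ constant." This is not valid: the lemma asserts the bound for every $s>0$ with an $O(1)$ that does not depend on $s$ (nor on the distribution of $h$), and the factor $s^{-\alpha}\E|h|^\alpha$ diverges as $s\to 0$ while the right-hand side of the lemma remains bounded. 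The absorption you invoke only works in specific applications where $s$ is tied to the scale of $h$, not in the lemma itself.

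The repair is to avoid the $s^{-\alpha}$-trade altogether. On $\{|h|>s\}$, apply Cauchy--Schwarz directly to obtain
$\E[|h|^\beta\sup_{|x|\le|h|}|\cal F^{(\ell+1)}(x)|\mathbf{1}_{|h|>s}]\le(\E|h|^{2\beta}\mathbf{1}_{|h|>s})^{1/2}(\E\sup_{|x|\le|h|}|\cal F^{(\ell+1)}(x)|^2)^{1/2}$,
then absorb the outer factor $\E|h|^\alpha$ into the first piece via $\E|h|^\alpha\le(\E|h|^{2\alpha})^{1/2}$, and finally invoke the Chebyshev (positive-correlation) inequality for the two nondecreasing functions $t\mapsto t^{2\alpha}$ and $t\mapsto t^{2\beta}\mathbf{1}_{t>s}$ of $|h|\ge 0$:
$\E|h|^{2\alpha}\cdot\E[|h|^{2\beta}\mathbf{1}_{|h|>s}]\le\E[|h|^{2\alpha+2\beta}\mathbf{1}_{|h|>s}]=\E[|h|^{2\ell+4}\mathbf{1}_{|h|>s}]$.
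This yields precisely the advertised first term with an $s$- and $h$-independent constant. The $\{|h|\le s\}$ contribution is fine as you wrote it, using $\E|h|^\alpha\E|h|^\beta\le\E|h|^{\ell+2}$.
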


Next we introduce the notions of stochastic domination.
\begin{defn} [Stochastic domination]  \label{def2.1}
	Let
	\begin{equation*}
		\mathsf{X}=(\mathsf{X}^{(N)}(u):  N \in \mathbb{N}, \ u \in \mathsf{U}^{(N)}), \   \mathsf{Y}=(\mathsf{Y}^{(N)}(u):  N \in \mathbb{N}, \ u \in \mathsf{U}^{(N)}),
	\end{equation*}
	be two families of random variables, where $\mathsf{Y}$ is nonnegative, and $\mathsf{U}^{(N)}$ is a possibly $N$-dependent parameter set. 
	
	We say that $\mathsf{X}$ is stochastically dominated by $\mathsf{Y},$ uniformly in $u,$ if for all small $\epsilon>0$ and large $ D>0$, we have 
	\begin{equation*}
		\sup_{u \in \mathsf{U}^{(N)}} \mathbb{P} \Big( \big|\mathsf{X}^{(N)}(u)\big|>N^{\epsilon}\mathsf{Y}^{(N)}(u) \Big) \leq N^{- D},
	\end{equation*}   
	for large enough $N \geq  N_0(\epsilon, D).$  If $\mathsf{X}$ is stochastically dominated by $\mathsf{Y}$, uniformly in $u$, we use the
	notation $\mathsf{X} \prec \mathsf{Y}$ , or equivalently $\mathsf{X} = O_{\prec}(\mathsf{Y})$. Note that in the special case when $\mathsf{X}$ and $\mathsf{Y}$ are deterministic, $\mathsf{X} \prec\mathsf{Y}$
	means that for any given $\varepsilon>0$,  $|\mathsf{X}^{(N)}(u)|\leq N^{\epsilon}\mathsf{Y}^{(N)}(u)$ uniformly in $u$, for all sufficiently large $N\geq N_0(\epsilon)$.

	Throughout this paper, the stochastic domination will always be uniform in
	all parameters (mostly are matrix indices and the spectral parameter $z$) that are not explicitly fixed.
\end{defn}

We have the following elementary result about stochastic domination.

\begin{lem} \label{prop_prec} Let
	\begin{equation*}
		\mathsf{X}_i=(\mathsf{X}^{(N)}_i(u):  N \in \mathbb{N}, \ u \in \mathsf{U}^{(N)}), \   \mathsf{Y}_i=(\mathsf{Y}_i^{(N)}(u):  N \in \mathbb{N}, \ u \in \mathsf{U}^{(N)}),\quad i=1,2
	\end{equation*}
	be families of  random variables, where $\mathsf{Y}_i, i=1,2,$ are nonnegative, and $\mathsf{U}^{(N)}$ is a possibly $N$-dependent parameter set.	Let 
	\begin{align*}
		\Phi=(\Phi^{(N)}(u): N \in \mathbb{N}, \ u \in \mathsf{U}^{(N)})
	\end{align*}
	be a family of deterministic nonnegative quantities. We have the following results:
	\begin{enumerate}
		\item[(i)] If $\mathsf{X}_1 \prec \mathsf{Y}_1$ and $\mathsf{X}_2 \prec \mathsf{Y}_2$ then $\mathsf{X}_1+\mathsf{X}_2 \prec \mathsf{Y}_1+\mathsf{Y}_2$ and  $\mathsf{X}_1 \mathsf{X}_2 \prec \mathsf{Y}_1 \mathsf{Y}_2$.
		
		\item[(ii)] Suppose ${X}_1 \prec \Phi$, and there exists a constant $C>0$ such that  $|\mathsf{X}_1^{(N)}(u)| \leq N^{C}\Phi^{(N)}(u)$ a.s. uniformly in $u$ for all sufficiently large $N$. Then $\E \mathsf{X}_1 \prec \Phi$.  
	\end{enumerate}
\end{lem}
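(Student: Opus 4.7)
The plan is to peel back Definition \ref{def2.1} of stochastic domination and reduce each claim to elementary probabilistic manipulations: a union bound for part (i), and a decomposition into a high-probability ``good'' event plus a deterministic tail control for part (ii). There is no substantive obstacle here — this is a purely structural lemma about how $\prec$ interacts with sums, products, and expectations — so the main care is in matching the $\varepsilon, D, N_0$ quantifiers from Definition \ref{def2.1} correctly and in maintaining uniformity in $u$, which is automatic because every inequality below will be uniform in $u$.

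For part (i), I would fix arbitrary $\varepsilon > 0$ and $D > 0$, and invoke the hypotheses $\mathsf{X}_i \prec \mathsf{Y}_i$ with parameters $\varepsilon/2$ and $D+1$ in place of $\varepsilon, D$ to obtain, uniformly in $u$ and for all sufficiently large $N$,
\[
\mathbb{P}\bigl(|\mathsf{X}_i^{(N)}(u)| > N^{\varepsilon/2}\, \mathsf{Y}_i^{(N)}(u)\bigr) \leq N^{-D-1}, \quad i=1,2.
\]
On the intersection of the two complementary events, which has probability at least $1 - 2N^{-D-1} \geq 1 - N^{-D}$, the triangle inequality yields $|\mathsf{X}_1 + \mathsf{X}_2| \leq N^{\varepsilon/2}(\mathsf{Y}_1 + \mathsf{Y}_2) \leq N^{\varepsilon}(\mathsf{Y}_1 + \mathsf{Y}_2)$, while multiplying the two bounds gives $|\mathsf{X}_1 \mathsf{X}_2| \leq N^{\varepsilon} \mathsf{Y}_1 \mathsf{Y}_2$. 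The point of using $\varepsilon/2$ in the hypothesis is precisely so the product does not pick up an extra factor $N^{\varepsilon}$. Since $\varepsilon$ and $D$ were arbitrary, both conclusions of (i) follow from the definition of $\prec$.

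For part (ii), I would fix $\varepsilon > 0$ and decompose along the ``good'' event $\Omega \deq \{|\mathsf{X}_1^{(N)}(u)| \leq N^{\varepsilon} \Phi^{(N)}(u)\}$:
\[
\mathbb{E}|\mathsf{X}_1| \;=\; \mathbb{E}\bigl[|\mathsf{X}_1|\mathbf{1}_{\Omega}\bigr] + \mathbb{E}\bigl[|\mathsf{X}_1|\mathbf{1}_{\Omega^c}\bigr] \;\leq\; N^{\varepsilon} \Phi + N^{C} \Phi \cdot \mathbb{P}(\Omega^c),
\]
using the defining bound of $\Omega$ together with determinism of $\Phi$ for the first term, and the assumed a.s.\,polynomial bound $|\mathsf{X}_1^{(N)}(u)| \leq N^{C}\Phi^{(N)}(u)$ for the second. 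The hypothesis $\mathsf{X}_1 \prec \Phi$ gives $\mathbb{P}(\Omega^c) \leq N^{-D}$ for any $D > 0$ once $N$ is large, so choosing $D = C+1$ makes the second term at most $N^{-1}\Phi \leq N^{\varepsilon}\Phi$. Hence $\mathbb{E}|\mathsf{X}_1| \leq 2 N^{\varepsilon}\Phi$, and from $|\mathbb{E}\mathsf{X}_1| \leq \mathbb{E}|\mathsf{X}_1|$ and the arbitrariness of $\varepsilon$ we conclude $\mathbb{E}\mathsf{X}_1 \prec \Phi$, uniformly in $u$.
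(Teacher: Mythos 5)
The paper states this lemma without proof (it is a standard fact about stochastic domination, going back to the local-law literature), so there is no paper argument to compare against; your proposal supplies exactly the expected proof. Both parts are correct: in (i) the use of $\varepsilon/2$ and $D+1$ together with a union bound handles both the sum and the product cleanly, and in (ii) the split into $\Omega$ and $\Omega^c$ with the choice $D=C+1$ is the standard mechanism. The only cosmetic point is the final bound $\E|\mathsf{X}_1|\leq 2N^{\varepsilon}\Phi$: to land exactly on the form in Definition \ref{def2.1} you should either start from $\varepsilon/2$ so that $2N^{\varepsilon/2}\leq N^{\varepsilon}$ for $N$ large, or note that the definition for deterministic quantities is insensitive to constant prefactors; either way this is a one-line fix and does not affect the argument.
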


We further define the following relation between random variables that will facilitate our discussion.
\begin{defn}[Equivalent class in the Kolmogorov-Smirnov distance]
	Fix $M>0$. Two families of real random variables $X=(X_N)$,$Y=(Y_N)$ are said to be equivalent, with parameter $M$, if 
	\begin{equation*}
		\mathbb P(|X_N-Y_N|\geq CN^{-M}) \leq CN^{-M}
	\end{equation*}
	for some constant $C>0$. We denote it by
	$
	X\sim_{M} Y.
	$
\end{defn}

It is easy to check that ``$\sim_M$" defines an equivalent relation. The next lemma shows a relation between $\sim_M$ and the Kolmogorov-Smirnov distance. 

\begin{lem} \label{lem:KS comparison}
	Recall that $Z \overset{d}{=}\cal N(0,1)$. Suppose two families of real random variables $X=(X_N)$, $Y=(Y_N)$ satisfy $X\sim_M Y$
	for some fixed $M>0$.  Then
	\[
	\Delta(X,Z)=\Delta(Y,Z)+O(N^{-M})\,.
	\]
\end{lem}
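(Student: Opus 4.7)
The plan is a standard three-term triangle inequality, splitting the event $\{X_N \le x\}$ according to whether the discrepancy $|X_N - Y_N|$ is small or not. Let $C,M$ be as in the definition of $\sim_M$, and set $\delta_N \deq C N^{-M}$. For any $x \in \bb R$, I would first write
\[
\P(X_N \le x) \le \P(Y_N \le x + \delta_N) + \P(|X_N - Y_N| \ge \delta_N) \le \P(Y_N \le x + \delta_N) + \delta_N,
\]
and symmetrically the lower bound $\P(X_N \le x) \ge \P(Y_N \le x - \delta_N) - \delta_N$. This reduces the problem to comparing $\P(Y_N \le x \pm \delta_N)$ with $\P(Z \le x)$.

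Next, insert $\P(Z \le x \pm \delta_N)$ as an intermediate quantity. For the first term, by definition of the Kolmogorov--Smirnov distance,
\[
\bigl|\P(Y_N \le x \pm \delta_N) - \P(Z \le x \pm \delta_N)\bigr| \le \Delta(Y_N, Z).
\]
For the second term, the standard normal CDF $\Phi$ is Lipschitz with constant $1/\sqrt{2\pi}$ (its density is bounded by $1/\sqrt{2\pi}$), so
\[
\bigl|\P(Z \le x \pm \delta_N) - \P(Z \le x)\bigr| \le \tfrac{1}{\sqrt{2\pi}} \delta_N = O(N^{-M}).
\]
Combining these bounds yields $|\P(X_N \le x) - \P(Z \le x)| \le \Delta(Y_N, Z) + O(N^{-M})$, and taking the supremum over $x$ gives $\Delta(X_N, Z) \le \Delta(Y_N, Z) + O(N^{-M})$.

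The reverse inequality $\Delta(Y_N, Z) \le \Delta(X_N, Z) + O(N^{-M})$ is obtained by swapping the roles of $X_N$ and $Y_N$ in the argument above, which is permitted because the relation $\sim_M$ is symmetric. Together these two inequalities give the claim. There is no real obstacle here; the only content is that the Gaussian distribution has a bounded density, which makes the small shift $\pm \delta_N$ contribute only at the scale $N^{-M}$ and therefore be absorbed into the error.
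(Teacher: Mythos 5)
Your proof is correct and follows essentially the same route as the paper's: sandwich $\P(X_N\le x)$ between $\P(Y_N\le x\pm\delta_N)\pm O(N^{-M})$ using the definition of $\sim_M$, then invoke the Lipschitz continuity of the standard normal CDF to absorb the $\pm\delta_N$ shift into the $O(N^{-M})$ error. The only difference is that you spell out the triangle inequality and the symmetry step explicitly, which the paper leaves implicit.
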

\begin{proof}
	Let $x\in \bb R$. Since $X\sim_M Y$, we see that
	\[
	\mathbb P(Y\leq x-|\varepsilon|)+O(N^{-M})\leq\mathbb P(X\leq x) \leq \mathbb P(Y\leq x+|\varepsilon|)+O(N^{-M})
	\]
	for some $\varepsilon=O(N^{-M})$ independent of $x$. The proof then follows by using the triangle inequality and the fact that the distribution function of $Z$ is Lipschitz continuous.
\end{proof}

We now state the local semicircle law for Wigner matrices from \cite{EYY,KY13}.
\begin{thm}[Isotropic local semicircle law] \label{refthm1}
	Let $H$ be a Wigner matrix satisfying Definition \ref{def:Wigner}, and define the spectral domains
	$$ 
	{\bf S}  \deq  \{E+\mathrm{i}\eta: |E| \le 10, 0 <  \eta \le 10 \} \quad \mbox{and} \quad \mathbf{S}^{o}(c)\deq\big\{ E+\ii \eta\in \b S: |E|\geq 2+N^{-2/3+c}\big\}
	$$
	for fixed $c>0$. Then for deterministic $\b u, \b v \in \bb C^{n}$ with $\|\b u\|_2=\|\b v\|_2=1$, we have the bounds
	\begin{equation*}  
		\langle \b u, G(z) \b v \rangle -m(z) \langle \b u, \b v \rangle  \prec \sqrt{\frac{\im m(z)}{N\eta}}+\frac{1}{N\eta} \quad \mbox{and} \quad |s(z)-m(z)| \prec \frac{1}{N\eta}\,,
	\end{equation*} 
	uniformly in $z =   
	E+\mathrm{i}\eta \in {\bf S}$. Moreover, outside the bulk of the spectrum, we have the stronger estimates
	\begin{equation*}
		\langle \b u, G(z) \b v \rangle -m(z) \langle \b u, \b v \rangle  \prec \frac{1}{\sqrt{N}(\eta+|E^2-4|)^{1/4}} \quad \mbox{and} \quad
		|\ul{G}(z)-m(z)|\prec  \frac{1}{N(\eta+|E^2-4|)} \label{19083101}
	\end{equation*}
	uniformly for $z=E+\ii \eta \in \mathbf{S}^{o}(c)$. 
\end{thm}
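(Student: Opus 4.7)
The plan is to follow the now-standard resolvent approach of Erd\H{o}s--Schlein--Yau developed in \cite{EYY,KY13}. The starting point is the Schur complement identity
\begin{equation*}
G_{ii}(z) = \frac{1}{H_{ii} - z - \mathbf{h}_i^* G^{(i)}(z) \mathbf{h}_i}\,,
\end{equation*}
where $G^{(i)}$ is the Green function of the minor $H^{(i)}$ obtained by deleting the $i$th row and column, and $\mathbf{h}_i$ is the $i$th column of $H$ with the $i$th entry removed. A large deviation bound for quadratic forms in independent variables gives $\mathbf{h}_i^* G^{(i)} \mathbf{h}_i - \ul{G^{(i)}} \prec \sqrt{\im \ul{G^{(i)}}/(N\eta)}$, and the interlacing identity $\ul{G^{(i)}} - \ul{G} = O(1/(N\eta))$ then yields a perturbed self-consistent equation $G_{ii} = -1/(z + \ul{G} + \mathsf{E}_i)$ with $|\mathsf{E}_i| \prec \sqrt{\im m/(N\eta)} + 1/(N\eta)$, provided $\ul{G}\approx m$ has been established already.

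Averaging over $i$ and comparing with the unperturbed equation $m = -1/(z + m)$ for the semicircle Stieltjes transform, I would control $|\ul{G} - m|$ through stability of the self-consistent equation. Stability degenerates at the edges $\pm 2$, so the standard remedy is a bootstrap: starting from a weak a priori estimate at large $\eta \sim 1$ (by an easy moment bound), propagate it down to smaller $\eta$ via a continuity argument on an $N^{-10}$-net of $\mathbf{S}$, exploiting the bulk inequality $\im m(z) \gtrsim \sqrt{|E^2-4|+\eta}$ and the Lipschitz continuity of $G$. An additional fluctuation averaging of the errors $\mathsf{E}_i$, extracting independence between the column $H_{\cdot i}$ and $G^{(i)}$, delivers the improved averaged rate $\ul{G} - m \prec 1/(N\eta)$. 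A parallel Schur expansion $G_{ij} = -G_{ii}\sum_{k\ne i} G^{(i)}_{kj}H_{ki} + \cdots$ for $i \ne j$ provides the off-diagonal bound $G_{ij} \prec \sqrt{\im m/(N\eta)} + 1/(N\eta)$.

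The isotropic upgrade $\la \mathbf{u}, (G-m)\mathbf{v}\ra \prec \sqrt{\im m/(N\eta)} + 1/(N\eta)$ cannot be obtained by summing entrywise bounds, since $\sum_{i,j}|\ol u_i v_j|$ can be as large as $N$. Following \cite{KY13}, I would instead compute the $2p$th moment $\bb E |\la \mathbf{u},(G-m)\mathbf{v}\ra|^{2p}$ for large $p$, iteratively inserting Schur-complement expansions and collecting terms into a sum indexed by closed walks. The relevant combinatorial cancellations, analogous to those in the averaged case, produce the required $1/(N\eta)$-type gains uniformly in $\mathbf{u},\mathbf{v}$, after which a Markov inequality plus union bound over a fine net in $\mathbf{S}$ completes the isotropic bound.

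The main obstacle is the spectral edge regime $\mathbf{S}^o(c)$, where $\im m(z)$ is only of order $\sqrt{\eta/(|E^2-4|+\eta)}$ and the naive rate is weaker than the claimed $1/(\sqrt{N}(\eta+|E^2-4|)^{1/4})$. The key additional input is rigidity of the extreme eigenvalues, namely $\spec(H) \subset [-2 - N^{-2/3+c/2}, 2 + N^{-2/3+c/2}]$ with overwhelming probability, which follows from the bulk isotropic law together with a standard argument controlling the largest eigenvalue through high-order trace moments. Inserting this rigidity into the spectral representation of $G$ yields a deterministic improvement on a priori bounds in $\mathbf{S}^o(c)$. Combined with the sharper stability of the self-consistent equation outside the spectrum---where the linearization $\partial_m[m + 1/(z+m)]$ is of order $|z^2-4|^{1/2}$ and hence bounded away from zero---one arrives at the claimed sharper edge estimates. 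This edge step is the delicate part of the argument, as it requires coupling the rigidity input with a refined stability analysis to upgrade the bulk rate.
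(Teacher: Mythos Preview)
The paper does not prove this theorem; it is quoted as a known result from \cite{EYY,KY13} (introduced by ``We now state the local semicircle law for Wigner matrices from \cite{EYY,KY13}''). Your sketch correctly outlines the standard Schur-complement/self-consistent-equation approach of those references, including the isotropic moment argument of \cite{KY13} and the edge refinement via rigidity, so there is nothing to compare against a proof in the present paper.
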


One standard consequence of the above local law is the following bound on the spectral norm of $H$.
\begin{cor} \label{cor:spectral norm}
	We have
	\[
	\|H\|-2 \prec N^{-2/3}\,.
	\]
\end{cor}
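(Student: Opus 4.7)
The statement packages an upper bound $\|H\|\leq 2+N^{-2/3+\varepsilon}$ and the matching lower bound $\|H\|\geq 2-N^{-2/3+\varepsilon}$ with very high probability, for every fixed $\varepsilon>0$.  The upper bound is the substantive direction and follows from the outside-bulk estimate in Theorem \ref{refthm1}; the lower bound is a standard consequence of edge rigidity, which is itself a well-documented consequence of the bulk local law, cf.\,\cite{EYY,KY13}.  My plan is to write out the upper bound in detail and simply invoke the literature for the lower bound.  By symmetry between the two edges, it suffices to treat the top eigenvalue $\lambda_1$.

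Fix $\varepsilon, D>0$.  A crude a priori bound $\|H\|\leq 10$ with overwhelming probability, obtained by applying Markov's inequality to $\tr H^{2p}$ for large fixed $p$ together with the finite moment hypothesis in Definition \ref{def:Wigner}, restricts attention to the window $[2+N^{-2/3+\varepsilon},10]$.  Set $\eta \deq N^{-2/3-\delta}$ for a small $\delta\in(0,\varepsilon/4)$ and cover the window by intervals $I_k \deq [E_k-\eta,E_k+\eta]$ of length $2\eta$, centred at $E_k \deq 2+N^{-2/3+\varepsilon}+(2k-1)\eta$, $k=1,\dots,K=O(N)$.  The elementary counting inequality
\begin{equation*}
    \#\{i:\lambda_i\in I_k\} \;\leq\; 2N\eta \cdot \im \ul G(E_k+\ii\eta),
\end{equation*}
which follows from $\im \ul G(E+\ii\eta)=N^{-1}\sum_i \eta/((\lambda_i-E)^2+\eta^2)$ and the pointwise lower bound $\eta/((\lambda_i-E)^2+\eta^2)\geq (2\eta)^{-1}$ on $I_k$, reduces the task to controlling $\im \ul G(z_k)$ with $z_k \deq E_k+\ii\eta$.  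Since $|E_k-2|\geq N^{-2/3+\varepsilon}$, we have $z_k\in\mathbf{S}^o(\varepsilon/2)$, and the outside-bulk estimate in Theorem \ref{refthm1} gives $|\ul G(z_k)-m(z_k)|\prec 1/(N(\eta+|E_k^2-4|))\lesssim N^{-1/3-\varepsilon}$.  A direct computation from the explicit formula $m(z)=(-z+\sqrt{z^2-4})/2$ yields $\im m(z_k)\lesssim \eta/\sqrt{|E_k^2-4|}\lesssim N^{-1/3-\delta-\varepsilon/2}$, so altogether $\im \ul G(z_k)\prec N^{-1/3-\delta-\varepsilon/2}$.

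Substituting into the counting inequality produces $\#\{i:\lambda_i\in I_k\}\prec N^{-2\delta-\varepsilon/2}$.  Being a nonnegative integer dominated by a negative power of $N$, this count equals $0$ outside an event of probability at most $N^{-D-1}$, by the definition of $\prec$ for sufficiently large $N$; a union bound over the $K=O(N)$ intervals then gives $\P(\lambda_1\geq 2+N^{-2/3+\varepsilon})\leq N^{-D}$, and the symmetric argument handles the bottom edge.  Combined with the standard edge rigidity lower bound $\lambda_1\geq 2-N^{-2/3+\varepsilon}$ from the bulk local law, this proves $\|H\|-2\prec N^{-2/3}$.  The whole argument is routine given Theorem \ref{refthm1}; the only real bookkeeping is ensuring the $\varepsilon$--$\delta$--$D$ exponents close up (handled by picking $\delta$ small relative to $\varepsilon$) and that the $O(N)$-term union bound does not cost more than it should (trivially absorbed since $\prec$ buys arbitrary polynomial control).
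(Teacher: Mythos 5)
The paper records Corollary \ref{cor:spectral norm} without proof, asserting it as a standard consequence of Theorem \ref{refthm1}, so there is no in-paper argument to compare against. Your deduction is the standard one: a dyadic covering of $[2+N^{-2/3+\varepsilon},10]$ at scale $\eta=N^{-2/3-\delta}$, the elementary counting inequality $\#\{i:\lambda_i\in I_k\}\leq 2N\eta\,\im\ul G(z_k)$, the outside-bulk bound from Theorem \ref{refthm1} giving $\im\ul G(z_k)\prec N^{-1/3-\delta-\varepsilon/2}$, and a union bound over the $O(N)$ sites. The exponent bookkeeping ($\delta<\varepsilon/4$, choosing $D'$ large enough to absorb the $O(N)$ union bound, using that $\prec$ allows arbitrary polynomial control) all closes up, and citing edge rigidity from \cite{EYY,KY13} for the matching lower bound is appropriate.

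One step as written is not quite right: the a priori bound $\mathbb P(\|H\|>10)\leq N^{-D}$ does \emph{not} follow from Markov's inequality on $\tr H^{2p}$ for any \emph{fixed} $p$. For fixed $p$ one has $\E\tr H^{2p}=C_pN(1+o(1))$ with $C_p$ the $p$-th Catalan number, so Markov gives $\mathbb P(\|H\|>10)\leq C_p N\cdot 10^{-2p}$, which \emph{grows} with $N$. To get an $N^{-D}$ bound by the moment method you must let $p$ grow with $N$ (order $\log N$ suffices), and for that you need a moment estimate $\E\tr H^{2p}\leq C N\, 4^p$ that is uniform in $p$ over this range -- true under the moment hypotheses of Definition \ref{def:Wigner}, but it is more than a one-line Markov application and deserves to be either proved or cited. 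Alternatively you can cite the crude norm bound directly from \cite{EYY,KY13}, where it is established en route to the local law. This is a minor and routine fix; the rest of your argument is sound.
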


For any fixed $c>0$, we define
\begin{equation} \label{sc+}
	\b S_c \deq \{E+\ii \eta: |E|\leq 10, |\eta|\geq N^{-1+c}\} \quad \mbox{ and } \quad \b S_c^+ \deq \{E+\ii \eta: |E|\leq 10, \eta\geq N^{-1+c}\} \,.
\end{equation}
The following lemma is a preliminary estimate on $G$. It provides a priori bounds on entries of powers of $G$ which are significantly better than those obtained by a direct application of the local semicircle law. The proof is postponed to Appendix \ref{appD}.

\begin{lem} \label{prop4.4} 
	Let $H$ be a Wigner matrix satisfying Definition \ref{def:Wigner}. Fix $c>0$ and $l \in \bb N_+$.  For $j\in \{1,2,...,l\}$, we define $G_j \equiv G(z_j)=(H-z_j)^{-1}$, where $z_j=E_j+\ii \eta_j \in \b S_c$. Suppose $|\eta_1| \leq |\eta_2|\leq \cdots \leq |\eta_l|$. We have
	\begin{equation} \label{2.12}
		\tr\big( G_1^{k_1}\cdots G_l^{k_l}\big)-\bb E \tr\big( G_1^{k_1}\cdots G_l^{k_l}\big) \prec  \frac{1}{\big|\eta_1^{k_1} \cdots \eta_l^{k_l}\big|}
	\end{equation}
	and
	\begin{equation} \label{2.13}
		\sum_{i}(G_1^{k_1})_{ii}\cdots (G^{k_l}_l)_{ii}-Nm^{(k_1-1)}(z_1) \cdots m^{(k_l-1)}(z_l)\prec \frac{1}{\big|\eta_1^{k_1}\eta_2^{k_2-1} \cdots \eta_l^{k_l-1}\big|}
	\end{equation}
	as well as
	\begin{equation} \label{2.14}
		\big( G_1^{k_1}\cdots G_l^{k_l}\big)_{ij}-\delta_{ij} m_{l}((z_1,k_1),...,(z_l,k_l))\prec \frac{1}{\sqrt{N|\eta_1|}} \frac{1}{\big|\eta_1^{k_1-1} \eta_2^{k_2} \cdots \eta_l^{k_l}\big|}
	\end{equation}
	for any fixed $k_1,...,k_l \in \bb N_+$. Here 
	\begin{align}
	m_{l}((z_1,k_1),...,(z_l,k_l))\deq \int \frac{\varrho_{sc}(x)}{(x-z_1)^{k_1} \cdots (x-z_l)^{k_l}} \dd x \prec \frac{1}{\big|\eta_1^{k_1-1} \eta_2^{k_2} \cdots \eta_l^{k_l}\big|}\,.
	\label{031901}
	\end{align}
\end{lem}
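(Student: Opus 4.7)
My plan is to prove Lemma~\ref{prop4.4} by induction on the total exponent $K = k_1 + \cdots + k_l$, using Theorem~\ref{refthm1} as the base case and combining Cauchy's integral formula with holomorphic functional calculus to handle higher powers and multiple Green functions. For the base case $l = 1$, $k_1 = 1$, the three inequalities \eqref{2.12}--\eqref{2.14} follow directly from Theorem~\ref{refthm1}: the isotropic bound gives \eqref{2.14}; the averaged bound $\ul{G} - m \prec 1/(N\eta_1)$, together with Lemma~\ref{prop_prec}(ii) to absorb the expectation, gives \eqref{2.13}; and $\tr G - \mathbb{E}\tr G \prec 1/\eta_1$ gives \eqref{2.12}. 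For $l = 1$ and general $k_1 \geq 2$, I would apply Cauchy's integral formula to the identity $G(z)^{k_1} = \frac{1}{(k_1-1)!}\partial_z^{k_1-1}G(z)$ on a disk of radius $|\eta_1|/2$ around $z_1$, which remains inside the spectral domain since $z_1 \in \mathbf{S}_c$; the derivative estimate then produces the additional $|\eta_1|^{-(k_1-1)}$ factor matching the claimed scalings.

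For $l \geq 2$, the natural tool is holomorphic functional calculus applied to $f(x) = \prod_{j=1}^{l}(x - z_j)^{-k_j}$ (analytic in a neighbourhood of $\sigma(H)\subset\mathbb{R}$ since each $z_j \notin \mathbb{R}$):
\begin{equation*}
G(z_1)^{k_1}\cdots G(z_l)^{k_l} = -\frac{1}{2\pi\mathrm{i}}\oint_{\Gamma}\frac{G(w)}{\prod_{j=1}^{l}(w-z_j)^{k_j}}\,\mathrm{d}w,
\end{equation*}
where $\Gamma$ is a contour in $\mathbb{C}\setminus\mathbb{R}$ enclosing $\sigma(H)$ and not encircling any $z_j$. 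Deforming $\Gamma$ outward to infinity (using the decay $|w|^{-K-1}$ of the integrand) and collecting residues at the $z_j$'s gives
\begin{equation*}
G_1^{k_1}\cdots G_l^{k_l} = \sum_{j=1}^{l}\frac{1}{(k_j-1)!}\,\partial_{w}^{k_j-1}\!\left[\frac{G(w)}{\prod_{i\neq j}(w-z_i)^{k_i}}\right]_{\!w=z_j}.
\end{equation*}
This reduces the product to a linear combination of $l=1$-type objects localised at individual $z_j$'s, each controlled by the base step via a Cauchy disk of appropriately chosen radius around $z_j$.

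The main obstacle, on which I expect to spend most of the effort, is achieving the asymmetric scaling $|\eta_1^{k_1-1}\eta_2^{k_2}\cdots\eta_l^{k_l}|^{-1}$ in \eqref{2.14} (and its analogues in \eqref{2.12} and \eqref{2.13}) rather than the naive $|\eta_1|^{-K}$ that a single disk of radius $\sim|\eta_1|$ would yield. To exploit the ordering $|\eta_1|\leq\cdots\leq|\eta_l|$, I would choose the Cauchy disk around $z_j$ of radius $r_j = c\min\bigl(|\eta_j|,\min_{i\neq j}|z_j - z_i|\bigr)$ and split into cases: when the $z_j$'s are well-separated ($|z_j - z_i|\gtrsim|\eta_i|$ for $i\neq j$), the partial-fraction representation combined with the base bound applied at each $z_j$ directly gives the claimed scaling; when some $z_j$'s cluster, one uses that the divided difference $G[z_1,\ldots,z_l]$ extends analytically across coinciding arguments (reducing to $G^l$ at a single point at the larger common scale) to transfer the analysis to the merged variables. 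For the centered versions \eqref{2.12} and \eqref{2.13}, one substitutes $G(w) - m(w)\mathrm{I}$, $\ul{G}(w) - m(w)$, or $\tr G(w) - \mathbb{E}\tr G(w)$ respectively into the contour integral and uses the finer local-law bounds on these centered quantities. Finally, the identification of the deterministic replacement $m_l$ and the bound \eqref{031901} follow by substituting $m(w) = \int \rho_{sc}(x)(x-w)^{-1}\mathrm{d}x$ into the contour representation, exchanging the order of integration, and using $|x - z_j|\geq|\eta_j|$ for $j\geq 2$ together with an elementary estimate for the $x$-integral against the $(x - z_1)^{-k_1}$ factor.
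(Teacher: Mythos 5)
Your route for \eqref{2.12} and \eqref{2.14} --- a holomorphic functional calculus / residue representation, reduced to the base-case local law via Cauchy disks --- is in essence the same mechanism as the paper's, which uses the Helffer--Sj\"ostrand formula applied to $f(x)=\prod_j(x-z_j)^{-k_j}$ (with an imaginary-part cutoff at scale $|\eta_1|$) together with the resolvent identity $G_jG_{j'}=(G_j-G_{j'})/(z_j-z_{j'})$. Where the paper's bookkeeping is noticeably cleaner is in producing the asymmetric $\eta$-scaling: it partitions $\{z_j\}$ into equivalence classes in which consecutive $|\eta_j|$'s are within a factor of two, applies Helffer--Sj\"ostrand once per class at the common scale (so a partial-fraction coefficient is never allowed to blow up when $z_j,z_{j'}$ are close with $\eta_j\approx\eta_{j'}$), and uses the automatic gap $|z_j-z_{j'}|\gtrsim|\eta_j|+|\eta_{j'}|$ between classes to apply the resolvent identity one pair at a time. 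Your ``clustered'' case is exactly this regime, and the remark that divided differences extend analytically ``to the merged variables at the larger common scale'' is correct as an algebraic observation but is not yet a quantitative high-probability estimate; making it one is precisely the contour-at-common-scale argument that the paper's grouping delivers, and you would need to write that part out rather than gesture at it.

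The genuine gap is \eqref{2.13}. The quantity $\sum_i(G_1^{k_1})_{ii}\cdots(G_l^{k_l})_{ii}$ is a diagonal sum of the \emph{entrywise} (Hadamard) product of the $l$ matrices, not a trace, a normalized trace, or an entry of a matrix product. It therefore is not equal to $\tr f(H)$, $\ul{f(H)}$, or $(f(H))_{ij}$ for any scalar $f$ when $l\geq 2$, and your proposal to ``substitute $\ul{G}(w)-m(w)$ into the contour integral'' produces nothing: functional calculus simply never yields a representation of this object. What the paper does instead is telescope: write each factor $(G_j^{k_j})_{ii}$ as its deterministic leading term plus a fluctuation, expand the $l$-fold product, sum over $i$, and estimate each cross term using the single-$j$ entrywise bound $(G_j^{k_j})_{ii}-\cdots\prec(N|\eta_j|)^{-1/2}|\eta_j|^{-(k_j-1)}$ from \cite{HK}. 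Your $l=1$ Cauchy-disk argument does supply exactly this single-$j$ bound, so the missing step is cheap to fill, but your proposal as written does not cover \eqref{2.13} and you should add the telescoping step explicitly.
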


\begin{rem} We emphasize that (\ref{2.12})-(\ref{2.14}) are particular cases of equations (52) and (54) in \cite{ECS20}; also see Theorem 4.1 of \cite{ECS20a} and Theorem 3.4 of \cite{ECS21} for more general versions, which are called {\it multi-resolvent local laws} therein. Further, we also emphasize that the quantity in (\ref{031901}) is a special case of the one considered in equation (3.8) of \cite{ECS21} (also see (3.25) therein). But due the simplicity of the special forms  in Lemma \ref{prop4.4}, we state a short proof in Appendix \ref{appD} for readers' convenience.
\end{rem}

Next we use $\widehat{H}$ to denote the Wigner matrix with zero diagonal entries, i.e.
$
\widehat{H}_{ij}=(1-\delta_{ij})H_{ij}
$
for all $i,j \in \llbracket 1, n\rrbracket$. We define the corresponding Green function and  Stieltjes transform by
\begin{equation} \label{def_hatG}
	\widehat{G} \equiv \widehat{G}(z)\deq (\widehat{H}-z)^{-1}\quad  \mbox{and} \quad \widehat{s}(z) \deq \frac{1}{N} \tr \widehat{G}(z)\,=\ul{\widehat{G}(z)}.
\end{equation}
We have the following result.

\begin{lem} \label{lem0diag}
	Theorem \ref{refthm1} and Lemma \ref{prop4.4} remain valid when we replace $H$ and $G$ by $\widehat{H}$ and $\widehat{G}$, respectively.
\end{lem}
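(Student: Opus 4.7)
The plan is to observe that $\widehat{H}$ itself is a Wigner matrix in the sense of Definition \ref{def:Wigner}, corresponding to the degenerate choice $h_d \equiv 0$ for the diagonal random variable. This choice trivially satisfies $\mathbb{E}h_d = 0$ and $\mathbb{E}|h_d|^p = 0$, while the off-diagonal entries of $\widehat{H}$ coincide with those of $H$ and therefore inherit the same moment bound $\mathbb{E}|h_o|^p \leq C_p$. Consequently $\widehat{H}$ falls within the class of matrices described by Definition \ref{def:Wigner}, with the same constants. Since the proofs of Theorem \ref{refthm1} (taken from \cite{EYY,KY13}) and of Lemma \ref{prop4.4} (given in Appendix \ref{appD}) are carried out uniformly in this class, with constants depending only on the bounds $C_p$ and on the symmetry class $\beta \in \{1,2\}$, all of their conclusions apply verbatim to the pair $(\widehat{H}, \widehat{G})$. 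This identification is essentially the entire proof.

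An alternative derivation that uses only the estimates already proved for $G$ is a resolvent perturbation argument. Set $\mathsf{D} = \diag(H_{11},\ldots,H_{NN})$, so that $\widehat{H} = H - \mathsf{D}$; the second resolvent identity then gives
$$\widehat{G} - G \;=\; \widehat{G}\,\mathsf{D}\, G \;=\; \sum_{k=1}^N \widehat{G}_{\cdot k}\,H_{kk}\,G_{k\cdot}.$$
Because the diagonal variables $H_{kk}$ are i.i.d., centered, independent of the off-diagonal entries of $H$, and satisfy $|H_{kk}| \prec N^{-1/2}$, a cumulant expansion in the $H_{kk}$'s (of the same type as those used throughout Section \ref{sec6}) combined with the isotropic local law for $G$ (Theorem \ref{refthm1}) and with the multi-resolvent bounds of Lemma \ref{prop4.4} shows that every entry, off-diagonal element, and trace of $\widehat{G} - G$ is smaller by a factor $N^{-1/2}$ (at the cost of at most one additional inverse power of $\eta$) than the right-hand sides of (\ref{2.12})--(\ref{2.14}), and hence is subleading on $\b S_c$.

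The only mildly subtle point, should one follow this second route, is the need for an \emph{a priori} control on $\widehat{G}$ before the comparison with $G$ can be closed. This is standard and handled by an $\eta$-continuity bootstrap: one starts from $\eta \sim 1$, where $\|\widehat{G}\| \le 1$ is trivial, and descends to $\eta \ge N^{-1+c}$ along a polynomially dense grid of $z$, using Lipschitz continuity in $z$ together with the gain $N^{-1/2}$ per step. Since the first route bypasses all of this bookkeeping, we adopt it; the resolvent-expansion argument is recorded only to emphasize that the statement is a genuine perturbative consequence of the results for $G$, with no hidden obstacle.
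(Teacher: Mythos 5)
Your first route is exactly the paper's proof: one notes that $\widehat{H}$ is again a Wigner matrix in the sense of Definition \ref{def:Wigner}, with $h_d \equiv 0$ (hence $a_2 = 0$), so both Theorem \ref{refthm1} and Lemma \ref{prop4.4} apply verbatim. The alternative resolvent-perturbation argument you sketch is not needed, and you correctly discard it in favor of the direct identification.
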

\begin{proof}
	The statement is trivially true, as by Definition \ref{def:Wigner}, $\widehat{H}$ is again a Wigner matrix, with $a_2=0$.
\end{proof}

Set the matrix 
\begin{equation} \label{HHH}
	H_{\mathrm d}\deq H-\widehat{H}\,,
\end{equation}
which  consists of the diagonal entries of $H$. We have the following estimate that can be easily deduced from Lemma \ref{lem0diag}.

\begin{lem} \label{lemGHG}
	Fix $\omega \in \bb N$.	We have
	\[
	(\widehat{G}(z)H_{\mathrm d}\widehat{G}^{1+\omega}(z))_{ij}\prec \Big(\frac{1}{N|\eta|}+\frac{\delta_{ij}}{\sqrt{N|\eta|}}\Big)\frac{1}{|\eta|^{\omega}}
	\]
	uniformly for $z \in \b S_c$.
\end{lem}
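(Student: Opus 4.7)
The plan is to exploit the fact that $H_{\mathrm d}$ is a diagonal matrix whose entries $H_{kk}$ are mutually independent, of size $\prec N^{-1/2}$, and crucially independent of $\widehat H$ (hence of $\widehat G$, since $\widehat H$ has zero diagonal by construction). Writing out the matrix product,
\[
(\widehat G(z)\, H_{\mathrm d}\,\widehat G^{1+\omega}(z))_{ij} \;=\; \sum_{k=1}^{N} H_{kk}\, a_k, \qquad a_k \;\deq\; \widehat G_{ik}(z)\,(\widehat G^{1+\omega})_{kj}(z),
\]
I would treat this as a sum of independent mean-zero random variables weighted by coefficients $a_k$ that become deterministic under the conditional expectation $\bb E_{\mathrm d}$ defined in \eqref{2.1}. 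Using $\bb E H_{kk}=0$, $\bb E H_{kk}^2 = a_2/N$ and $\bb E|H_{kk}|^{2p}\leq C_p N^{-p}$ together with the Rosenthal/Marcinkiewicz--Zygmund moment inequality applied conditionally on the off-diagonal entries, I would obtain, for every fixed $p\in\bb N_+$,
\[
\bb E_{\mathrm d}\Big|\sum_k a_k H_{kk}\Big|^{2p}\;\leq\; C_p\Big(\frac{1}{N}\sum_k|a_k|^2\Big)^p+\frac{C_p}{N^p}\sum_k|a_k|^{2p}.
\]
Combined with Markov's inequality and the arbitrariness of $p$, this reduces the problem to controlling $\sum_k|a_k|^2$; the second ``sparse'' term will be seen to be subleading.

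Next I would bound $\sum_k |a_k|^2$ via Lemma \ref{lem0diag}, splitting the sum into the exceptional indices $k\in\{i,j\}$ and the generic indices $k\notin\{i,j\}$. The key input is the size contrast between diagonal and off-diagonal Green function entries: $\widehat G_{kl}\prec (N|\eta|)^{-1/2}$ for $k\ne l$ while $\widehat G_{kk}=O(1)$, and similarly $(\widehat G^{1+\omega})_{kl}\prec (N|\eta|)^{-1/2}|\eta|^{-\omega}$ for $k\ne l$ while $(\widehat G^{1+\omega})_{kk}\prec |\eta|^{-\omega}$. In the case $i\ne j$, every coefficient $a_k$ contains at least one off-diagonal factor, yielding
\[
\sum_k|a_k|^2\;\prec\; \frac{1}{N\,|\eta|^{2+2\omega}},
\]
which after the $1/N$ from the Rosenthal bound gives $(\widehat G H_{\mathrm d}\widehat G^{1+\omega})_{ij}\prec N^{-1}|\eta|^{-1-\omega}$, matching the claim. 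In the case $i=j$, the single index $k=i$ carries both diagonal factors and contributes $|\widehat G_{ii}|^2|(\widehat G^{1+\omega})_{ii}|^2\prec|\eta|^{-2\omega}$, which dominates the remaining terms since $N|\eta|^2\gg 1$ on $\b S_c$. This gives $\sum_k|a_k|^2\prec |\eta|^{-2\omega}$ and hence $(\widehat G H_{\mathrm d}\widehat G^{1+\omega})_{ii}\prec N^{-1/2}|\eta|^{-\omega}$, which is even stronger than the stated bound $(N|\eta|)^{-1/2}|\eta|^{-\omega}$.

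The argument is essentially linear algebra combined with the entrywise bounds from Lemma \ref{lem0diag}, so I do not foresee any serious obstacle. The only point requiring some care is the case analysis distinguishing diagonal from off-diagonal Green function factors, together with checking that the exceptional indices $k\in\{i,j\}$ do not disturb the $i\ne j$ bound and that the sparse Rosenthal term is negligible; the latter follows easily by estimating $|a_k|^{2p}\leq|a_k|^2\cdot\max_k|a_k|^{2p-2}$ and inserting the trivial bound $\max_k|a_k|^2\prec|\eta|^{-2\omega}$.
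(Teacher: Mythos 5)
Your decomposition $(\widehat G H_{\mathrm d}\widehat G^{1+\omega})_{ij}=\sum_k H_{kk}a_k$ with $a_k=\widehat G_{ik}(\widehat G^{1+\omega})_{kj}$, exploiting that $H_{\mathrm d}$ is independent of $\widehat G$ and applying a conditional moment/large-deviation bound, is the right and essentially forced route: a naive triangle inequality with $|H_{kk}|\prec N^{-1/2}$ would only give $N^{-1/2}|\eta|^{-1-\omega}$ for $i\ne j$, short of the claim by a factor $N^{1/2}$, so the cancellation from the mean-zero diagonal entries is the crux. The paper states the lemma as a consequence of Lemma \ref{lem0diag} without a written proof, and your argument is presumably the intended one.

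However, your treatment of the diagonal case $i=j$ contains an error. You assert that $N|\eta|^2\gg 1$ on $\b S_c$ in order to discard the generic-index contribution, but by \eqref{sc+} the domain $\b S_c$ only requires $|\eta|\geq N^{-1+c}$, so $N|\eta|^2$ may be as small as $N^{2c-1}\to 0$ when $c<1/2$. In the regime $N^{-1+c}\leq|\eta|\lesssim N^{-1/2}$ the off-diagonal sum $\sum_{k\ne i}|a_k|^2\prec N^{-1}|\eta|^{-2-2\omega}$ in fact dominates the single term $|a_i|^2\prec|\eta|^{-2\omega}$, so your intermediate claim $\sum_k|a_k|^2\prec|\eta|^{-2\omega}$ fails there. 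Fortunately the final estimate survives: retaining both pieces yields the fluctuation bound $\prec N^{-1/2}|\eta|^{-\omega}+N^{-1}|\eta|^{-1-\omega}$, and these are exactly what the two summands $\frac{\delta_{ij}}{\sqrt{N|\eta|}}$ and $\frac{1}{N|\eta|}$ in the lemma's stated bound are designed to absorb (the second piece is literally $\frac{1}{N|\eta|}|\eta|^{-\omega}$, and $N^{-1/2}|\eta|^{-\omega}\leq \sqrt{10}\,\frac{1}{\sqrt{N|\eta|}}|\eta|^{-\omega}$ since $|\eta|\leq 10$). The fix is simply to carry the off-diagonal contribution through rather than discard it; the rest of the argument, including the Rosenthal inequality, the handling of the sparse remainder term, and the $i\ne j$ case, is sound.
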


We conclude this section with two classical results.

\begin{lem}[Helffer-Sj\"{o}strand formula] \label{HS}
	Let $\phi \in C^{1}(\bb C)$ such that $\phi(z) = 0$ for large enough $|\re z|$. Then for any $\lambda \in \mathbb{R}$ we have
	\begin{align*} 
		\phi(\lambda)=\frac{1}{\pi}\int_{\mathbb{C}}\frac{\partial_{\bar{z}}\phi(z)}{\lambda-z}\,{\rm d}^2z\,, 
	\end{align*}
	where $\partial_{\bar{z}}:= \frac{1}{2}(\partial_x+\mathrm{i}\partial_y)$ is the antiholomorphic derivative and ${\rm d}^2 z$ is the Lebesgue measure on $\mathbb{C}$.
\end{lem}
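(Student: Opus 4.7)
The plan is to derive the identity from the complex form of Stokes' theorem,
\begin{align*}
\int_D \partial_{\bar z} F \, \dd^2 z = \frac{1}{2\ii}\oint_{\partial D} F \, \dd z,
\end{align*}
combined with the holomorphy of $(\lambda-z)^{-1}$ in $z$ away from the simple pole at $z=\lambda\in\bb R$; what we aim for is essentially the Cauchy--Pompeiu representation.

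Fix $\lambda\in\bb R$ and let $R>0$ be large enough that $\phi(z)=0$ for $|\re z|\geq R$. For small $\epsilon\in(0,1)$ and large $M>|\lambda|+1$, I would introduce the bounded domain
\begin{align*}
D_{\epsilon,M}\deq\{z\in\bb C:|\re z|<R,\ |\im z|<M\}\setminus\overline{B_\epsilon(\lambda)}.
\end{align*}
On this set $\phi(z)/(\lambda-z)$ is $C^1$, and using $\partial_{\bar z}\big(1/(\lambda-z)\big)=0$ the complex Stokes theorem yields
\begin{align*}
\int_{D_{\epsilon,M}}\frac{\partial_{\bar z}\phi(z)}{\lambda-z}\,\dd^2z=\frac{1}{2\ii}\oint_{\partial D_{\epsilon,M}}\frac{\phi(z)}{\lambda-z}\,\dd z.
\end{align*}

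Next I would decompose $\partial D_{\epsilon,M}$ into four pieces. The two vertical sides $\{\re z=\pm R\}$ contribute nothing, since $\phi$ vanishes there. The two horizontal sides $\{\im z=\pm M\}$ vanish in the limit $M\to\infty$; this step uses decay of $\phi$ in $|\im z|$, which is automatic in the almost-analytic extension construction $\wt f$ employed throughout the applications of this lemma in the paper. The remaining piece, the inner circle $\partial B_\epsilon(\lambda)$ traversed clockwise, can be parametrized by $z=\lambda+\epsilon\ee^{\ii\theta}$ with $\theta$ decreasing from $2\pi$ to $0$, producing
\begin{align*}
\frac{1}{2\ii}\oint_{\partial B_\epsilon(\lambda)}\frac{\phi(z)}{\lambda-z}\,\dd z=\frac{1}{2}\int_0^{2\pi}\phi(\lambda+\epsilon\ee^{\ii\theta})\,\dd\theta\longrightarrow\pi\phi(\lambda)
\end{align*}
as $\epsilon\to 0$, by continuity of $\phi$.

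Sending $M\to\infty$ and $\epsilon\to 0$, the left-hand side of the Stokes identity converges to $\int_{\bb C}\partial_{\bar z}\phi(z)/(\lambda-z)\,\dd^2z$ while the right-hand side converges to $\pi\phi(\lambda)$, yielding the formula after dividing by $\pi$. The result is classical, so the only ``obstacle" is the bookkeeping of orientation on the inner circle and the integrability at $|\im z|\to\infty$; both are routine under the smoothness and support hypotheses, and in any case automatic for the $\wt f$ used in the paper's applications.
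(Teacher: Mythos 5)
The paper states this lemma without proof, as a classical fact, so there is no internal argument to compare against; your task is therefore simply to supply a correct proof, and yours is essentially the standard Cauchy--Pompeiu derivation, done correctly. Both the Green/Stokes identity $\int_D\partial_{\bar z}F\,\dd^2z=\tfrac{1}{2\ii}\oint_{\partial D}F\,\dd z$ and the orientation bookkeeping on the excised disc are handled correctly: with the inner circle traversed clockwise, your computation gives $\tfrac12\int_0^{2\pi}\phi(\lambda+\epsilon\ee^{\ii\theta})\,\dd\theta\to\pi\phi(\lambda)$, which is the right constant.

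The one genuine soft spot, which you yourself flag, is the horizontal boundary contributions. Under the literal hypothesis stated in the lemma, namely that $\phi$ vanishes only for large $|\re z|$, there is no decay of $\phi$ in the $\im z$ direction, and neither do the boundary integrals over $\{\im z=\pm M\}$ vanish individually, nor is the planar integral $\int_{\bb C}\partial_{\bar z}\phi/(\lambda-z)\,\dd^2z$ absolutely convergent in general (for $\phi$ depending only on $\re z$ one must pair the $\pm M$ contributions or interpret the $y$-integral as a principal value). This is an imprecision in the lemma's statement rather than in your argument: the version you actually prove requires $\phi$ to have compact support (or at least decay in $\im z$), and that is indeed what is always satisfied by the almost-analytic extension $\tilde f$ in \eqref{def of f tilde}, whose $\chi(y)$ factor forces $\supp\tilde f\subset\{|\im z|\leq 2\}$. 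You note this, which is correct; a fully rigorous write-up should simply strengthen the hypothesis to $\phi\in C^1_c(\bb C)$ or add an explicit decay condition in $\im z$, after which every step in your proof, including the vanishing of the horizontal boundary terms and the absolute convergence of the area integral, goes through without further comment.
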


\begin{lem}[Esseen inequality] \label{lem:essen}
	Let $Y$ be a real random variable with characteristic function $\psi(t)=\bb E \mathrm{e}^{\ii t Y}$. Let $\Phi(x)$ be the distribution function of $Z\overset{d}{=}\cal N(0,1)$.  There exists a constant $C>0$ such that for any $\sigma,T>0$, we have
	\[
	\Delta(Y\sigma^{-1},Z)=\sup_{x \in \bb R}|\bb P(Y \sigma^{-1}\leq x) -\Phi(x)|\leq C \int_0^T \frac{|\psi(t\sigma^{-1})-\exp(-t^2/2)|}{t} \,\dd t +\frac{C}{T}
	\]
\end{lem}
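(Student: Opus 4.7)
The plan is to derive this as a consequence of the classical Fourier-analytic smoothing inequality, since the result is essentially a restatement of Esseen's theorem specialised to a Gaussian reference measure with an arbitrary rescaling $\sigma^{-1}$. Write $F(x) \deq \mathbb P(Y\sigma^{-1}\le x)$ and $G(x) \deq \Phi(x)$. The characteristic functions are $\hat F(t) = \bb E \ee^{\ii t Y/\sigma} = \psi(t/\sigma)$ and $\hat G(t) = \ee^{-t^2/2}$. Since $G$ has density bounded by $M = (2\pi)^{-1/2}$, the content of the lemma is the Esseen smoothing bound
\[
\sup_x |F(x)-G(x)| \;\le\; \frac{1}{\pi}\int_{-T}^T \bigg|\frac{\hat F(t)-\hat G(t)}{t}\bigg|\,\dd t \;+\; \frac{C M}{T},
\]
after which conjugate symmetry $\overline{\psi(t/\sigma)-\ee^{-t^2/2}} = \psi(-t/\sigma)-\ee^{-t^2/2}$ halves the integral to $\int_0^T$, giving the stated form.

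To establish the smoothing inequality itself, I would pick a smoothing kernel $K_T$ with $\widehat{K_T}$ supported in $[-T,T]$, $\int K_T = 1$, and first absolute moment $\int |x| K_T(x)\,\dd x \le C/T$; the Fejér-type choice $K_T(x) = \frac{T}{2\pi}\bigl(\frac{\sin(Tx/2)}{Tx/2}\bigr)^2$ works, with $\widehat{K_T}(t) = (1-|t|/T)_+$. For any $x_0$, decompose
\[
(F-G)(x_0) \;=\; \bigl((F-G)\ast K_T\bigr)(x_0) \;+\; \bigl(F-G-(F-G)\ast K_T\bigr)(x_0).
\]
The second piece is bounded by $CM/T$ using $\|G'\|_\infty \le M$ together with the moment bound on $K_T$ and monotonicity of $F$; this is the standard Berry trick of comparing $F(x_0)$ with $F(x_0\pm s)$ for $s$ in the support of $K_T$. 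For the first piece, Parseval/Plancherel together with the identity $\widehat{F-G}(t) = (\hat F(t)-\hat G(t))/(\ii t)$ (valid after integrating by parts since $F-G$ is bounded and vanishes at $\pm\infty$) convert the convolution into the frequency-side integral over $[-T,T]$.

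Substituting $F$, $G$ as above and changing variables $t \mapsto t\sigma^{-1}$ is automatic because both sides of the smoothing inequality are invariant in the appropriate way once we recognise $\hat F(t) = \psi(t/\sigma)$. The resulting bound matches the statement with constant $C = \max(1/\pi, C'(2\pi)^{-1/2})$ absorbed into a single $C$.

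The only delicate step is the argument that $\bigl|F-G-(F-G)\ast K_T\bigr|(x_0) \le CM/T$. The subtlety is that $F$ may have jumps, so one cannot simply bound the difference by $\|F'\|_\infty \cdot \int|x|K_T(x)\,\dd x$. The standard workaround is to use that $F$ is monotone and $G$ is $M$-Lipschitz: for any $s$, $F(x_0-s)-G(x_0-s)-M|s| \le F(x_0)-G(x_0) \le F(x_0+s)-G(x_0+s)+M|s|$, and then integrate against $K_T$. This is the only place where the density bound on $G$ enters; everything else is routine Fourier analysis.
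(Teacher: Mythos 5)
The paper states this lemma as a classical result and gives no proof, so I am evaluating your sketch on its own merits. Your overall plan is the standard one: Fej\'er smoothing, the identity $\widehat{F-G}(t)=(\hat F(t)-\hat G(t))/(\ii t)$, and halving the integral by conjugate symmetry. That part is fine. But the step you yourself flag as delicate has a genuine gap, and in fact is wrong as written.

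First, the claimed moment bound $\int |x|\,K_T(x)\,\dd x\leq C/T$ is false: the Fej\'er kernel decays like $K_T(x)\sim 2/(\pi T x^2)$, so $|x|K_T(x)\sim c/(T|x|)$ and the first absolute moment diverges logarithmically. Second, and more fundamentally, the pointwise smoothing error $\bigl|(F-G)(x_0)-\bigl((F-G)*K_T\bigr)(x_0)\bigr|$ is \emph{not} $O(M/T)$: if $F$ has a jump of size $c$ at $x_0$, the convolution averages across the jump and the pointwise difference is of order $c$, not $O(1/T)$, regardless of the kernel (this is forced by the fact that any $K_T$ with $\widehat{K_T}$ supported in $[-T,T]$ is continuous, so $(F-G)*K_T$ is continuous while $F-G$ is not). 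Consequently the decomposition ``$(F-G)(x_0)=\bigl((F-G)*K_T\bigr)(x_0)+\text{error}$, error $\leq CM/T$'' cannot hold pointwise. The correct classical argument (Feller, Vol.~II, \S XVI.3, or Petrov, Lemma~V.2) does not bound $\Delta(x_0)-\Delta_T(x_0)$ at the same point; it compares $\eta\deq\sup\Delta=\Delta(x_0)$ with the \emph{shifted} value $\Delta_T(x_0+h)$ for an $h\asymp 1/T$: for $|u|\leq h$ one uses monotonicity of $F$ and the $M$-Lipschitz bound on $G$ to get $\Delta(x_0+h-u)\geq\eta-2Mh$, while the tail $\int_{|u|>h}K_T\leq 4/(\pi T h)$ controls the rest. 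This yields $\eta\leq 2\|\Delta_T\|_\infty+CM/T$ after choosing $h$. So your monotonicity/Lipschitz inequalities are the right ingredients, but ``integrate against $K_T$'' without the shift does not close the argument: the inequalities are one-sided, and even over $s\geq 0$ the integral $\int_0^\infty sK_T(s)\,\dd s$ diverges. Replacing the first-moment bound with the tail bound of the Fej\'er kernel and introducing the shift $h$ fixes the proof.
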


\vspace{2em}

\section{Proof of Theorem \ref{thmmain}} \label{s.pfmainthm}
For the rest of this paper we fix $\kappa>0$ as in Theorem \ref{thmmain}. Let $f$ be as in Theorem \ref{thmmain}. It suffices to assume that $f$ has compact support,  since we can construct $\widehat{f} \in C^5(\bb R)$ such that
\[
\widehat{f}(x)=\begin{cases}
	f(x) & \mbox{if} \quad  |x|\leq 3,\\
	0   & \mbox{if} \quad   |x| \geq 4\,.
\end{cases}
\] 
Note that $f=\widehat{f}$ on $[-3,3]$, and by Corollary \ref{cor:spectral norm} we have
\[
\mathbb P(\tr f(H)\ne \tr \widehat{f}(H) )\leq \mathbb P(\|H\|\geq 3)\leq CN^{-10}\,.
\]
Thus
$\cal Z_{f,\gamma} =\cal Z_{\widehat{f},\gamma}$ with probability at least $1-CN^{-10}$, which together with Lemma \ref{lem:KS comparison} implies
\[
\Delta(\cal Z_f,Z)=\Delta(\cal Z_{\widehat{f}},Z)+O(N^{-10})\,.
\] 
From now on we shall always assume $\supp f \subset [-4,4]$, and as a consequence $f$ is bounded. We have the following result, whose proof is postponed to Appendix \ref{appA}.

\begin{lem} \label{lem:exp tr f(H)}
	Let $f\in C^5_c(\bb R)$ and $\mu_f$ be as in \eqref{mu_f}. We have
	\[
	\mathbb E \tr f(H) -\mu_f =O_\prec(N^{-1})\,.
	\]
\end{lem}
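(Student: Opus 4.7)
\emph{Proof plan.} I would follow the standard Helffer–Sj\"ostrand route adapted to Wigner matrices. First, let $\chi \in C_c^\infty(\R)$ satisfy $\chi(y)=1$ for $|y|\le 1$ and $\chi(y)=0$ for $|y|\ge 2$, and set
\[
\tilde f(z) \deq \Big(\sum_{k=0}^{4} \frac{(\ii y)^k}{k!}f^{(k)}(x)\Big)\chi(y),\qquad z=x+\ii y,
\]
so that $\tilde f$ extends $f$ almost-analytically with $|\partial_{\bar z}\tilde f(z)| \lesssim \|f^{(5)}\|_\infty |y|^4\mathbf 1_{|y|\le 1}+\mathbf 1_{1\le |y|\le 2}$. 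Applying Lemma \ref{HS} to each eigenvalue converts
\[
\bb E\tr f(H) = \frac{N}{\pi}\int_{\C} \partial_{\bar z}\tilde f(z)\,\bb E \underline G(z)\,\dd^2 z,
\]
and the problem reduces to an expansion of $\bb E\underline G(z)$ uniform in the relevant domain: the region $\eta=\Im z\gtrsim N^{-1+c}$ (handled via the local law) and $|\eta|\ge 1$ (handled trivially by $\|G\|\le |\eta|^{-1}$).

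Next I would derive
\[
\bb E\underline G(z) = m(z)+\frac{\Phi_1(z)}{N}+\frac{a_3\,\Phi_{3/2}(z)}{N^{3/2}}+O_\prec\Big(\frac{1}{N^2|\eta|^{K}}\Big)
\]
for some fixed integer $K$, starting from the identity $z\underline G+1=\frac{1}{N}\sum_{i,j} H_{ij}G_{ji}$, taking expectation, and applying Lemma \ref{lem:cumulant_expansion} truncated after the fourth cumulant, separating diagonal from off-diagonal contributions. Using the standard resolvent differentiation rule, the cumulant bound $\cal C_n(H_{ij})=O(N^{-n/2})$ of Lemma \ref{lemh}, and the isotropic local law together with the multi-resolvent estimates of Theorem \ref{refthm1} and Lemma \ref{prop4.4}, each monomial in Green function entries that arises is replaced by a deterministic product of derivatives of $m(z)$ at the cost of an $O_\prec(N^{-2}|\eta|^{-K})$ error. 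Solving the resulting perturbed quadratic equation around $1+zm+m^2=0$ yields $\Phi_1$ as an explicit rational expression in $m$ with coefficients depending on $\beta, a_2, s_4$, and $\Phi_{3/2}$ similarly in $m$ alone; the prefactor $a_3$ arises because only the diagonal third cumulant survives, the off-diagonal third cumulants being zero (by Hermiticity in the $\beta=2$ case or symmetry upon the summation in the $\beta=1$ case).

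Substituting back into the Helffer–Sj\"ostrand integral and multiplying by $N$, each contour integral $\frac{1}{\pi}\int \partial_{\bar z}\tilde f\cdot F(z)\,\dd^2 z$ with $F\in\{m,m',m'',\ldots\}$ collapses, by contour deformation to the cut $[-2,2]$, into an integral of $f$ against one of the weights $\sqrt{4-x^2}$, $(4-x^2)^{-1/2}$, $(2-x^2)/\sqrt{4-x^2}$, $(x^4-4x^2+2)/\sqrt{4-x^2}$, $(x^3-2x)/\sqrt{4-x^2}$ on $[-2,2]$, plus possible endpoint evaluations $f(\pm 2)$; term-by-term matching against \eqref{mu_f} reproduces the six contributions to $\mu_f$, including the boundary term $(f(2)+f(-2))/(2\beta)$ from the square-root branch of $m$ near $\pm 2$. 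The remainder integrates to $O_\prec(N^{-1})$ because $|\partial_{\bar z}\tilde f|\lesssim|\eta|^4$ absorbs the singularity $|\eta|^{-K}$ for $K\le 5$, which is the precise place where the hypothesis $f\in C^5$ is used. The main obstacle is the bookkeeping in Step 2: one must retain enough terms in the cumulant expansion to produce the $O(N^{-1/2})$ contribution from $a_3$, identify the exact coefficients of the $s_4$- and $a_2$-weights after several cancellations among second- and fourth-cumulant contributions, and control the edge deterioration of the local law so that the $(f(2)+f(-2))/(2\beta)$ term emerges cleanly from the contour deformation rather than contaminating the error.
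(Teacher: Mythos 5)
Your plan coincides with the paper's own argument: the proof proceeds exactly by establishing the expansion $\bb E\ul G(z)=m-\frac{m'}{m}\big(-\frac{1}{N}m'-\frac{a_2-2}{N}m^2-\frac{s_4}{N}m^4+\frac{a_3}{N^{3/2}}m^3\big)+O_\prec(N^{-2}\eta^{-2})$ via the cumulant expansion of $\bb E\ul{HG}$ truncated after the fourth cumulant (Lemma \ref{lem.EG}), and then feeding this into the Helffer--Sj\"ostrand representation together with the $|\partial_{\bar z}\tilde f|\lesssim|\eta|^4$ bound afforded by $f\in C^5$. One small imprecision worth noting: the off-diagonal third-cumulant contribution in the $\beta=1$ case is not killed by a symmetry of the sum; rather, the term $\frac{s_3}{N^{5/2}}\sum_{i\ne j}\bb E G_{ii}G_{jj}G_{ij}$ is subleading (order $N^{-2}\eta^{-1}$) because the isotropic law forces $\sum_j G_{ij}\prec|\eta|^{-1/2}$, so it is simply absorbed into the error rather than vanishing, leaving only the diagonal $a_3$ contribution at order $N^{-3/2}$.
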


By Lemma \ref{lem:exp tr f(H)} we see that
\[
\langle {\cal Z}_{f,\gamma} \rangle = \cal Z_{f,\gamma}-\bb E \cal Z_{f,\gamma}=\frac{\tr f(H)-\bb E \tr f(H)-\frac12\gamma c_1^f \tr {H}}{\sigma_{f,\gamma}}
\]
satisfies $\langle {\cal Z}_{f,\gamma} \rangle \sim_{1-\kappa} {\cal Z}_{f,\gamma}  $. Hence,  Lemma \ref{lem:KS comparison} implies
\begin{equation} \label{z0}
	\Delta({\cal Z}_{f,\gamma} ,Z)=\Delta(\langle {\cal Z}_{f,\gamma} \rangle ,Z)+O(N^{-1+\kappa})\,.
\end{equation}
By Lemma \ref{HS}, we can write 
the LES as
\begin{align}
\text{Tr} f(H)= \frac{1}{\pi} \int_{\mathbb{C}} \frac{\partial }{\partial \bar{z}} \tilde{f}(z)\text{Tr} G(z) {\rm d}^2 z\,, \quad \mbox{where} \quad	\tilde{f}(z)\deq \sum_{k=0}^4\frac{(\ii y)^k}{k!}f^{(k)}(x) \chi(y)\,, \label{def of f tilde}
\end{align}
and $\chi\in C_c^{\infty}(\bb R)$ satisfies $\chi(y)=1$ for $|y|\leq 1$ and $\chi(y)=0$ for $|y|\geq 2$. Define the region
\begin{equation} \label{ddd}
	\b D\deq\{x+\ii y: |x|\leq 10, |y|\geq N^{-1/4}\}\,.
\end{equation}
By Theorem \ref{refthm1}, we see that $\langle \tr G(z)\rangle \prec |y|^{-1}$ uniformly for $z \in \b S$. Thus
\begin{equation}  \label{4.2}
	\begin{aligned}
		\frac{1}{\pi} \int_{\bb C \backslash\b D} \frac{\partial }{\partial \bar{z}} \tilde{f}(z)\langle\text{Tr} G(z)\rangle  {\rm d}^2 z=-\frac{\ii }{48\pi} \int_{\bb C \backslash\b D} y^4 f^{(5)}(x)\langle\text{Tr} G(z)\rangle{\rm d}^2 z\prec N^{-1}\,,
	\end{aligned}
\end{equation}
where we used the fact that $\supp f\subset [-4,4]$.  Let
\begin{equation} \label{4.3}
	\widetilde{\cal Z}_{f,\gamma}\deq \frac{1}{\pi\sigma_{f,\gamma}} \int_{\b D} \frac{\partial }{\partial \bar{z}} \tilde{f}(z)\langle\text{Tr} G(z) \rangle{\rm d}^2 z-\frac{\frac12\gamma c_1^f\tr {H}}{\sigma_{f,\gamma}}\,.
\end{equation}
Note that by the symmetry of the domain $\b D$, the random variable $\widetilde{\cal Z}_{f,\gamma}$ is real. From \eqref{4.2} we know that $|\widetilde{\cal Z}_{f,\gamma}-\langle {\cal Z}_{f,\gamma} \rangle|\prec  N^{-1}$. By Lemma \ref{lem:KS comparison} and \eqref{z0} we have  
\begin{equation} \label{3.33}
	\Delta(\cal Z_{f,\gamma},Z)=\Delta(\widetilde{\cal Z}_{f,\gamma},Z)+O( N^{-1+\kappa})\,.
\end{equation}
The first key step of our proof is the following decomposition, which splits off the contribution of the diagonal entries. The proof is postponed to Appendix \ref{sec6}.

\begin{pro} \label{lem 2148}
	We have
	\[
	\langle\tr G(z)\rangle -\langle\tr \widehat{G}(z)\rangle+\sum_i \langle(\widehat{G}^2)_{ii}\rangle H_{ii} +m'(z) \tr H-m'(z)m(z)\Big(\sum_i H_{ii}^2-a_2\Big)  \prec \frac{1}{N|\eta|^3}
	\]
	uniformly for $z \in \b S_c$.
\end{pro}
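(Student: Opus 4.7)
The natural approach is a resolvent expansion of $G$ around $\widehat G$. Writing $H=\widehat H+H_{\mathrm d}$ and iterating $G=\widehat G-\widehat G\,H_{\mathrm d}\,G$, I obtain, for any fixed integer $K\geq 2$,
\begin{align*}
\tr G \;=\; \tr\widehat G \;+\; \sum_{k=1}^{K}(-1)^k \tr\bigl[(\widehat G\,H_{\mathrm d})^k \widehat G\bigr] \;+\; R_{K+1},
\end{align*}
with $R_{K+1}\deq (-1)^{K+1}\tr[(\widehat G\,H_{\mathrm d})^{K+1}G]$. The crucial structural fact is that $\widehat G$ is \emph{independent} of $H_{\mathrm d}$, since $\widehat H$ has zero diagonal; this lets me condition on $\widehat G$ and treat the $H_{ii}$'s as i.i.d.\ mean-zero random variables with variance $a_2/N$. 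Combined with $\|\widehat G\|,\|G\|\leq|\eta|^{-1}$ and $\max_i|H_{ii}|\prec N^{-1/2}$, the operator-norm estimate gives $|R_{K+1}|\prec N^{1-(K+1)/2}|\eta|^{-(K+2)}$, which is driven below $N^{-1}|\eta|^{-3}$ by choosing $K$ fixed and sufficiently large in terms of the parameter $c$ defining $\mathbf{S}_c$.

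For the $k=1$ term I use that $H_{\mathrm d}$ is diagonal to write $\tr[\widehat G\,H_{\mathrm d}\widehat G]=\sum_i H_{ii}(\widehat G^2)_{ii}$, and decompose it as
\begin{align*}
\sum_i H_{ii}(\widehat G^2)_{ii}\;=\;\sum_i H_{ii}\langle(\widehat G^2)_{ii}\rangle \;+\; m'(z)\,\tr H \;+\;\bigl(\bb E(\widehat G^2)_{11}-m'(z)\bigr)\tr H.
\end{align*}
Since $\tr H$ is a sum of $N$ independent mean-zero variables with total variance $a_2$, it satisfies $\tr H\prec 1$; combined with the bias estimate $|\bb E(\widehat G^2)_{11}-m'(z)|\prec N^{-1}|\eta|^{-2}$, which I would extract from the averaged local law for $\widehat G$ after differentiating in $z$, the last summand is $\prec N^{-1}|\eta|^{-2}$. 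Because $H_{\mathrm d}$ is independent of $\widehat G$ and $\bb E H_{ii}=0$, $\sum_i H_{ii}(\widehat G^2)_{ii}$ already has mean zero, so the $\langle\cdot\rangle$ passes transparently and this step produces the $-\sum_i\langle(\widehat G^2)_{ii}\rangle H_{ii}-m'(z)\tr H$ pieces of the proposition.

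For the $k=2$ term I split
\begin{align*}
\tr\bigl[(\widehat G\,H_{\mathrm d})^2\widehat G\bigr]\;=\;\sum_i H_{ii}^2\,\widehat G_{ii}\,(\widehat G^2)_{ii}\;+\;\sum_{i\ne j}H_{ii}H_{jj}\,(\widehat G^2)_{ij}\widehat G_{ji}.
\end{align*}
For the diagonal sum I replace $\widehat G_{ii}(\widehat G^2)_{ii}$ by $m(z)m'(z)$, using Lemma \ref{prop4.4} to control the averaged error and the concentration $\sum_i H_{ii}^2=a_2+O_\prec(N^{-1/2})$ for the $H_{ii}^2$ fluctuations; centering yields exactly $m(z)m'(z)\bigl(\sum_i H_{ii}^2-a_2\bigr)$ up to an $O_\prec(N^{-1}|\eta|^{-2})$ error. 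The off-diagonal sum has conditional mean zero given $\widehat G$ (because $\bb E[H_{ii}H_{jj}]=0$ for $i\ne j$); its conditional variance, bounded using $|\widehat G_{ij}|\prec(N|\eta|)^{-1/2}$ and $|(\widehat G^2)_{ij}|\prec(N|\eta|)^{-1/2}|\eta|^{-1}$ for $i\ne j$, is $O_\prec(N^{-2}|\eta|^{-4})$, giving an $O_\prec(N^{-1}|\eta|^{-2})$ bound. The terms with $3\leq k\leq K$ are treated analogously: after conditioning on $\widehat G$, only index configurations with every index appearing at least twice contribute to the mean, and each such configuration carries a factor $N^{-k/2}$ from the $H_{ii}$'s that, for $k\geq 3$, beats the combinatorial gain from the free index sums and leaves an $O_\prec(N^{-1}|\eta|^{-3})$ contribution. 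Summing all pieces and applying $\langle\cdot\rangle$ produces the claim.

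The main obstacle is to drive every error down to the sharp scale $N^{-1}|\eta|^{-3}$. In particular, the bias estimate $\bb E(\widehat G^2)_{11}-m'(z)=O(N^{-1}|\eta|^{-2})$ is sharper than the pointwise local law provides and must be extracted from the averaged version; similarly, the partial-pairing cases for $k=2,3$ force one to exploit the off-diagonal Green function bounds together with an actual conditional variance calculation, rather than a worst-case entry-wise estimate.
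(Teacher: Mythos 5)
The expansion $\tr G=\tr\widehat G+\sum_{k=1}^K(-1)^k\tr[(\widehat G H_{\mathrm d})^k\widehat G]+R_{K+1}$ is exactly the paper's starting point (with $K=3$ fixed; compare \eqref{7.1}), and your treatment of the $k=1,2$ terms agrees in substance with the paper's estimates of $\langle E_1\rangle,\langle E_2\rangle$, including the bias estimate $|\mathbb E(\widehat G^2)_{11}-m'(z)|\prec N^{-1}|\eta|^{-2}$. However, your handling of the remainder term contains a genuine gap.

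You claim $|R_{K+1}|\prec N^{1-(K+1)/2}|\eta|^{-(K+2)}$ by operator-norm estimates and then assert this can be ``driven below $N^{-1}|\eta|^{-3}$ by choosing $K$ fixed and sufficiently large in terms of $c$.'' But this fails for the range of $c$ the proposition allows. Comparing with the target, you need $N^{2-(K+1)/2}|\eta|^{-(K-1)}\lesssim 1$, and on $\mathbf{S}_c$ the worst case $|\eta|\sim N^{-1+c}$ gives the exponent $\tfrac12+\tfrac K2-c(K-1)$, which must be $\leq 0$. This is equivalent to $c\geq (K+1)/(2(K-1))$, a quantity that decreases to $1/2$ as $K\to\infty$ and never reaches any $c<1/2$. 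In other words, each extra factor $\widehat G H_{\mathrm d}$ contributes $\|\widehat G\|\,\|H_{\mathrm d}\|\prec|\eta|^{-1}N^{-1/2}$, which in the regime $|\eta|\ll N^{-1/2}$ is \emph{large}, so pushing $K$ up only makes the operator-norm bound on $R_{K+1}$ worse, not better. Since the proposition is claimed uniformly for arbitrary fixed $c>0$, your remainder estimate does not close the argument.

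What rescues the argument in the paper is precisely the step you cannot replace by crude operator-norm bounds: they stop at $K=3$ and estimate $E_4=-\tr\widehat G H_{\mathrm d}G(H_{\mathrm d}\widehat G)^3$ entry-wise, exploiting that off-diagonal entries satisfy $|\widehat G_{ij}|\prec(N|\eta|)^{-1/2}$ and $|(\widehat G^2)_{ij}|\prec(N|\eta|)^{-1/2}|\eta|^{-1}$ for $i\neq j$ (and a further decomposition according to index coincidences; see \eqref{7.10}). These isotropic-law bounds beat the deterministic $|\eta|^{-1}$ per resolvent by a factor $\sim(N|\eta|)^{-1/2}$ for off-diagonal entries, and it is this gain — not the smallness of $H_{\mathrm d}$ — that yields $E_4\prec N^{-1}|\eta|^{-3}$. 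Your sketch for the middle $k\geq 3$ terms gestures at this kind of counting, but the crucial place where it is needed and unavoidable is the remainder, which you treated only by operator norm. Without redoing the remainder bound at entry level, the proof does not go through on $\mathbf{S}_c$ for small $c$.

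A secondary, more minor point: for $3\le k\le K$ you assert that ``only index configurations with every index appearing at least twice contribute to the mean'' and invoke a conditional variance computation; since the proposition concerns the centered quantity $\langle\cdot\rangle$, you must control fluctuations, not just the mean, and this needs to be made quantitative along the lines of the paper's term-by-term estimate of $\langle E_3\rangle\prec N^{-1}|\eta|^{-5/2}$ using off-diagonal Green function bounds — but this is a matter of completeness rather than a wrong idea.
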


By Green's formula, we have 
\begin{multline*}
	\frac{1}{\pi\sigma_{f,\gamma}} \int_{\b D} \frac{\partial }{\partial \bar{z}} \tilde{f}(z)\Big(\langle\text{Tr} \widehat{G}(z) \rangle-\sum_i \langle(\widehat{G}^2)_{ii}\rangle H_{ii}-m'(z)\tr H+m'(z)m(z)\Big(\sum_i H_{ii}^2-a_2\Big)\Big){\rm d}^2 z\\
	-\frac{\frac12\gamma c_1^f\tr {H}}{\sigma_{f,\gamma}}=\frac{1}{\pi\sigma_{f,\gamma}} \int_{\b D} \frac{\partial }{\partial \bar{z}} \tilde{f}(z)\langle\text{Tr} \widehat{G}(z) \rangle{\rm d}^2 z
	-\frac{1}{\pi\sigma_{f,\gamma}} \int_{\b D} \frac{\partial }{\partial \bar{z}} \tilde{f}(z)\sum_i \langle(\widehat{G}^2)_{ii}\rangle H_{ii} {\rm d}^2 z\\
	+\frac{1}{\sigma_{f,\gamma}}\Big(\frac12(1-\gamma)c_1^f\tr {H}+\frac12 c_2^f \Big(\sum_{i}H_{ii}^2-a_2\Big)\Big)
	\eqd \widehat{Z}_{f,\gamma,1}+\widehat{Z}_{f,\gamma,2}+\widehat{Z}_{f,\gamma,3} \eqd  \widehat{Z}_{f,\gamma}
\end{multline*}
Since $\frac{\partial }{\partial \bar{z}} \tilde{f}(z)=-\frac{ \ii y^3}{12}f^{(4)}(x)$ for $|y|\leq 1$ and $f$ is compactly supported, we can easily see from \eqref{4.3} and Proposition \ref{lem 2148} that
$
\widehat{Z}_{f,\gamma} \sim_{1-\kappa} \widetilde{\cal Z}_{f,\gamma}.
$
By Lemma \ref{lem:KS comparison} and \eqref{3.33} we have
\begin{equation} \label{brick}
	\Delta(\widehat{Z}_{f,\gamma},Z)=\Delta({\cal Z}_{f,\gamma},Z)+O(N^{-1+\kappa})\,.
\end{equation}
In the squeal, we denote
$
\varphi_k (t)\deq \exp( \ii t \sigma_{f,\gamma} \widehat{{\cal Z}}_{f,\gamma,k}) 
$
for $k=1,2,3$, and
\begin{equation}
	X_i\deq \Big(-\frac{1}{\pi} \int_{\mathbf{D}} \frac{\partial}{\partial \bar{z}} \tilde{f}(z)\la (\wh{G}^2)_{ii}\ra {\rm d}^2 z H_{ii}+\frac{1}{2}c_2^f (H_{ii}^2-a_2N^{-1})\Big)  \label{def of Xi} 
\end{equation} 
for $i \in \llbracket 1,N \rrbracket$. Let
\begin{align}
	\varphi_4(t)\deq\exp\Big(-\frac{t^2}{2}\sum_i \mathbb{E}_d X_{i}^2+\frac{(\ii t)^3}{6}\sum_{i}\mathbb{E}_d X_{i}^3\Big)\,. \label{eq of phi 4}
\end{align}
Note that by the definition of $\b D$, $X_i$'s are real random variables, and as a result $|\varphi_4(t)|\leq 1$. We have the following result concerning the diagonal entries of $H$.

\begin{lem} \label{lem1558}
	Fix $c>0$.  Recall the definition of $\cal X$ from \eqref{cal X}, and the definition of $\bb E_{\rm d}$ from \eqref{2.1}. We have
	\begin{align} 
		\bb E_{\rm d}\big[\varphi_{2}(t)\varphi_{3}(t)\big]
		=\exp\Big(-\frac{a_2(1-\gamma)^2(c_1^f)^2t^2}{8}\Big)\varphi_4(t)+O_{\prec} (N^{-1}+ \cal X N^{-1/2
		})  \label{012110}
	\end{align}
	uniformly for $t \in [0,N^{(1-c)/2}]$.
\end{lem}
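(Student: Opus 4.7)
The plan is to exploit the structural observation that, conditional on the off-diagonal entries of $H$, the diagonal entries $\{H_{ii}\}$ are i.i.d.\ (each distributed as $h_d/\sqrt N$) while the off-diagonal-measurable coefficients $\alpha_i \deq -\tfrac{1}{\pi}\int_{\b D} \tfrac{\partial}{\partial \bar z}\tilde f(z)\,\la (\wh G^2)_{ii}\ra\,d^2z$ are frozen. Setting $\beta_i \deq \alpha_i + \tfrac12(1-\gamma)c_1^f$ and $\tilde X_i \deq \beta_i H_{ii} + \tfrac12 c_2^f(H_{ii}^2 - a_2 N^{-1}) = X_i + \tfrac12(1-\gamma)c_1^f H_{ii}$, the product $\varphi_2(t)\varphi_3(t)$ equals $\exp(\ii t \sum_i \tilde X_i)$, so the conditional expectation factorises as $\bb E_d[\varphi_2(t)\varphi_3(t)] = \prod_{i=1}^N \bb E_d[\exp(\ii t \tilde X_i)]$, reducing the lemma to the analysis of a product of $N$ characteristic functions of explicit second-degree polynomials in $h_d$.

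As preparation I would establish two a priori estimates on the $\alpha_i$'s. A pointwise bound $\alpha_i \prec N^{-1/2}$ follows from $\la (\wh G^2)_{ii}\ra \prec (N|\eta|^3)^{-1/2}$, obtained by applying the Cauchy derivative estimate to the isotropic local law $\la \wh G_{ii}\ra \prec (N|\eta|)^{-1/2}$ from Theorem~\ref{refthm1} on a contour of radius $|\eta|/2$, and then integrating $|\partial_{\bar z}\tilde f(z)| \lesssim |y|^3$ over $\b D$. The averaged bound $\sum_i \alpha_i \prec 1$ follows directly from $\la \tr \wh G^2\ra \prec |\eta|^{-2}$ (Lemma~\ref{prop4.4}) inside the Helffer--Sj\"{o}strand integral. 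Next, taking logarithm and applying the cumulant expansion truncated at order $\ell = 3$, $\log \bb E_d \exp(\ii t \tilde X_i) = \sum_{n=1}^3 \tfrac{(\ii t)^n}{n!}\kappa_n(\tilde X_i) + R_{4,i}(t)$ with $|R_{4,i}(t)| \lesssim t^4\,\bb E|\tilde X_i|^4 \lesssim (|\beta_i|^4 + 1)\,t^4/N^2$, I would compute each cumulant explicitly in terms of $\beta_i, c_2^f, a_2, a_3$ and the higher moments of $h_d$, and subtract the corresponding cumulants of $X_i$ that appear in $\log\varphi_4(t)$. The algebraic core is the second-moment identity $\sum_i \kappa_2(\tilde X_i) - \sum_i \bb E_d X_i^2 = \tfrac{a_2(1-\gamma)^2(c_1^f)^2}{4} + \tfrac{a_2(1-\gamma)c_1^f}{N}\sum_i \alpha_i + \tfrac{(1-\gamma)c_1^f c_2^f a_3}{2\sqrt N}$, whose first term multiplied by $-t^2/2$ reproduces exactly the Gaussian factor $\exp(-a_2(1-\gamma)^2(c_1^f)^2 t^2/8)$ on the right-hand side of \eqref{012110}; an analogous computation for the third cumulant extracts the leading piece $-\ii t^3 a_3 (1-\gamma)^3(c_1^f)^3/(48\sqrt N)$.

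The residual exponent-level error then decomposes as follows: (i) a piece $\lesssim t^2/N$ proportional to $\sum_i\alpha_i$; (ii) purely $\cal X$-type terms of sizes $\lesssim t^2\cal X/\sqrt N$ and $\lesssim t^3\cal X/\sqrt N$, each proportional to the factor $(1-\gamma)c_1^f a_3$ and hence vanishing identically when $\cal X = 0$; and (iii) the remainder $\sum_i R_{4,i}$ together with higher cumulants, of total size $\lesssim t^4/N$. To convert these exponent-level estimates into bounds on $\bb E_d[\varphi_2\varphi_3]$ itself, I would invoke $|e^A - e^B| \leq |e^B|\min(|A-B|, 2)$ together with the Gaussian damping $|\exp(\mathrm{target})| \leq e^{-c_0 t^2}$, valid with $c_0 \deq a_2(1-\gamma)^2(c_1^f)^2/8 > 0$ whenever $(1-\gamma)c_1^f \neq 0$, and the elementary fact $\sup_{t \geq 0} t^m e^{-c_0 t^2/2} < \infty$; any exponent-level error of the form $t^m/N^p$ is thereby converted into a final error $O_\prec(N^{-p})$.

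The main obstacle is the degenerate regime $(1-\gamma)c_1^f = 0$, in which the Gaussian damping is absent and the $t$-range $[0, N^{(1-c)/2}]$ is too wide for a naive argument. Fortunately $\cal X = 0$ automatically in that case, $\tilde X_i = X_i$, and $X_i \prec N^{-1}$ pointwise (since $\alpha_i \prec N^{-1/2}$ and $H_{ii} \prec N^{-1/2}$), so a direct Taylor expansion yields $|\log \bb E_d e^{\ii t X_i} + \tfrac{t^2}{2}\bb E_d X_i^2 - \tfrac{(\ii t)^3}{6}\bb E_d X_i^3| \prec t^4/N^4$, which sums over $i$ to $\prec t^4/N^3 \leq N^{-1-2c}$ without invoking any damping. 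Combining both regimes delivers the claimed bound $O_\prec(N^{-1} + \cal X N^{-1/2})$ uniformly for $t \in [0, N^{(1-c)/2}]$.
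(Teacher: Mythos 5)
Your proposal takes a genuinely different route from the paper's proof of Case 1. The paper derives a differential equation for $\bb E_d\phi_d(t) = \bb E_d[\varphi_2(t)\varphi_3(t)]$ via cumulant expansion of $\bb E_d\phi_d'(t)$ and solves it by Duhamel's formula, mirroring the ODE machinery used for $\phi_\xi$ in Proposition~\ref{lem. tf=0}. You instead factor the conditional expectation into a product $\prod_i \bb E_d e^{\ii t\tilde X_i}$ over the i.i.d.\ diagonal entries and Taylor-expand each $\log\bb E_d e^{\ii t\tilde X_i}$ to third order. The cumulant bookkeeping, the a priori estimates $\alpha_i\prec N^{-1/2}$ and $\sum_i\alpha_i\prec 1$, and the identification of the $\cal X N^{-1/2}$-error with the $(1-\gamma)c_1^f a_3$-proportional pieces are all correct and mirror, at the algebraic level, what emerges from the paper's $L_{d,1},L_{d,2},L_{d,3}$. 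Your Case~2 treatment (direct Taylor expansion of the product, using $X_i\prec N^{-1}$) is essentially identical to the paper's.

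However, the conversion step from exponent-level error to function-level error has a gap. The inequality $|e^A-e^B|\le|e^B|\min(|A-B|,2)$ is valid only when $\Re(A-B)\le 0$: for $\Re z\le 0$ one has $|e^z-1|\le|z|$ and $|e^z-1|\le 2$, but for $\Re z>0$ only $|e^z-1|\le|z|e^{\Re z}$ holds. In your situation $\Re(A-B)$ need not be $\le 0$, because the second-moment correction $\tfrac{a_2(1-\gamma)c_1^f}{N}\sum_i\alpha_i+\tfrac{(1-\gamma)c_1^f c_2^f a_3}{2\sqrt N}$ (whose product with $-t^2/2$ lands in $\Re(A-B)$) has no definite sign, and when $\cal X=1$ and $t\gtrsim N^{1/4}$ the resulting contribution $\sim t^2\cal X/\sqrt N$ to $\Re(A-B)$ is no longer $O(1)$. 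The proof can be repaired: the factor $e^{(\Re(A-B))_+}\le e^{Ct^2/\sqrt N+Ct^4/N}$ is dominated by the Gaussian damping $e^{-c_0t^2}$ for $N$ large (since $c_0 - C/\sqrt N - Ct^2/N\ge c_0/2$ uniformly on $t\le N^{(1-c)/2}$), so $|e^B|e^{(\Re(A-B))_+}\le e^{-c_0t^2/2}$ still kills the polynomial factors and the final $O_\prec(N^{-1}+\cal X N^{-1/2})$ bound goes through; alternatively one can split $[0,N^{(1-c)/2}]$ into a range where $|A-B|\le 1$ and a complementary range where both $|e^A|$ and $|e^B|$ are superpolynomially small. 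As written, though, the quoted inequality does not justify the conclusion, and this step needs to be spelled out.
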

\begin{proof}
	See Appendix  \ref{app B}.
\end{proof}

The second key step of our proof is the following estimate of the characteristic function, especially when $t$ gets large. The proof  will be our major technical task, and it will be postponed to Section \ref{sec4} below.

\begin{pro} \label{lem. tf=0} Fix $c>0$, and recall the definition of $\bb E_{\rm o}$ from \eqref{2.1}. Let us denote
	\[
	\phi_\xi(t)\deq \bb E_{\rm o}[\varphi_1(t)(\xi \varphi_{2}(t)+(1-\xi)\varphi_4(t))]
	\]
	with parameter $\xi \in \{0,1\}$. We have
	\begin{equation} \label{ODE}
		\phi_\xi'(t)=(-\sigma_f^2 t+b_\xi(t)) \phi_\xi(t)+ \cal E_\xi(t)\,,
	\end{equation}
	where $b_\xi(t)$ is deterministic satisfying 
	\begin{equation} \label{3.99}
		b_\xi(t) \prec \frac{t^2}{N}\,, \quad \mbox{and} \quad \bb E|\cal E_\xi(t)| \prec \frac{t+1}{N}+\frac{\xi}{\sqrt{N}}
	\end{equation}
	uniformly for $t \in [0,N^{(1-c)/(2-\xi)}]$.
\end{pro}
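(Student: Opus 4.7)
The plan is to derive the ODE (\ref{ODE}) by differentiating $\phi_\xi(t)$ in $t$, processing the resulting $\la\tr\widehat G(z)\ra$ factor through a Schur-type identity combined with the cumulant expansion of Lemma \ref{lem:cumulant_expansion} in the off-diagonal entries (the diagonal entries are frozen under $\bb E_{\rm o}$), and then isolating the leading Gaussian contribution $-\sigma_f^2 t\,\phi_\xi(t)$ from deterministic lower-order corrections (absorbed in $b_\xi(t)$) and genuine error terms (absorbed in $\cal E_\xi(t)$). Concretely, since $\widehat{\cal Z}_{f,\gamma,1}$ is given by an $\{\cdot\}_1$-integral of $\la\tr\widehat G(z)\ra$ over $\b D$, differentiation gives
\[
\phi_\xi'(t)=\big\{\ii\,\bb E_{\rm o}\,\la\tr\widehat G(z)\ra\,\varphi_1(t)(\xi\varphi_2(t)+(1-\xi)\varphi_4(t))\big\}_1,
\]
and the starting point for the $z$-pointwise analysis is the identity $z\tr\widehat G=-N+\sum_{i\ne j}H_{ij}\widehat G_{ji}$.

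Next, I would apply the cumulant expansion in each $H_{ij}$ with $i\ne j$ to a sufficiently large order $\ell$, with the derivative rule $\partial_{H_{ij}}\widehat G=-\widehat G(E_{ij}+E_{ji})\widehat G$ (and its counterpart for $\varphi_1,\varphi_2$, which contribute factors linear in $t$ through their internal $\widehat G$- and $H_{ii}$-dependence). The $k=1$ cumulant produces: (i) the semicircle identity that, after dividing by $z+2m(z)$, yields the deterministic factor whose Helffer--Sj\"ostrand integration reproduces the Chebyshev coefficients of $f$; (ii) a $t$-linear contribution from differentiating $\varphi_1$ once which, upon the second $\{\cdot\}$-integration in $z'$, assembles into the leading $-\sigma_f^2 t\,\phi_\xi(t)$, together with the standard $s_4(c_2^f)^2/2$ piece obtained from the $k=3$ cumulant through the identity
\[
\frac{s_4}{2\pi^2}\Big(\int_{-2}^2 f(x)\frac{2-x^2}{\sqrt{4-x^2}}\,{\rm d}x\Big)^2=\frac{s_4}{2}(c_2^f)^2.
\]
All remaining $k\ge 2$ cumulant contributions are denoted $\cal L_k$, and the core task is to show that each $\cal L_k$ splits as $\cal L_k=\cal L_{k,1}+b_{k,\xi}(t)\phi_\xi(t)+(\text{error})$ with $b_{k,\xi}(t)\prec t^k/N^{k-1}\prec t^2/N$ (using $k\ge 2$ and the range of $t$ in the statement) and $\bb E|\cal L_{k,1}|\prec t/N$.

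The key obstacle is bounding $\cal L_{k,1}$, which after applying Cauchy--Schwarz reduces to the \emph{fluctuation-averaging estimate} for off-diagonal Green function entries described in Section \ref{s. strategy}, i.e.\ the claim (\ref{666}):
\[
\bb E\bigg\{{\sum_{i,j}}^*\Big\la(\widehat G_1)_{ij}(\widehat G_2^2)_{ji}\cdots(\widehat G_{k+1}^2)_{ji}\Big\ra\bigg\}^2_{k+1}=O_\prec(N^{-k+1}),
\]
together with its variants arising from all possible $\partial^k$-derivative patterns and from the $\xi=1$ case (for $\xi=1$ the extra $\varphi_2$ factor forces a more restrictive range $t\le N^{(1-c)/2}$ in order to absorb the $\la (\widehat G^2)_{ii}\ra H_{ii}$ increments). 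A direct application of Lemma \ref{lem0diag} only yields $O_\prec(N^{-k+3})$, so I would gain the extra $N^{-2}$ by performing a \emph{second} cumulant expansion on $\bb E\la Y_{k+1}\ra^2$, splitting the resulting sum into $\cal L_n^{(1)}$ (terms carrying enough off-diagonal Green function entries that Theorem \ref{refthm1} and Lemma \ref{prop4.4} together with the identity $\bb E\xi\la Y\ra=\bb E\la\xi\ra\la Y\ra$ for any word $\xi$ in diagonal entries already suffice) and $\cal L_n^{(2)}$ (the hard terms where the outer centering is destroyed by the derivatives, requiring the refined \emph{lone factor}, two-lone-factor and trio estimates of Section \ref{sec6.1}).

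Finally, all the pointwise-in-$z$ bounds will be integrated against $\partial_{\bar z}\widetilde f$ over $\b D$ using Lemma \ref{lem:integration}, relying on the regularity $f\in C^5$ and the cutoff $|\eta|\ge N^{-1/4}$ from (\ref{ddd}) to absorb the negative powers of $\eta$ generated by Lemmas \ref{prop4.4} and \ref{lem0diag}. The remainder $\cal R_{\ell+1}$ from Lemma \ref{lem:cumulant_expansion} is handled in the standard way for $\ell$ large (depending on $\kappa$ and $c$) together with the moment bounds in Definition \ref{def:Wigner}. Combining the deterministic $b_{k,\xi}(t)$-contributions for $k\ge 2$ into $b_\xi(t)\prec t^2/N$ and the centered pieces, cumulant remainders and ``error'' in Proposition \ref{lem 2148} into $\cal E_\xi(t)$ with $\bb E|\cal E_\xi(t)|\prec (t+1)/N+\xi/\sqrt N$ yields (\ref{3.99}); the $\xi/\sqrt N$ term comes precisely from the $\xi=1$ contributions where one $\varphi_2$-derivative lands on $H_{ii}$, which cannot benefit from the off-diagonal fluctuation averaging.
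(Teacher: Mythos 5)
The proposal follows the paper's approach essentially exactly: differentiate $\phi_\xi$, cumulant-expand $\langle\tr\widehat G\rangle$ via $z\widehat G=\widehat H\widehat G-I$ in the off-diagonal entries under $\bb E_{\rm o}$, read off the $-\sigma_f^2 t$ leading term from the second- and fourth-cumulant contributions, reduce the $\cal L_{k}$-terms to the off-diagonal fluctuation-averaging estimate \eqref{666} via Cauchy--Schwarz, and supply the extra $N^{-2}$ gain there through a second cumulant expansion decomposed into the $\cal L_n^{(1)}$ (diagonal fluctuation averaging) and $\cal L_n^{(2)}$ (lone factor / two-lone-factor / trio) classes. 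This is the paper's proof, sketched at the level of Section \ref{s. strategy} rather than the full bookkeeping of Sections \ref{sec4}--\ref{sec5}.

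One slip worth flagging: you claim that for $\xi=1$ the extra $\varphi_2$ factor forces the more restrictive range $t\le N^{(1-c)/2}$, but the restriction goes the other way. The statement has $t\in[0,N^{(1-c)/(2-\xi)}]$, so $\xi=1$ enjoys the \emph{wider} range $t\le N^{1-c}$, while it is $\xi=0$ that is confined to $t\le N^{(1-c)/2}$. The reason is that $\varphi_4=\exp(K(t))$ has $K(t)=O_\prec(t^2/N)$ with $t^2$- and $t^3$-order $H_{ij}$-differentials (see \eqref{4.40}, \eqref{4.43}), which are only controllable for $t\lesssim\sqrt N$; $\varphi_2$, being purely oscillatory, causes no such problem. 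You have the mechanism of the $\xi/\sqrt N$ error right (it comes from the $t$-derivative of $\varphi_2$ producing the $\widehat Z_{f,\gamma,2}$-factor built from $\sum_i\langle(\widehat G^2)_{ii}\rangle H_{ii}\prec N^{-1/2}$), but precisely because that error is $t$-independent it is only acceptable in the regime $t\gtrsim\sqrt N$, which is why the $\xi=1$ bound is what is used on $[N^{(1-c)/2},N^{1-c}]$ and the $\xi=0$ bound on $[0,N^{(1-c)/2}]$ in Corollary \ref{cor3.5}.
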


Proposition \ref{lem. tf=0} leads to the following corollary.

\begin{cor}  \label{cor3.5}
	Fix $c>0$. There exists a constant $C\equiv C(f,c)>0$ such that 		
	\begin{equation}  \label{coe4.5-1}
		\Big|\bb E \varphi_1(t)\varphi_4(t)-\exp\Big(-\frac{\sigma_{f}^2t^2}{2}\Big)\Big|\leq CN^{-1+c}
	\end{equation}
	uniformly for all $t \in [0,N^{(1-c)/2}]$, and
	\begin{equation}  \label{coe4.5-2}
	\Big|\bb E\big[\varphi_1(t)\varphi_2(t)\varphi_3(t)\big]\Big| \leq CN^{-1+c}
	\end{equation}
	uniformly for all $t \in [N^{(1-c)/2},N^{1-c}]$.
\end{cor}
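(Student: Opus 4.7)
The plan is to prove both parts by solving the linear ODE from Proposition~\ref{lem. tf=0} via variation of parameters with integrating factor $\psi(t)\deq e^{\sigma_f^2 t^2/2}$. Setting $g_\xi(t)\deq \phi_\xi(t) - \psi(t)^{-1}$ and substituting $\phi_\xi = g_\xi + \psi^{-1}$ into the ODE, one rewrites it in integral form as
\begin{align*}
	\psi(t) g_\xi(t) = \int_0^t \psi(s) b_\xi(s) g_\xi(s)\, ds + \int_0^t b_\xi(s)\, ds + \int_0^t \psi(s) \cal E_\xi(s)\, ds.
\end{align*}
The two non-feedback integrals will be controlled using the bounds from Proposition~\ref{lem. tf=0} together with the elementary estimates $\sup_{t\ge 0} t^k e^{-\sigma_f^2 t^2/2} \le C_k$ for each fixed $k$, $\psi(t)^{-1}\int_0^t \psi(s) s^2\, ds \le t/\sigma_f^2$, and $\psi(t)^{-1}\int_0^t \psi(s)\, ds = O(1/(\sigma_f^2 t + 1))$; the feedback integral will be closed by a supremum bootstrap using the a priori bound $|g_\xi|\le 2$.

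For part~(\ref{coe4.5-1}), I will use that $\varphi_1$ and $\varphi_4$ both depend only on the off-diagonal entries of $H$, so $\phi_0(t) = \bb E[\varphi_1\varphi_4]$ is deterministic and hence so are $b_0, \cal E_0$, with $|b_0|\le N^\epsilon t^2/N$ and $|\cal E_0|\le N^\epsilon(t+1)/N$. The non-feedback integrals will contribute $O(N^{\epsilon-1})$ uniformly in $t\in[0,N^{(1-c)/2}]$, while with $M\deq \sup_{s\in[0,N^{(1-c)/2}]}|g_0(s)|$ the feedback term is bounded by $M\cdot N^{\epsilon-1/2-c/2}/\sigma_f^2$. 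Taking supremum in $t$ and absorbing the feedback coefficient (which is less than $1/2$ for large $N$ whenever $\epsilon < (1+c)/2$), I will obtain $M\le CN^{\epsilon-1}$; choosing $\epsilon < c$ gives~(\ref{coe4.5-1}).

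For part~(\ref{coe4.5-2}), since $\varphi_3$ depends only on the diagonal entries with $|\varphi_3|=1$, the tower property yields $|\bb E[\varphi_1\varphi_2\varphi_3]| = |\bb E[\varphi_3\phi_1(t)]| \le \bb E|\phi_1(t)|$, reducing the problem to bounding $\bb E|\phi_1(t)|$. Applying the same ODE strategy with $\xi=1$ and taking expectations throughout gives the analogous integral inequality for $w(t)\deq \bb E|g_1(t)|$. The principal new difficulty will be the $1/\sqrt N$ term in $\bb E|\cal E_1|$: its contribution $\psi(t)^{-1}\int_0^t \psi(s)/\sqrt N\, ds$ is only $O(N^{-1/2})$ for $t\sim 1$, improving to $O(N^{-1+c/2})$ on the outer range $[N^{(1-c)/2}, N^{1-c}]$ by Laplace asymptotics. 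A one-shot bootstrap would therefore only deliver $O(N^{-1/2})$, so achieving $N^{-1+c}$ is the main obstacle.

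I will overcome this with a two-stage bootstrap. On $[0, N^{(1-c)/2}]$, the same supremum argument as in part~(\ref{coe4.5-1}) — now including the $1/\sqrt N$ contribution — will yield only the weaker auxiliary estimate $M_1 \deq \sup_{s\in[0,N^{(1-c)/2}]} w(s) \le CN^{-1/2+\epsilon}$. On the outer range $[N^{(1-c)/2}, N^{1-c}]$, I will split the feedback integral at $s = N^{(1-c)/2}$: the inner piece will be bounded using $M_1$ together with $\psi(t)^{-1}\psi(N^{(1-c)/2}) \le 1$, contributing at most $CN^{2\epsilon-1-c/2}$, which is $\ll N^{-1+c}$ for $\epsilon < c/4$; the outer piece will close by a second supremum bootstrap on $M_2 \deq \sup_{s\in[N^{(1-c)/2}, N^{1-c}]} w(s)$, yielding $M_2 \le M_2 \cdot N^{\epsilon-c}/\sigma_f^2 + CN^{-1+c/2}$ and hence $M_2 \le CN^{-1+c}$ for $\epsilon < c$. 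Combined with $\bb E|\phi_1| \le w + \psi^{-1}$ and the super-polynomial smallness $\psi(t)^{-1} \le N^{-D}$ on the outer range, this will complete the proof.
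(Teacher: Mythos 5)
Your proposal is correct and follows the same core strategy as the paper: solve the linear ODE from Proposition~\ref{lem. tf=0} via the integrating factor $\ee^{\sigma_f^2 t^2/2}$ and exploit the Gaussian damping. Your integral-equation reformulation is equivalent to the paper's explicit Duhamel formula \eqref{3.10}; the one place you genuinely diverge is in handling the $1/\sqrt N$ error for \eqref{coe4.5-2} on $t\in[N^{(1-c)/2},N^{1-c}]$. You use a two-stage supremum bootstrap, first extracting the auxiliary estimate $\sup_{[0,N^{(1-c)/2}]}\bb E|g_1|\prec N^{-1/2}$ and then splitting the feedback integral at $N^{(1-c)/2}$, relying on the monotonicity $\ee^{-\sigma_f^2 t^2/2}\ee^{\sigma_f^2 s^2/2}\le 1$ to make the inner-piece cross term small. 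The paper instead shifts the base point to $t_0=N^{1/2-3c/4}$, uses $|\phi_1(t_0)|\le 1$ together with $\exp(-\sigma_f^2(t^2-t_0^2)/2)=O(N^{-10})$ to dispose of the initial-condition term, and observes that on $[t_0,t]$ one has $\bb E|\cal E_1(s)|\prec s N^{3c/4-1}$, so the Laplace integral immediately gives $O_\prec(N^{-1+3c/4})$ with no intermediate bound. Both mechanisms are valid; the paper's choice avoids the need for the weaker auxiliary estimate at the cost of picking a non-obvious base point, while yours is arguably more mechanical to discover. One small point worth stating explicitly in a write-up: when you take expectations of the integral inequality to get the one for $w(t)=\bb E|g_1(t)|$, you are using that $b_1$ is deterministic (so it factors out of $\bb E$), which Proposition~\ref{lem. tf=0} does guarantee; and the a priori bound $|g_\xi|\le 2$ you invoke for the bootstrap follows from $|\phi_\xi|\le 1$, which in the $\xi=0$ case uses $|\varphi_4|\le 1$ (valid because the $X_i$ are real by the symmetry of $\b D$).
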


\begin{proof}
Let 
$
B_\xi(t)\deq \int_0^t b_\xi(u) \dd u,
$
and it is easy to see that 
\begin{equation} \label{3.9}
	B_\xi(t)-B_\xi(s)=O_{\prec}((t-s)t^2N^{-1})\,,   \quad 0 \leq s \leq t \leq N^{(1-c)/(2-\xi)}\,.
\end{equation} 
The ODE \eqref{ODE} is solved by
\begin{equation} \label{3.10}
	\begin{aligned}
		\phi_{\xi}(t) =&\,\phi_\xi (t_0) \exp\Big(-\frac{\sigma_f^2 (t^2-t_0^2)}{2}+B_\xi(t)-B_\xi(t_0)\Big)\\
		&+\int_{t_0}^t\exp\Big(  -\frac{\sigma_{f}^2(t^2-s^2)}{2}+B_\xi(t)-B_\xi(s)\Big)  \cal E_\xi(s) \dd s
	\end{aligned}
\end{equation}
for $0 \leq t_0 \leq t \leq N^{(1-c)/(2-\xi)}$.

First, we prove \eqref{coe4.5-1}. Note that when $\xi=0$, the error term $\cal E_0(t)$ in \eqref{ODE} is deterministic. Using \eqref{3.10} for $\xi=0$ and $t_0=0$, we have
\begin{equation*}
	\bb E \varphi_1(t)\varphi_4(t)=\phi_0(t) =\exp\Big(-\frac{\sigma_f^2 t^2}{2}+B_0(t)\Big)
	+\int_{0}^t\exp\Big(  -\frac{\sigma_{f}^2(t^2-s^2)}{2}+B_0(t)-B_0(s)\Big)  \cal E_0(s) \dd s\\
\end{equation*}
for $t \in [0,N^{(1-c)/2}]$. By \eqref{3.9} and $\sigma_f >c_*>0$, we have
\[
\exp\Big(-\frac{\sigma_f^2 t^2}{2}+B_0(t)\Big)=\exp\Big(-\frac{\sigma^2_ft^2}{2}\Big)+O_{\prec}(N^{-1})\,,
\]
and
\[
\exp\Big(  -\frac{\sigma_{f}^2(t^2-s^2)}{2}+B_0(t)-B_0(s)\Big) \leq \exp\Big(-\frac{\sigma^2_f(t^2-s^2)}{4}\Big)
\]
for $0 \leq s \leq t \leq N^{(1-c)/2}$.
Together with \eqref{3.99} we get
\begin{equation*}
	\bb E \varphi_1(t)\varphi_4(t)=\exp\Big(-\frac{\sigma^2_ft^2}{2}\Big)+O_{\prec}(N^{-1})+O_{\prec }(N^{-1})\cdot  \int_{0}^t\exp\Big(  -\frac{\sigma_{f}^2(t^2-s^2)}{4}\Big) (s+1)\,\dd s\,.
\end{equation*}
When $t \in [0, \log N]$, it is easy to see from the above that 
\begin{equation}
	\bb E \varphi_1(t)\varphi_4(t)=\exp\Big(-\frac{\sigma^2_ft^2}{2}\Big)+O_{\prec}(N^{-1})\,. \label{012106}
\end{equation}
When $t \in [\log N,N^{(1-c)/2}]$, we have
$
\exp\big( -\sigma_{f}^2(t^2-s^2)/4\big)=O(N^{-10})
$
for $s \in [0, t-(\log N)^2 t^{-1}]$. Thus
\begin{align}
	\bb E \varphi_1(t)\varphi_4(t)
	&=\exp\Big(-\frac{\sigma^2_ft^2}{2}\Big)+O_{\prec }(N^{-1})\cdot  \int_{t-(\log N)^2t^{-1}}^t\exp\Big(  -\frac{\sigma_{f}^2(t^2-s^2)}{4}\Big) (s+1) \,\dd s+O_{\prec}(N^{-1})\notag\\
	&=\exp\Big(-\frac{\sigma^2_ft^2}{2}\Big)+O_{\prec }(N^{-1})\cdot  (\log N)^2t^{-1}\cdot (t+1)+O_{\prec}(N^{-1})\notag\\
	&=\exp\Big(-\frac{\sigma^2_ft^2}{2}\Big)+O_{\prec}(N^{-1}) \label{012105}
\end{align}

This finishes the proof of  \eqref{coe4.5-1}.

Next, we prove \eqref{coe4.5-2}. Note that $|\varphi_3|, |\phi_{\xi}|\leq 1$. We have, using \eqref{3.10} with $\xi=1$ and $t_0=N^{1/2-3c/4}$ that
\begin{multline*}
	\Big|\bb E\big[\varphi_1(t)\varphi_2(t)\varphi_ 3(t)\big]\Big| \leq \bb E |\phi_1(t)| \\
	\leq\Big|\exp\Big(-\frac{\sigma_f^2 (t^2-t_0^2)}{2}+B_1(t)-B_1(t_0)\Big)\Big|+\int_{t_0}^t\Big|\exp\Big(  -\frac{\sigma_{f}^2(t^2-s^2)}{2}+B_1(t)-B_1(s)\Big)\Big|\bb E  |\cal E_1(s)| \dd s\,.
\end{multline*}
By \eqref{3.9}, $\sigma_f>c_*>0$, and $N^{1-c}\geq t \geq N^{(1-c)/2}=t_0N^{c/4}$, we have
\[
\Big|\exp\Big(-\frac{\sigma_f^2 (t^2-t_0^2)}{2}+B_1(t)-B_1(t_0)\Big)\Big|=O(N^{-10})\,.
\]
By \eqref{3.99},
$
\bb E|\cal E_1(s)| \prec \frac{s+1}{N}+\frac{1}{\sqrt{N}} \leq \frac{3s}{N^{1-3c/4}}
$
for $s\geq t_0$. Thus
\begin{equation*}
	\begin{aligned}
		&\quad\int_{t_0}^t\Big|\exp\Big(  -\frac{\sigma_{f}^2(t^2-s^2)}{2}+B_1(t)-B_1(s)\Big)\Big|\bb E  |\cal E_1(s)| \dd s \\
		&\prec \frac{1}{N^{1-3c/4}} \int_{t_0}^t\exp\Big(  -\frac{\sigma_{f}^2(t^2-s^2)}{4}\Big) s\,\dd s\\
		&=	O_{\prec}(N^{-10})+\frac{1}{N^{1-3c/4}} \int_{t-(\log N)^2t^{-1}}^t\exp\Big(  -\frac{\sigma_{f}^2(t^2-s^2)}{4}\Big) s\,\dd s
		\prec N^{-1+3c/4}\,.
	\end{aligned}
\end{equation*}
The above three relations imply the desired result \eqref{coe4.5-2}.
\end{proof}

Now, with the above estimates on characteristic functions, we can prove Theorem \ref{thmmain} in the sequel. 

\begin{proof}[Proof of Theorem \ref{thmmain}]	
	Let us denote
	\[
	{\psi}_f(t) \deq \bb E  \exp( \ii t \sigma_{f,\gamma} \widehat{{\cal Z}}_{f,\gamma})\,. 
	\]
	Setting $c=\kappa/2$ in Lemma \ref{lem1558} and \eqref{coe4.5-1}, we have
	\begin{equation} \label{3.2}
		\begin{aligned}
			&\ \  \ \ \Big|{\psi}_f(t)-\exp\Big(-\frac{\sigma_{f,\gamma}^2t^2}{2}\Big)\Big|=\Big|\bb E[\varphi_1(t)\bb E_{\rm d}[ \varphi_2(t)\varphi_3(t)]]-\exp\Big(-\frac{\sigma_{f,\gamma}^2t^2}{2}\Big)\Big|\\
			&\leq \Big|\bb E[\varphi_1(t)\varphi_4(t)]\exp\Big(-\frac{a_2(1-\gamma)^2(c_1^f)^2t^2}{8}\Big)-\exp\Big(-\frac{\sigma_{f,\gamma}^2t^2}{2}\Big)\Big|\\%
			&\ \ \ + \bb E\bigg|\bb E_{\rm d}\big[\varphi_{2}(t)\varphi_{3}(t)\big]-\exp\Big(-\frac{a_2(1-\gamma)^2(c_1^f)^2t^2}{8}\Big)\varphi_4(t)\bigg|\leq  C(N^{-1+\kappa/2}+ \cal X N^{-1/2+\kappa/2})
		\end{aligned} 
	\end{equation}
	uniformly for all $t \in [0,N^{1/2-\kappa/4}]$. By \eqref{coe4.5-2} and $\sigma_f>c_*>0$, we have
	\begin{equation} \label{maomao}
		\Big|{\psi}_f(t)-\exp\Big(-\frac{\sigma_{f,\gamma}^2t^2}{2}\Big)\Big| \leq C N^{-1+\kappa/2}	
	\end{equation} 
	uniformly for $t \in [N^{1/2-\kappa/4},N^{1-\kappa/2}]$. By the boundness of $f$ and Definition \ref{def:Wigner}  we have
	\begin{equation} \label{3.3}
		\Big|{\psi}_f(t)-\exp\Big(-\frac{\sigma_{f,\gamma}^2t^2}{2}\Big)\Big|\leq |{\psi}_f(t)-1|+\Big|\exp\Big(-\frac{\sigma_{f,\gamma}^2t^2}{2}\Big)-1\Big| \leq CtN
	\end{equation}
	uniformly in $N$ and $t \in[0,N^{-2}]$. Let us apply Lemma \ref{lem:essen} for $Y=\sigma_{f,\gamma} \widehat{\cal Z}_{f,\gamma}$ and  $T=N^{1-\kappa/2}$. We have
	\begin{multline} \label{34}
		\Delta(\widehat{\cal Z}_{f,\gamma},Z) \leq C \int_0^{N^{1-\kappa/2}} \frac{|{\psi}_f(t\sigma_{f,\gamma}^{-1})-\exp(-t^2/2)|}{t} \,\dd t +CN^{-1+\kappa/2}\\
		=C \int_0^{N^{-10}} \frac{|{\psi}_f(t\sigma_{f,\gamma}^{-1})-\exp(-t^2/2)|}{t} \,\dd t +C \int_{N^{-10}}^{N^{1-\kappa/2}} \frac{|{\psi}_f(t\sigma_{f,\gamma}^{-1})-\exp(-t^2/2)|}{t} \,\dd t\\ +CN^{-1+\kappa/2}
		\leq  C(N^{-1+\kappa}+ \cal X N^{-1/2+\kappa})\,,
	\end{multline}
	where in the last step we used \eqref{3.2} -- \eqref{3.3}. Combining \eqref{brick} and \eqref{34} we finish the proof.
\end{proof}

\vspace{2em}

\section{Proof of Proposition \ref{lem. tf=0}} \label{sec4}

For the rest of this paper we shall always assume $H$ is a real symmetric Wigner matrix. Using the complex cumulant expansion formula \cite[Lemma 7.1]{HK}, our argument can be easily extended to the complex Hermitian case, and for conciseness we shall omit the details.

In Sections \ref{sec4.1} -- \ref{sec4.6} we prove \eqref{ODE} for $\xi=1$, i.e. computing
$
\phi_1(t)=\bb E_{\rm o}\varphi_1(t)\varphi_2(t);
$ 
the case $\xi=0$ will be proved in  \ref{sec4.4}. In Sections \ref{sec4} and  \ref{sec5}, with certain abuse of notations, we will simply write
\[
\bb E \equiv \bb E_{\rm o}\,, \quad \langle X\rangle \equiv X-\bb E_{\rm o} X 
\]
and we denote
$\zeta(t)\deq \varphi_1(t)\varphi_2(t), \widehat{\zeta}(t)\deq \varphi_1(t)\varphi_4(t).
$
Further, set
\begin{align}
	s_n\deq \cal C_n(\sqrt{N}H_{12})  \quad \mbox{and} \quad a_n\deq  \bb E (\sqrt{N}H_{11})^n \label{022701}
\end{align}
for all fixed $n \geq 3$. Note that this definition is coherent with Definition \ref{def:Wigner}. For $n \in \bb N_+$, we shall use the integration operator $\{\cdot\}_n\equiv \{\cdot\}_{n,f}$, defined by
\begin{equation} \label{int f}
	\{g\}_n\equiv \{g(z_1,...,z_n)\}_n\deq \frac{1}{\pi^n}\int_{\b D^n} \Big(\frac{\partial}{\partial \bar{z}_1} \tilde{f}(z_1)\Big)\cdots\Big(\frac{\partial}{\partial \bar{z}_n} \tilde{f}(z_n)\Big) g(z_1,z_2,...,z_n) \dd^2 z_1\cdots \dd z_n^2
\end{equation}
for $g:\bb C ^n\to \bb C$. We also abbreviate $\{\cdot\} \equiv \{\cdot \}_1$.  Note that we also conventionally use $\{\cdot\}$ to group elements of a generic set. But this abuse of notation is harmless since it is easy to tell the difference between a set and an integral in the following context.  The following is an elementary estimate, which is essentially due to the fact that $f \in C^5(\bb R)$. 

\begin{lem} \label{lem:integration}
	Suppose $g:\bb C \to \bb C$ satisfies
	$
	g(z) \prec |\eta|^{-5}
	$
	uniformly for $z=E+\ii \eta \in \b D$, then we have
	$
	\{g\} \prec 1.
	$
\end{lem}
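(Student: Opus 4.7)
The plan is a direct deterministic estimate on $\frac{\partial}{\partial\bar z}\tilde f$ combined with the hypothesis on $g$; I do not expect any serious obstacle. The only conceptual content is to extract the gain of four powers of $|y|$ from the almost-holomorphic structure of $\tilde f$.

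First, I would compute $\frac{\partial}{\partial\bar z}\tilde f$ explicitly from \eqref{def of f tilde}. The Taylor-type structure of the sum produces a telescoping cancellation between $\partial_x$ and $\ii\partial_y$: for each $k\in\{1,2,3,4\}$, the action of $\ii\partial_y$ on the prefactor $(\ii y)^k/k!$ produces $-(\ii y)^{k-1}/(k-1)!$, which exactly cancels the $(k-1)$-st term of $\partial_x\tilde f$. Only the top-order term and the pieces where the derivative lands on $\chi(y)$ survive, giving (for $|y|\leq 1$) the identity $\frac{\partial}{\partial\bar z}\tilde f(z)=\frac{y^4}{48}f^{(5)}(x)$ already used in \eqref{4.2}. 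Since $f\in C^5$ has compact support in $[-4,4]$ and $\chi'(y)$ is supported in $1\leq|y|\leq 2$, this yields the deterministic pointwise bound
\begin{equation*}
\Big|\tfrac{\partial}{\partial\bar z}\tilde f(z)\Big| \leq C\,|y|^4\,\mathbf 1_{|x|\leq 4,\,|y|\leq 2}+C\,\mathbf 1_{|x|\leq 4,\,1\leq|y|\leq 2}.
\end{equation*}

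Second, I would upgrade the hypothesis $g(z)\prec|\eta|^{-5}$ from pointwise in $z$ to uniform on $\b D$. In every intended application $g$ is a polynomial in Green-function entries and hence Lipschitz in $z$ on $\b D$ with constant at most a power of $N$ (using $\|\partial_z G(z)\|=\|G^2(z)\|\leq |\eta|^{-2}\leq N^{1/2}$ on $\b D$). A standard $\varepsilon$-net argument on an $N^{-C}$-spaced lattice in $\b D$, together with a union bound over the $N^{O(1)}$ lattice points, promotes the hypothesis to $\sup_{z\in\b D}|\eta|^5|g(z)|\prec 1$.

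Substituting these two bounds into the definition \eqref{int f} gives
\begin{equation*}
|\{g\}|\prec \int_{|x|\leq 4}\int_{N^{-1/4}\leq|y|\leq 2}|y|^4\cdot|y|^{-5}\,\dd y\,\dd x+O(1)=O(\log N),
\end{equation*}
where the $O(1)$ absorbs the contribution of the $\chi'$ piece, integrated over a compact region bounded away from $\eta=0$. Since $\log N\prec 1$, we conclude $\{g\}\prec 1$. The mechanism is that the four powers of $|y|$ produced by the almost-holomorphic extension exactly compensate the $|\eta|^{-5}$ singularity of $g$ up to an integrable $|y|^{-1}$ factor; this is ultimately why the hypothesis $f\in C^5$ (and not merely $C^4$) is needed for the argument to close at scale $\eta\geq N^{-1/4}$.
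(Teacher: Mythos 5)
Your proof is correct, and since the paper declares the lemma elementary and omits the argument, yours is the expected route; there is no alternative proof in the paper to compare against. Two remarks. First, you are right that passing from the pointwise-in-$z$ domination $g(z)\prec|\eta|^{-5}$ to a bound on the integral is the only non-trivial point: as literally stated the lemma requires the implicit (and in every application automatic) assumption that $g$ is deterministically bounded by a fixed power of $N$ on $\b D$ and is Lipschitz in $z$ with a polynomially bounded constant, so that an $\varepsilon$-net plus union bound upgrades the pointwise domination to $\sup_{z\in\b D}|\eta|^5|g(z)|\prec 1$; without some such regularity the statement would fail for pathological random fields $g$, so making the hypothesis explicit, as you do, is appropriate. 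Second, a small aside on constants: with \eqref{def of f tilde} the telescoping you describe gives $\partial_{\bar z}\tilde f(z)=\tfrac12\,\tfrac{(\ii y)^4}{4!}\,f^{(5)}(x)=\tfrac{y^4}{48}f^{(5)}(x)$ on $\{|y|\le 1\}$, as you compute; the prefactor $-\ii/48$ in \eqref{4.2} and the expression $-\tfrac{\ii y^3}{12}f^{(4)}(x)$ appearing shortly after \eqref{4.3} (which corresponds to truncating the sum at $k=3$) are slips in the paper, but immaterial, since only the order $|y|^4$ enters. That order is exactly what makes $\int_{N^{-1/4}\le|y|\le 2}|y|^{4}\cdot|y|^{-5}\,\dd y\asymp\log N\prec 1$ close at scale $\eta\geq N^{-1/4}$, which is the content of the remark that $f\in C^5$ suffices.
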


\subsection{The first expansion} \label{sec4.1}
We start with
\[
\phi_1'(t)=\frac{\ii }{\pi} \int_{\b D} \frac{\partial }{\partial \bar{z}} \tilde{f}(z)\bb E \big[\big(\big\langle\text{Tr} \widehat{G}(z) \rangle -\sum_i\langle (\widehat{G}^2(z))_{ii} \rangle H_{ii}\big)\zeta(t)\big] {\rm d}^2 z\,.
\]
It is easy to see from Lemma \ref{lem0diag} that
\[
\sum_{i}  \langle (\widehat{G}^2(z))_{ii} \rangle H_{ii} \prec \frac{1}{\sqrt{N|\eta|^3}}
\]
uniformly for $z \in \b D$, and together with Lemma \ref{lem:integration} we get
\[
\phi_1'(t)=\ii \bb E \{\langle \tr \widehat{G} \rangle\} \zeta(t)+O_{\prec}(N^{-1/2})\,.
\]
It suffices to check the first term on RHS of the above. By the resolvent identity
$
z\widehat{G}(z)=\widehat{H}\widehat{G}-I,
$
we have
\[
z\bb E \langle \tr \widehat{G} \rangle \zeta(t)={\sum_{i,j
}}^{\hspace{-0cm}*}\bb E H_{ij}\widehat{G}_{ji} \langle \zeta(t) \rangle \,,
\]
where we recall the notation of distinct summation ${\sum}^*$ from \eqref{sum*}. We compute the RHS of the above using Lemma \ref{lem:cumulant_expansion}, with $h=H_{ij}$ and $\cal F(h)\equiv \cal F_{ij}(\widehat{H})=\widehat{G}_{ji} \langle \zeta(t) \rangle$, which leads to
\begin{equation*} 
	\begin{aligned} 
		z\bb E \langle \tr \widehat{G} \rangle \zeta(t)
		&=\frac{1}{N} {\sum_{i,j
		}}^{\hspace{-0cm}*} \bb E  \frac{\partial \widehat{G}_{ji}}{\partial H_{ij}} \langle \zeta(t) \rangle  +\frac{1}{N} {\sum_{i,j
		}}^{\hspace{-0cm}*}\bb E\widehat{G}_{ji}\frac{\partial \langle \zeta(t)\rangle}{\partial H_{ij}}+
		\sum_{k=2}^{\ell}\bb E L_k +{\sum_{i,j}}^*\bb E \cal  R^{(ji)}_{\ell+1}\\
		&\ \eqd  (a)+(b)+ \sum_{k=2}^\ell  \bb EL_k+{\sum_{i,j}}^*\bb E\cal R^{(ji)}_{\ell+1}\,,
	\end{aligned} 
\end{equation*}
where
\begin{equation} \label{3.14} 
	L_k=\frac{s_{k+1}}{k!}\frac{1}{N^{(1+k)/2}}  {\sum_{i,j
	}}^{\hspace{-0cm}*} \frac{\partial^k( \widehat{G}_{ji} \langle \zeta(t) \rangle)}{\partial H_{ij}^k}\,.  
\end{equation}
Here $l$ is a fixed positive integer to be chosen later, and $\cal R_{l+1}^{(ji)}$ is a remainder term defined analogously to $\cal R_{l+1}$ in (\ref{eq:cumulant_expansion}). 
Using the differential rule 
\begin{equation} \label{diff}
	\frac{\partial \widehat{G}_{ab}}{\partial H_{ij}}=-\widehat{G}_{ai}\widehat{G}_{jb}-\widehat{G}_{aj}\widehat{G}_{ib}\,, \quad  i \ne j\,,
\end{equation}
we get
\begin{equation*} 
	\begin{aligned}
		(a)&= N^{-1} {\sum_{i,j}}^* \bb E (-\widehat{G}_{ij}\widehat{G}_{ij}-\widehat{G}_{ii}\widehat{G}_{jj})\langle \zeta(t)\rangle\\
		&=N^{-1} \sum_{i,j} \bb E (-\widehat{G}_{ij}\widehat{G}_{ij}-\widehat{G}_{ii}\widehat{G}_{jj})\langle \zeta(t)\rangle+2N^{-1}\sum_i \bb E \langle \widehat{G}_{ii}^2 \rangle\zeta(t)\\
		&= -\bb E \langle\underline{\widehat{G}^2} \rangle \zeta(t) - N\bb E  \langle \underline{\widehat{G}} \rangle^2\zeta(t)-2\bb E \underline{\widehat{G}}\bb E \langle \tr {\widehat{G}} \rangle \zeta(t)+ N\bb E \zeta(t)\bb E \langle \underline{\widehat{G}} \rangle^2+2N^{-1}\sum_i \bb E \langle \widehat{G}_{ii}^2 \rangle\zeta(t)\,,
	\end{aligned} 
\end{equation*} 
and
\begin{equation*} 
	(b)=-\frac{2\ii t}{N}{\sum_{i,j}}^*\bb E\widehat{G}_{ij}\{(\widehat{G}^2)_{ij}\} \zeta(t)+\frac{4\ii t}{N}{\sum_{i,j}}^*\sum_k\bb E\widehat{G}_{ij}\{(\widehat{G}^2)_{ik}\widehat{G}_{kj}\} H_{kk}\zeta(t)\,.
\end{equation*}
Altogether we obtain
\begin{equation}  \label{maingreen}
	\begin{aligned}
		\phi'_1(t) &= \ii\bb E \{T\langle\underline{\widehat{G}^2} \rangle\} \zeta(t) +\ii N\bb E \{T \langle \underline{\widehat{G}} \rangle^2\}\zeta(t)
		-\ii  N\bb E\{ T\langle \underline{\widehat{G}} \rangle^2\}\bb E \zeta(t) \\
		&\ \ +2\ii N^{-1}\sum_i \bb E \{T\langle \widehat{G}_{ii}^2 \rangle\}\zeta(t)
		-\frac{2 t}{N}{\sum_{i,j}}^*\bb E\{T\widehat{G}_{ij}\}\{(\widehat{G}^2)_{ij}\} \zeta(t)+O_{\prec}(N^{-1/2})\\
		&\ \ +\frac{4 t}{N}{\sum_{i,j}}^*\sum_k\bb E\{T\widehat{G}_{ij}\}\{(\widehat{G}^2)_{ik}\widehat{G}_{kj}\} H_{kk}\zeta(t)-\ii\sum_{k=2}^{\ell}\bb E\{TL_{k}\}-\ii {\sum_{i,j}}^*\bb E \{T\cal R^{(ji)}_{\ell+1}\}\,,
	\end{aligned}
\end{equation}
where 
$
	T\equiv T(z)\deq (-z-2\bb E \ul{\widehat{G}})^{-1}\,.
$
It is easy to check, from Theorem \ref{refthm1} that
\begin{equation} \label{T}
	|T(z)|=O(|\eta|^{-1/2})
\end{equation}
uniformly for $z =E+\ii \eta\in \b D$. Let us estimate the terms in \eqref{maingreen}. By Lemma \ref{prop4.4} and \eqref{T}, we see that
\[
T\langle \ul{\widehat{G}^2} \rangle  \prec \frac{1}{N|\eta|^{5/2}}\,.
\]
Since $|\zeta(t)|=|\varphi_1(t)\varphi_{2}(t)|\leq 1$, we can use Lemma \ref{lem:integration} to show that
\begin{equation} \label{4.7}
	\bb E \{T\langle\underline{\widehat{G}^2} \rangle\} \zeta(t) \prec N^{-1}\,.
\end{equation}
Similarly, Theorem \ref{refthm1}  and \eqref{T} imply $T\langle \underline{\widehat{G}} \rangle^2 \prec N^{-2}|\eta|^{-5/2}$, which leads to 
\begin{equation} \label{4.8}
	N\bb E \{T \langle \underline{\widehat{G}} \rangle^2\}\zeta(t)
	-N\bb E\{ T\langle \underline{\widehat{G}} \rangle^2\}\bb E \zeta(t)  \prec N^{-1}\,.
\end{equation}
From Lemma \ref{prop4.4} we also know that
\[
\sum_i \langle \widehat{G}_{ii}^2 \rangle =\sum_i \widehat{G}_{ii}^2 -Nm(z)^2+ \bb E\Big(Nm(z)^2-\sum_i \widehat{G}_{ii}^2 \Big) \prec |\eta|^{-1}\,,
\]
and thus
\begin{equation} \label{4.9}
	2N^{-1}\sum_i \bb E \{\langle \widehat{G}_{ii}^2 \rangle \}\zeta(t) \prec N^{-1}\,.
\end{equation}
Let us abbreviate $\widehat{G}\deq \widehat{G}(z_1)$, $T\equiv T(z_1)$ and $\widehat{F}\equiv G(z_2)$. We see that
\begin{equation}  \label{g_1}
	\begin{aligned} 
		&{\sum_{i,j}}^*\bb ET\widehat{G}_{ij}(\widehat{F}^2)_{ij} \zeta(t)=T \sum_{i,j} \bb E \widehat{G}_{ij}(\widehat{F}^2)_{ij} \zeta(t)-T \sum_{i} \bb E  \widehat{G}_{ii}(\widehat{F}^2)_{ii} \zeta(t)\\
		=&\,T \bb E\tr {\widehat{G}\widehat{F}^2}\bb E \zeta(t)-NTm(z_1)m'(z_2)\bb E \zeta(t)+O_{\prec}\Big(\frac{1}{|\eta_1\eta_2|^2}\Big)\\
		=&\,\bigg(-\frac{N}{z_1+2m(z_1)} \Big(\partial_{z_2}\frac{m(z_1)-m(z_2)}{z_1-z_2} \Big)-Nm'(z_1)m'(z_2)\bigg)\bb E \zeta(t)+O_{\prec}\Big(\frac{1}{|\eta_1\eta_2|^2}\Big)\\
		=&: Ng_1(z_1,z_2) \bb E \zeta(t)  +O_{\prec}\Big(\frac{1}{|\eta_1\eta_2|^2}\Big)\,,
	\end{aligned} 
\end{equation}
where in the second step we used Lemma \ref{prop4.4}, while in the third step we used Theorem \ref{refthm1} and the basic fact $z+2m(z)=-m(z)/m'(z)$. Hence
\begin{equation} \label{4.11}
	\begin{aligned}
		&\,-\frac{2 t}{N}{\sum_{i,j}}^*\bb E\{T\widehat{G}_{ij}\}\{(\widehat{G}^2)_{ij}\} \zeta(t)	\\
		=&\,-\frac{2t}{\pi^2} \int_{\b D^2} \Big(\frac{\partial}{\partial \bar{z}_1} \tilde{f}(z_1)\Big)\Big(\frac{\partial}{\partial \bar{z}_2} \tilde{f}(z_2)\Big)g_1(z_1,z_2)\dd^2 z_1 \dd^2 z_2 \,\bb E \zeta(t)+O_{\prec}(tN^{-1})\,.
	\end{aligned}
\end{equation}
By Lemma \ref{prop4.4}, we also have
\begin{equation*}
	\begin{aligned}
		&{\sum_{i,j}}^*\sum_k\bb ET\widehat{G}_{ij}(\widehat{F}^2)_{ik}\widehat{F}_{kj} H_{kk}\zeta(t)\\
		=&\ T\sum_k \bb E (\widehat{G}\widehat{F}^3)_{kk}H_{kk}\zeta(t)-T{\sum_{i,k}}^*\bb E\widehat{G}_{ii}(\widehat{F}^2)_{ik}\widehat{F}_{ki} H_{kk}\zeta(t)-T\sum_{k}\bb E\widehat{G}_{kk}(\widehat{F}^2)_{kk}\widehat{F}_{kk} H_{kk}\zeta(t)\,,
	\end{aligned}
\end{equation*}
which can be estimated by $O_{\prec}\Big(\frac{1}{|\eta_1\eta_2^2|}\Big)$. As a result
\begin{equation} \label{kkk}
	\frac{4 t}{N}{\sum_{i,j}}^*\sum_k\bb E\{T\widehat{G}_{ij}\}\{(\widehat{G}^2)_{ik}\widehat{G}_{kj}\} H_{kk}\zeta(t)=O_{\prec}(tN^{-1})\,.
\end{equation}
The estimate for the remainder term can be done routinely. One can follow, e.g.\,the proof of Lemma 3.4 (iii) in \cite{HKR}, and readily check that 
\begin{equation} \label{RRR}
	{\sum_{i,j}}^*\bb E\{T\cal R^{(ji)}_{\ell+1}\} \prec N^{-1}
\end{equation}
for some fixed (large) $\ell\in \bb N_+$. From now on, we shall always assume the remainder term in cumulant expansion is negligible. Inserting \eqref{4.7} -- \eqref{4.9},  \eqref{4.11} -- \eqref{RRR} into \eqref{maingreen}, we have
\begin{equation} \label{4.14}
	\begin{aligned}
		\phi'_1(t) 
		=&\,-\frac{2t}{\pi^2} \int_{\b D^2} \Big(\frac{\partial}{\partial \bar{z_1}} \tilde{f}(z_1)\Big)\Big(\frac{\partial}{\partial \bar{z_2}} \tilde{f}(z_2)\Big)g_1(z_1,z_2)\dd^2 z_1 \dd^2 z_2 \,\phi_1(t)\\
		&\,-\ii\sum_{k=2}^{\ell}\bb E\{TL_{k}\}+O_{\prec}(tN^{-1})+O_{\prec}(N^{-1/2})\,.
	\end{aligned}
\end{equation}
Therefore, what remains is the analysis of $\bb E \{TL_k\}$, $k \ge 2$. This requires very precise preliminary bounds. To this end, we introduce the notion of abstract polynomials.

\subsection{Abstract polynomials} \label{sec8}
We now define a notion of formal monomials in a set of formal variables.  Here the word \emph{formal} refers to the fact that these definitions are purely algebraic and we do not assign any values to variables or monomials. We start with the definition of the variables in our monomials.

\begin{defn} \label{def:cal_V}
	Let $\cal I_*=\{i_1,i_2,\dots\}$ be an infinite set of formal indices. For $z \in \b D$, we define
	\begin{equation*}
		\begin{aligned}
			\b G(z)\deq & \big\{T^\alpha(z)(\widehat{G}(z)H_{\mathrm d}\widehat{G}^{1+\omega}(z))_{x_0y_0}^d\big(\partial_z^{\delta}\widehat{G}_{x_1y_1}(z)\big)\widehat{G}_{x_2y_2}(z)\cdots \widehat{G}_{x_ny_n}(z):\\
			&\quad  \alpha,\omega,d,\delta\in \{0,1\}, \alpha+\omega+\delta\leq 1, n \in \bb N_+, x_0,y_0,...,x_n,y_n \in \cal I_*\big\}\,,
		\end{aligned}
	\end{equation*}
	and $\b G\deq \cup_{z\in \b D} \b G(z)$. For 
	$\cal G=T^\alpha(\widehat{G}H_{\mathrm d}\widehat{G}^{1+\omega})_{x_0y_0}^d\big(\partial_z^{\delta}\widehat{G}_{x_1y_1}\big)\widehat{G}_{x_2y_2}\cdots \widehat{G}_{x_ny_n} \in \b G
	$, we denote
	\[
	\cal I(\cal G)=\{x_0,y_0,...,x_n,y_n\} \quad \mbox{and}\quad d(\cal  G)=d\,.
	\]
	The collection of off-diagonal indices of $\cal G$ is denoted by
	\[
	\cal I_0(\cal G)=\{x_j,y_j: 0\leq j\leq n, x_j\ne y_j\}\,.
	\]	
	Accordingly, when $x_i\ne y_i$, the corresponding $(\widehat{G}(z)H_{\mathrm d}\widehat{G}^{1+\omega}(z))_{x_iy_i}$, $\partial_z^{\delta}\widehat{G}_{x_iy_i}(z)$, or $\widehat{G}_{x_iy_i}$ is called an \emph{off-diagonal} factor in $\cal G$.
\end{defn}

Now we define the monomials we use.

\begin{defn} \label{defn4.3}

	To $\sigma,\mu \in \N_+$, $a \in \bb C$, and $\theta \in \R$
	we assign a formal monomial
	\begin{equation} \label{def_T}
		\cal	P =  a t^\mu N^{-\theta}\{\cal G_1\}\cdots \{\cal G_\sigma\}\,,
	\end{equation}
	where $\cal G_1\equiv \cal G(z_1)\in \b G(z_1),...,\cal G_\sigma \equiv \cal G(z_\sigma) \in \b G(z_\sigma)$. We denote $\sigma(\cal P) = \sigma$, $\mu(\cal P)=\mu$, $\theta(\cal P) = \theta$, and
	\[
	\cal I(\cal P)\deq \cal I(\cal G_1)\cup \cdots \cup \cal I(\cal G_\sigma)\,, \quad  	\nu(\cal P)\deq \big|\cal I(\cal P)\big|\,, 
	\] 
	as well as
	\[
	d(\cal P)\deq d(\cal G_1)+\cdots +d(\cal G_\sigma)\,.
	\]
	The collection of off-diagonal indices of $\cal P$ is denoted by
	\[
	\cal I_0(\cal P)\deq \cal I_0(\cal G_1)\cup \cdots \cup \cal I_0(\cal G_\sigma) \,,
	\]
	and accordingly the definition of \emph{off-diagonal} factors is also naturally extended to $\cal P$. In addition, we define 
	\begin{equation} \label{def_TT}
		\cal	{\mathring P} =a  t^\mu N^{-\theta} \langle \{\cal G_1\}\cdots \{\cal G_\sigma\} \rangle\quad \mbox{and} \quad \widetilde{\cal P}=at^{\mu}N^{-\theta} \cal G_1\cdots \cal G_\sigma\,.
	\end{equation}
	We denote by $\b P$ the set of formal monomials $\cal P$ of the form \eqref{def_T}, and denote by ${\mathring{ \b P}}$ the set of formal monomials $\cal	{\mathring P} $ of the form \eqref{def_TT}.	
\end{defn}

The next definition concerns the \textit{evaluation} of $\cal P$.

\begin{defn} \label{def:evaluation}
	(i) For each monomial $\cal P \in  \b P$ with $\nu = \nu(\cal P)$, its \emph{evaluation} is a random variable depending on an $\nu$-tuple $(i_1,\dots,i_{\nu})\in \{1,2,\dots,N\}^{\nu}$. It is obtained by replacing, in the formal monomial $\cal P$, the formal indices $i_1,\dots,i_{\nu}$ with the integers $i_1,\dots,i_{\nu}$ and the formal variables $\widehat{G}, H_{\mathrm d}$ with the random variables defined in \eqref{def_hatG} and  \eqref{HHH}. The evaluation of $\cal	{\mathring P}  \in {\mathring{ \b P}}$ is defined accordingly.
	
	(ii) Let $\cal P \in \b P$, and set
	\[
	\cal I_2(\cal P)\deq \{i \in \cal I(\cal P ): i \mbox{ appears twice in }\cal P\}, \quad \mbox{and} \quad \nu_2(\cal P)=|\cal I_2(\cal P)|\,.
	\]
	W.O.L.G., let us assume $\cal I_2(\cal P)=\{i_1,...,i_{\nu_2}\}$ for some nonnegative integer $\nu_2\leq \nu_1$. We define the sums
	\begin{equation*} 
		\cal S_2 (\cal P) \deq \sum_{i_1,\dots,i_{\nu_2}}  \cal P\,, \quad
		\mbox{and}
		\quad 
		\cal S(\cal P)\deq {\sum_{i_{\nu_2+1},...,i_{\nu}}}^{\hspace{-0.4cm}*} \cal S_2(\cal P)\,.
	\end{equation*}
	The definitions of $\cal S(\cdot)$ and $\cal S_2(\cdot)$ can be extended to any family of random variables that is labeled with indices in $\cal I_*$, in particular for $\mathring {\cal P}$ and $\widetilde{\cal P}$.
	
\end{defn}

Observe that, after summing  over the indices $i_1,..., i_{\nu_2}\in \mathcal{I}_2(\mathcal{P})$, $\mathcal{S}_2(\mathcal{P})$ is again a monomial, where each variable might now contain several Green functions $\widehat{G}(z)$ at different $z \in \b D$; accordingly, we extend the term \emph{off-diagonal factors} to include those entries of the form $(W(H_{\mathrm d},G))_{xy}$ with $x\neq y$, where $W(H_{\mathrm d},G)$ can be any word in $H_{\mathrm d}$ and $\widehat{G}(z_1),\ldots, \widehat{G}(z_\sigma)$ or their derivatives w.r.t. $z_i$'s. Next, we define parameters that characterize smallness in our estimates of $\cal S(\cal P)$.

\begin{defn} \label{defn4.5}
	Let $\cal P \in \b P$. We use $\nu_1(\cal P)$ to denote number of off-diagonal factors in $\cal S_2(\cal P)$, and $\nu_{3,0}(\cal P)$ denotes the number of traces in $\cal S_2(\cal P)$. We also set
	\[
	\nu_3(\cal P)\deq |\cal I_2(\cal P)|-\nu_{3,0}(\cal P)\,,\nc \quad   \cal I_1(\cal P) \deq \cal I(\cal P) \backslash \cal I_2(\cal P)\,. \
	\]
	In addition, for a factor 
	$\cal G=T^\alpha(\widehat{G}H_{\mathrm d}\widehat{G}^{1+\omega})_{x_0y_0}^d\big(\partial_z^{\delta}\widehat{G}_{x_1y_1}\big)\widehat{G}_{x_2y_2}\cdots \widehat{G}_{x_ny_n} \in \b G$ of $\widetilde{\cal P}$, we use $\nu_1(\cal G)$ to denote number of off-diagonal factors in $\cal S_2(\cal P)$ that only contain terms in $\cal G$, and $\nu_{3,0} (\cal G)$ denotes the number of traces in $\cal S_2(\cal P)$ that only contain terms in $\cal G$. We further set
	\[
	\nu_3(\cal G)\deq |\cal I(\cal G)\cap \cal I_2(\cal P)|-\nu_{3,0}(\cal G)
	\]
	and
	\[
	\nu_0(\cal G)\deq {\nu}_1(\cal G)+2\nu_3(\cal G)+\alpha+2\delta+(1+2\omega)d\,.
	\]
	Finally, we set
	\[
	\nu_0(\cal P)\deq \nu_0(\cal G_1)\vee\cdots\vee\nu_0(\cal G_\sigma)\,, \quad \nu_*(\cal P)\deq (\nu_0(\cal G_1)-10)_+ +\cdots + (\nu_0(\cal G_\sigma)-10)_+\,,
	\]
	and note that $\nu_*(\cal P)=0$ for $\nu_0(\cal P)\leq 10$.
\end{defn}

\begin{example} \label{example 4.6}
	Let us consider
	\[
	\cal P\deq6 t^{10} N^{-2}\{\cal G_1\}\{\cal G_2\}\{\cal G_3\}\deq6 t^{10} N^{-2}\{T\widehat{G}_{ij}\widehat{G}_{kx}\}\{(\partial_z\widehat{G}_{ll})\widehat{G}_{xj}\widehat{G}_{ik}\}\{(\widehat{G}H_{\mathrm d}\widehat{G}^2)_{kj}\widehat{G}_{iy}\widehat{G}_{jk}^4\}\,.
	\]
	
	(i) Let us first illustrate Definitions \ref{def:cal_V} and \ref{defn4.3}. We have $\sigma(\cal P)=3$, $\mu(\cal P)=10$, $\theta(\cal P)=2$, $\cal I(\cal P)=\{i,j,k,l,x,y\}$, $\nu(\cal P)=6$, $d(\cal P)=1$. We also see that $\cal I_0(\cal P)=\{i,j,k,x,y\}$.

	(ii) Regarding Definition \ref{def:evaluation}, we have $\cal I_2(\cal P)=\{l,x\}$, $\nu_2(\cal P)=2$, and
	\begin{align*}
	&\cal S_2(\cal P)=\sum_{l,x}\cal P\\
	&=6t^{10}N^{-2}\{T(z_1)\widehat{G}_{ij}(z_1)(\widehat{G}(z_1)\widehat{G}(z_2))_{kj}(\partial_{z_2}\tr \widehat{G}(z_2))\widehat{G}_{ik}(z_2)\}_2\{(\widehat{G}H_{\mathrm d}\widehat{G}^2)_{kj}\widehat{G}_{iy}\widehat{G}_{jk}^4\} \,.
	\end{align*}

	(iii) Regarding Definition \ref{defn4.5}, we see that $\nu_1(\cal P)=3+6=9$, $\cal I_2(\cal P )=\{x,l\}$, $\nu_{3,0}(\cal P)=1$, $\nu_3(\cal P)=2-1=1$, $\cal I_1(\cal P)=\{i,j,k,y\}$. In addition, $\nu_3(\cal G_1)=|\{x\}|-0=1$, and $\nu_0(\cal G_1 )=1+2+1+0+0=4$. Similarly, $\nu_0(\cal G_2)=1+2+0+2+0=5$ and $\nu_0(\cal G_3)=6+0+0+0+3=9$. As a result, $\nu_0(\cal P)=4\vee5\vee 9=9$, $\nu_*(\cal P)=0$.
\end{example}

\begin{rem}
	Let us illustrate how the above parameters determine the size of $\cal S(P)$. First
	of all, trivially, there are $N^{\cal I_2(\cal P)}$ terms in the sum over indices in $\cal I_2(\cal P)$, but summing over these indices
	gains an $N^{-\nu_{3}(
		\cal P)}$ improvement from the non-tracial quantities produced in this summation. Then, the
	$\nu_1(\cal P)$ off-diagonal entries in the resulting sum $\cal S_2(\cal P)$ further contributes an $N^{-\nu_1(\cal P)/2}$ factor thanks to
	Lemma \ref{prop4.4}. However, we shall also monitor the power of each $\eta_i^{-1}$ which is bounded by $\nu_0(\cal G_i)/2$, for $i \in \{1, . . . ,\sigma\}$. Notice that an $\eta_i^{-5}$ can be killed in the integral $\{\cal G_i\}$ due to Lemma \ref{lem:integration}. But any higher
	power of $\eta_i^{-1}$ will contribute additional factor to $\cal S (\cal P)$. This additional contribution from high power of $\eta_i$'s is then determined by $\nu_*(\cal P)$.
\end{rem}

The following is an elementary consequence of Lemmas \ref{lem0diag} and \ref{lemGHG}. 

\begin{lem} \label{lem:intermediate}
	For any $\cal P= a t^\mu N^{-\theta}\{\cal G_1\}\cdots \{\cal G_\sigma\} \in \b P$, we have
	\[
	\cal S(\widetilde{\cal P}) \prec t^{\mu(\cal P)}N^{\nu(\cal P)-\theta(\cal P)-\nu_1(\cal P)/2-\nu_3(\cal P)-d(\cal P)/2} |\eta_1|^{-\nu_0(\cal G_1)/2} \cdots |\eta_\sigma|^{-\nu_0(\cal G_\sigma)/2} \,.
	\]
\end{lem}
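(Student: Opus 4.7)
The plan is a direct accounting of sizes, combining the entry-wise local laws of Lemma \ref{lem0diag} (together with the multi-resolvent refinements in Lemma \ref{prop4.4}), the bound for $H_{\mathrm d}$-linear factors in Lemma \ref{lemGHG}, and the pointwise estimate \eqref{T} for $T$. First I would carry out the inner summation $\cal S_2(\widetilde{\cal P})$ over the repeated indices $i\in \cal I_2(\cal P)$. Each such $i$ either contracts two Green-function entries inside a single factor into a trace (these are counted by $\nu_{3,0}(\cal P)$, hence by $\sum_j \nu_{3,0}(\cal G_j)$), or contracts them into a non-tracial multi-resolvent entry, possibly merging two different $\cal G_j$'s. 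The remaining repeated indices, $\nu_3(\cal P)=\nu_2(\cal P)-\nu_{3,0}(\cal P)$ in total, fall in the second case.

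Next I would bound each factor appearing in $\cal S_2(\widetilde{\cal P})$ by the estimates above. By \eqref{2.14}, each off-diagonal entry (whether a single $\widehat G_{xy}$ or an off-diagonal multi-resolvent entry produced by contraction) contributes an $N^{-1/2}$ gain on top of an appropriate $|\eta|^{-1/2}$ power; by \eqref{2.13}, each trace produced in $\cal S_2$ contributes a factor of $N$ times its $|\eta|$ powers; each non-tracial diagonal entry contributes an $|\eta|^{-1}$-type power with no $N$ gain. A $T$-factor contributes $|\eta|^{-1/2}$ by \eqref{T}; a $\partial_z\widehat G$ factor contributes $|\eta|^{-1}$ from differentiation; and by Lemma \ref{lemGHG} each $(\widehat G H_{\mathrm d}\widehat G^{1+\omega})$ factor contributes an extra $N^{-1/2}$ together with appropriate $|\eta|$ powers. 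Finally, the outer summation $\cal S$ over $\cal I_1(\cal P)$ contributes $N^{|\cal I_1(\cal P)|}=N^{\nu(\cal P)-\nu_2(\cal P)}$. Collecting all $N$-powers yields
\begin{equation*}
-\theta(\cal P) + \bigl(\nu(\cal P)-\nu_2(\cal P)\bigr) + \nu_{3,0}(\cal P) - \tfrac{1}{2}\nu_1(\cal P) - \tfrac{1}{2}d(\cal P) \;=\; \nu(\cal P) - \theta(\cal P) - \nu_3(\cal P) - \tfrac{1}{2}\nu_1(\cal P) - \tfrac{1}{2}d(\cal P),
\end{equation*}
exactly as claimed.

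For the $|\eta_i|$-powers, the additive definition $\nu_0(\cal G)=\nu_1(\cal G)+2\nu_3(\cal G)+\alpha+2\delta+(1+2\omega)d$ is calibrated so that each of the basic bounds above contributes the correct increment to $\nu_0(\cal G_i)/2$: off-diagonal factors contribute $1/2$ (via $\nu_1$), non-tracial diagonal contractions contribute $1$ (via $2\nu_3$), $T$ contributes $1/2$ (via $\alpha$), $\partial_z\widehat G$ contributes $1$ (via $2\delta$), and each $(\widehat G H_{\mathrm d}\widehat G^{1+\omega})$ factor contributes $(1+2\omega)/2$. Summing these contributions inside $\cal G_i$ produces exactly $|\eta_i|^{-\nu_0(\cal G_i)/2}$.

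The main obstacle is the case in which a repeated index $i\in \cal I_2(\cal P)$ merges Green-function entries coming from two different $\cal G_j$'s with distinct spectral parameters. Here one must invoke the multi-resolvent bound \eqref{2.14}, whose $|\eta|$-exponents depend on the relative ordering $|\eta_{j_1}|\leq\cdots\leq|\eta_{j_l}|$, and verify that the resulting $|\eta|$-powers distribute correctly between the ambient $\cal G_j$'s so that the total bound still factors as the claimed product $|\eta_1|^{-\nu_0(\cal G_1)/2}\cdots|\eta_\sigma|^{-\nu_0(\cal G_\sigma)/2}$. This amounts to a finite case analysis for each possible structure of $\cal S_2(\cal P)$, the accounting being consistent with that illustrated in Example \ref{example 4.6}.
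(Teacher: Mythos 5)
Your proposal is essentially the paper's own argument: bound $\cal S_2(\widetilde{\cal P})$ factor-by-factor using Lemmas \ref{lem0diag}, \ref{lemGHG} and the $T$-bound \eqref{T}, then multiply by $N^{\nu(\cal P)-\nu_2(\cal P)}$ for the outer free sum; the arithmetic you write out (using $\nu_{3,0}(\cal P)=\nu_2(\cal P)-\nu_3(\cal P)$) reproduces the paper's one-line accounting, and the subtlety you flag about distributing $|\eta|$-exponents across merged $\cal G_j$'s is precisely what makes the product $|\eta_1|^{-\nu_0(\cal G_1)/2}\cdots|\eta_\sigma|^{-\nu_0(\cal G_\sigma)/2}$ an overestimate rather than a tight bound (indeed $\sum_j\nu_3(\cal G_j)$ can strictly exceed $\nu_3(\cal P)$), which the paper also passes over without comment. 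The only small slip is citing \eqref{2.13} for traces produced in $\cal S_2$; those are covered by \eqref{2.12}, whereas \eqref{2.13} concerns sums of products of diagonal entries that do not form a genuine trace.
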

\begin{proof}
	Let us first consider the estimate of $\cal S_2(\widetilde{\cal P})$. The power of $t$ in the estimate is obviously $\mu(\cal P)$. The trivial power of $N$ in the estimate of $S_2(\widetilde{\cal P})$ is $\nu_2(\cal P)$, and it can be further improved by Lemmas \ref{lem0diag} and \ref{lemGHG}. In fact, through the definitions of $\nu_1(\cal P)$, $\nu_3(\cal P)$ and $d(\cal P)$, we see that they improve the power of $N$ in the estimate by $-\nu_1(\cal P)/2-\nu_3(\cal P)-d(\cal P)/2$.  Finally, again by Lemmas \ref{lem0diag} and \ref{lemGHG}, the power of $\eta_i$ in the estimate of $\cal S_2(\widetilde{\cal P})$ is $-\nu_0(\cal G_i)/2$. Hence we have
	\[
	\cal S_2(\widetilde{\cal P}) \prec t^{\mu(\cal P)}N^{\nu_2(\cal P)-\theta(\cal P)-\nu_1(\cal P)/2-\nu_3(\cal P)-d(\cal P)/2} |\eta_1|^{-\nu_0(\cal G_1)/2} \cdots |\eta_\sigma|^{-\nu_0(\cal G_\sigma)/2}\,,
	\]
	and together with
	\[
	\cal S(\widetilde{\cal P}) \prec N^{\nu(\cal P)-\nu_2(\cal P)} \big|\cal S_2(\widetilde{\cal P}) \big|
	\] 
	we get the desired result.
\end{proof}

For $\cal P \in \b P$, let
\begin{equation*}
	\cal E_0(\cal P)\deq t^{\mu(\cal P)} N^{\nu(\cal P)-\theta(\cal P)-\nu_1(\cal P)/2-\nu_3(\cal P)-d(\cal P)/2+\nu_*(\cal P)/8}
\end{equation*}
and
\begin{equation*}
	\cal E_*(\cal P) \deq t^{\mu(\cal P)} N^{\nu(\cal P)-\theta(\cal P)-\nu_1(\cal P)/2-\nu_3(\cal P)-1}\,.
\end{equation*}

By Lemmas \ref{lem:integration} and \ref{lem:intermediate}, together with the fact that $|\eta| \geq N^{-1/4}$ for $z=E+\ii \eta \in \b D$, we have the following estimate.

\begin{lem} \label{lem4.5}
	For any $\cal P \in \b P$, we have
	\[
	\bb E \,\cal S(\cal P)=O_{\prec}\big( \cal E_0(\cal P)\big)\,.
	\]
	In addition, for any complex random variable $Y$ satisfying $|Y|\leq 1$, we have
	\begin{equation*} 
		\bb E \,\cal S(\cal P)Y=O_{\prec}\big( \cal E_0(\cal P)\big)\cdot \bb E Y +\bb E\cal S (\cal	{\mathring P} )Y\,,
	\end{equation*}
	where
	$
	\bb E\,\cal S (\cal	{\mathring P} )Y= O_{\prec}\big( \cal E_0(\cal P)\big)\,.
	$
\end{lem}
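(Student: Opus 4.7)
The plan is to prove the three assertions in order: first the pointwise domination $\cal S(\cal P)\prec \cal E_0(\cal P)$, then the expectation estimate $\bb E\cal S(\cal P)=O_\prec(\cal E_0(\cal P))$, and finally the $Y$-decomposition. The pointwise bound reduces directly to a clean combination of Lemmas \ref{lem:intermediate} and \ref{lem:integration}. Using Fubini I would rewrite $\cal S(\cal P)=\{\cal S(\widetilde{\cal P})\}_\sigma$, which is legitimate because the formal indices of $\cal P$ are independent of the spectral parameters $z_1,\dots,z_\sigma$. Lemma \ref{lem:intermediate} then gives
\[
\cal S(\widetilde{\cal P})\prec t^{\mu(\cal P)}N^{\nu(\cal P)-\theta(\cal P)-\nu_1(\cal P)/2-\nu_3(\cal P)-d(\cal P)/2}\prod_{i=1}^{\sigma}|\eta_i|^{-\nu_0(\cal G_i)/2}\,,
\]
and it remains to integrate the product of the $|\eta_i|^{-\nu_0(\cal G_i)/2}$ factors. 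The integrals factorize, so we only need to bound $\{|\eta|^{-k}\}$ for each $k=\nu_0(\cal G_i)/2$ separately. When $k\leq 5$, Lemma \ref{lem:integration} yields $\{|\eta|^{-k}\}\prec 1$; when $k>5$, factoring $|\eta|^{-k}=|\eta|^{-(k-5)}\cdot |\eta|^{-5}$ and using the lower bound $|\eta|\geq N^{-1/4}$ from the definition of $\b D$ gives $\{|\eta|^{-k}\}\prec N^{(k-5)/4}=N^{(\nu_0(\cal G_i)-10)/8}$. Multiplying the $\sigma$ contributions reproduces precisely the exponent $\nu_*(\cal P)/8$ in $\cal E_0(\cal P)$.

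Given the pointwise bound, $\bb E\cal S(\cal P)=O_\prec(\cal E_0(\cal P))$ follows from Lemma \ref{prop_prec} (ii): the required deterministic polynomial majorant comes from $\|\widehat{G}(z)\|\leq |\eta|^{-1}\leq N^{1/4}$ on $\b D$ together with $|T(z)|=O(|\eta|^{-1/2})$ from \eqref{T}, which together ensure $|\cal S(\cal P)|\leq N^{C}$ for some fixed $C$. A minor point is that in Sections \ref{sec4} and \ref{sec5} we have $\bb E\equiv \bb E_{\rm o}$, so strictly speaking a conditional analogue of Lemma \ref{prop_prec} (ii) is needed; this follows by the usual truncation-and-Markov trick applied to the unconditional tail estimate coming from $\cal S(\cal P)\prec \cal E_0(\cal P)$ and introduces no new ingredient.

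For the last statement, observe that Definition \ref{defn4.3} and linearity give the algebraic identity $\cal S(\cal{\mathring P})=\cal S(\cal P)-\bb E\cal S(\cal P)$. Since $\bb E\cal S(\cal P)$ is measurable with respect to the sigma-algebra generated by $H_{\rm d}$, we may split
\[
\bb E[\cal S(\cal P)Y]=\bb E\cal S(\cal P)\cdot \bb E Y+\bb E[\cal S(\cal{\mathring P})Y]\,.
\]
The first term is already controlled by the previous step; for the second, the triangle inequality yields $\cal S(\cal{\mathring P})\prec \cal E_0(\cal P)$, and combining $|Y|\leq 1$ with the same conditional version of Lemma \ref{prop_prec} (ii) produces $\bb E[\cal S(\cal{\mathring P})Y]=O_\prec(\cal E_0(\cal P))$, as required. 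Overall the lemma is a bookkeeping consequence of previously established estimates; the only care required is the precise accounting of the exponents of $N$ and of $|\eta_i|^{-1}$ in order to recover the factor $N^{\nu_*(\cal P)/8}$, and I do not anticipate a genuine analytic obstacle.
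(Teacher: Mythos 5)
Your proof is correct and follows exactly the route the paper treats implicitly: the text simply declares the lemma to be a consequence of Lemmas~\ref{lem:integration} and~\ref{lem:intermediate} together with the bound $|\eta|\geq N^{-1/4}$, and your argument---Fubini to pass to $\{\cal S(\widetilde{\cal P})\}_\sigma$, iterated use of Lemma~\ref{lem:integration} one $z_i$ at a time (pulling out $N^{(\nu_0(\cal G_i)-10)/8}$ whenever $\nu_0(\cal G_i)>10$, giving precisely $N^{\nu_*(\cal P)/8}$ in total), Lemma~\ref{prop_prec}~(ii) in its conditional form for the expectation bound, and the tower-property decomposition for the $Y$-term---is exactly what was intended. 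One phrase worth tightening: the multi-integral does not literally factorize (the integrand $\cal S(\widetilde{\cal P})$ depends jointly on $z_1,\dots,z_\sigma$); what factorizes is the stochastic bound, and the variable-by-variable application of Lemma~\ref{lem:integration} is the correct formalization of that idea.
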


We have the following improved estimate for a special class of $\cal P$.

\begin{lem} \label{lem5.7}
	Let $t \in [0,N^{1-c}]$ for some fixed $c>0$, and let
	$\cal P =a t^\mu N^{-\theta}\{\cal G_1\}\cdots \{\cal G_\sigma\}\in \b P$ with $\nu(\cal P)=2$, $\nu_2(\cal P)=\nu_3(\cal P)=0$, $d(\cal P)\leq 1$, $\nu_0(\cal P)\leq 3+d(\cal P)$. Moreover, each $\cal G_i$, $i=1,...,\sigma$ contains at most one off-diagonal factor. For any complex random variable $Y$ satisfying $|Y|\leq 1$, we have the following estimates.

	(i) When $d(\cal P)=0$, we have
	\begin{equation*}  
		\bb E\,\cal S (\cal	{\mathring P} )Y=O_{\prec}\big(\cal E_*(\cal P) \big)\,.
	\end{equation*}
Moreover, under the additional assumption $\nu_1(\cal {P)}=1$, we have the stronger estimate
\begin{equation*}  
	\bb E\,\cal S (\cal	 P )Y=O_{\prec}\big(\cal E_*(\cal P) \big)\,.
\end{equation*}
	
	(ii) When $d(\cal P)=1$, we have
	\begin{equation*}  
		\bb E\,\cal S (\cal	 P )Y=O_{\prec}\big(\cal E_*(\cal P) \big)\,.
	\end{equation*}
\end{lem}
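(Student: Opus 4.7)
The plan is a case analysis on $(\nu_1(\cal P), d(\cal P))$. Since $\nu(\cal P)=2$, $\nu_2(\cal P)=\nu_3(\cal P)=0$, and $\nu_0(\cal P)\le 3+d(\cal P)$, each $\cal G_k$ involves only the two free indices $i,j$ (summed distinctly), carries at most one off-diagonal factor, and has very limited total $\eta$-power. Every off-diagonal factor thus takes the form $\widehat{G}_{ij}(z_k)$ (or $(\widehat{G}H_{\mathrm d}\widehat{G}^{1+\omega})_{ij}$ if $d(\cal G_k)=1$), and every non-off-diagonal factor is either a diagonal Green function entry at index $i$ or $j$, or a pure $T/\partial_z$ constant. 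Because $\nu_*(\cal P)=0$, Lemma \ref{lem:integration} absorbs all $|\eta|^{-k}$ with $k\le 5$, so it suffices to estimate $\cal S_2(\cal P)=\sum_{i\ne j}\cal G_1(i,j)\cdots\cal G_\sigma(i,j)$ pointwise in $(z_1,\ldots,z_\sigma)$.

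For case (ii), the unique $H_{\mathrm d}$-factor is bounded via Lemma \ref{lemGHG} by $\prec(N|\eta|^{1+\omega})^{-1}$, already supplying the $-d/2$ exponent of $\cal E_0$. The extra $N^{-1/2}$ gain over $\cal E_0$ comes from the remaining sum of diagonal factors, treated by $\sum_{i\ne j}=\sum_{i,j}-\sum_{i=j}$: the $\sum_{i,j}$ piece becomes a bilinear form $\b 1^\top M\b 1$ for a word $M$ in $\widehat{G}$-factors, controlled by the isotropic local law of Theorem \ref{refthm1} applied to the unit vector $\b e=\b 1/\sqrt{N}$, while the $\sum_{i=j}$ piece is handled by \eqref{2.13}. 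Case (i) with $\nu_1(\cal P)=1$ uses the same strategy: after extracting the single $\widehat{G}_{ij}(z_0)$, the identity $\sum_{i\ne j}\widehat{G}_{ij}(z_0)\prod_l a_i(z_l)b_j(z_l)=\b a^\top \widehat{G}(z_0)\b b-\sum_i a_ib_i\widehat{G}_{ii}(z_0)$ reduces the problem, after expanding each diagonal entry as $\widehat{G}_{ii}(z_l)=m(z_l)+\langle\widehat{G}_{ii}(z_l)\rangle$, to the leading deterministic case $\b a=c_a\b 1$, $\b b=c_b\b 1$. This leading case reads $c_ac_b\bigl(\b 1^\top \widehat{G}(z_0)\b 1-\sum_i\widehat{G}_{ii}(z_0)\bigr)=c_ac_b\sum_{i\ne j}\widehat{G}_{ij}(z_0)\prec\sqrt{N/|\eta_0|}$ by the isotropic law combined with \eqref{2.13}, yielding precisely the $N^{-1}$ gain over $\cal E_0$. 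Subleading terms from the expansion, each carrying a factor $\langle\widehat{G}_{ii}(z_l)\rangle\prec(N|\eta_l|)^{-1/2}$, are controlled by Cauchy--Schwarz against the rank-one quantity $(\widehat{G}(z_0)\b 1)_i-\widehat{G}_{ii}(z_0)\prec|\eta_0|^{-1/2}$ (again from Theorem \ref{refthm1}).

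For case (i) with $\nu_1\in\{0,2,3,\ldots\}$ I turn to the centered $\mathring{\cal P}$. Since the centering $\langle\cdot\rangle$ commutes with the integration $\{\cdot\}$ and the sum $\sum_{i\ne j}$, one obtains $\cal S(\mathring{\cal P})=at^\mu N^{-\theta}\{\langle\cal S_2(\widetilde{\cal P})\rangle\}_\sigma$, and the centering kills the $O(N^{\nu(\cal P)})$ deterministic leading contribution of the sum. When $\nu_1\ge 2$, the pairs of off-diagonal factors combine as $\sum_{i\ne j}\widehat{G}_{ij}(z_a)\widehat{G}_{ij}(z_b)=\tr(\widehat{G}(z_a)\widehat{G}(z_b))-\sum_i\widehat{G}_{ii}(z_a)\widehat{G}_{ii}(z_b)$, whose centered versions are bounded via \eqref{2.12} and \eqref{2.13} by $\prec|\eta_a\eta_b|^{-1}$ and $\prec|\eta|_{\min}^{-1}$ respectively --- each is $N$-times smaller than the raw sum, providing the required $N^{-1}$ gain. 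When $\nu_1=0$, $\cal S_2(\widetilde{\cal P})$ factorizes (up to single-index corrections) into a product of trace-type sums, and the identity $\langle XY\rangle=\bb E X\cdot\langle Y\rangle+\langle X\rangle\cdot\bb E Y+\langle X\rangle\langle Y\rangle-\bb E[\langle X\rangle\langle Y\rangle]$ combined with $\bb E\tr\widehat{G}=O(N)$ and $\langle\tr\widehat{G}\rangle\prec|\eta|^{-1}$ yields $\prec N/|\eta|_{\min}$. In every subcase the auxiliary random variable $Y$ with $|Y|\le 1$ contributes no difficulty since $|\bb E\cal S(\cdot)Y|\le\bb E|\cal S(\cdot)|$.

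The main obstacle is the delicate cancellation in case (i) with $\nu_1=1$: the expansion $\widehat{G}_{ii}(z_l)=m(z_l)+\langle\widehat{G}_{ii}(z_l)\rangle$ produces many subleading terms, and matching the leading $m(z_0)\sum_i a_ib_i$ contributions from the bilinear form against those from the diagonal correction requires careful index tracking across all placements of spectral parameters in the various $\cal G_k$'s. A secondary bookkeeping burden is the tight $\eta$-budget imposed by $\nu_0(\cal G_k)\le 3+d(\cal P)$, which leaves little slack before Lemma \ref{lem:integration} stops absorbing the $\eta$-weights at the $|\eta|^{-5}$ threshold.
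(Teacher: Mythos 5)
Your approach is genuinely different from the paper's. You aim for a direct, essentially pointwise estimate of $\cal S(\mathring{\cal P})$ (or $\cal S(\cal P)$) using the isotropic local law and algebraic identities reducing the double sum $\sum_{i\ne j}$ to quadratic forms and traces. The paper instead proves the stronger second-moment bound $\bb E|\cal S(\mathring{\cal P})|^2 = O_\prec(\cal E_*(\cal P)^2)$ of Lemma~\ref{lem5.1}, deduces Lemma~\ref{lem5.7} by Jensen, and establishes Lemma~\ref{lem5.1} via a recursive cumulant-expansion bootstrap supported by the lone-factor/happy-trio machinery of Lemma~\ref{lem5.4}. For $\nu_1(\cal P)\in\{0,1\}$ your reductions (to the trace-type sums of Lemma~\ref{prop4.4} when $\nu_1=0$, and to $\b a^\top\widehat G\b b-\sum_i a_ib_i\widehat G_{ii}$ with the isotropic law applied to $\b e=\b 1/\sqrt N$ when $\nu_1=1$) are plausible and, if the $\eta$-budget is tracked carefully, would give a cleaner argument than the paper's bootstrap for those two cases. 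For $\nu_1=2$ you state the identity only in the constant case $a_i\equiv b_j\equiv 1$; with nontrivial diagonal weights it becomes $\tr(A\widehat G_aB\widehat G_b)-\sum_i\widehat G_{a,ii}\widehat G_{b,ii}a_ib_i$ with random diagonal $A=\diag(a)$, $B=\diag(b)$, and the mixed terms $\tr(\langle A\rangle\widehat G_a\langle B\rangle\widehat G_b)$ then need a Hilbert--Schmidt/Cauchy--Schwarz argument that you do not spell out; this looks repairable but is not automatic.

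The real gap is $\nu_1(\cal P)\ge 3$. The constraint $\nu_0(\cal P)\le 3+d(\cal P)$ bounds each $\nu_0(\cal G_k)$ individually, not their number, and the hypothesis ``at most one off-diagonal factor per $\cal G_k$'' allows $\nu_1(\cal P)=\sigma$ with $\sigma$ arbitrary; the paper does invoke Lemma~\ref{lem5.7} for such inputs in Section~\ref{sec4.5} (where $\nu_1(\cal P_r)\ge 2p+1-k$ with $p$ as large as $k$). For $\nu_1\ge 3$, e.g.\ $\sum_{i\ne j}\widehat G_{ij}(z_1)\widehat G_{ij}(z_2)\widehat G_{ij}(z_3)a_ib_j$, no trace or bilinear-form identity collapses the sum: three off-diagonal entries sharing both indices $(i,j)$ cannot be reassembled into matrix products. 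The required $N^{-1}$ gain from centering is here a CLT-type concentration of a sum of $\sim N^2$ correlated terms --- precisely the fluctuation-averaging phenomenon --- and it is invisible from pointwise applications of the isotropic law; one gets only $\cal E_0(\cal P)$, not $\cal E_*(\cal P)=N^{-1}\cal E_0(\cal P)$. The paper captures this gain by the recursion in Section~\ref{sec5.2}, Case 3 (see \eqref{5.49}--\eqref{5.50}): one cumulant expansion applied to a lone off-diagonal factor produces abstract polynomials $\cal P^{(q)}$ with $\nu_1(\cal P^{(q)})\le\nu_1(\cal P)-2$ and $\cal E_0(\cal P^{(q)})\le\cal E_0(\cal P)$, and iterating down to $\nu_1\in\{0,1\}$ closes the self-consistent second-moment inequality. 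Your plan has no substitute for this induction, so the proposal as written does not prove the lemma in full generality.
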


\begin{proof}
	See Section \ref{sec5}.
\end{proof}

Armed with the above lemmas, we are now ready to deal with $\bb E\{TL_k\}$ in \eqref{4.14}.

\subsection{The estimate of $ \bb E\{TL_2\}$} \label{sec4.3}
By \eqref{3.14} and \eqref{diff}, we have
\begin{equation} \label{4.20}
	\begin{aligned}\,
		\bb E\{TL_2\}=&\,\frac{s_3}{2N^{3/2}}\cdot\bigg\{ T {\sum_{i,j
		}}^{\hspace{-0cm}*} \bb E\bigg(\frac{\partial^2( \widehat{G}_{ji} \langle\zeta(t) \rangle)}{\partial H_{ji}^2}\bigg)\bigg\}\\
		=&\,\frac{3s_3}{N^{3/2}}{\sum_{i,j}}^* \bb E \langle \{T\widehat{G}_{ii}\widehat{G}_{jj}\widehat{G}_{ij}\} \rangle \zeta(t) +\frac{s_3}{N^{3/2}}{\sum_{i,j}}^* \bb E \langle \{T\widehat{G}^3_{ij}\}\rangle \zeta(t)\\
		& +\frac{2 s_3\ii t}{N^{3/2}}{\sum_{i,j}}^* \bb E  \{T\widehat{G}_{ii}\widehat{G}_{jj}\}\{\partial_z\widehat{G}_{ij}\}\zeta(t)-\frac{4 s_3\ii t}{N^{3/2}}{\sum_{i,j}}^* \bb E  \{T\widehat{G}_{ii}\widehat{G}_{jj}\}\{(\widehat{G}H_{\mathrm d}\widehat{G}^2)_{ij}\}\zeta(t)\\
		&+\frac{2s_3\ii t}{N^{3/2}}{\sum_{i,j}}^* \bb E  \{T\widehat{G}_{ij}^2\}\{\partial_z\widehat{G}_{ij}\}\zeta(t)-\frac{4s_3\ii t}{N^{3/2}}{\sum_{i,j}}^* \bb E  \{T\widehat{G}_{ij}^2\}\{(\widehat{G}H_{\mathrm d}\widehat{G}^2)_{ij}\}\zeta(t)\\
		&+\frac{s_3}{2N^{3/2}}\cdot\bigg\{ T {\sum_{i,j
		}}^{\hspace{-0cm}*} \bb E\widehat{G}_{ji}\bigg(\frac{\partial^2\zeta(t) }{\partial H_{ji}^2}\bigg)\bigg\}\eqd  \sum_{p=1}^7\bb EL_{2,p}\,.
	\end{aligned}
\end{equation}
The terms on RHS of \eqref{4.20} can be estimated by Lemmas \ref{lem4.5} and \ref{lem5.7}. It is easy to see that $L_{2,1}=\cal S(\cal {\mathring P})\zeta(t)$, where $\mu(\cal P)=0$, $\nu(\cal P)=2$, $\theta(\cal P)=3/2$, $\nu_1(\cal P)=1$, $d(\cal P)=0$, $\nu_2(\cal P)=\nu_3(\cal P)=0$, and $\nu_0(\cal P)=2$. Thus we can apply Lemma \ref{lem5.7} and show that
\begin{equation*}
	\bb E L_{2,1}=O_{\prec}(N^{-1})\,.
\end{equation*}
In addition, $L_{2,4}=\cal S(\cal P)\zeta(t)$, where $\mu(\cal P)=1$, $\nu(\cal P)=2$, $\theta(\cal P)=3/2$, $\nu_1(\cal P)=1$, $d(\cal P)=1$, $\nu_2(\cal P)=\nu_3(\cal P)=0$, and $\nu_0(\cal P)=4$. Thus Lemma \ref{lem5.7} implies that 
$$
\bb E L_{2,4}=O_{\prec}(tN^{-1})
$$
By a similar argument, we can show that
$
\bb E L_{2,3} =O_\prec (tN^{-1}).
$
Using the trivial estimate from Lemma \ref{lem4.5}, one easily sees that
\[
\bb E L_{2,2}=O_{\prec}(N^{-1}) \quad \mbox{and}\quad \bb E L_{2,5}+\bb E L_{2,6}= O_{\prec}(tN^{-1})\,.
\]
By applying the differentials in $\bb EL_{2,7}$ carefully and using a similar argument as above, one can show that
\[
\quad \bb E L_{2,7}=O_{\prec}(t^2N^{-1})\cdot \phi_1(t)+O_{\prec}(tN^{-1}+t^2N^{-2})\,.
\]
Combining the above estimates, we arrive at
\begin{equation} \label{L_2}
	\bb E\{TL_2\}=O_{\prec}(t^2N^{-1})\cdot \phi_1(t)+O_\prec( (t+1)N^{-1})\,.
\end{equation}

\subsection{The estimate of $ \bb E\{TL_3\}$}
By \eqref{3.14} and \eqref{diff}, we have
\begin{equation*} 
	\begin{aligned}\,
		&\ \ \bb E\{TL_3\}=\,\frac{s_4}{6N^{2}}\bigg\{ T {\sum_{i,j
		}}^{\hspace{-0cm}*} \bb E\bigg(\frac{\partial^3( \widehat{G}_{ji} \langle\zeta(t) \rangle)}{\partial H_{ji}^3}\bigg)\bigg\}\\
		&=\,\frac{s_4}{6N^{2}}{\sum_{i,j
		}}^{\hspace{-0cm}*} \bb E\bigg(\bigg\{T \frac{\partial^3 \widehat{G}_{ji} }{\partial H_{ji}^3}\bigg\}\langle \zeta(t) \rangle\bigg) -\frac{\ii s_4t}{N^{2}}  {\sum_{i,j
		}}^{\hspace{-0cm}*} \bb E\bigg(\bigg\{ T\frac{\partial^2 \widehat{G}_{ji} }{\partial H_{ji}^2}\bigg\}\big\{(\partial_z\widehat{G}_{ij})-2(\widehat{G}H_{\mathrm d}\widehat{G}^2)_{ji}\big\} \zeta(t) \bigg) \\
		& -\frac{\ii s_4t}{N^{2}}  {\sum_{i,j
		}}^{\hspace{-0cm}*} \bb E\bigg(\bigg\{ T\frac{\partial \widehat{G}_{ji} }{\partial H_{ji}}\bigg\}\bigg\{\frac{\partial (\partial_z\widehat{G}_{ij})}{\partial H_{ji}}\bigg\} \zeta(t) \bigg)+\frac{2\ii s_4t}{N^{2}} {\sum_{i,j
		}}^{\hspace{-0cm}*} \bb E\bigg(\bigg\{ T\frac{\partial \widehat{G}_{ji} }{\partial H_{ji}}\bigg\}\bigg\{\frac{\partial (\widehat{G}H_{\mathrm d}\widehat{G}^2)_{ij}}{\partial H_{ji}}\bigg\} \zeta(t) \bigg) \\
		& -\frac{ 2s_4t^2}{N^{2}} {\sum_{i,j
		}}^{\hspace{-0cm}*} \bb E\bigg(\bigg\{T \frac{\partial \widehat{G}_{ji} }{\partial H_{ji}}\bigg\} \big\{(\partial_z\widehat{G}_{ij})-2(\widehat{G}H_{\mathrm d}\widehat{G}^2)_{ij}\big\}^2 \zeta(t) \bigg)\\
		& +\frac{s_4}{6N^{2}} {\sum_{i,j
		}}^{\hspace{-0cm}*} \bb E\bigg(\big\{T \widehat{G}_{ij}\big\}\frac{\partial^3 \zeta(t) }{\partial H_{ji}^3}\bigg)\eqd  \sum_{p=1}^6\bb EL_{3,p}\,.
	\end{aligned}
\end{equation*}
In the sequel, we estimate each term on the above separately. We claim, except for $\mathbb{E} L_{3,3}$ that contains the leading contribution, all the other terms are errors. In the sequel, we do not state the estimates for each single terms generated by the derivatives. Instead, for brevity, we pick those terms which have largest bounds by using Lemmas \ref{lem4.5} and \ref{lem5.7} and show the details of their estimates only. We will informally call these terms as the {\it worst} terms. The estimates of other terms are either similar or simpler, and thus we omit the details.

\textit{Step 1.} By \eqref{diff}, we see that the worst term in $\bb EL_{3,1}$ is of the form
\[
\bb EL_{3,1,1}\deq \frac{s_4}{N^2}{\sum_{i,j}}^*\bb E\{T\widehat{G}_{ii}^2\widehat{G}_{jj}^2\}\langle \zeta(t)\rangle=\frac{s_4}{N^2}{\sum_{i,j}}^*\bb E\langle\{T\widehat{G}_{ii}^2\widehat{G}_{jj}^2\}\rangle \zeta(t)\,.
\]
We see that $L_{3,1,1}=\cal S(\cal {\mathring P})\zeta(t)$, where $\mu(\cal P)=0$, $\nu(\cal P)=2$, $\theta(\cal P)=2$, $\nu_1(\cal P)=0$, $d(\cal P)=0$, $\nu_2(\cal P)=\nu_3(\cal P)=0$, $\nu_0(\cal P)=1$. By Lemma \ref{lem5.7}, we see that
$
\bb E L_{3,1,1}=O_{\prec}(N^{-1})\,.
$
By estimating other terms in $\bb EL_{3,1}$ with Lemma \ref{lem4.5}, we can show that
\begin{equation} \label{4.21}
	\bb E L_{3,1}=O_{\prec}(N^{-1})\,.
\end{equation}

\textit{Step 2.} By \eqref{diff}, we see that the worst term in $\bb EL_{3,2}$ is of the form
\[
\bb EL_{3,2,1}\deq -\frac{s_4\ii t}{N^2}{\sum_{i,j}}^*\bb E\{T\widehat{G}_{ii}\widehat{G}_{jj}\widehat{G}_{ij}\}\{(\partial_z\widehat{G}_{ij})\} \zeta(t)\,.
\]
We see that $L_{3,2,1}=\cal S(\cal {P})\zeta(t)$, where $\mu(\cal P)=1$, $\nu(\cal P)=2$, $\theta(\cal P)=2$, $\nu_1(\cal P)=2$, $d(\cal P)=0$, $\nu_2(\cal P)=\nu_3(\cal P)=0$, and $\nu_0(\cal P)=3$, $\nu_*(\cal P)=0$. By Lemma \ref{lem4.5}, we have $\bb E L_{3,2,1}=O_\prec(tN^{-1})$. Other terms in $\bb EL_{3,2}$ can be estimated similarly, and we have
\begin{equation} \label{4.22}
	\bb E L_{3,2}=O_\prec(tN^{-1})\,.
\end{equation}

\textit{Step 3.} Now let us estimate $\bb EL_{3,3}$, which contains the leading contribution to $\mathbb{E}(TL_3)$. By \eqref{diff}, we have
\begin{equation*}
	\begin{aligned}
		\bb EL_{3,3}=&-\frac{2 s_4\ii t}{N^2}{\sum_{i,j
		}}^{*}\bb E\{T\widehat{G}_{ii}\widehat{G}_{jj}\}\{(\partial_z\widehat{G}_{ii})\widehat{G}_{jj}\}\zeta(t)-\frac{2 s_4\ii t}{N^2}{\sum_{i,j
		}}^{*}\bb E\{T\widehat{G}_{ij}^2\}\{(\partial_z\widehat{G}_{ii})\widehat{G}_{jj}\}\zeta(t)\\
		&-\frac{2 s_4\ii t}{N^2}{\sum_{i,j
		}}^{*}\bb E\{T\widehat{G}_{ii}\widehat{G}_{jj}\}\{(\partial_z\widehat{G}_{ij})\widehat{G}_{ij}\}\zeta(t)-\frac{2 s_4\ii t}{N^2}{\sum_{i,j
		}}^{*}\bb E\{T\widehat{G}_{ij}^2\}\{(\partial_z\widehat{G}_{ij})\widehat{G}_{ij}\}\zeta(t)\,,
	\end{aligned}
\end{equation*}
and let us define the last sum by $\sum_{p=1}^4\bb EL_{3,3,p}$. The leading term is
\begin{equation*}
	\begin{aligned}
		\bb E L_{3,3,1}=&-\frac{2 s_4\ii t}{N^2}N(N-1)\{Tm^2\}\{m'm\}\bb E\zeta(t)-\frac{2 s_4\ii t}{N^2}{\sum_{i,j
		}}^{*}\bb E\langle\{T\widehat{G}_{ii}\widehat{G}_{jj}\}\{(\partial_z\widehat{G}_{ii})\widehat{G}_{jj}\}\rangle\zeta(t)\\
		&-\frac{2 s_4\ii t}{N^2}\Big({\sum_{i,j}}^*\bb E\{T\widehat{G}_{ii}\widehat{G}_{jj}\}\{(\partial_z\widehat{G}_{ii})\widehat{G}_{jj}\}-N(N-1)\{Tm^2\}\{m'm\}\Big)\bb E\zeta(t)\,.
	\end{aligned}
\end{equation*}
Note that $T(z)m(z)=m'(z)+O_{\prec}(1/(N\eta^2))$, and we can use Lemmas  \ref{prop4.4}, \ref{lem4.5} and \ref{lem5.7} to estimate the second and third term on RHS of the above. Together with Lemma \ref{lem:integration}, we get
\begin{equation*}
	\bb E L_{3,3,1}=-2s_4 \ii t \{m'm\}^2 \phi_1(t)+O_{\prec}(tN^{-1})\,.
\end{equation*}
Using Lemmas \ref{lem4.5} and \ref{lem5.7}, the other three terms in $\bb EL_{3,3}$ can be estimated by $O_{\prec}(tN^{-1})$.
Hence
\begin{equation} \label{4.23}
	\bb E L_{3,3}=-2s_4 \ii t \{m'm\}^2 \phi_1(t)+O_{\prec}(tN^{-1})\,.
\end{equation}

\textit{Step 4.} By \eqref{diff}, the worst terms in $\bb E L_{3,4}$ are
\begin{multline*}
	\bb EL_{3,4,1}+\bb EL_{3,4,2}\deq\\
	\frac{4 s_4\ii t}{N^2}{\sum_{i,j
	}}^{*}\bb E\{T\widehat{G}_{ii}\widehat{G}_{jj}\}\{(\widehat{G}H_{\mathrm d}\widehat{G}^2)_{ii}\widehat{G}_{jj}\}\zeta(t)+\frac{2 s_4\ii t}{N^2}{\sum_{i,j
	}}^{*}\bb E\{T\widehat{G}_{ii}\widehat{G}_{jj}\}\{(\widehat{G}H_{\mathrm d}\widehat{G})_{ii}(\partial_z\widehat{G}_{jj})\}\zeta(t)\,.
\end{multline*}
We see that $L_{3,4,1}=\cal S(\cal {P})\zeta(t)$, where $\mu(\cal P)=1$, $\nu(\cal P)=2$, $\theta(\cal P)=2$, $d(\cal P )=1$, $\nu_1(\cal P)=\nu_2(\cal P)=\nu_3(\cal P)=0$, and $\nu_0(\cal P)=3$. By Lemma \ref{lem5.7} we have
$
\bb E L_{3,4,1}=O_{\prec} (tN^{-1})\,,
$
and similarly the same bound also holds for $\bb E L_{3,4,2}$. Estimating other terms in $\bb EL_{3,4}$ we get
\begin{equation} \label{reee}
	\bb E L_{3,4} =O_{\prec}(tN^{-1})\,.
\end{equation}

\textit{Step 5.} Again by \eqref{diff}, the worst term in $\bb EL_{3,5}$ is of the form
\[
\bb EL_{3,5,1}\deq \frac{2s_4t^2}{N^2}{\sum_{i,j}}^*\bb E\{T\widehat{G}_{ii}\widehat{G}_{jj}\}\{(\partial_z\widehat{G}_{ij})\}^2 \zeta(t)\,.
\]
We see that $L_{3,5,1}=\cal S(\cal { P})\zeta(t)$, where $\mu(\cal P)=2$, $\nu(\cal P)=2$, $\theta(\cal P)=2$, $\nu_1(\cal P)=2$, $d(\cal P)=0$, $\nu_2(\cal P)=\nu_3(\cal P)=0$, and $\nu_0(\cal P)=3$, $\nu_*(\cal P)=0$. By Lemmas \ref{lem4.5} and \ref{lem5.7}, we have
\[
\bb EL_{3,5,1}=O_{\prec}(t^2N^{-1})\cdot \phi_1(t)+O_{\prec}(t^2N^{-2})\,.
\]
Other terms in $\bb EL_{3,5}$ can be estimated by $O_{\prec}(t^2N^{-2})$ using Lemma \ref{lem4.5}. Thus
\begin{equation} \label{4.24}
	\bb EL_{3,5}=O_{\prec}(t^2N^{-1})\cdot \phi_1(t)+O_{\prec}(t^2N^{-2})\,.
\end{equation}
A similar argument applies to $\bb EL_{3,6}$, and one can show that
\begin{equation} \label{4.25}
	\bb EL_{3,6}=O_{\prec}(t^2N^{-1}+t^3N^{-2})\cdot \phi_1(t)+O_{\prec}(tN^{-1}+t^2N^{-2}+t^{3}N^{-3})\,.
\end{equation}
Combining \eqref{4.21} -- \eqref{4.25}, we have
\begin{equation} \label{L_3}
	\bb E\{TL_3\}=\big(-2s_4 \ii t \{m'm\}^2+O_{\prec}(t^2N^{-1})\big) \phi_1(t)+O_{\prec}((t+1)N^{-1})\,.
\end{equation}

\subsection{The estimate of $\bb E\{TL_k\}, k \geq 4$} \label{sec4.5}
Let us fix a $k \geq 4$. From \eqref{3.14} and \eqref{diff} we see that
	\begin{align*}
		\big|\bb E\{TL_k\} \big|\leq &\,\frac{C_k}{N^{(1+k)/2}} \cdot\sum_{p=1}^k\sum_{\substack{k_0,...,k_p\in \bb N\\ k_0+\cdots +k_p=k-p}}\bigg| {\sum_{i,j
		}}^{*} \bb E\bigg(\bigg\{T\frac{\partial^{k_0} \widehat{G}_{ji}}{\partial H_{ji}^{k_0}}\bigg\} \bigg\{\frac{\partial^{k_1} B_{ji}}{\partial H_{ji}^{k_1}}t\bigg\}\cdots \bigg\{\frac{\partial^{k_p} B_{ji}}{\partial H_{ji}^{k_p}}t\bigg\}\zeta(t)\bigg)\bigg|\\
		&+\frac{C_k}{N^{(1+k)/2}} \cdot \bigg|{\sum_{i,j
		}}^{\hspace{-0cm}*}\bb E\bigg\{ T\frac{\partial^k \widehat{G}_{ji} }{\partial H_{ji}^k}\bigg\}\langle \zeta(t) \rangle\bigg|\eqd \sum_{p=1}^kA_{k,p}+A_{k,0}\,,
	\end{align*}
where $B=(\partial_z\widehat{G})-2\widehat{G}H_{\mathrm d}\widehat{G}^2$. For $k=4$, by \eqref{diff} there is at least one factor of $\widehat{G}_{ij}$ in $A_{4,0}$. Hence,  Theorems \ref{refthm1} and \ref{lem:integration} imply $A_{4,0}=O_{\prec}(N^{-5/2}\cdot N^2\cdot N^{-1/2})=O_{\prec}(N^{-1})$. We also see that $A_{k,0}=O_{\prec}(N^{-(k+1)/2}\cdot N^2)=O_{\prec}(N^{-1})$ for $k \geq 5$. Hence
\begin{equation} \label{4.27}
	A_{k,0}=O_{\prec}(N^{-1})\,.
\end{equation}
A similar argument shows that
\begin{equation} \label{4.28}
	A_{k,1}=O_{\prec}(tN^{-1})\,.
\end{equation}
Now we fix $p \geq 2$ as well as $k_0,...,k_p\in \bb N$ satisfying $k_0+\cdots +k_p=k-p$. We consider the following representative term of $A_{k,p}$, namely
\begin{equation*}
	\begin{aligned}
		\cal A\deq  &\,\frac{1}{N^{(k+1)/2}}{\sum_{i,j
		}}^{*} \bb E\bigg(\bigg\{T\frac{\partial^{k_0} \widehat{G}_{ji}}{\partial H_{ji}^{k_0}}\bigg\} \bigg\{\frac{\partial^{k_1} B_{ji}}{\partial H_{ji}^{k_1}}t\bigg\}\cdots \bigg\{\frac{\partial^{k_p} B_{ji}}{\partial H_{ji}^{k_p}}t\bigg\}\zeta(t)\bigg)
		\\
		=&\,t^{-1}{\sum_{i,j
		}}^{*} \bb E\bigg(\bigg\{T\frac{\partial^{k_0} \widehat{G}_{ji}}{\partial H_{ji}^{k_0}}tN^{-\frac{k_0+1}{2}}\bigg\} \bigg\{\frac{\partial^{k_1} B_{ji}}{\partial H_{ji}^{k_1}}tN^{-\frac{k_1+1}{2}}\bigg\}\cdots \bigg\{\frac{\partial^{k_p} B_{ji}}{\partial H_{ji}^{k_p}}tN^{-\frac{k_p+1}{2}}\bigg\}\zeta(t)\bigg)\\
		=: &\, t^{-1}{\sum_{i,j}}^* \bb E(\cal A_{0}\cdots \cal A_{p}\zeta(t))
	\end{aligned}
\end{equation*}
Note that in order to bound $\sum_{p=1}^kA_{k,p}$, it is enough to estimate $\cal A$, as $k$ is independent of $N$. By \eqref{diff}, Theorem \ref{refthm1}, Lemmas \ref{lemGHG} and \ref{lem:integration}, we have the naive estimates
\begin{equation} \label{4.29}
	\bigg\{\frac{\partial^{n} \widehat{G}_{ji}}{\partial H_{ji}^{n}}tN^{-\frac{n+1}{2}}\bigg\}=O_{\prec}(tN^{-1})\,,\quad	\bigg\{\frac{\partial^{n} B_{ji}}{\partial H_{ji}^{n}}tN^{-\frac{n+1}{2}}\bigg\}=O_{\prec}(tN^{-1})
\end{equation}
for all fixed $n \in \bb N$. Now let us split
$
\cal A_q =\cal A_{q,1}+\cdots +\cal A_{q,n_q}
$
for all $q \in \{0,1,...,p\}$, by applying \eqref{diff} to 
$$
\frac{\partial^{k_0} \widehat{G}_{ji}}{\partial H_{ji}^{k_0}}\quad 
\mbox{or} \quad \frac{\partial^{k_q} B_{ji}}{\partial H_{ji}^{k_q}}
$$ 
such that there is only one term in each $\cal A_{q,r}$, $1 \leq r \leq n_q$. Suppose some $\cal A_{q,r}$ contains at least two factors of  $(\widehat{G}H_{\mathrm d}\widehat{G})_{ji}$, $(\widehat{G}H_{\mathrm d}\widehat{G}^2)_{ji}$ $\partial_z\widehat{G}_{ij}$ or $\widehat{G}_{ij}$. Then \eqref{diff} suggests $k_q \geq 1$, and by Theorem \ref{refthm1},  Lemmas \ref{lemGHG} and \ref{lem:integration} we have
\begin{equation*}
	\cal A_{q,r}=O_{\prec}(N^{-1} \cdot t\cdot N^{-\frac{k_q+1}{2}}) =O_{\prec}(tN^{-2}) \,.
\end{equation*}
Combining with \eqref{4.29} we have
\begin{equation*}
		t^{-1}{\sum_{i,j}}^* \bb E(\cal A_{0}\cdots \cal A_{q-1}\cal A_{q,r} \cal A_{q+1}\cdots\cal A_{p}\zeta(t))
		=O_\prec( t^{-1}\cdot N^2 \cdot (tN^{-1})^p \cdot tN^{-2})=O_{\prec}(tN^{-1})\,.
\end{equation*}
Hence, it suffices to consider the case when each $\cal A_{q,r}$ contains at most one aforementioned off-diagonal factors. Then, according to Definitions  \ref{def:cal_V} and \ref{defn4.3}, we can write 
\begin{equation} \label{4.31}
	\cal A=\sum_{r} \bb E\cal S(\cal P_r)\zeta(t)+ O_{\prec}(tN^{-1})
\end{equation}
where $\nu_0(\cal P_r)\leq3+d(\cal P_r)$ for each $\cal P_r\in \b P$, and $\sum_r$ is over finitely many (independent of $N$) terms. We also see that $\mu(\cal P_r)=p$, $\nu(\cal P_r)=2$, $\theta(\cal P_r)=(k+1)/2$, $\nu_2(\cal P)=\nu_3(\cal P)=\nu_*(\cal P)=0$.  By applying \eqref{diff}, we can write 
\[
\frac{\partial^{n}B_{ji}}{\partial H_{ji}^{n}}=:\sum_{s}\cal G_s\,, \quad \frac{\partial^{n}\widehat{G}_{ji}}{\partial H_{ji}^{n}}=:\sum_{s}\cal G'_s
\]
and note that each $\cal G_s, \cal G'_s$ contains at least $1-n$ many off-diagonal factors. Hence $\nu_1(\cal P_r)\geq p+1-(k_0+\cdots+k_p)=2p+1-k$. Thus Lemmas \ref{lem4.5} and \ref{lem5.7} shows
\begin{equation*}
	\begin{aligned}
		\bb E \cal S(\cal P_r)\zeta(t)=&\,O_{\prec}(t^p\cdot N^{2-(k+1)/2-(2p+1-k)/2})\cdot \phi_1(t)+O_{\prec}(t^p\cdot N^{2-(k+1)/2-(2p+1-k)/2-1})\\
		=&\,O_{\prec}(t^pN^{-p+1})\cdot \phi_1(t)+O_{\prec}(t^pN^{-p})=O_{\prec}(t^2N^{-1})\cdot \phi_1(t)+O_{\prec}(t^2N^{-2})\,.
	\end{aligned}
\end{equation*}
for each $\bb E \cal S(\cal P_r)\zeta(t)$ on RHS of \eqref{4.31}. As a result, $\cal A=O_{\prec}(t^2N^{-1})\cdot \phi_1(t)+O_{\prec}(t^2N^{-2})$, which implies
\[
\sum_{p=2}^kA_{k,p}=O_{\prec}(t^2N^{-1})\cdot \phi_1(t)+O_{\prec}(t^2N^{-2})\,.
\]
Combining the above with \eqref{4.27} and \eqref{4.28}, we get
\begin{equation} \label{L_k}
	\bb E\{TL_k\}=O_{\prec}(t^2N^{-1})\cdot \phi_1(t)+O_{\prec}(tN^{-1})
\end{equation}
for all $k \geq 4$.

\subsection{Conclusion} \label{sec4.6}
Inserting \eqref{L_2}, \eqref{L_3} and \eqref{L_k} into \eqref{4.14}, we have 
\begin{equation*} 
	\begin{aligned}
		\phi'_1(t) 
		=&\,-\frac{2t}{\pi^2} \int_{\b D^2} \Big(\frac{\partial}{\partial \bar{z}_1} \tilde{f}(z_1)\Big)\Big(\frac{\partial}{\partial \bar{z}_2} \tilde{f}(z_2)\Big)g_1(z_1,z_2)\dd^2 z_1 \dd^2 z_2 \cdot\phi_1(t)\\
		&\,-\frac{2s_4  t}{\pi^2} \bigg(\int_{\b D} \frac{\partial}{\partial \bar{z}} \tilde{f}(z) m'(z)m(z)\dd^2z\bigg)^2\cdot\phi_1(t)\\
		&\, +O_{\prec}(t^2N^{-1})\cdot\phi_1(t)+O_{\prec}((t+1)N^{-1}+N^{-1/2})\,,
	\end{aligned}
\end{equation*}
where $g_1(z_1,z_2)$ is defined as in \eqref{g_1}. One can then follow a standard computation, e.g.\,\cite[Section 4.3]{LS18} to evaluate the first two terms on RHS of the above, and show that
\begin{equation} \label{4.39}
	\phi'_1(t) =-\sigma^2_f  t \cdot\phi_1(t) +O_{\prec}(t^2N^{-1})\cdot\phi_1(t)+O_{\prec}((t+1)N^{-1}+N^{-1/2})\,.
\end{equation}
This finished the proof of Proposition \ref{lem. tf=0} for $\xi=1$.

\subsection{Proof of Proposition \ref{lem. tf=0} for $\xi=0$} \label{sec4.4}

In this section we compute $\phi_0(t)=\bb E \varphi_1(t)\varphi_4(t)$. We shall work under the assumption that
\[
t\in [0,N^{(1-c)/2}]\,.
\]
Thanks to the much smaller $t$, the proof is easier than the case $\xi=1$. Let us denote $\varphi_1(t)=\exp(J(t))$ and $\varphi_4(t)=\exp(K(t))$. We see that
\begin{equation} \label{J}
	\max_{k \ne l}\bigg|	\frac{\partial^n J(t)}{\partial H_{kl}^n}\bigg|=O_{\prec}\Big(\frac{t}{N^{1/2}}\cdot (N^{1/2}\b 1(n \geq 2)+1)\Big)
\end{equation}
for $n \in \bb N_+$. In addition, from (\ref{eq of phi 4}), 
\begin{equation*}
	\begin{aligned}
		K(t)&= -\frac{t^2}{2}\sum_i \mathbb{E}_d X_{i}^2+\frac{(\ii t)^3}{6}\sum_{i}\mathbb{E}_d X_{i}^3\\
		&=-\frac{a_2t^2}{2N}\sum_i \{\langle (\widehat{G}^2)_{ii}\rangle\}^2+\frac{a_3c_2^ft^2}{2N^{3/2}}\{\langle  \tr \widehat{G}^2 \rangle \}+\frac{a_3t^3 \ii}{6 N^{3/2}}\sum_i \{\langle (\widehat{G}^2)_{ii}\rangle\}^3\\
		&\ -\frac{(a_4-a_2^2)c_2^f t^3\ii }{4N^2}\sum_i \{\langle (\widehat{G}^2)_{ii}\rangle\}^2+\frac{(a_5-2a_3a_2)c_2^f t^3 \ii}{8N^{5/2}} \{\langle  \tr \widehat{G}^2 \rangle \}+F(t)\,,
	\end{aligned}
\end{equation*} 
where $F(t)$ is deterministic and satisfies $|F(t)|=O(t^2N^{-1})$. It is easy to check from Theorem \ref{refthm1} that
\begin{equation} \label{4.40}
	K(t) =O_{\prec}\Big(\frac{t^2}{N}+\frac{t^3}{N^2}\Big)=O_{\prec}\Big(\frac{t^2}{N}\Big)\,, \quad \mbox{and} \quad  K'(t) =O_{\prec}\Big(\frac{t}{N}\Big)\,.
\end{equation}
In addition, Theorem \ref{refthm1} and \eqref{diff} implies that
\begin{equation} \label{4.42}
	\max_{k\ne l}\Bigg|	\frac{\partial K(t)}{\partial H_{kl}}-\frac{2a_2t^2}{N}\sum_i \{\langle (\widehat{G}^2)_{ii}\rangle\}\{(\widehat{G}^2)_{ik}\widehat{G}_{li}+(\widehat{G}^2)_{il}\widehat{G}_{ki}\}\Bigg| =O_{\prec}\Big(\frac{t^2}{N^2}\Big)\,.
\end{equation}
and
\begin{equation} \label{4.43}
	\max_{k \ne l}\bigg|	\frac{\partial^n K(t)}{\partial H_{kl}^n}\bigg|=O_{\prec}\Big(\frac{t^2}{N^{3/2}}\cdot (N^{1/2}\b 1(n \geq 4)+1)\Big)
\end{equation}
for $n \in \bb N_+$. We have
\[
\phi_0'(t)=\frac{\ii }{\pi} \int_{\b D} \frac{\partial }{\partial \bar{z}} \tilde{f}(z)\bb E \big[\big\langle\text{Tr} \widehat{G}(z) \rangle \varphi_1(t)\varphi_4(t)\big] {\rm d}^2 z+ \bb E[ K'(t) \varphi_1(t)\varphi_4(t)]\,.
\]
By \eqref{4.40} we have  $\bb E[ K'(t) \varphi_1(t)\varphi_4(t)]=O_{\prec}(tN^{-1})$, and thus
\[
\phi_0'(t)=\ii \bb E \{\langle \tr \widehat{G} \rangle\} \varphi_1(t)\varphi_4(t)+O_{\prec}(tN^{-1})\,.
\]
Now we can compute the first term on RHS of the above by Lemma \ref{lem:cumulant_expansion}. Similar to \eqref{maingreen}, we have
\begin{equation}  \label{maingreen2}
	\begin{aligned}
		\phi'_0(t) &= \ii\bb E \{T\langle\underline{\widehat{G}^2} \rangle\} \varphi_1(t)\varphi_4(t) +\ii N\bb E \{T \langle \underline{\widehat{G}} \rangle^2\}\varphi_1(t)\varphi_4(t)
		-\ii  N\bb E\{ T\langle \underline{\widehat{G}} \rangle^2\}\phi_0(t) \\
		&\ \ +2\ii N^{-1}\sum_i \bb E \{T\langle \widehat{G}_{ii}^2 \rangle\}\varphi_1(t)\varphi_4(t)
		-\frac{2 t}{N}{\sum_{i,j}}^*\bb E\{T\widehat{G}_{ij}\}\{(\widehat{G}^2)_{ij}\} \varphi_1(t)\varphi_4(t)\\
		&\ \ +\frac{1}{N}{\sum_{i,j}}^*\bb E\{T\widehat{G}_{ij}\}\frac{\partial K(t)}{\partial H_{ij}}\varphi_1(t)\varphi_4(t)-\ii\sum_{k=2}^{\ell}\bb E\{T\widehat{L}_{k}\}+O_{\prec}(tN^{-1})\,,
	\end{aligned}
\end{equation}
where we estimate the remainder term by $O_{\prec}(tN^{-1})$ for some  fixed (large) $\ell \in \bb N_+$, and
\[
\widehat{L}_{k} \deq\frac{s_{k+1}}{k!}\frac{1}{N^{(1+k)/2}} \cdot {\sum_{i,j
}}^{\hspace{-0cm}*} \frac{\partial^k( \widehat{G}_{ji} \langle \varphi_1(t)\varphi_4(t) \rangle)}{\partial H_{ij}^k}\,.
\]
By \eqref{4.42}, Theorem \ref{refthm1} and Lemma \ref{lem:integration}, we have
\begin{equation*}
	\begin{aligned}
		&\quad\frac{1}{N}{\sum_{i,j}}^*\bb E\{T\widehat{G}_{ij}\}\frac{\partial K(t)}{\partial H_{ij}}\varphi_1(t)\varphi_4(t)\\
		&=\frac{2at^2}{N^2}{\sum_{i,j}}^*\sum_{k}\bb E\{T\widehat{G}_{ij}\}\{\langle (\widehat{G}^2)_{kk}\rangle\}\{(\widehat{G}^2)_{ki}\widehat{G}_{jk}+(\widehat{G}^2)_{kj}\widehat{G}_{ik}\} \varphi_1(t)\varphi_4(t)+O_\prec(t^2N^{-3/2})\\
		&=\frac{2at^2}{N^2}{\sum_{i,j}}\sum_{k}\bb E\{T\widehat{G}_{ij}\}\{\langle (\widehat{G}^2)_{kk}\rangle\}\{(\widehat{G}^2)_{ki}\widehat{G}_{jk}+(\widehat{G}^2)_{kj}\widehat{G}_{ik}\} \varphi_1(t)\varphi_4(t)+O_\prec(t^2N^{-3/2})\\
		&=\frac{4at^2}{N^2}\sum_{k}\bb E\{T(z_1)(\widehat{G}(z_1)\widehat{G}^3(z_2))_{kk}\}_2\{\langle (\widehat{G}^2)_{kk}\rangle\} \varphi_1(t)\varphi_4(t)+O_\prec(t^2N^{-3/2})=O_\prec(tN^{-1})
	\end{aligned}
\end{equation*}
for $t \in [0,N^{(1-c)/2}]$. The first five terms on RHS of \eqref{maingreen2} can be computed as in Section \ref{sec4.1}. Thus we have, as in \eqref{4.14} that
\begin{equation} \label{4.46}
	\phi'_0(t) 
	=-\frac{2t}{\pi^2} \int_{\b D^2} \Big(\frac{\partial}{\partial \bar{z_1}} \tilde{f}(z_1)\Big)\Big(\frac{\partial}{\partial \bar{z_2}} \tilde{f}(z_2)\Big)g_1(z_1,z_2)\dd^2 z_1 \dd^2 z_2 \,\phi_0(t)-\ii\sum_{k=2}^{\ell}\bb E\{T\widehat{L}_{k}\}+O_{\prec}(tN^{-1})\,,
\end{equation}
where $g_1(z_1,z_2)$ is defined as in \eqref{g_1}. For each $k\geq 2$, we can decompose
\begin{multline*}
	\widehat{L}_k=L_{k,0}+\sum_{m=1}^k\sum_{n=0}^m \widehat{L}_{k,m,n}\deq  \frac{s_{k+1}}{k!}\frac{1}{N^{(1+k)/2}}  {\sum_{i,j
	}}^{\hspace{-0cm}*}\frac{\partial^{k} \widehat{G}_{ji}  }{\partial H_{ij}^{k}}\langle\varphi_1(t)\varphi_4(t)\rangle
	\\
	+ \sum_{m=1}^k\sum_{n=0}^m{k \choose m}{m\choose n} \frac{s_{k+1}}{k!}\frac{1}{N^{(1+k)/2}}  {\sum_{i,j
	}}^{\hspace{-0cm}*}\frac{\partial^{k-m} \widehat{G}_{ji}  }{\partial H_{ij}^{k-m}} \frac{\partial^{m-n} \varphi_1(t) }{\partial H_{ij}^{m-n}}\frac{\partial^n \varphi_4(t)}{\partial H_{ij}^n}\,.
\end{multline*}
Note that $\bb E\{T\widehat{L}_{k,0}\}$ and $\bb E\{T\widehat{L}_{k,m,0}\}$ can be computed exactly as in Sections \ref{sec4.3} -- \ref{sec4.5}, using Lemmas \ref{lem4.5} and \ref{lem5.7} for the case $ d(\cal P)=0$. We can show that
\begin{multline} \label{4.48}
	-\ii\sum_{k=2}^{\ell}\bb E\{T\widehat{L}_{k,0}\}-\ii \sum_{k=2}^\ell \sum_{m=1}^k\bb E \{T\widehat{L}_{k,m,0}\}\\
	=	-\frac{2s_4  t}{\pi^2} \bigg(\int_{\b D} \frac{\partial}{\partial \bar{z}} \tilde{f}(z) m'(z)m(z)\dd^2z\bigg)^2\cdot\phi_0(t) +O_{\prec}(t^2N^{-1})\cdot\phi_0(t)+O_{\prec}((t+1)N^{-1})\,.
\end{multline}
Combining \eqref{4.46} and \eqref{4.48}, and compute the result as in \eqref{4.39}, we get
\begin{equation} \label{4.49}
	\phi'_0(t) =-\sigma^2_f  t \cdot\phi_0(t) +O_{\prec}(t^2N^{-1})\cdot\phi_0(t)+O_{\prec}((t+1)N^{-1})-\ii \sum_{k=2}^\ell \sum_{m=1}^k\sum_{n=1}^m\bb E \{T\widehat{L}_{k,m,n}\}\,.
\end{equation}
Hence it remains to estimate the last sum in \eqref{4.49}.

Let us first consider the case $k=2$. By \eqref{diff}, \eqref{J} and \eqref{4.43}, we see that for $t\in[0, N^{(1-c)/2}]$
\begin{equation*}
	-\ii  \sum_{m=1}^2\sum_{n=1}^m\bb E \{T\widehat{L}_{2,m,n}\}=\frac{s_3 \ii }{N^{3/2}}{\sum_{i,j
	}}^{\hspace{-0cm}*} \bb E \{T\widehat{G}_{ii}\widehat{G}_{jj}\} \frac{\partial K(t)}{\partial H_{ij}} \varphi_1(t)\varphi_4(t)+O_{\prec}(tN^{-1})\,,
\end{equation*}
and together with \eqref{4.42} we have
\begin{equation} \label{4.51}
	\begin{aligned}
		&\ -\ii  \sum_{m=1}^2\sum_{n=1}^m\bb E \{T\widehat{L}_{2,m,n}\}\\
		&=\frac{2s_3a_2 t^2\ii }{N^{5/2}}{\sum_{i,j
		}}^{\hspace{-0cm}*} \sum_k\bb E \{T\widehat{G}_{ii}\widehat{G}_{jj}\} \{\langle (\widehat{G}^2)_{kk}\rangle\}\{(\widehat{G}^2)_{ki}\widehat{G}_{jk}+(\widehat{G}^2)_{kj}\widehat{G}_{ik}\} \varphi_1(t)\varphi_4(t)+O_{\prec}(tN^{-1})\\
		&=\frac{2s_3a_2 t^2\ii }{N^{5/2}}\sum_{i,j,k}\bb E \{T\widehat{G}_{ii}\widehat{G}_{jj}\} \{\langle (\widehat{G}^2)_{kk}\rangle\}\{(\widehat{G}^2)_{ki}\widehat{G}_{jk}+(\widehat{G}^2)_{kj}\widehat{G}_{ik}\} \varphi_1(t)\varphi_4(t)+O_{\prec}(tN^{-1})\,.
	\end{aligned}
\end{equation}
Now, we continue the estimate with the isotropic law. By Theorem \ref{refthm1} we see that
\[
\sum_{i} \widehat{G}(z_1)_{ii}\widehat{G}(z_2)_{ik}=\sum_{i}(\widehat{G}(z_1)_{ii}-m(z_1))\widehat{G}(z_2)_{ik}+ \sum_{i}m(z_1)\widehat{G}(z_2)_{ik}\prec \frac{1}{|\eta_1\eta_2|^{1/2}}
\]
and $\sum_{j} \widehat{G}(z_1)_{jj}\widehat{G}(z_2)_{jk}\prec |\eta_1\eta_2|^{-1/2}$ uniformly for $z_1,z_2 \in \b D$. Plug these two estimates into \eqref{4.51}, together with Theorem \ref{refthm1} and Lemma \ref{lem:integration}, we get
\begin{equation}\label{4.52}
	-\ii  \sum_{m=1}^2\sum_{n=1}^m\bb E \{T\widehat{L}_{2,m,n}\}=O_{\prec}(tN^{-1})
\end{equation}
as desired.

Now we consider the case $k \geq 3$. Fix $m,n\geq 1$, and set $a=k-m$, $b=m-n$. We have
\begin{multline} \label{4.53}
	\bb E \{T\widehat{L}_{k,m,n}\}\leq \frac{C_k}{N^{(k+1)/2}}{\sum_{i,j
	}}^{\hspace{-0cm}*} \bb E \bigg|\bigg\{T\frac{\partial^{a} \widehat{G}_{ji}  }{\partial H_{ij}^{a}} \bigg\}\frac{\partial^{b} \varphi_1(t) }{\partial H_{ij}^{b}}\frac{\partial^n \varphi_4(t)}{\partial H_{ij}^n}\bigg|\\
	\leq \frac{C'_k}{N^{(k+1)/2}}\sum_{\substack{b_1,...,b_p\in \bb N_+\\ b_1+\cdots +b_p=b}}\sum_{\substack{n_1,...,n_q\in \bb N_+\\ n_1+\cdots +n_q=n}}{\sum_{i,j
	}}^{\hspace{-0cm}*} \bb E \bigg|\bigg\{T\frac{\partial^{a} \widehat{G}_{ji}  }{\partial H_{ij}^{a}} \bigg\}\frac{\partial^{b_1} J(t) }{\partial H_{ij}^{b_1}}\cdots \frac{\partial^{b_p} J(t) }{\partial H_{ij}^{b_p}}\frac{\partial^{n_1} K(t)}{\partial H_{ij}^{n_1}}\cdots \frac{\partial^{n_q} K(t)}{\partial H_{ij}^{n_q}}\bigg|\,.
\end{multline}
Note that $\{T \partial^a \widehat{G}_{ji}/\partial H_{ij}^a\}=O_\prec(N^{(a-1)/2})$ for $a\geq 0$. Together with \eqref{J} and \eqref{4.43}, we have
\begin{equation} \label{4.54}
	\begin{aligned}
		&\quad	\frac{C'_k}{N^{(k+1)/2}}{\sum_{i,j
		}}^{\hspace{-0cm}*} \bb E \bigg|\bigg\{T\frac{\partial^{a} \widehat{G}_{ji}  }{\partial H_{ij}^{a}} \bigg\}\frac{\partial^{b_1} J(t) }{\partial H_{ij}^{b_1}}\cdots \frac{\partial^{b_p} J(t) }{\partial H_{ij}^{b_p}}\frac{\partial^{n_1} K(t)}{\partial H_{ij}^{n_1}}\cdots \frac{\partial^{n_q} K(t)}{\partial H_{ij}^{n_q}}\bigg|\\
		&\prec {N^{-(k+1)/2}}\cdot N^2 \cdot N^{(a-1)/2}\cdot tN^{(b_1-2)/2} \cdots   tN^{(b_p-2)/2} \cdot t^2N^{(n_1-4)/2}\cdots  t^2N^{(n_p-4)/2} \\
		&= N \cdot (tN^{-1})^p \cdot (t^2N^{-2})^q \leq tN^{-1/2} \cdot N^{-p/2}\cdot N^{1-q}
	\end{aligned}
\end{equation}
where $q \geq 1$. If $(p,q)\neq (0,1)$, then we have $\eqref{4.54} \prec tN^{-1}$. If $(p,q)=(0,1)$, then 
\[
\eqref{4.54}=\frac{C'_k}{N^{(k+1)/2}}{\sum_{i,j
}}^{\hspace{-0cm}*} \bb E \bigg|\bigg\{T\frac{\partial^{k-n} \widehat{G}_{ji}  }{\partial H_{ij}^{k-n}} \bigg\}\frac{\partial^n K(t)}{\partial H^n_{ij}}\bigg|\,.
\]
One easily checks that $\{T \partial^{k-n} \widehat{G}_{ji}/\partial H_{ij}^{k-n}\}=O_\prec(N^{(k-n-3)/2})$ for $k-n \geq 2$, and $\partial^nK(t)/\partial H_{ij}^n=O(t^2N^{(n-5)/2})$ for $n\geq 2$. As $(k-n)+n=k\geq 3$, we see that in this case 
\[
\eqref{4.54} \prec N^{-(k+1)/2}\cdot N^2\cdot (N^{(k-n-3)/2}\cdot t^2N^{(n-4)/2}+N^{(k-n-1)/2}\cdot t^2N^{(n-5)/2}) \prec tN^{-1}\,.
\]
Hence we always have $\eqref{4.54} \prec tN^{-1}$, and \eqref{4.53} shows
\begin{equation} \label{4.55}
	\bb E \{T\widehat{L}_{k,m,n}\}=O_{\prec}(tN^{-1})
\end{equation}
for all $k \geq 2$ and $n \geq 1$. Combining \eqref{4.49}, \eqref{4.52} and \eqref{4.55} finishes the proof.

\section{Proof of the improved estimates for abstract polynomials} \label{sec5}

In this section we prove Lemma \ref{lem5.7}. For notational convenience, throughout the section we make the following convention
\begin{align} 
	\frac{\partial \widehat{G}_{ab}}{\partial H_{ii}}\deq -\widehat{G}_{ai}\widehat{G}_{ib}. \label{convention on diagonal dderivatie}
\end{align}
Whenever the above partial derivative notation is used, it will always be compensated by a $\widehat{G}_{ai}\widehat{G}_{ib}$ term (with positive sign) since the true derivative of $\widehat{G}$-entries w.r.t. $H_{ii}$ is apparently $0$. 
In this way, we complete \eqref{diff} to
\begin{equation} \label{diff1}
	\frac{\partial \widehat{G}_{ab}}{\partial H_{ij}}=-(\widehat{G}_{ai}\widehat{G}_{jb}+\widehat{G}_{aj}\widehat{G}_{ib})(1+\delta_{ij})^{-1}
\end{equation}
for all $i,j$. From Jensen's Inequality, we can easily deduce Lemma \ref{lem5.7} from the following result.

\begin{lem} \label{lem5.1}
	Let us adopt the assumptions in Lemma \ref{lem5.7}. We have the following estimates.
	
	(i)  Suppose $d(\cal P)=0$, we have
	\begin{equation*}  
		\bb E\,|\cal S (\cal	{\mathring P} )|^2=O_{\prec}\big(\cal E_*(\cal P)^2 \big)\,.
	\end{equation*}
Moreover, under the additional assumption $\nu_1(\cal P)=1$, we have the stronger estimate
	\[
\bb E\,|\cal S (\cal	{ P} )|^2=O_{\prec}\big(\cal E_*(\cal P)^2 \big)\,.
\]

	(ii) Suppose $d(\cal P)=1$, we have
	\[
	\bb E\,|\cal S (\cal	{ P} )|^2=O_{\prec}\big(\cal E_*(\cal P)^2 \big)\,.
	\]
\end{lem}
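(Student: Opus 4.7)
The strategy is to combine Barbour's cumulant expansion (Lemma~\ref{lem:cumulant_expansion}) with the resolvent identity $z\widehat{G}_{ab}=\sum_k \widehat{H}_{ak}\widehat{G}_{kb}-\delta_{ab}$ to gain the required improvement over the naive bound of Lemma~\ref{lem4.5}. Under the assumptions of Lemma~\ref{lem5.7} one has $\nu_*(\cal P)=0$, so Lemma~\ref{lem4.5} only yields $\cal E_0(\cal P)=t^{\mu}N^{2-\theta-\nu_1/2-d/2}$, whereas the target is $\cal E_*(\cal P)=t^{\mu}N^{1-\theta-\nu_1/2}$. Hence an extra factor of $N^{-1/2}$ (when $d(\cal P)=1$) or $N^{-1}$ (when $d(\cal P)=0$) must be extracted. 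In each case the gain is obtained by applying a single cumulant expansion to one of the two copies of $\cal S(\cal P)$ (or $\cal S(\mathring{\cal P})$) in $\bb E|\cal S(\cdot)|^2$ and verifying that every generated term satisfies the improved bound $\cal E_*(\cal P)^2$.

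For case (ii) with $d(\cal P)=1$, the unique factor $(\widehat{G}H_{\mathrm d}\widehat{G}^{1+\omega})_{x_0 y_0}=\sum_k \widehat{G}_{x_0 k}H_{kk}(\widehat{G}^{1+\omega})_{ky_0}$ exposes an explicit $H_{kk}$. We treat one of these $H_{kk}$ factors (inside one copy of $\cal S(\cal P)$) as the ``$h$'' in Lemma~\ref{lem:cumulant_expansion} and expand against the product of the remaining quantities. Since $\bb E H_{kk}=0$ and $\cal C_{n+1}(H_{kk})=O(N^{-(n+1)/2})$, the first surviving term already carries an $N^{-1/2}$ prefactor. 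Each derivative produced by (\ref{diff1}) either creates at least one new off-diagonal Green-function factor, which enlarges $\nu_1$ in the resulting abstract polynomial and lowers $\cal E_0$ correspondingly, or yields a diagonal-type contribution whose $\cal E_0$-bound under Lemma~\ref{lem4.5} already lies below $\cal E_*(\cal P)$. Summing the expansion then gives $\bb E|\cal S(\cal P)|^2=O_{\prec}(\cal E_*(\cal P)^2)$.

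For case (i) with $d(\cal P)=0$ the analysis splits. When $\nu_1(\cal P)=1$, the lone off-diagonal entry $\widehat{G}_{ab}$ ($a\neq b$) is expanded via the resolvent identity into $\frac{1}{z}\sum_k \widehat{H}_{ak}\widehat{G}_{kb}$, and cumulant expansion is applied in $H_{ak}$; each derivative from (\ref{diff1}) produces extra off-diagonal factors (gaining $N^{-1/2}$ per expansion step by Lemma~\ref{lem4.5}), and performing this on both copies inside $\bb E|\cal S(\cal P)|^2$ yields the full $N^{-1}$ gain. For the remaining subcase, where we work with $\mathring{\cal P}$ and $\nu_1(\cal P)$ is arbitrary, we use $\bb E|\cal S(\mathring{\cal P})|^2=\var(\cal S(\cal P))$ and invoke the fluctuation-averaging mechanism of Lemma~\ref{prop4.4}: a single cumulant expansion on one copy, together with the centering $\langle \cdot\rangle$ which forces at least two $H$-entries to be correlated, produces the desired $N^{-1}$ improvement. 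The main obstacle will be the bookkeeping of how the structural parameters $(\nu_1,\nu_3,\nu_0,d)$ of Definition~\ref{defn4.5} evolve during each expansion step. Each application of (\ref{diff1}) splits into two terms that modify the index partition $\cal I_1\sqcup\cal I_2$ and can alter $\nu_1$, $\nu_{3,0}$, and the total number of traces in subtle ways; one must verify that under the tight constraint $\nu_0(\cal P)\leq 3+d(\cal P)$ every branch of the expansion either terminates within the scope of Lemma~\ref{lem4.5} at scale $\cal E_*(\cal P)^2$ or admits a further round of expansion that strictly improves its bound.
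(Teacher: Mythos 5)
Your overall strategy — cumulant expansion plus bookkeeping of the structural parameters $(\nu_1,\nu_3,\nu_0,d)$ — is the right starting point, but the proposal has a fatal gap at exactly the place you flag as ``the main obstacle.'' The assertion that ``each derivative from \eqref{diff1} produces extra off-diagonal factors (gaining $N^{-1/2}$ per expansion step by Lemma~\ref{lem4.5})'' is false. Applying $\partial^k/\partial H_{ix}$ to a factor such as $\widehat{G}_{xj}$ produces, among others, terms like $\widehat{G}_{xj}\widehat{G}_{ii}$ in which $\nu_1$ does not increase, and there are terms (e.g.\ when $x=j$) where the derivative destroys the only off-diagonal factor altogether and $\nu_1$ decreases by up to $1+k$. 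This is precisely the obstruction the paper isolates in the proof of Lemma~\ref{lem5.4}, where the estimate of $L^{(2)}_{k,3}$ is only salvageable because the differentiated entry is a \emph{lone factor}. Without Lemma~\ref{lem5.4} — i.e.\ without the notions of lone factor, second lone factor, and happy trio from Section~\ref{sec6.1} — the bookkeeping you describe does not close: the mixed terms (those in which the derivative acts on the second copy $\widetilde{\cal P}'_{uv}$) are not small by the naive count of off-diagonal entries, and the improved estimates of Lemma~\ref{lem5.4} are what rescue them.

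Two further points where the proposal diverges from a workable argument. First, after one cumulant expansion one also obtains \emph{unmixed} terms in which $\widetilde{\cal P}'_{uv}$ is not touched; for these the paper exploits a cancellation between the leading second-cumulant contributions (cf.\ the cancellation between $L^{(1)}_{1,1}$ and $L^{(2)}_{1,1}$, and between $\bb E L^{(6)}_1$ and $\bb E L^{(8)}_1$) together with the isotropic-law estimate \eqref{swd}; your outline has no analogue of either device. Second, when $\nu_1(\cal P)\geq 2$ the single cumulant expansion does not deliver the full improvement: the resulting relation is only the recursion \eqref{5.50}/\eqref{5.500}, producing monomials $\cal P^{(q)}$ with $\nu_1(\cal P^{(q)})\leq \nu_1(\cal P)-2$ and $\cal E_0(\cal P^{(q)})\leq\cal E_0(\cal P)$, and one must iterate until $\nu_1$ drops to $0$ or $1$. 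Your proposal to ``perform this on both copies'' is not how the expansion works — one expansion already couples the two copies through derivatives — and it omits the recursive descent.

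For part (ii) with $d(\cal P)=1$, your idea of treating the exposed $H_{kk}$ as the cumulant variable is not what the paper does and is dubious as stated: the first surviving term carries $\cal C_2(H_{kk})=O(N^{-1})$, not $N^{-1/2}$, and the argument does not address the hard subcase where the $d$-factor is off-diagonal, i.e.\ $(\widehat{F}H_{\mathrm d}\widehat{F}^{1+\omega})_{ij}$ with $i\ne j$. The paper handles $\nu_1\in\{0,1\}$ directly from Lemma~\ref{lemGHG} and the isotropic law (no cumulant expansion needed), and for $\nu_1\geq 2$ performs the cumulant expansion in $H_{ix}$ using the differential rule \eqref{ww}, again combined with the recursion in $\nu_1$. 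You would need to supply an argument of comparable strength for that case.
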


\subsection{The lone factor and happy trio}  \label{sec6.1}
To prove Lemma \ref{lem5.1}, we need some a priori estimate, which is given in Lemma \ref{lem5.4}, after some additional notions are introduced below.
\begin{defn}
	Let $\cal P =  a t^\mu N^{-\theta}\{\cal G_1\}\cdots \{\cal G_\sigma\}\in \b P$, where $\cal G_r \in \b G(z_r)$ for $r=1,...,\sigma$. Let $\b M\deq \{\widehat{G}(z_1), \partial_{z_1} \widehat{G}(z_1),...,\widehat{G}(z_\sigma),\partial_{z_{\sigma}} \widehat{G}(z_{\sigma})\}$. 
	
	(i) If there exists $i, j \in \cal I(\cal P)\backslash\cal I_2(\cal P)$, $i \ne j$, and $M_1\in \b M$, such that $(M_1)_{ij}$ is a factor of $\cal S_2(\cal P)$, and there is no other factors of $\cal S_2(\cal P)$ having both indices $i,j$, then we say that $(M_1)_{ij}$ is a \emph{(first) lone factor} of $\cal P$, and set $\nu_5(\cal P )=1$; otherwise $\nu_5(\cal P)=0$.  
	
	(ii)  Suppose $\nu_5(\cal P)=1$ with a lone factor $(M_1)_{ij}$. If there exists $u, v \in \cal I(\cal P)\backslash \cal I_2(\cal P )$, $u \ne v$, and $M_2,M_3\in \b M$, such that $\{u,v\}\cap \{i,j\}=\emptyset$, and $\cal S_2(\cal P)$ contains either $(M_2)_{uv}$ or $(M_2M_3)_{uv}$, and there is no other factor of $\cal S_2(\cal P)$ having both indices $u,v$, then we say that $(M_2)_{uv}$ or $(M_2M_3)_{uv}$ is a \emph {(second) lone factor} of $\cal P$, and set $\nu_6(\cal P)=1$; otherwise $\nu_6(\cal P)=0$. 
	
	(iii) Suppose there exists distinct $i,j,u,v \in \cal I(\cal P)\backslash\cal I_2(\cal P)$, such that $i$ appears exactly three times in $\cal P$, in the form $(M_1)_{ij}$, $(M_2)_{iu}$, $(M_3)_{iv}$, where $M_1,M_2,M_3 \in \b M$. We say that $(M_1)_{ij}$, $(M_2)_{iu}$, $(M_3)_{iv}$ are happy trio of $\cal P$, and set $\cal \nu_7(\cal P)=1$; otherwise $\cal\nu_7(\cal P)=0$.  Note that $\nu_7(\cal P)=1$ implies $\nu_5(\cal P)=1$.
\end{defn}

We remark here although $(M_2M_3)_{uv}$ is called a lone factor of $\cal P$ in (ii), it is indeed a factor of $\cal S_2(\cal P)$ rather than $\cal P$, by definition.

\begin{example}
	Let us take $\cal P$ and $\cal S_2(\cal P)$ as in Example \ref{example 4.6}. We easily see that $\nu_5(\cal P)=1$ and the first  lone factor can be either $\widehat{G}_{ij}(z_1)$ or $\widehat{G}_{ik}(z_2)$ or $\widehat{G}_{iy}(z_3)$. We also have $\nu_6(\cal P)=0$, as we cannot find a second lone factor of $\cal P$. In addition, $\widehat{G}_{ij}(z_1)$, $\widehat{G}_{ik}(z_2)$ and $\widehat{G}_{iy}(z_3)$ are happy trio of $\cal P$, thus $\nu_7(\cal P)=1$.
\end{example}

\begin{rem} \label{remK}
	The lone factor and happy trio are essential objects that generate additional factors of $N^{-1/2}$ in our estimates. Heuristically, the lone factors are almost mean 0 and ``weakly" correlated with other factors in $\mathcal{P}$, and thus they create additional smallness when one take expectation of $\mathcal{P}$. Nevertheless, the mechanism to exploit this smallness is more delicate. We roughly illustrate it via the following elementary examples.
	
	(i) Let us take $\cal P_{1,r} \deq \{\widehat{G}^q_{ij}\}$ for fixed $q \geq 1$. Naively, by Lemma \ref{lem4.5}, we have 
	\begin{equation} \label{6.1}
		\bb E \cal S(\cal P_{1,q})=O_{\prec}(N^{2-q/2})\,.
	\end{equation}
	For $q \geq 2$,	we can use resolvent identity 
	\begin{align}
		\widehat{G}_{ij}=\ul{\widehat{H}\widehat{G}} \widehat{G}_{ij}-(\widehat{H}\widehat{G})_{ij}\ul{\widehat{G}}+\delta_{ij}\ul{\widehat{G}} \label{resolvent eq for G}
	\end{align} 
	and Lemma \ref{lem:cumulant_expansion} to show that the estimate \eqref{6.1} cannot be improved. More precisely, we have
	\begin{align}
		\bb E \cal S(\cal P_{1,q})&=\frac{1}{N}{\sum_{i,j}}^*{\sum_{x,y}}^*\left\{ \bb E H_{xy}\widehat{G}_{yx}\widehat{G}_{ij}^q \right\}-{\sum_{i,j}}^*\sum_{x:x\ne i} \left\{ \bb E H_{ix} \widehat{G}_{xj}\widehat{G}_{ij}^{q-1}\ul{\widehat{G}}\right \}
		\notag\\
		&=\sum_{k\geq 1} \frac{s_{k+1}}{k!}\frac{1}{N^{(k+3)/2}}{\sum_{i,j}}^*{\sum_{x,y}}^*\left\{ \bb E \frac{\partial^k( \widehat{G}_{yx}\widehat{G}_{ij}^q)}{\partial H_{xy}^k} \right\}\notag\\
		&\quad-\sum_{k\geq 1} \frac{s_{k+1}}{k!}\frac{1}{N^{(k+1)/2}} {\sum_{i,j}}^*\sum_{x:x\ne i} \left\{ \bb E \frac{\partial^{k}(\widehat{G}_{xj}\widehat{G}_{ij}^{q-1}\ul{\widehat{G}})}{\partial H_{xi}^k}\right \}\notag\\
		&\eqd \sum_{k\geq 1} A_{1,k}+\sum_{k\geq 1} A_{2,k}\label{6.2}\,.	
	\end{align}
	When $q=2$, the leading term is contained in $A_{2,1}$, and we can use \eqref{diff1} to see that
	\[
	\bb E \cal S(\cal P_{1,2})= \frac{1}{N} \sum_{i,j} \{\bb E \widehat{G}_{ii}(\widehat{G}^2)_{jj}\ul{\widehat{G}}\}+\emph{error} \asymp O(N)\,.
	\]
	When $q\geq 3$, the leading term comes from $A_{2,q-1}$ and $x=j$, i.e.
	\[
	\bb E \cal S(\cal P_{1,q})=\frac{(-1)^{q+1}s_q}{N^{q/2}}\sum_{i,j} \bb E \{\widehat{G}_{jj}^q\widehat{G}_{ii}^{q-1}\ul{\widehat{G}}\}+\emph{error} \asymp O(N^{2-q/2})\,.
	\]
	When $q=1$, as $\widehat{G}^{q-1}_{ij}=1$, we do not have the above leading contributions, and we can show that $\bb E\cal S(\cal P_{1,1})=O_{\prec}(N)$, which improves Lemma \ref{lem4.5} by a factor $N^{-1/2}$. In general, the same idea applies whenever we have a first lone factor.
	
	(ii) Now we describe the idea behind the second lone factor. Let us compare $\cal P_2\deq \{\widehat{G}_{ij}\widehat{G}_{iv}\widehat{G}_{vj}\widehat{G}_{uu}\}$ with $\cal P_3\deq \{\widehat{G}_{ij}\widehat{G}_{uv}\widehat{G}_{vj}\widehat{G}_{uu}\}$. It is easy to see that $\nu_5(\cal P_2)=\nu_5(\cal P_3)=1$, while $\nu_6(\cal P_2)=0$, $\nu_6(\cal P_2)=1$. A direct application of Lemma \ref{lem4.5} leads to
	\begin{equation} \label{6.3}
		\bb E \cal S(\cal P_2)=O_{\prec}(N^{5/2})\,, \quad \mbox{and} \quad \bb E \cal S(\cal P_3)=O_{\prec}(N^{5/2})\,.
	\end{equation}
	Similar to \eqref{6.2}, we see that
	\begin{equation} \label{6.4}
		\bb E \cal S(\cal P_2)=\frac{1}{N}\sum_{i,j,u,v}\bb E \{(\widehat{G}^2)_{jv}\widehat{G}_{vj}\widehat{G}_{ii}\widehat{G}_{uu}\ul{\widehat{G}}\}+\emph{error}=\frac{1}{N}\sum_{i,u.v}\bb E \{ (\widehat{G}^3)_{vv}\widehat{G}_{ii}\widehat{G}_{uu}\ul{\widehat{G}}\}+\emph{error}\,,
	\end{equation}
	which implies $\bb E\cal S(\cal P_2)\asymp O(N^{2})$ and improves \eqref{6.3} by a factor of $N^{-1/2}$. Because $\cal P_3$ has a second lone factor $\widehat{G}_{uv}$, we have
	\[
	\bb E \cal S(\cal P_3)=\frac{1}{N}\sum_{i,j,u,v}\bb E \{(\widehat{G}^2)_{jv}\widehat{G}_{vj}\widehat{G}_{ui}\widehat{G}_{uu}\ul{\widehat{G}}\}+\emph{error}=\frac{1}{N}\sum_{i,u,v}\bb E \{ (\widehat{G}^3)_{vv}\widehat{G}_{ui}\widehat{G}_{uu}\ul{\widehat{G}}\}+\emph{error}\,,
	\]
	which implies $\bb E \cal S(\cal P_3)=O_{\prec}(N^{3/2})$ and improves \eqref{6.3} by a factor of $N^{-1}$. This suggests that the second lone factor can bring additional smallness.
	
	(iii) Lastly we remark on the happy trio. Let us denote $\cal P_4\deq \{\widehat{G}_{ij}\widehat{G}_{iv}\widehat{G}_{iu}\widehat{G}_{uu}\}$, and note that $\nu_7(\cal P_4)=1$, $\nu_7(\cal P_2)=0$. Applying lemma \ref{lem4.5} directly, we have
	\begin{equation} \label{6.5}
		\bb E \cal S(\cal P_4)=O_{\prec}(N^{5/2})\,.
	\end{equation} 
	Similar to \eqref{6.4}, we can show that
	\begin{equation} \label{6.6}
		\bb E \cal S(\cal P_4)=\frac{1}{N}\sum_{i,j,u,v}\bb E \{(\widehat{G}^2)_{jv}\widehat{G}_{iu}\widehat{G}_{ii}\widehat{G}_{uu}\ul{\widehat{G}}\}+\emph{error}\,.
	\end{equation}
	Comparing with the estimate of $\bb E\cal S(\cal P_2)$ in \eqref{6.4}, we see that the leading term on RHS of \eqref{6.6} contains two lone factors, thus further expansions shall give us better estimates. We can show that $\bb E \cal S(\cal P_4)=O_\prec(N^{3/2})$, which improves \eqref{6.5} by a factor of $N^{-1}$.
\end{rem}

In light of the above remark, we have the following estimate, which is an improvement of Lemma \ref{lem4.5}. 

\begin{lem} \label{lem5.4}
	For any $\cal P =a t^\mu N^{-\theta}\{\cal G_1\}\cdots \{\cal G_\sigma\}\in \b P$ with $\nu_0(\cal P)\leq 7-2(\nu_6(\cal P)\vee \nu_7(\cal P))$ and $d(\cal P)=0$, we have
	\[
	\bb E \cal S(\cal P)=	O_{\prec}\big(t^{\mu(\cal P)} N^{\nu(\cal P)-\theta(\cal P)-\nu_1(\cal P)/2-\nu_3(\cal P)-\nu_5(\cal P)/2-(\nu_6(\cal P)\vee \nu_7(\cal P))/2} \big)=: O_{\prec}(\cal E_1(\cal P))\,.
	\]
\end{lem}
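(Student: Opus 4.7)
The plan is to bootstrap from Lemma \ref{lem4.5} by exploiting the lone-factor structure through the resolvent identity \eqref{resolvent eq for G} combined with Barbour's cumulant expansion (Lemma \ref{lem:cumulant_expansion}). When $\nu_5(\cal P)=0$ there is nothing to improve and the claim reduces to Lemma \ref{lem4.5}. So assume $\nu_5(\cal P)=1$ and let $(M_1)_{ij}$ be the first lone factor inside some $\cal G_r$, where $i,j\in \cal I(\cal P)\setminus \cal I_2(\cal P)$ appear only in this factor. It suffices to treat $M_1=\widehat{G}(z_r)$; the case $M_1=\partial_{z_r}\widehat{G}(z_r)$ follows identically after commuting the derivative through the identity below.

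First I would substitute \eqref{resolvent eq for G} into the lone factor, writing $\widehat{G}_{ij}$ (with $i\ne j$) as $N^{-1}\sum_{x,y}\widehat{H}_{xy}\widehat{G}_{yx}\widehat{G}_{ij} - \sum_{x}\widehat{H}_{ix}\widehat{G}_{xj}\,\ul{\widehat{G}}$. Each resulting term couples a single $\widehat{H}$-factor to the remaining Green function monomial, so Lemma \ref{lem:cumulant_expansion} (applied to the relevant $H_{ab}$) converts it into a series in $k\ge 1$ of derivatives of the monomial with respect to $H_{ab}$, weighted by $s_{k+1}/(k!\,N^{(k+1)/2})$. Using \eqref{diff1}, each such derivative replaces one Green function entry by a product of two new entries sharing an index with either $x,y$ or $i,x$. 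The critical feature is that since $i,j$ are unique to the lone factor, no derivative landing on another Green function entry can create an $i$- or $j$-match. The resulting terms are reclassified as new abstract polynomials in $\b P$: those in which the derivative destroys the unique occurrence of $j$ (by pairing it with $x$ or $y$) produce monomials with strictly higher $\nu_1$ or $\nu_3$, and Lemma \ref{lem4.5} applied to them already yields the claimed $N^{-1/2}$ improvement; those that leave $\widehat{G}_{ij}$ untouched gain the extra $N^{-1/2}$ directly from the $N^{-(k+1)/2}$ cumulant prefactor, since the remaining summation over $x$ (or $x,y$) only restores one power of $N$. Higher-order cumulants $k\ge 2$ are suppressed by a further factor $N^{-(k-1)/2}$ which trivially beats $N^{-1/2}$.

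When $\nu_6(\cal P)\vee\nu_7(\cal P)=1$ I would iterate. In the $\nu_6=1$ case the second lone factor still satisfies the lone-factor hypothesis after the first expansion, so the same argument delivers a further $N^{-1/2}$. In the $\nu_7=1$ case, after processing the first of the three off-diagonal factors sharing index $i$, the remaining two form a pair of off-diagonal entries meeting at a shared non-repeated index, and this configuration again admits the resolvent-plus-cumulant treatment. The constraint $\nu_0(\cal P)\le 7-2(\nu_6\vee\nu_7)$ is precisely what guarantees that the new Green function factors produced by the two rounds of expansion do not push any $|\eta_r|^{-\nu_0(\cal G_r)/2}$ past the $|\eta_r|^{-5}$ threshold required by Lemma \ref{lem:integration} for the $\{\cdot\}$-integrals to stay bounded.

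The main obstacle is the bookkeeping: each cumulant expansion spawns many subleading terms, and one needs to verify systematically that every such term either still belongs to $\b P$ with parameters for which Lemma \ref{lem4.5} already provides the improved bound, or is negligible due to high cumulant order and extra off-diagonal structure. The happy-trio case is the most delicate: after the first expansion, one must check that the two surviving off-diagonal factors genuinely constitute a lone-factor configuration amenable to a second round, rather than getting fused into a less favorable structure, and here the constraint on $\nu_0$ becomes sharp.
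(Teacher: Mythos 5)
Your overall framework—resolvent identity on the lone factor, then cumulant expansion, then reclassify the resulting monomials—matches the paper's approach, but there is a genuine gap in the step that actually produces the gain for the terms "that leave $\widehat G_{ij}$ untouched." You attribute the extra $N^{-1/2}$ to the cumulant prefactor $N^{-(k+1)/2}$, but for $k=1$ this is not true: after substituting $\widehat G_{ij}=\ul{\widehat H\widehat G}\,\widehat G_{ij}-(\widehat H\widehat G)_{ij}\ul{\widehat G}$ and taking the second-cumulant term, the derivative can hit $\widehat G_{yx}$ (resp.\ $\widehat G_{xj}$) and produce the purely diagonal factors $\widehat G_{xx}\widehat G_{yy}$ (resp.\ $\widehat G_{xx}\widehat G_{ii}$). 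These summations over $x,y$ cost you back the entire cumulant prefactor: $N^{-2}\sum_{x,y}\widehat G_{xx}\widehat G_{yy}\widehat G_{ij}\widetilde{\cal P}^{(A)}\sim\ul{\widehat G}^2\,\widehat G_{ij}\widetilde{\cal P}^{(A)}$, which is exactly the same size as the quantity $\bb E\widetilde{\cal P}$ you started with. So no improvement comes from those terms by themselves. The mechanism the paper actually uses is that these two leading $k=1$ contributions—one from $\ul{\widehat H\widehat G}\,\widehat G_{ij}$ and one from $-(\widehat H\widehat G)_{ij}\ul{\widehat G}$—are equal and opposite (they are both $\pm N^{-2}\sum_{x,y}\partial_z^\delta(\widehat G_{xx}\widehat G_{yy}\widehat G_{ij})\widetilde{\cal P}^{(A)}$) and cancel identically. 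This cancellation, not the cumulant weight, is the source of the gain; every remaining $k=1$ term contains an extra off-diagonal factor or a repeated index, and it is those that Lemma \ref{lem4.5} improves.

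Once this is fixed, the rest of your plan is essentially right but still underestimates the bookkeeping for $k\ge 2$: your assertion that "higher cumulants are suppressed by $N^{-(k-1)/2}$ which trivially beats $N^{-1/2}$" is also too quick, because the $k$-fold derivatives can create new diagonal factors whose summation over $x,y$ partially recovers powers of $N$. The paper handles this through a careful $\nu_*$-budget, using the constraint $\nu_0(\cal P)\le 7-2(\nu_6\vee\nu_7)$ to control the $\eta^{-1}$-powers that appear; your proposal correctly identifies that constraint's role (keeping $|\eta_r|^{-\nu_0(\cal G_r)/2}$ within the $|\eta_r|^{-5}$ budget needed for Lemma \ref{lem:integration}), but treats the higher cumulants as negligible when in fact they require an argument. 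The happy-trio case (Case 4 in the paper) is also delicate precisely because after the first expansion the two surviving factors $\widehat F_{xu},\widehat U_{xv}$ at the new summation index $x$ form a configuration with a single surviving lone factor, not two; the paper has to re-run Case 1 on the resulting polynomial and needs the $\nu_0$ constraint exactly to allow this second pass. Your sketch says this "again admits the resolvent-plus-cumulant treatment," which is true but elides the actual verification.
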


\begin{proof}
	First, note that the case of $\nu_5(\cal P)=0$ follows from Lemma \ref{lem4.5} directly. Hence, it suffices to assume $\nu_5(\cal P)=1$ in the following. 
	
	\textit{Case 1.} Suppose $\nu_6(\cal P)=\nu_7(\cal P)=0$ and $\nu_0(\cal P)\leq 7$. Thus the lone factor of $\cal P$ is of the form $\partial_z^{\delta}\widehat{G}_{ij}\equiv \partial_{z }^{\delta}\widehat{G}_{ij}(z)$, $\delta\in \{0,1\}$, $z \in \{z_1,...,z_\sigma\}$. W.O.L.G,\,we assume $z=z_1$. Note that in the case $\nu_6(\cal P)=\nu_7(\cal P)=0$ we have $\cal E_0(\cal P)=N^{1/2}\cal E_1(\cal P)$, and thus we need to improve Lemma \ref{lem4.5} by a factor of $N^{-1/2}$. 
	
	Recall the definition of $\widetilde{\cal P}$ from \eqref{def_TT}, and we have $\bb E \cal S(\cal P)=\{\bb E \cal S(\widetilde{\cal P})\}_\sigma$. Let us abbreviate $\widetilde{\cal P}^{(A)}\deq \widetilde{P}/ \partial^{\delta}_z G_{ij}$. Using (\ref{resolvent eq for G}), 
	together with Lemma \ref{lem:cumulant_expansion} and a routine estimate of the remainder term, we have
		\begin{align}
			\bb E \widetilde{\cal P}&=\frac{1}{N}{\sum_{x,y}}^*\bb E H_{xy}\partial^{\delta}_z( \widehat{G}_{yx}\widehat{G}_{ij})\widetilde{\cal P}^{(A)}-\sum_{x:x \ne i}\bb E H_{ix}\partial_z^{\delta} (\widehat{G}_{xj}\ul{\widehat{G}}) \widetilde{\cal P}^{(A)}\notag\\
			&= \sum_{k=1}^{\ell}\frac{s_{k+1}}{k!}\frac{1}{N^{(k+3)/2}}{\sum_{x,y}}^* \bb E \frac{\partial^k (\partial_z^{\delta}( \widehat{G}_{yx}\widehat{G}_{ij}) \widetilde{\cal P}^{(A)})}{\partial H_{xy}^k}\notag\\
			&\quad -\sum_{k=1}^{\ell}\frac{s_{k+1}}{k!}\frac{1}{N^{(k+3)/2}}\sum_{x,y:x \ne i} \bb E \frac{\partial^k (\partial_z^{\delta}( \widehat{G}_{xj}\widehat{G}_{yy}) \widetilde{\cal P}^{(A)})}{\partial H_{ix}^k}+O_{\prec}(\cal E_1(\cal P)N^{-\nu(\cal P)})\notag\\
			&=:\sum_{k=1}^{\ell}\bb E L_{k}^{(1)}+\sum_{k=1}^{\ell} \bb E L_{k}^{(2)} +O_{\prec}(\cal E_1(\cal P)N^{-\nu(\cal P)}) \label{5.2}
		\end{align}
	for some fixed $\ell \in \bb N_+$. We shall first observe the cancellations between $\bb E L_{1}^{(1)}$ and $\bb E L_{1}^{(2)}$. By \eqref{diff1}, we have
		\begin{align*}
			\bb E L^{(1)}_1&=\frac{1}{N^2}\sum_{x,y}(1+\delta_{xy})\bb E\frac{\partial (\partial_z^{\delta}( \widehat{G}_{yx}\widehat{G}_{ij}) \widetilde{\cal P}^{(A)})}{\partial H_{xy}}-\frac{2}{N^2}\sum_{x}\bb E\frac{\partial (\partial_z^{\delta}( \widehat{G}_{xx}\widehat{G}_{ij}) \widetilde{\cal P}^{(A)})}{\partial H_{xx}}\\
			&=-\frac{1}{N^2}\sum_{x,y} \bb E\partial_z^{\delta}( \widehat{G}_{xx}\widehat{G}_{yy}\widehat{G}_{ij}) \widetilde{\cal P}^{(A)}-\frac{1}{N^2}\sum_{x,y} \bb E\partial_z^{\delta}( \widehat{G}_{yx}\widehat{G}_{xy}\widehat{G}_{ij}) \widetilde{\cal P}^{(A)}\\
			&\ \ -\frac{1}{N^2}\sum_{x,y} \bb E\partial_z^{\delta}( \widehat{G}_{yx}(\widehat{G}_{ix}\widehat{G}_{yj}+\widehat{G}_{iy}\widehat{G}_{xj})) \widetilde{\cal P}^{(A)}+\frac{1}{N^2}\sum_{x,y}\bb E(1+\delta_{xy})\partial_z^{\delta}( \widehat{G}_{yx}\widehat{G}_{ij})\frac{\partial  \widetilde{\cal P}^{(A)}}{\partial H_{xy}}\\
			&\ \ -\frac{2}{N^2}\sum_{x\in \cal I_1(\cal P)}\bb E\frac{\partial (\partial_z^{\delta}( \widehat{G}_{xx}\widehat{G}_{ij}) \widetilde{\cal P}^{(A)})}{\partial H_{xx}}-\frac{2}{N^2}\sum_{x: x \notin \cal I_1(\cal P)}\bb E\frac{\partial (\partial_z^{\delta}( \widehat{G}_{xx}\widehat{G}_{ij}) \widetilde{\cal P}^{(A)})}{\partial H_{xx}}\eqd \sum_{p=1}^6\bb E L^{(1)}_{1,p}
		\end{align*}
	where we applied the convention in (\ref{convention on diagonal dderivatie}). Similarly, we have
	\begin{equation*}
		\begin{aligned}
			\bb EL_{1}^{(2)}&=\frac{1}{N^2}\sum_{x,y} \bb E\partial_z^{\delta}( \widehat{G}_{xx}\widehat{G}_{ij}\widehat{G}_{yy}) \widetilde{\cal P}^{(A)}+\frac{1}{N^2}\sum_{x,y} \bb E\partial_z^{\delta}( \widehat{G}_{xj}(\widehat{G}_{xi}\widehat{G}_{yy}+\widehat{G}_{yi}\widehat{G}_{xy}+\widehat{G}_{yx}\widehat{G}_{iy})) \widetilde{\cal P}^{(A)}\\
			&\ \ -\frac{1}{N^2}\sum_{x,y}\bb E(1+\delta_{xi})\partial_z^{\delta}( \widehat{G}_{xj}\widehat{G}_{yy})\frac{\partial  \widetilde{\cal P}^{(A)}}{\partial H_{xi}}-\frac{2}{N^2}\sum_{y}\bb E\frac{\partial (\partial_z^{\delta}( \widehat{G}_{ij}\widehat{G}_{yy}) \widetilde{\cal P}^{(A)})}{\partial H_{ii}}\eqd \sum_{p=1}^4\bb E L^{(2)}_{1,p}\,.
		\end{aligned}
	\end{equation*}
	Note the cancellation between $\bb E L_{1,1}^{(1)}$ and $\bb E L_{1,1}^{(2)}$. 
	
	The estimates of $\bb EL_{1,p}^{(1)}$ and $\bb EL_{1,p}^{(2)}$ for $p \geq 2$ can be handled using Lemmas \ref{lem:intermediate} and \ref{lem4.5}. To start with, we consider
	\[
	\bb E L_{1,2}^{(1)}=-\frac{1}{N^2}\bb E \partial_z^{\delta}(\tr \widehat{G}^2)\widehat{G}_{ij}\widetilde{\cal P}^{(A)}-\frac{\delta}{N^2}\tr \widehat{G}^2 \partial_z^\delta \widehat{G}_{ij}\widetilde{\cal P}^{(A)}\,.
	\]
	By Lemma \ref{lem:intermediate} we have
	\[
	\cal S(\partial_z^\delta \widehat{G}_{ij}\widetilde{\cal P}^{(A)}) \prec t^{\mu(\cal P)}N^{\nu(\cal P)-\theta(\cal P)-\nu_1(\cal P)/2-\nu_3(\cal P)-d(\cal P)/2} |\eta_1|^{-7/2}|\eta_2|^{-7/2} \cdots |\eta_\sigma|^{-7/2}
	\]
	and together with the bound $N^{-2}\tr \partial^\delta_z\widehat{G}^2\prec N^{-1}|\eta_1|^{-1-\delta}$ we get
	\begin{align*}
		\bb E\cal S(L_{1,2}^{(1)}) & \prec t^{\mu(\cal P)}N^{\nu(\cal P)-\theta(\cal P)-\nu_1(\cal P)/2-\nu_3(\cal P)-d(\cal P)/2-1} |\eta_1|^{-9/2}|\eta_2|^{-7/2} \cdots |\eta_\sigma|^{-7/2}\\
		&=\cal E_1(\cal P)N^{-1/2}|\eta_1|^{-9/2}|\eta_2|^{-7/2} \cdots |\eta_\sigma|^{-7/2}\,.
	\end{align*}
	By Lemma \ref{lem:integration} we get $\{\bb E\cal S(L_{1,2}^{(1)}) \}_\sigma \prec \cal E_1(\cal P)N^{-1/2}$. A similar argument shows that
	\[
	\bb E \cal S(L_{1,3}^{(1)})\prec \cal E_1(\cal P)N^{-3/2}|\eta_1|^{-11/2}|\eta_2|^{-7/2} \cdots |\eta_\sigma|^{-7/2}\prec\cal E_1(\cal P)N^{-11/8}|\eta_1|^{-5}|\eta_2|^{-7/2} \cdots |\eta_\sigma|^{-7/2}\,,
	\]
	where in the second step we used $|\eta|\geq N^{-1/4}$ for $z=E+\ii \eta\in \b D$. By Lemma \ref{lem:integration} we get $\{\bb E\cal S(L_{1,3}^{(1)}) \}_\sigma \prec \cal E_1(\cal P)N^{-11/8}$. Analogously, we can also show that  $\{\bb E\cal S(L_{1,4}^{(1)}) \}_\sigma \prec \cal E_1(\cal P)N^{-11/8}$.  
	
	Next, we consider $\bb E L_{1,5}^{(1)}$. By \eqref{diff1}, one can show that
$
	\bb E L_{1,5}^{(1)}=\sum_{q=1}^n \bb E  \widetilde{\cal P}^{(1)}_{1,5,q}
$
	for some fixed $n \in \bb N_+$, where each $\cal P^{(1)}_{1,5,q}\in \b P$ satisfies $\mu\big(\cal P^{(1)}_{1,5,q}\big)=\mu(\cal P)$, $\nu\big(\cal P^{(1)}_{1,5,q}\big)=\nu(\cal P)$, $\theta\big(\cal P^{(1)}_{1,5,q}\big)=\theta(\cal P)+2$. In addition, note that by \eqref{diff1}, when we apply the differential $\partial/\partial H_{xx}$, the indices $\nu_1,\nu_3$ will not decrease. That is, $\nu_1\big(\cal P^{(1)}_{1,5,q}\big)\geq \nu_1(\cal P)$, $\nu_3(\cal P^{(1)}_{1,5,q})\geq\nu_3(\cal P)$. Moreover, we have $\nu_*(\cal P^{(1)}_{1,5,q})=0$. Thus Lemma \ref{lem4.5} implies
	\begin{equation*}
		\{\bb E \cal S(L_{1,5}^{(1)})\}_\sigma=	\sum_{q=1}^n \{\bb E \cal S(\widetilde{\cal P}^{(1)}_{1,5,q})\}_\sigma=O_{\prec}(\cal E_1(\cal P)N^{-3/2})\,.
	\end{equation*}
	As another illustration, we see that
	\[
	\bb E L_{1,6}^{(1)}=\sum_{q=1}^n \sum_{x: x \notin \cal I_1(\cal P)} \bb E  \widetilde{\cal P}^{(1)}_{1,6,q}
	\]
	for some fixed $n \in \bb N_+$, where each $\cal P^{(1)}_{1,6,q}\in \b P$ satisfies $\mu\big(\cal P^{(1)}_{1,6,q}\big)=\mu(\cal P)$, $\nu\big(\cal P^{(1)}_{1,6,q}\big)=\nu(\cal P)+1$, $\theta\big(\cal P^{(1)}_{1,6,q}\big)=\theta(\cal P)+2$, $\nu_1\big(\cal P^{(1)}_{1,6,q}\big)\geq \nu_1(\cal P)$, $\nu_3(\cal P^{(1)}_{1,6,q})\geq \nu_3(\cal P)$ and $\nu_*(\cal P_{1,6,q}^{(1)})=0$. Thus Lemma \ref{lem4.5} implies
	\begin{equation*}
		\{	\bb E \cal S(L_{1,6}^{(1)})\}_\sigma=	\sum_{q=1}^n \{\bb E \cal S(\widetilde{\cal P}^{(1)}_{1,6,q})\}_\sigma=O_{\prec}(\cal E_1(\cal P)N^{-1/2})\,.
	\end{equation*}
	One can use similar arguments to estimate the remaining terms in $\bb EL_{1}^{(1)}$ and $\bb EL_{1}^{(2)}$, and show that
	\begin{equation} \label{L_1}
		\{\bb E \cal S(L_1^{(1)})\}_\sigma+\{\bb E \cal S(L_1^{(2)})\}_\sigma =O_{\prec}(\cal E_0(\cal P)N^{-1})=O_{\prec}(\cal E_1(\cal P)N^{-1/2})\,.
	\end{equation}
	The above suggests that estimate is improved by $N^{-1}$ from the trivial estimate $\cal E_0(\cal P)$. 
	
	Now let us deal with $\bb E L_{k}^{(1)}$ and $\bb E L_{k}^{(2)}$ for $k \geq 2$.  We shall check  the latter in detail since it is more representative. We have
	\begin{equation} \label{6.14}
		\begin{aligned}
			\bb EL_{k}^{(2)}=&-\frac{s_{k+1}}{k!}\frac{1}{N^{(k+3)/2}}\sum_{x:x \notin \cal I_1(\cal P ) } \sum_{y}\bb E \frac{\partial^k (\partial_z^{\delta}( \widehat{G}_{xj}\widehat{G}_{yy})\widetilde{\cal P}^{(A)} )}{\partial H_{ix}^k}\\
			&-\frac{s_{k+1}}{k!}\frac{1}{N^{(k+3)/2}}\sum_{x:x \in \cal I_1(\cal P )\backslash\{i,j\} } \sum_{y}\bb E \frac{\partial^k (\partial_z^{\delta}( \widehat{G}_{xj}\widehat{G}_{yy}) \widetilde{\cal P}^{(A)})}{\partial H_{ix}^k}\\
			&-\frac{s_{k+1}}{k!}\frac{1}{N^{(k+3)/2}}\sum_{y}\bb E \frac{\partial^k (\partial_z^{\delta}( \widehat{G}_{jj}\widehat{G}_{yy}) \widetilde{\cal P}^{(A)})}{\partial H_{ij}^k} \eqd \sum_{p=1}^3 \bb E L_{k,p}^{(2)}\,.
		\end{aligned}
	\end{equation}
	In the sequel we shall repeatedly use the following observation: from \eqref{diff1} we see that, when $a\ne b$ and $\{a,b\}\ne \{c,d\}$, every term in 
	\begin{equation} \label{444}
		\frac{\partial^k (\sum_{i_1,...,i_n}\partial_z^{\delta_1} G^{(1)}_{ai_1}\partial_z^{\delta_2} G^{(2)}_{i_1i_2}\cdots\partial_z^{\delta_n} G^{(n)}_{i_{n-1}i_n}\partial_z^{\delta_{n+1}} G^{(n+1)}_{i_nb}) }{\partial H_{cd}^k}\,,  \quad \delta_1,...,\delta_n \in \{0,1\}
	\end{equation}
	contains at least one off-diagonal factor. As $x$ is distinct from $\cal I_1(\cal P)$ in $\bb E L^{(2)}_{k,1}$, we see that
	\[
	\bb E L^{(2)}_{k,1}=\sum_{q=1}^n \sum_{x:x \notin \cal I_1(\cal P ) } \sum_{y}\bb E  \widetilde{\cal P}_{k,1,q}^{(2)}\,,
	\]
	for some fixed $n \in \bb N_+$, where each $\cal P^{(2)}_{k,1,q}\in \b P$ satisfies $\mu\big(\cal P^{(2)}_{k,1,q}\big)=\mu(\cal P)$, $\nu\big(\cal P^{(2)}_{k,1,q}\big)=\nu(\cal P)+2$, $\theta\big(\cal P^{(2)}_{k,1,q}\big)=\theta(\cal P)+(k+3)/2$, $\nu_1\big(\cal P^{(2)}_{k,1,q}\big)\geq \nu_1(\cal P)$, $\nu_3(\cal P^{(2)}_{k,1,q})\geq\nu_3(\cal P)$. Let us fix $q$, and write $\cal P^{(2)}_{k,1,q} =a^{(2)} t^\mu N^{-\theta^{(2)}}\{\cal G_1^{(2)}\}\cdots \{\cal G_\sigma^{(2)}\}$. Our assumption $\nu_0(\cal P)\leq 7$ implies
	\begin{align}
		\nu_*(\cal P^{(2)}_{k,1,q})&\leq \sum_{i=1}^{\sigma}(\nu_0(\mathcal{G}_i^{(2)})-\nu_0(\mathcal{P})-3)_+\leq\sum_{i=1}^{\sigma} (\nu_0(\mathcal{G}_i^{(2)})-\nu_0(\mathcal{G}_i)-3)_+\notag\\		
		&= 		\sum_{i=1}^\sigma({\nu}_1(\mathcal{G}_i^{(2)})+2\nu_3(\mathcal{G}_i^{(2)})-{\nu}_1(\mathcal{G}_i)-2\nu_3(\cal G_i)-3)_+\notag\\
		&\leq \sum_{i=1}^\sigma\big(({\nu}_1(\mathcal{G}_i^{(2)})-{\nu}_1(\mathcal{G}_i)-3)_+ +2\nu_3(\cal G_i^{(2)})-2\nu_3(\cal G_i)\big)\label{33333}\,,
	\end{align}
	where in the last step we used $\nu_3(\mathcal{G}_i^{(2)})\geq \nu_3(\cal G_i)$ for $i \in \{1,...,n\}$. Note that by \eqref{6.14}, we get $\cal P_{k,1,q}^{(2)}$ through $k$ differentials. This motivates us to define for $r=0,...,k$, the term $\cal P_{k,1,q}^{(2,r)}$ such that 
	\[
	\cal P_{k,1,q}^{(2,0)}=-\frac{s_{k+1}}{k!}\frac{1}{N^{(k+3/2)}}\partial_z^{\delta}( \widehat{G}_{xj}\widehat{G}_{yy})\widetilde{\cal P}^{(A)}\,,\quad  \cal P_{k,1,q}^{(2,k)}=\cal P^{(2)}_{k,1,q}\,,
	\]
	and $ \cal P_{k,1,q}^{(2,r)}$ is a term in
	$$
	-\frac{s_{k+1}}{k!} \frac{1}{N^{(k+3)/2}}\frac{ \partial \cal P_{k,1,q}^{(2,r-1)}}{\partial H_{ix}}
	$$
	for all $r=1,...,k$. We write $\cal P^{(2,r)}_{k,1,q} =a^{(2,r)} t^\mu N^{-\theta^{(2,r)}}\{\cal G_1^{(2,r)}\}\cdots \{\cal G_\sigma^{(2,r)}\}$. By \eqref{diff1} and our observation concerning \eqref{444}, it is not hard to check that
	\begin{equation} \label{66666}
		{\nu}_1(\mathcal{G}_1^{(2,0)})={\nu}_1(\mathcal{G}_1)\,,\cdots\,, {\nu}_1(\mathcal{G}_\sigma^{(2,0)})={\nu}_1(\mathcal{G}_\sigma)\,,\quad {\nu}_1(\cal P^{(2,1)}_{k,1,q})={\nu}_1(\cal P)
	\end{equation}
	and
	\begin{equation} \label{6666}
		\sum_{i=1}^\sigma({\nu}_1(\mathcal{G}_i^{(2,r)})-{\nu}_1(\mathcal{G}^{(2,r-1)}_i))_+\leq 1+{\nu}_1(\cal P^{(2,r)}_{k,1,q})-{\nu}_1(\cal P^{(2,r-1)}_{k,1,q})\,,
	\end{equation}
	for all $r=1,...,k$. The term $+1$ on RHS of the above comes from the worst case, where a factor of the form $(\widehat{G}(z_1)\widehat{G}(z_2))_{ux}$, $u\ne x$ in $\cal P^{(2,r-1)}_{k,1,q}$ was differentiated  by $H_{ix}$, and we get $\widehat{G}(z_1)_{ui}$ and $(\widehat{G}(z_1)\widehat{G}(z_2))_{xx}$ in $\cal P_{k,q,1}^{(2,r)}$. In this case, $({\nu}_1(\mathcal{G}_1^{(2,r)})-{\nu}_1(\mathcal{G}^{(2,r-1)}_1))_++\cdots+ ({\nu}_1(\mathcal{G}_\sigma^{(2,r)})-{\nu}_1(\mathcal{G}^{(2,r-1)}_\sigma))_+=1$, while ${\nu}_1(\cal P^{(2,r)}_{k,1,q})-{\nu}_1(\cal P^{(2,r-1)}_{k,1,q})=0$. Thus \eqref{66666} and \eqref{6666} imply
	$
	({\nu}_1(\mathcal{G}_1^{(2)})-{\nu}_1(\mathcal{G}_1))_++\cdots ({\nu}_1(\mathcal{G}_\sigma^{(2)})-{\nu}_1(\mathcal{G}_\sigma))_+ \leq k+{\nu}_1(\cal P^{(2)}_{k,1,q})-{\nu}_1(\cal P),
	$
	and as a result
	\begin{align}
		\sum_{i=1}^\sigma({\nu}_1(\mathcal{G}_i^{(2)})-{\nu}_1(\mathcal{G}_i)-3)_+ \leq (k+{\nu}_1(\cal P^{(2)}_{k,1,q})-{\nu}_1(\cal P)-3)_+ \,.\label{022601}
	\end{align}
	Similarly, we can also show that
	\begin{align}
	\sum_{i=1}^\sigma\big(	\nu_3(\cal G_i^{(2)})-\nu_3(\cal G_i) \big)\leq 2(\nu_3(\cal P^{(2)}_{k,1,q})-\nu_3(\cal P))\,. \label{022710}
	\end{align}
	\nc By \eqref{33333}, \eqref{022601} and \eqref{022710}, we have
	\[
	\nu_*(\cal P^{(2)}_{k,1,q})\leq 4(\nu_3(\cal P^{(2)}_{k,1,q})-\nu_3(\cal P))+({\nu}_1(\cal P^{(2)}_{k,1,q})-{\nu}_1(\cal P)+k-3)_+\,,
	\]
	and as a result
	\begin{equation}\label{5.7}
		\nu_*(\cal P^{(2)}_{k,1,q})/8\leq \nu_3(\cal P^{(2)}_{k,1,q})-\nu_3(\cal P)+({\nu}_1(\cal P^{(2)}_{k,1,q})-{\nu}_1(\cal P)+k-3)_+/2\,.
	\end{equation}
	The above relation, together with Lemma \ref{lem4.5} imply
	\begin{equation} \label{5.6}
		\begin{aligned}
			 \{\bb E \cal S(L^{(2)}_{k,1})\}_\sigma&=\sum_{q=1}^n \{\bb E \cal S(\widetilde{\cal  P}_{k,1,q}^{(2)}) \}_\sigma\\
			&=O_{\prec}\big(t^{\mu(\cal P)} N^{\nu(\cal P)+2-(\theta(\cal P)+(k+3)/2)-\nu_1(\cal P)/2-\nu_3(\cal P)+(k-3)_+/2} \big)\\
			&=O_{\prec}(\cal E_1(\cal P)N^{-1/2})\,.
		\end{aligned}
	\end{equation}	
	From \eqref{diff1}, we also see that
	\[
	\bb E L^{(2)}_{k,2}=\sum_{q=1}^n  \sum_{y}\bb E   \widetilde{\cal P}_{k,2,q}^{(2)}\,,
	\]
	for some fixed $n \in \bb N_+$, where each $\cal P^{(2)}_{k,2,q}\in \b P$ satisfies $\mu\big(\cal P^{(2)}_{k,2,q}\big)=\mu(\cal P)$, $\nu\big(\cal P^{(2)}_{k,2,q}\big)=\nu(\cal P)+1$, $\theta\big(\cal P^{(2)}_{k,2,q}\big)=\theta(\cal P)+(k+3)/2$, $\nu_1\big(\cal P^{(2)}_{k,2,q}\big)\geq \nu_1(\cal P)-k$, $\nu_3(\cal P^{(2)}_{k,2,q})\geq \nu_3(\cal P)$. Similarly to \eqref{5.7}, we can show that
	\begin{align*}
	\nu_*(\cal P^{(2)}_{k,1,q})/8&\leq  \nu_3(\cal P^{(2)}_{k,1,q})-\nu_3(\cal P)+({\nu}_1(\cal P^{(2)}_{k,1,q})-{\nu}_1(\cal P)+k-3)_+/2\\
	&\leq\nu_3(\cal P^{(2)}_{k,1,q})-\nu_3(\cal P)+(\nu_1(\cal P^{(2)}_{k,1,q})-\nu_1(\cal P)+k)/2\,.
	\end{align*}
	Thus Lemma \ref{lem4.5} shows
	\begin{equation} 
		\begin{aligned}
			&\quad\{\bb E \cal S(L^{(2)}_{k,2})\}_\sigma=\sum_{q=1}^n \{\bb E\cal S(\widetilde{\cal  P}_{k,2,q}^{(2)})\}_\sigma\\
			& =O_{\prec}\big(t^{\mu(\cal P)} N^{\nu(\cal P)+1-(\theta(\cal P)+(k+3)/2)-\nu_1(\cal P)/2-\nu_3(\cal P)+k/2} \big)=O_{\prec}(\cal E_1(\cal P))\,.
		\end{aligned}
	\end{equation}	
	For $\bb E L_{k,3}^{(2)}$, since $\partial_z^{\delta } G_{ij}$ is a lone factor of $\cal P$, there is no factor with both indices $i,j$ in $\cal S_2(\partial_z^{\delta}( \widehat{G}_{jj}\widehat{G}_{yy}) \widetilde{\cal P}^{(A)})$. Hence we again use our observation about the differential \eqref{444}, and we see that
	\[
	\bb E L^{(2)}_{k,3}=\sum_{q=1}^n  \sum_{y}\bb E  \widetilde{\cal P}_{k,3,q}^{(2)}\,,
	\]
	for some fixed $n \in \bb N_+$, where each $\cal P^{(3)}_{k,3,q}\in \b P$ satisfies $\mu\big(\cal P^{(3)}_{k,3,q}\big)=\mu(\cal P)$, $\nu\big(\cal P^{(3)}_{k,3,q}\big)=\nu(\cal P)+1$, $\theta\big(\cal P^{(3)}_{k,3,q}\big)\geq \theta(\cal P)+(k+3)/2$, $\nu_1\big(\cal P^{(2)}_{k,3,q}\big)\geq \nu_1(\cal P)-1$, $\nu_3(\cal P^{(2)}_{k,3,q})\geq \nu_3(\cal P)$, and
	\begin{align*}
		\nu_*(\cal P^{(2)}_{k,1,q})/8&\leq \nu_3(\cal P^{(2)}_{k,1,q})-\nu_3(\cal P)+(\nu_1(\cal P^{(2)}_{k,1,q})-\nu_1(\cal P)+k-3)_+/2\\
		&\leq \nu_3(\cal P^{(2)}_{k,1,q})-\nu_3(\cal P)+(\nu_1(\cal P^{(2)}_{k,1,q})-\nu_1(\cal P)+1)/2+(k-4)_+/2
	\end{align*}
	Thus Lemma \ref{lem4.5} shows
	\begin{equation}  \label{5.9}
		\begin{aligned}
			\{	\bb E \cal S(L^{(2)}_{k,3})\}_\sigma&=\sum_{q=1}^n \{\bb E\cal S( \widetilde{\cal  P}_{k,3,q}^{(2)})\}_\sigma\\ &=O_{\prec}\big(t^{\mu(\cal P)} N^{\nu(\cal P)+1-(\theta(\cal P)+(k+3)/2)-(\nu_1(\cal P)/2+1/2)-\nu_3(\cal P)+(k-4)_+/2} \big)\\
			&=O_{\prec}(\cal E_1(\cal P))\,.
		\end{aligned}
	\end{equation}
	Note the criticality of the assumption that $\partial_z^{\delta } G_{ij}$ is a lone factor of $\cal P$. Without this condition, we can only have $\nu_1\big(\cal P^{(2)}_{k,3,q}\big)\geq \nu_1(\cal P)-1-k$ for the parameter $\nu_1$, and in this case \eqref{5.9} fails. By \eqref{5.6} -- \eqref{5.9} we have
	\begin{equation} \label{333}
		\{\bb E \cal S(L_{k}^{(2)}) \}_{\sigma}=O_{\prec}(\cal E_1(\cal P))=O_{\prec}(\cal E_0(\cal P)N^{-1/2})=O_{\prec}(\cal E_1(\cal P))
	\end{equation}
	for $k\geq 2$. A similar argument shows that
	\begin{equation} \label{4444}
		\{\bb E \cal S(L_{k}^{(1)}) \}_\sigma=O_{\prec}(\cal E_0(\cal P)N^{-1/2})=O_{\prec}(\cal E_1(\cal P))
	\end{equation}
	for $k\geq 2$. Note that in the above two relations,  we improve the trivial bound $\cal E_0(\cal P)$ by a factor of $N^{-1/2}$. Inserting \eqref{L_1}, \eqref{333} and \eqref{4444} into \eqref{5.2}, together with Lemma \ref{lem:integration} we arrive at
	\[
	\bb E \cal S(\cal P)=\{\bb E \cal S(\widetilde{\cal P})\}_\sigma=O_{\prec}(\cal E_1(\cal P))
	\]
	as desired.
	
	\textit{Case 2.} Suppose $\nu_5(\cal P)=\nu_6(\cal P)=1$, $\nu_0(\cal P)\leq 5$\,, and the lone factors of $\cal P$ are $\partial^{\delta_1}_{z_r} \widehat{G}_{ij}\equiv \partial^{\delta_1}_{z_r} \widehat{G}_{ij}(z_r)$ and $\partial^{\delta_2}_{z_s} \widehat{F}_{uv}\equiv \partial^{\delta_2}_{z_s} \widehat{G}_{uv}(z_s)$, where $z_r,z_s \in \{z_1,...,z_\sigma\}$, $\delta_{1},\delta_2 \in \{0,1\}$. Note that in the case $\nu_6(\cal P)=1$ we have  $\cal E_0(\cal P)=N\cal E_1(\cal P)$, and thus we need to improve Lemma \ref{lem4.5} by a factor of $N^{-1}$. 
	
	Let us abbreviate $\widetilde{\cal P}^{(B)}\deq \widetilde{\cal P}/( \partial^{\delta_1}_{z_r} \widehat{G}_{ij}\partial^{\delta_2}_{z_s} \widehat{F}_{uv})$. Similar to \eqref{5.2}, we have
	\begin{equation} \label{5.12}
		\begin{aligned}
			\bb E \widetilde{\cal P}&=\frac{1}{N}{\sum_{x,y}}^* \bb E H_{xy} \partial_{z_r}^{\delta_1}( \widehat{G}_{yx}\widehat{G}_{ij}) \partial^{\delta_2}_{z_s} \widehat{F}_{uv}\widetilde{\cal P}^{(B)}-\frac{1}{N}\sum_{x,y:x \ne i} \bb E \partial_{z_r}^{\delta_1}( \widehat{G}_{xj}\widehat{G}_{yy})\partial^{\delta_2}_{z_s} \widehat{F}_{uv}\widetilde{\cal P}^{(B)}\\
			&=
			\sum_{k=1}^{\ell}\frac{s_{k+1}}{k!}\frac{1}{N^{(k+3)/2}}{\sum_{x,y}}^* \bb E \frac{\partial^k (\partial_{z_r}^{\delta_1}( \widehat{G}_{yx}\widehat{G}_{ij}) \partial^{\delta_2}_{z_s} \widehat{F}_{uv}\widetilde{\cal P}^{(B)})}{\partial H_{xy}^k}
			\\ &\quad-\sum_{k=1}^{\ell}\frac{s_{k+1}}{k!}\frac{1}{N^{(k+3)/2}}\sum_{x,y:x \ne i} \bb E \frac{\partial^k (\partial_{z_r}^{\delta_1}( \widehat{G}_{xj}\widehat{G}_{yy})\partial^{\delta_2}_{z_s} \widehat{F}_{uv}\widetilde{\cal P}^{(B)})}{\partial H_{ix}^k}+O_{\prec}(\cal E_1(\cal P)N^{-\nu(\cal P)})\\
			&=:\sum_{k=1}^{\ell}\bb E L_{k}^{(3)}+\sum_{k=1}^{\ell} \bb E L_{k}^{(4)} +O_{\prec}(\cal E_1(\cal P)N^{-\nu(\cal P)})
		\end{aligned}
	\end{equation}
	for some fixed $\ell \in \bb N_+$. By \eqref{diff1}, we have
		\begin{align*}
			\bb E L^{(3)}_1
			&=-\frac{1}{N^2}\sum_{x,y} \bb E\partial_{z_r}^{\delta_1}( \widehat{G}_{xx}\widehat{G}_{yy}\widehat{G}_{ij})\partial^{\delta_2}_{z_s} \widehat{F}_{uv} \widetilde{\cal P}^{(B)}\\
			&\ \  -\frac{1}{N^2}\sum_{x,y} \bb E\partial_{z_r}^{\delta}( \widehat{G}_{yx}(\widehat{G}_{xy}\widehat{G}_{ij}+\widehat{G}_{ix}\widehat{G}_{yj}+\widehat{G}_{iy}\widehat{G}_{xj})) \partial^{\delta_2}_{z_s} \widehat{F}_{uv}\widetilde{\cal P}^{(B)}\\
			&\ \ +\frac{1}{N^2}\sum_{x,y}\bb E\partial_{z_r}^{\delta_1}( \widehat{G}_{yx}\widehat{G}_{ij})\frac{\partial  (\partial^{\delta_2}_{z_s} \widehat{F}_{uv} \widetilde{\cal P}^{(B)})}{\partial H_{xy}}(1+\delta_{xy})\\
			&\ \ -\frac{2}{N^2}\sum_{x\in \cal I_1(\cal P)}\bb E\frac{\partial (\partial_{z_r}^{\delta_1}( \widehat{G}_{xx}\widehat{G}_{ij})\partial^{\delta_2}_{z_s} \widehat{F}_{uv} \widetilde{\cal P}^{(B)})}{\partial H_{xx}}\\
			&\ \ -\frac{2}{N^2}\sum_{x: x \notin \cal I_1(\cal P)}\bb E\frac{\partial (\partial_{z_r}^{\delta}( \widehat{G}_{xx}\widehat{G}_{ij})\partial^{\delta_2}_{z_s} \widehat{F}_{uv} \widetilde{\cal P}^{(B)})}{\partial H_{xx}}\eqd \sum_{p=1}^5\bb E L^{(3)}_{1,p}
		\end{align*}
	and similarly
		\begin{align*}
			\bb EL_{1}^{(4)}&=\frac{1}{N^2}\sum_{x,y} \bb E\partial_{z_r}^{\delta_1}( \widehat{G}_{xx}\widehat{G}_{ij}\widehat{G}_{yy})\partial^{\delta_2}_{z_s} \widehat{F}_{uv} \widetilde{\cal P}^{(B)}\\
			&+\frac{1}{N^2}\sum_{x,y} \bb E\partial_{z_r}^{\delta_1}( \widehat{G}_{xj}(\widehat{G}_{xi}\widehat{G}_{yy}+\widehat{G}_{yi}\widehat{G}_{xy}+\widehat{G}_{yx}\widehat{G}_{iy})) \partial^{\delta_2}_{z_s} \widehat{F}_{uv}\widetilde{\cal P}^{(B)}\\
			& -\frac{1}{N^2}\sum_{x,y}\bb E\partial_{z_r}^{\delta_1}( \widehat{G}_{xj}\widehat{G}_{yy})\frac{\partial (\partial^{\delta_2}_{z_s} \widehat{F}_{uv} \widetilde{\cal P}^{(B)})}{\partial H_{xi}}(1+\delta_{xi})\\
			&-\frac{2}{N^2}\sum_{y}\bb E\frac{\partial (\partial_{z_r}^{\delta_1}( \widehat{G}_{ij}\widehat{G}_{yy})\partial^{\delta_2}_{z_s} \widehat{F}_{uv} \widetilde{\cal P}^{(B)})}{\partial H_{ii}} \eqd \sum_{p=1}^4\bb E L^{(4)}_{1,p}\,.
		\end{align*}
	We again see that there is a cancellation between $\bb EL_{1,1}^{(3)}$ and $\bb EL_{1,1}^{(4)}$. Similar to \eqref{L_1}, we can use Lemmas \ref{lem:intermediate} and \ref{lem4.5} to show that the estimate for other terms in $\{\bb E\cal S( L_{1}^{(3)})\}_\sigma$ and $\{\bb E \cal S (L_{1}^{(4)})\}_\sigma$ are improved by $N^{-1}$ from the trivial estimate $\cal E_0(\cal P)$, i.e.
	\begin{equation} \label{5.15}
		\{\bb E \cal S(L_1^{(3)})\}_\sigma+\{\bb E \cal S(L_1^{(4)})\}_\sigma=O_{\prec}(\cal E_0(\cal P)N^{-1}) =O_{\prec}(\cal E_1(\cal P)N^{-1/2})
		\,.
	\end{equation}
	
	Next let us deal with $\bb EL_{k}^{(3)}$ and $\bb E L_{k}^{(4)}$ for $k \geq 2$. We shall give a careful treatment of the latter. We have
	\begin{equation*}
		\begin{aligned}
			\bb EL_{k}^{(4)}=&-\frac{s_{k+1}}{k!}\frac{1}{N^{(k+3)/2}}\sum_{x:x \notin \cal I_1(\cal P ) } \sum_{y}\bb E\frac{\partial^{k} (\partial_{z_r}^{\delta_1}( \widehat{G}_{xj}\widehat{G}_{yy})\partial^{\delta_2}_{z_s} \widehat{F}_{uv}\widetilde{\cal P}^{(B)})}{\partial H_{ix}^{k}}\\
			&-\frac{s_{k+1}}{k!}\frac{1}{N^{(k+3)/2}}\sum_{x:x \in \cal I_1(\cal P )\backslash\{i,j,u,v\} } \sum_{y}\bb E \frac{\partial^{k} (\partial_{z_r}^{\delta_1}( \widehat{G}_{xj}\widehat{G}_{yy})\partial^{\delta_2}_{z_s} \widehat{F}_{uv}\widetilde{\cal P}^{(B)})}{\partial H_{ix}^{k}}\\
			&-\frac{s_{k+1}}{k!}\frac{1}{N^{(k+3)/2}}\sum_{y}\bb  E\frac{\partial^{k} (\partial_{z_r}^{\delta_1}( \widehat{G}_{jj}\widehat{G}_{yy})\partial^{\delta_2}_{z_s} \widehat{F}_{uv}\widetilde{\cal P}^{(B)})}{\partial H_{ij}^{k}}\\
			&-\frac{s_{k+1}}{k!}\frac{1}{N^{(k+3)/2}}\sum_{x \in \{u,v\}}\sum_{y}\bb  E\frac{\partial^{k} (\partial_{z_r}^{\delta_1}( \widehat{G}_{xj}\widehat{G}_{yy})\partial^{\delta_2}_{z_s} \widehat{F}_{uv}\widetilde{\cal P}^{(B)})}{\partial H_{ix}^{k}}\eqd \sum_{p=1}^{4}\bb E L_{k,p}^{(4)}\,.
		\end{aligned}
	\end{equation*}
	We can further split
	\begin{equation*} 
		\begin{aligned}
			\bb E L_{k,1}^{(4)}=&-\frac{s_{k+1}}{k!}\frac{1}{N^{(k+3)/2}}\sum_{n=1}^k{k \choose n}\sum_{x:x \notin \cal I_1(\cal P ) } \sum_{y}\bb E\frac{\partial^{k-n} (\partial_{z_r}^{\delta_1}( \widehat{G}_{xj}\widehat{G}_{yy})\widetilde{\cal P}^{(B)})}{\partial H_{ix}^{k-n}}
			\frac{\partial^{n} (\partial^{\delta_2}_{z_s} \widehat{F}_{uv})}{\partial H_{ix}^{n}}\\
			&-\frac{s_{k+1}}{k!}\frac{1}{N^{(k+3)/2}}\sum_{x:x \notin \cal I_1(\cal P ) } \sum_{y}\bb E\frac{\partial^{k} (\partial_{z_r}^{\delta_1}( \widehat{G}_{xj}\widehat{G}_{yy})\widetilde{\cal P}^{(B)})}{\partial H_{ix}^{k}}
			(\partial^{\delta_2}_{z_s} \widehat{F}_{uv})\eqd \bb E L_{k,1,1}^{(4)}+\bb E L_{k,1,2}^{(4)}\,.
		\end{aligned}
	\end{equation*}
	Note that when $n \geq 1$, there are at least two off-diagonal entries in each term of $\partial^{n} (\partial^{\delta_2}_{z_s} \widehat{F}_{uv})/\partial H_{ix}^{n}$. Thus we see that
	\[
	\bb E L^{(4)}_{k,1,1}=\sum_{q=1}^r \sum_{x:x \notin \cal I_1(\cal P ) } \sum_{y}\bb E  \widetilde{\cal P}_{k,1,1,q}^{(4)}\,,
	\]
	for some fixed $r \in \bb N_+$, where each $\cal P^{(4)}_{k,1,1,q}\in \b P$ satisfies $\mu\big(\cal P^{(4)}_{k,1,1,q}\big)=\mu(\cal P)$, $\nu\big(\cal P^{(4)}_{k,1,1,q}\big)=\nu(\cal P)+2$, $\theta\big(\cal P^{(4)}_{k,1,1,q}\big)=\theta(\cal P)+(k+3)/2$, $\nu_1\big(\cal P^{(4)}_{k,1,1,q}\big)\geq \nu_1(\cal P)+1$, $\nu_3(\cal P^{(4)}_{k,1,1,q})\geq \nu_3(\cal P)$. In addition, similar to \eqref{5.7}, we have
	\begin{equation*}
		\nu_*(\cal P^{(4)}_{k,1,1,q})/8\leq \nu_3(\cal P^{(4)}_{k,1,1,q})-\nu_3(\cal P)+(\nu_1(\cal P^{(4)}_{k,1,1,q})-\nu_1(\cal P)-1)/2+(k-2)_+/2\,.
	\end{equation*}
	Thus Lemma \ref{lem4.5} implies
	\begin{equation} \label{5.18}
		\begin{aligned}
		&\quad \{\bb E \cal S(L^{(4)}_{k,1,1})\}_\sigma\\
		&=O_{\prec}\big(t^{\mu(\cal P)} N^{\nu(\cal P)+2-(\theta(\cal P)+(k+3)/2)-(\nu_1(\cal P)+1)/2-\nu_3(\cal P)+(k-2)_+/2} \big)=O_\prec(\cal E_1(\cal P))
		\end{aligned}
	\end{equation}
	for $k \geq 2$. Similarly, we have
	\begin{equation} \label{5.19}
		\{\bb E \cal S(L^{(4)}_{k,1,2})\}_\sigma=O_\prec(\cal E_1(\cal P))
	\end{equation}
	for $k\geq 3$. To handle $\bb E L^{(4)}_{2,1,2}$, note that
	\[
	\bb E L^{(4)}_{2,1,2}=\sum_{q=1}^r \sum_{x:x \notin \cal I_1(\cal P ) } \sum_{y}\bb E  \widetilde{\cal P}_{2,1,2,q}^{(4)}
	\]
	for some fixed $r \in \bb N_+$, where each $\cal P^{(4)}_{2,1,2,q}\in \b P$ satisfies $\mu\big(\cal P^{(4)}_{2,1,2,q}\big)=\mu(\cal P)$, $\nu\big(\cal P^{(4)}_{2,1,2,q}\big)=\nu(\cal P)+2$, $\theta\big(\cal P^{(4)}_{2,1,2,q}\big)=\theta(\cal P)+5/2$, $\nu_1\big(\cal P^{(4)}_{2,1,2,q}\big)\geq \nu_1(\cal P)$, $\nu_3(\cal P^{(4)}_{2,1,2,q})\geq \nu_3(\cal P)$. Let us further split the sum over $q$ such that $\nu_1(\cal P^{(4)}_{2,1,2,q})=\nu_1(\cal P)$, $\nu_3(\cal P^{(4)}_{2,1,2,q})=\nu_3(\cal P)$ for $1\leq q\leq m\leq r$, and otherwise $q \geq m+1$. By Lemma \ref{lem4.5}, one has
	\begin{equation*} 
		\sum_{q=m+1}^r\{\bb E \cal S\big(\widetilde{\cal P}_{2,1,2,q}^{(4)}\big)\}_\sigma=O_\prec(\cal E_1(\cal P))\,.
	\end{equation*}
	For $q \leq m$, directly applying Lemma \ref{lem4.5} only gives a bound $O_{\prec}(\cal E_1(\cal P)N^{1/2})$. In this case, we use the fact that  $\partial_{z_s}^{\delta_2}\widehat{F}_{uv}$ is a lone factor of $\cal P^{(4)}_{2,1,2,q}$, i.e.\,$\nu_5(\cal P^{(4)}_{2,1,2,q})=1$. Let us write $\cal P^{(4)}_{2,1,2,q} =a^{(4)} t^\mu N^{-\theta}\{\cal G_1^{(4)}\}\cdots \{\cal G_\sigma^{(4)}\}$. Similar to \eqref{5.7}, the conditions $\nu_1(\cal P^{(4)}_{2,1,2,q})=\nu_1(\cal P)$, $\nu_3(\cal P^{(4)}_{2,1,2,q})=\nu_3(\cal P)$ ensure that
	$\nu_0(\cal P^{(4)}_{2,1,2,q})\leq\nu_0(\cal P)+2\leq 7$. Thus we can use the the argument in Case 1 to get an additional improvement of $N^{-1/2}$. Hence,  we have finished estimating all terms in $\bb E L_{2,1,2}^{(4)}$, and we conclude
	\[
	\{\bb E \cal S(L^{(4)}_{2,1,2})\}_\sigma=O_\prec(\cal E_1(\cal P))\,.
	\]
	Together with \eqref{5.18} and \eqref{5.19} we have
	\begin{equation} \label{5.21}
		\{	\bb E \cal S(L_{k,1}^{(4)})\}_\sigma=O_\prec(\cal E_1(\cal P))\,.
	\end{equation}
	A similar argument applies when we show that
	\begin{equation} \label{5.22}
		\{	\bb E \cal S(L_{k,2}^{(4)})\}_\sigma+\{	\bb E \cal S(L_{k,3}^{(4)})\}_\sigma=O_\prec(\cal E_1(\cal P))\,,
	\end{equation} 
	as in both cases we get a lone factor when the derivatives $\partial^k/\partial H^k_{ix}$ do not hit $\partial_z^{\delta_2}\widehat{F}_{uv}$. 
	
	However, we need to be more careful when we deal with $\bb EL_{k,4}^{(4)}$, since now $x \in \{u,v\}$, and the derivatives $\partial^k/\partial H^k_{ix}$ might destroy the lone factor $\partial_z^{\delta_2}\widehat{F}_{uv}$. We apply \eqref{diff1} and see that
	\[
	\bb E L_{k,4}^{(4)}=\sum_{q=1}^r \sum_{y} \bb E \widetilde{\cal P}_{{k,4,q}}^{(4)}
	\]
	for some fixed $r \in \bb N_+$, where each $\cal P^{(4)}_{k,4,q}\in \b P$ satisfies $\mu\big(\cal P^{(4)}_{k,4,q}\big)=\mu(\cal P)$, $\nu\big(\cal P^{(4)}_{k,4,q}\big)=\nu(\cal P)+1$, $\theta\big(\cal P^{(4)}_{k,4,q}\big)=\theta(\cal P)+(k+3)/2$, $\nu_1\big(\cal P^{(4)}_{k,4,q}\big)\geq \nu_1(\cal P)-k$, $\nu_3(\cal P^{(4)}_{k,4,q})\geq \nu_3(\cal P)$. Again, we split the sum over $q$ such that $\nu_1(\cal P^{(4)}_{k,4,q})=\nu_1(\cal P)-k$, $\nu_3(\cal P^{(4)}_{k,4,q})=\nu_3(\cal P)$ for $1\leq q\leq m\leq n$, and otherwise $q \geq m+1$. One can use Lemma \ref{lem4.5} to show that
	\[
	\sum_{q=m+1}^4\{\bb E \cal S\big(\widetilde{\cal P}_{2,4,q}^{(4)}\big)\}_\sigma=O_\prec(\cal E_1(\cal P))\,.
	\]
	For $q\leq m$, directly applying Lemma \ref{lem4.5} only gives a bound $O_{\prec}(\cal E_1(\cal P)N^{1/2})$. However, the condition $\nu_1(\cal P^{(4)}_{k,4,q})=\nu_1(\cal P)-k$ implies that all the $k$ derivatives $\partial^k/\partial H^k_{ix}$ are applied on factors with indices $i,x$. As $i \notin \{u,v\}$, we see that the derivatives $\partial^k/\partial H^k_{ix}$ cannot create a factor with indices $u,v$, i.e.\, $\partial_{z_s}^{\delta_2}\widehat{F}_{uv}$ is still a lone factor for all terms in $\cal P^{(4)}_{k,4,q}$. Thus $\nu_5(\cal P^{(4)}_{k,4,q})=1$. In addition, $\nu_1(\cal P^{(4)}_{k,4,q})=\nu_1(\cal P)-k$, $\nu_3(\cal P^{(4)}_{k,4,q})=\nu_3(\cal P)$ ensures $\nu_0(\cal P^{(4)}_{k,4,q})\leq \nu_0(\cal P)\leq 5$. Thus we can use the argument in Case 1 to get an additional factor of $N^{-1/2}$. In conclusion, we get 
	\[
	\{	\bb E \cal S(L_{k,1}^{(4)})\}_\sigma=O_\prec(\cal E_1(\cal P))\,.
	\]
	Combining the above with \eqref{5.21} and \eqref{5.22} we get
	\begin{equation} \label{5.23}
		\{	\bb E \cal S(L_{k}^{(4)})\}_\sigma=O_\prec(\cal E_1(\cal P))
	\end{equation}
	for all fixed $k \geq 2$. Similar steps can be used to show that
	\begin{equation} \label{5.24}
		\{	\bb E \cal S(L_{k}^{(3)})\}_\sigma=O_\prec(\cal E_1(\cal P))
	\end{equation}
	for all fixed $k \geq 2$. Inserting \eqref{5.15}, \eqref{5.23} and \eqref{5.24} into \eqref{5.12}, we get $\bb E \cal S(\cal P)=\{\bb E \cal S(\widetilde{\cal P})\}_\sigma=O_{\prec}(\cal E_1(\cal P))$ as desired.
	
	\textit{Case 3.} Suppose $\nu_5(
	\cal P)=\nu_6(\cal P)=1$, $\nu_0(\cal P)\leq 5$\,, and the lone factors of $\cal P$ are $(\partial^{\delta_1}_{z_r} \widehat{G}\partial_{z_t}^{\delta_3}\widehat{U})_{ij}\equiv (\partial^{\delta_1}_{z_r} \widehat{G}(z_r)\partial^{\delta_3}_{z_t} \widehat{G}(z_t))_{ij}$ and $\partial^{\delta_2}_{z_s} \widehat{F}_{uv}\equiv \partial^{\delta_2}_{z_s} \widehat{G}_{uv}(z_s)$, where $z_r,z_s,z_t \in \{z_1,...,z_\sigma\}$, $\delta_{1},\delta_2,\delta_3 \in \{0,1\}$. In this case, again we need to improve Lemma \ref{lem4.5} by a factor of $N^{-1}$.
	Using resolvent identity \eqref{resolvent eq for G} we have
	\begin{equation} \label{5.25}
		(\partial^{\delta_1}_{z_r} \widehat{G}\partial_{z_t}^{\delta_3}\widehat{U})_{ij}
		=\partial^{\delta_1}_{z_r}\partial_{z_t}^{\delta_3}(\widehat{G}\widehat{U})_{ij}
		=\partial^{\delta_1}_{z_s}\partial_{z_t}^{\delta_3}(\ul{\widehat{H}\widehat{G}}(\widehat{G}\widehat{U})_{uv})-\partial^{\delta_1}_{z_s}\partial_{z_t}^{\delta_3}((\widehat{H}\widehat{G}\widehat{U})_{ij}\ul{\widehat{G}})
		+\partial_{z_t}^{\delta_1}\ul{\widehat{G}}\partial^{\delta_3}_{z_s} \widehat{U}_{ij}\,.
	\end{equation}
	Let us write $ (\partial^{\delta_1}_{z_r} \widehat{G}\partial_{z_t}^{\delta_3}\widehat{U})_{ij}=\sum_w(\partial^{\delta_1}_{z_r} \widehat{G})_{iw}(\partial_{z_t}^{\delta_3}\widehat{U})_{wj}$, and note that $(\partial^{\delta_1}_{z_r} \widehat{G}\partial_{z_t}^{\delta_3}\widehat{U})_{ij} $ is not a factor of $\widetilde{\cal P}$, but a factor of $\sum\limits_{x}\widetilde{\cal P}$. We denote $\widetilde{\cal P}^{(C)}\deq (\sum\limits_{w}\widetilde{P})/(\partial^{\delta_1}_{z_r} \widehat{G}\partial_{z_t}^{\delta_3}U)_{ij}\partial^{\delta_2}_{z_s} \widehat{F}_{uv}$. By \eqref{5.25} we have
	\begin{equation} \label{5.26}
	\begin{aligned} 
		\sum_{w}\bb E \widetilde{P}=&\,\frac{1}{N}{\sum_{x,y}}^* \bb E H_{xy}\partial^{\delta_1}_{z_r}\partial_{z_t}^{\delta_3}(\widehat{G}_{xy}(\widehat{G}\widehat{U})_{ij})\partial^{\delta_2}_{z_s} \widehat{F}_{uv} \widetilde{\cal P}^{(C)}\\
		&-\frac{1}{N}\sum_{x,y:x \ne i} \bb E H_{ix}  \partial^{\delta_1}_{z_r}\partial_{z_t}^{\delta_3}((\widehat{G}\widehat{U})_{xj}\widehat{G}_{yy})\partial^{\delta_2}_{z_s} \widehat{F}_{uv} \widetilde{\cal P}^{(C)}\\
		&+\bb E \partial_{z_t}^{\delta_1}\ul{\widehat{G}}\partial^{\delta_3}_{z_s} \widehat{U}_{ij} \partial^{\delta_2}_{z_s} \widehat{F}_{uv} \widetilde{\cal P}^{(C)} \eqd \bb E A_1+\bb E A_2+\bb EA_3\,.
	\end{aligned}
	\end{equation}
	We see that the RHS on the first line of \eqref{5.26} is very similar to that of \eqref{5.12}, the main difference is that two matrices $\widehat{G}$ are replaced by $\widehat{G}\widehat{U}$ in \eqref{5.26}. By \eqref{diff1},  we see that after expanding $\bb E A_1$
	and $\bb EA_2$ using Lemma \ref{lem:cumulant_expansion}, there will be a cancellation between their second-cumulant terms, and other terms can be estimated in a similar fashion as described in Case 2. This allows one to show that
	\[
	\{\bb E \cal S(A_1)\}_\sigma+\{\bb E \cal S(A_2)\}_\sigma=O_{\prec}(\cal E_1(\cal P))\,.
	\]
	In addition, note that the term $\bb E A_3$ can be treated as in Case 2. Hence we have $\bb E \cal S(\cal P)=\{\bb E \cal S(\widetilde{\cal P})\}_\sigma=O_{\prec}(\cal E_1(\cal P))$ as desired. 
	
	\textit{Case 4.} Suppose $\nu_5(\cal P)=\nu_7(\cal P)=1$ and $\nu_0(\cal P)\leq 5$, and the happy trio are $\partial_{z_r}^{\delta_1}\widehat{G}_{ij}\equiv \partial_{z_r}^{\delta_1}\widehat{G}_{ij}(z_r)$, $\partial_{z_s}^{\delta_2}\widehat{F}_{iu}\equiv \partial_{z_s}^{\delta_2}\widehat{G}_{iu}(z_s)$, and $\partial_{z_r}^{\delta_3}\widehat{U}_{iv}\equiv \partial_{z_r}^{\delta_1}\widehat{G}_{iv}(z_t)$, where $z_r,z_s,z_t \in \{z_1,...,z_\sigma\}$, $\delta_{1},\delta_2,\delta_3 \in \{0,1\}$. In this case, again we need to improve Lemma \ref{lem4.5} by a factor of $N^{-1}$.
	
	We shall proceed in a very similar way as in Case 2: we perform one cumulant expansion, and the resulting terms either gain a factor $N^{-1}$ or $N^{-1/2}$. In the latter the terms contain one lone factor, and we can use our estimate in Case 1 to gain another factor $N^{-1/2}$. Let us denote $\widetilde{\cal P}^{(D)}\deq \widetilde{P}/ \partial_{z_r}^{\delta_1}\widehat{G}_{ij}\partial_{z_s}^{\delta_2}\widehat{F}_{iu}\partial_{z_r}^{\delta_3}\widehat{U}_{iv}$. By \eqref{resolvent eq for G}, Lemma \ref{lem:cumulant_expansion}, and a routine estimate of the remainder term, we have
	\begin{equation} \label{5.277}
		\begin{aligned}
			\bb E \widetilde{\cal P}&=\frac{1}{N}{\sum_{x,y}}^*\bb E H_{xy}\partial^{\delta_1}_{z_r}( \widehat{G}_{yx}\widehat{G}_{ij})\partial_{z_s}^{\delta_2}\widehat{F}_{iu}\partial_{z_r}^{\delta_3}\widehat{U}_{iv}\widetilde{\cal P}^{(D)}-\sum_{x:x \ne i}\bb E H_{ix}\partial_{z_r}^{\delta_1} (\widehat{G}_{xj}\ul{G})\partial_{z_s}^{\delta_2}\widehat{F}_{iu}\partial_{z_r}^{\delta_3}\widehat{U}_{iv} \widetilde{\cal P}^{(D)}\\
			&= \sum_{k=1}^{\ell}\frac{s_{k+1}}{k!}\frac{1}{N^{(k+3)/2}}{\sum_{x,y}}^* \bb E \frac{\partial^k (\partial^{\delta_1}_{z_r}( \widehat{G}_{yx}\widehat{G}_{ij})\partial_{z_s}^{\delta_2}\widehat{F}_{iu}\partial_{z_r}^{\delta_3}\widehat{U}_{iv}\widetilde{\cal P}^{(D)})}{\partial H_{xy}^k}\\
			&\quad -\sum_{k=1}^{\ell}\frac{s_{k+1}}{k!}\frac{1}{N^{(k+3)/2}}\sum_{x,y:x \ne i} \bb E \frac{\partial^k (\partial_{z_r}^{\delta_1} (\widehat{G}_{xj}\widehat{G}_{yy})\partial_{z_s}^{\delta_2}\widehat{F}_{iu}\partial_{z_r}^{\delta_3}\widehat{U}_{iv} \widetilde{\cal P}^{(D)})}{\partial H_{ix}^k}+O_{\prec}(\cal E_1(\cal P)N^{-\nu(\cal P)})\\
			&=:\sum_{k=1}^{\ell}\bb E L_{k}^{(5)}+\sum_{k=1}^{\ell} \bb E L_{k}^{(6)} +O_{\prec}(\cal E_1(\cal P)N^{-\nu(\cal P)})
		\end{aligned}
	\end{equation}
	for some fixed $\ell \in \bb N_+$. Similar to \eqref{L_1} and \eqref{5.15} we have a cancellation between $\bb EL_{1}^{(5)}$ and $\bb EL_{1}^{(6)}$. We can show that
	\begin{equation*}
		\begin {aligned}
		\bb EL_{1}^{(5)}+\bb EL_{1}^{(6)}=& \frac{1}{N^2}\sum_{x,y} \bb E \partial_{z_r}^{\delta_1}(\widehat{G}_{yy}\widehat{G}_{xj})\partial_{z_s}^{\delta_2}(\widehat{F}_{xu}\widehat{F}_{ii}) \partial_{z_t}^{\delta_3}\widehat{U}_{iv} \widetilde{\cal P}^{(D)}\\
		&+\frac{1}{N^2}\sum_{x,y} \bb E \partial_{z_r}^{\delta_1}(\widehat{G}_{yy}\widehat{G}_{xj})\partial_{z_t}^{\delta_3}(\widehat{U}_{xv}\widehat{U}_{ii}) \partial_{z_s}^{\delta_2}\widehat{F}_{iu} \widetilde{\cal P}^{(D)}+\cal E^{(1)}\\
		\eqd& \sum_{q=1}^n \sum_{x,y}\bb E \widetilde {\cal P}^{(5)}_{1,q}+\cal E^{(1)}
	\end{aligned}
\end{equation*}
for some fixed $n \in \bb N_+$, where it can be checked using Lemma \ref{lem4.5} that
\[
\{\cal S(\cal E^{(1)})\}_\sigma=O_{\prec}(\cal E_0(\cal P)N^{-1})=O_{\prec}(\cal E_1(\cal P))\,.
\]
In addition, each $\widetilde {\cal P}^{(5)}_{1,q}$ satisfies $\mu\big({\cal P}^{(5)}_{1,q}\big)=\mu(\cal P)$, $\nu\big({\cal P}^{(5)}_{1,q}\big)=\nu(\cal P)+2$, $\theta\big({\cal P}^{(5)}_{1,q}\big)=\theta(\cal P)+2$, $\nu_1\big({\cal P}^{(5)}_{1,q}\big)= \nu_1(\cal P)-1$, $\nu_3({\cal P}^{(5)}_{1,q})= \nu_3(\cal P)+1$. Moreover, we see that $\nu_0({\cal P}^{(5)}_{1,q})=\nu_0(\cal P)+1\leq 6$ and $\nu_5({\cal P}^{(5)}_{1,q})=1$. Thus we can apply the result in Case 1 and show that
\[
\sum_{q=1}^n \{\bb E \cal S(\widetilde {\cal P}^{(5)}_{1,q})\}_\sigma= O_{\prec}\big(t^{\mu(\cal P)} N^{\nu(\cal P)+2-(\theta(\cal P)+2)-(\nu_1(\cal P)-1)/2-(\nu_3(\cal P)+1)-1/2}\big)=O_{\prec}(\cal E_1(\cal P))\,.
\]
As a result,
\begin{equation}\label{5.29}
	\{	\bb E\cal S(L_{1}^{(5)})+\bb E\cal S(L_{1}^{(6)})\}_\sigma=O_{\prec}(\cal E_1(\cal P))
\end{equation}
Similarly, we can use \eqref{diff1} and Lemma \ref{lem4.5} to show that for fixed $\ell\geq 2$,
\[
\sum_{k=2}^{\ell}\{\bb E\cal S(L_{k}^{(5)})\}_\sigma=O_{\prec}(\cal E_1(\cal P))\,,
\]
and
\begin{align*}
\sum_{k=2}^{\ell}\bb E L_k^{(6)}&=-\frac{s_3}{N^{5/2}}\sum_{x:x \notin \cal I_1(\cal P)}\sum_{y} \bb E \partial_{z_r}^{\delta_1} (\widehat{G}_{xj}\widehat{G}_{yy})\partial_{z_s}^{\delta_2}(\widehat{F}_{xu}\widehat{F}_{ii})\partial_{z_r}^{\delta_3}(\widehat{U}_{xv}\widehat{U}_{ii})\widetilde{\cal P}^{(D)}+\cal E^{(2)}\\
&\eqd \sum_{q=1}^n \sum_{x,y}\bb E \widetilde {\cal P}^{(6)}_{2,q}+\cal E^{(2)}
\end{align*}
for some fixed $n \in \bb N_+$, where $\{\cal S(\cal E^{(2)})\}_\sigma=O_{\prec}(\cal E_1(\cal P))$. In addition, each $\widetilde {\cal P}^{(6)}_{2,q}$ satisfies $\mu\big({\cal P}^{(6)}_{2,q}\big)=\mu(\cal P)$, $\nu\big({\cal P}^{(6)}_{2,q}\big)=\nu(\cal P)+2$, $\theta\big({\cal P}^{(6)}_{2,q}\big)=\theta(\cal P)+5/2$, $\nu_1\big({\cal P}^{(6)}_{2,q}\big)= \nu_1(\cal P)$, $\nu_3({\cal P}^{(6)}_{2,q})= \nu_3(\cal P)$. Moreover, we see that $\nu_0({\cal P}^{(6)}_{k,q})=\nu_0(\cal P)\leq 5$ and $\nu_5({\cal P}^{(6)}_{k,q})=1$. Thus we can apply the result in Case 1 and show that
\[
\sum_{q=1}^n \{\bb E \cal S(\widetilde {\cal P}^{(6)}_{k,q})\}_\sigma= O_{\prec}\big(t^{\mu(\cal P)} N^{\nu(\cal P)+2-(\theta(\cal P)+5/2)-\nu_1(\cal P)/2-\nu_3(\cal P)-1/2}\big)=O_{\prec}(\cal E_1(\cal P))\,.
\]
As a result, we have
\begin{equation} \label{5.31}
	\sum_{k=2}^\ell	\{	\bb E\cal S(L_{k}^{(5)})+\bb E\cal S(L_{k}^{(6)})\}_\sigma=O_{\prec}(\cal E_1(\cal P))\,.
\end{equation}
Inserting \eqref{5.29} and \eqref{5.31} into \eqref{5.277}, we have $\bb E \cal S(\cal P)=\{\bb E \cal S(\widetilde{\cal P})\}_\sigma=O_{\prec}(\cal E_1(\cal P))$ as desired. This finishes the proof.
\end{proof}

Now we are ready to prove Lemma \ref{lem5.1}. Let $\cal P =  a t^\mu N^{-\theta}\{\cal G_1\}\cdots \{\cal G_\sigma\}\in \b P$ as in Lemma \ref{lem5.1}, where $\cal G_r \in \b G(z_r)$ for $r=1,...,\sigma$. It suffices to assume $a=1$ and $\mu=\theta=0$.

\subsection{Proof of Lemma \ref{lem5.1} (i)} \label{sec5.2}
In this section we prove Lemma \ref{lem5.1} for $d(\cal P)=0$. We see that $\cal E_0(\cal P)=\cal E_*(\cal P)N$, i.e.\,we need to gain an improvement of factor $N^{-1}$.  Let us recall the assumption from Lemma \ref{lem5.7} that $\nu(\cal P)=2$, $\nu_2(\cal P)=\nu_3(\cal P)=0$, and the assumption  $\mu=\theta=0$ stated above. In case $\sigma=1$, some toy examples are $\cal P=\{\mathcal{G}_1\}$ with $\cal G_1=\widehat{G}_{ii}^2\widehat{G}^2_{jj}$,  $\widehat{G}_{ij}\widehat{G}_{ii}\widehat{G}_{jj}$ or $\widehat{G}_{ij}^2\widehat{G}_{ii}\widehat{G}_{jj}$ (say), which correspond to the cases $\nu_1(\cal P)=0$, $1$ or $2$, respectively. In the sequel, we separate the discussion for general $\mathcal{P}$ into the cases $\nu_1(\cal P)=0$, $\nu_1(\cal P)=1$ and $\nu_1(\cal P)\geq 2$. 

\textit{Case 1.} Let us first consider the case $\nu_1(\cal P)=0$, i.e. there is no off-diagonal entries in $\cal P$. In this case $\cal E_*(\cal P)=t^{\mu(\cal P)}N^{2-\theta(\cal P)-1}=N$. By Lemma \ref{prop4.4}, we see that
\[
\langle\cal S(\widetilde{\cal P})\rangle \prec N^{2-1}\frac{1}{|\eta_1|^{\nu_0(\cal G_1)/2+1}}\cdots \frac{1}{|\eta_\sigma|^{\nu_0(\cal G_\sigma)/2+1}} \leq  N\frac{1}{|\eta_1|^{5/2}}\cdots \frac{1}{|\eta_\sigma|^{5/2}} \,,
\]
where in the last step we used $\nu_0(\cal G_r)\leq \nu_0(\cal P)\leq 3$ for all $r=1,...,\sigma$. Then Lemma \ref{lem:integration} implies
\[
\cal S (\cal{\mathring  P}) = \{\langle\cal S(\widetilde{P})\rangle \}_\sigma =O_{\prec}(\cal E_*(\cal P))\,,
\]
and together with Lemma \ref{prop_prec} we complete the proof.

\textit{Case 2.} Now we assume $\nu_1(\cal P)=1$, i.e.\,there is exactly one off-diagonal factor in $\cal P$. Then $\cal E_*(\cal P)=N^{1/2} $. W.O.L.G.\,we denote it by $\partial_{z_1}^{\delta}\widehat{G}_{ij}\equiv \partial_{z_1}^{\delta}\widehat{G}_{ij}(z_1)$, $\delta \in \{0,1\}$. Let $\widetilde{\cal P}^{(E)}\deq \widetilde{\cal P}/ \partial_{z_1}^{\delta}\widehat{G}_{ij}$, and note that $\widetilde{\cal P}^{(E)}$ only contains diagonal entries. By the definition of $\b D$ in \eqref{ddd}, $\cal P$ is real, and $\bb E \cal |S(\cal P)|^2=\bb E \cal S(\cal P)^2=\bb E\{\cal S(\widetilde{P})\}_\sigma^2$. Let us write $\widetilde{\cal P}'=\{\cal G_{1'}\}\cdots \{\cal G_{\sigma'}\}\in \b P$, where each $\cal G_{r'}$ is obtained from $\cal G_r$ by changing $z_r$ into $z_{r'}$. We have
\begin{equation*} 
\bb E \cal |S(\cal P)|^2=\bb E\{\cal S(\widetilde{P})\}_\sigma^2=\{\bb E \cal S(\widetilde{\cal P})S(\widetilde{\cal P}')\}_{2\sigma}\eqd\Big\{{\sum_{i,j}}^{*}{\sum_{u,v}}^{*} \bb E\widetilde{\cal P}_{ij} \widetilde{\cal P}'_{uv}\Big\}_{2\sigma}\,.
\end{equation*}
Similar to \eqref{5.2}, we have for $i \ne j$ and $u \ne v$ that
\begin{equation}  \label{5.27}
\begin{aligned}
	\bb E \widetilde{\cal P}_{ij} \widetilde{\cal P}'_{uv}&=\frac{1}{N}{\sum_{x,y}}^*\bb E H_{xy}\partial^{\delta}_{z_1}( \widehat{G}_{yx}\widehat{G}_{ij})\widetilde{\cal P}^{(E)}_{ij}\widetilde{\cal P}'_{uv}-\sum_{x:x \ne i}\bb E H_{ix}\partial_{z_1}^{\delta} (\widehat{G}_{xj}\ul{G}) \widetilde{\cal P}^{(E)}_{ij}\widetilde{\cal P}'_{uv}\\
	&= \sum_{k=1}^{\ell}\frac{s_{k+1}}{k!}\frac{1}{N^{(k+3)/2}}{\sum_{x,y}}^* \bb E \frac{\partial^k (\partial_{z_1}^{\delta}( \widehat{G}_{yx}\widehat{G}_{ij})\widetilde{\cal P}^{(E)}_{ij}\widetilde{\cal P}'_{uv})}{\partial H_{xy}^k}\\
	&\quad -\sum_{k=1}^{\ell}\frac{s_{k+1}}{k!}\frac{1}{N^{(k+3)/2}}\sum_{x,y:x \ne i} \bb E \frac{\partial^k (\partial_{z_1}^{\delta}( \widehat{G}_{xj}\widehat{G}_{yy}) \widetilde{\cal P}^{(E)}_{ij}\widetilde{\cal P}'_{uv})}{\partial H_{ix}^k}+O_{\prec}(\cal E_*(\cal P)^2N^{-4})\\
	&=\sum_{k=1}^{\ell}\sum_{n=1}^{k}{k \choose n}\frac{s_{k+1}}{k!}\frac{1}{N^{(k+3)/2}}{\sum_{x,y}}^* \bb E \frac{\partial^{k-n} (\partial_{z_1}^{\delta}( \widehat{G}_{yx}\widehat{G}_{ij})\widetilde{\cal P}^{(E)}_{ij})}{\partial H_{xy}^{k-n}} \frac{\partial^n \widetilde{\cal P}'_{uv}}{\partial H_{xy}^n}\\
	&\quad+\sum_{k=1}^{\ell}\frac{s_{k+1}}{k!}\frac{1}{N^{(k+3)/2}}{\sum_{x,y}}^* \bb E \frac{\partial^k (\partial_{z_1}^{\delta}( \widehat{G}_{yx}\widehat{G}_{ij})\widetilde{\cal P}^{(E)}_{ij})}{\partial H_{xy}^k} \widetilde{\cal P}'_{uv}\\
	&\quad -\sum_{k=1}^{\ell}\sum_{n=1}^{k}{k \choose n}\frac{s_{k+1}}{k!}\frac{1}{N^{(k+3)/2}}\sum_{x,y:x \ne i} \bb E \frac{\partial^{k-n} (\partial_{z_1}^{\delta}( \widehat{G}_{xj}\widehat{G}_{yy}) \widetilde{\cal P}^{(E)}_{ij})}{\partial H_{ix}^{k-n}}\frac{\partial^n \widetilde{\cal P}'_{uv}}{\partial H_{ix}^n}\\
	&\quad- \sum_{k=1}^{\ell}\frac{s_{k+1}}{k!}\frac{1}{N^{(k+3)/2}}\sum_{x,y:x \ne i} \bb E \frac{\partial^k (\partial_{z_1}^{\delta}( \widehat{G}_{xj}\widehat{G}_{yy}) \widetilde{\cal P}^{(E)}_{ij})}{\partial H_{ix}^k}\widetilde{\cal P}'_{uv}+O_{\prec}(\cal E_*(\cal P)^2N^{-4})\\
	&\eqd \sum_{k=1}^{\ell}\sum_{n=1}^k \bb EL_{k,n}^{(5)}+\sum_{k=1}^{\ell} \bb EL_{k}^{(6)}+\sum_{k=1}^{\ell}\sum_{n=1}^k \bb EL_{k,n}^{(7)}+\sum_{k=1}^{\ell} \bb EL_{k}^{(8)}+O_{\prec}(\cal E_*(\cal P)^2N^{-4})\,.
\end{aligned}
\end{equation}
We emphasis that all the above $L$-terms is dependent of $i,j,u,v$, and for convenience we omit the indices. Note that in the above, $\bb EL^{(5)}_{k,n}$ and $\bb EL_{k,n}^{(7)}$ contains the information for the covariance of $\widetilde{\cal P}_{ij}$ and $ \widetilde{\cal P}'_{uv}$ terms, and in their treatment we shall use the estimates  proved in Lemma \ref{lem5.4}.  On the other hand,  in $\bb EL^{(6)}_k$ and $\bb EL_{k}^{(8)}$, we see that $ \widetilde{\cal P}'_{uv}$ is intact and thus one can focus on the estimate of the derivative of $\widetilde{\cal P}_{ij}$,  which will be similar to the counterpart in the estimate of $\bb E \widetilde{\cal P}$ in \eqref{5.12}. 

We first deal with $\bb E L_{1,1}^{(7)}$. Note that
\begin{align}
{\sum_{i,j}}^*{\sum_{u,v}}^*
&={\sum_{i,j,u,v}}^*+{\sum_{i,j,u}}^* (\b 1(i=v)+\b 1(j=v))+{\sum_{i,j,v}}^*(\b 1(i=u)\notag\\
&+\b 1(j=u))+{\sum_{i,j}}^* \b 1(\{i,j\}=\{u,v\})\nonumber\\
&\eqd {\sum}_{1}+\cdots+{\sum}_{4} \label{5.28}\,,
\end{align}
and $\bb E L_{1,1}^{(7)}$ behaves differently when we apply different ${\sum}_w$, $w=1,...,4$. We have
\begin{align}
{\sum_{i,j}}^{*}{\sum_{u,v}}^{*}\bb EL_{1,1}^{(7)}&=-\frac{1}{N^2}{\sum_{i,j}}^{*}{\sum_{u,v}}^{*}\sum_{x,y}\bb E \partial_{z_1}^{\delta}( \widehat{G}_{xj}\widehat{G}_{yy}) \widetilde{\cal P}^{(E)}_{ij}\frac{\partial \widetilde{\cal P}'_{uv}}{\partial H_{ix}}(1+\delta_{ix})\notag\\
&+\frac{2}{N}{\sum_{i,j}}^{*}{\sum_{u,v}}^{*}\sum_{y}\bb E \partial_{z_1}^{\delta}( \widehat{G}_{ij}\widehat{G}_{yy}) \widetilde{\cal P}^{(E)}_{ij}\frac{\partial \widetilde{\cal P}'_{uv}}{\partial H_{ii}}\nonumber \\
&\eqd \sum_{q=1}^{n_1}{\sum_{i,j}}^{*}{\sum_{u,v}}^{*}\sum_{x,y} \bb E \widetilde{\cal P}^{(7)}_{1,1,1,q}+\sum_{q=1}^{n_2}{\sum_{i,j}}^{*}{\sum_{u,v}}^{*}\sum_{y} \bb E \widetilde{\cal P}^{(7)}_{1,1,2,q}\nonumber \\
&\eqd \sum_{q=1}^{n_1}\sum_{w=1}^4{\sum}_{w}\sum_{x,y} \bb E \widetilde{\cal P}^{(7)}_{1,1,1,q,w}+\sum_{q=1}^{n_2}\sum_{w=1}^4{\sum}_w\sum_{y} \bb E \widetilde{\cal P}^{(7)}_{1,1,2,q,w} \label{5.30}
\end{align} 
for some fixed $n_1,n_2\in \bb N$, where in the last step we used \eqref{5.28}. By \eqref{diff1}, we see that for each $q$, we have $\cal P^{(7)}_{1,1,1,q,1}\in \b P$, with $\nu(\cal P^{(7)}_{1,1,1,q,1})=6$, $\theta(\cal P^{(7)}_{1,1,1,q,1})=2$, $\nu_3(\cal P^{(7)}_{1,1,1,q,1})=1$. Moreover, we see that when an off-diagonal factor in $\widetilde{P}'_{uv}$ is differentiated, each $\cal S_2(\cal P^{(7)}_{1,1,1,q,1})$ contains two lone factors, which are either in the form $\partial_{z_1}^{\delta_1}\partial_{z_r}^{\delta_2}(\widehat{G}\widehat{F})_{ju}$ and $\partial_{z_r}^{\delta_3}\widehat{F}_{iv}$, or $\partial_{z_1}^{\delta_1}\partial_{z_r}^{\delta_2}(\widehat{G}\widehat{F})_{jv}$ and $\partial_{z_r}^{\delta_3}\widehat{F}_{iu}$, where $\delta_1,...,\delta_3\in \{0,1\}$, and $\widehat{F}\equiv \widehat{G}(z_{r'})$ for some $r'\in \{1,2,...,\sigma\}$. In this case $\nu_1(\cal P^{(7)}_{1,1,1,q,1})\geq 2$, $\nu_5(\cal P^{(7)}_{1,1,1,q,1})=\nu_6(\cal P^{(7)}_{1,1,1,q,1})=1$, and $\nu_0(\cal P^{(7)}_{1,1,1,q,1})\leq 5= 7-2(\nu_6(\cal P^{(7)}_{1,1,1,q,1}))$. When a diagonal factor in $\widetilde{P}_{uv}'$, say $\widehat{F}_{uu}$, was differentiated,  each $\cal S_2(\cal P^{(7)}_{1,1,1,q,1})$ contains only one lone factor $\widehat{F}_{iu}$ or $\widehat{F}_{xu}$. In this case $\nu_1(\cal P^{(7)}_{1,1,1,q,1})\geq 3$, $\nu_5(\cal P^{(7)}_{1,1,1,q,1})=1$, $\nu_6(\cal P^{(7)}_{1,1,1,q,1})=0$, and $\nu_0(\cal P^{(7)}_{1,1,1,q,1})\leq 6\leq 7-2(\nu_6(\cal P^{(7)}_{1,1,1,q,1}))$. We can then apply Lemma \ref{lem5.4} and show that
\begin{align*}
\{\bb E\cal S(\widetilde{\cal P}^{(7)}_{1,1,1,q,1})\}_{2\sigma}&=\Big\{{\sum}_1\sum_{x,y} \bb E \widetilde{\cal P}^{(7)}_{1,1,1,q,1} \Big \}_{2\sigma}\prec N^{6-2-2/2-1-1/2-1/2}+N^{6-2-3/2-1-1/2-0/2}\\
&=\cal E_*(\cal P)^2
\end{align*}
as desired. The estimate concerning other terms on RHS of \eqref{5.30} are easier, as for them $\nu\leq 5$, i.e.\,there are fewer summations. We can use Lemma \ref{lem5.4} and show that
\begin{equation} \label{5.34}
\Big\{	{\sum_{i,j}}^{*}{\sum_{u,v}}^{*}\bb EL_{1,1}^{(7)}\Big\}_{2\sigma} \prec \cal E_*(\cal P)^2\,.
\end{equation}

Now, we consider $\bb EL_{k,n}^{(7)}$ for $k \geq 2$. A generalization of \eqref{5.28} shows that
\begin{align*}
	{\sum_{i,j}}^*{\sum_{u,v}}^*\sum_{x:x \ne i}&={\sum_{i,j,u,v,x}}^*+{\sum_{i,j,u,v}}^*(\b 1(x=j)+\b 1(x=u)+\b 1(x=v))\\
	&\quad +{\sum_{i,j,u}}^* (\b 1(i=v)+\b 1(j=v))(\b 1(x=j)+\b 1(x=u)+\b 1(x=v))\\
	&\quad +{\sum_{i,j,v}}^*(\b 1(i=u)+\b 1(j=u))(\b 1(x=j)+\b 1(x=u)+\b 1(x=v))\\
	&\quad +{\sum_{i,j}}^* \b 1(\{i,j\}=\{u,v\})(\b 1(x=j)+\b 1(x=u)+\b 1(x=v))\eqd \sum_{w=1}^5{\sum}^{(1)}_w\,.
\end{align*}
Similar to \eqref{5.30}, we have
\begin{equation*}
\begin{aligned}
	{\sum_{i,j}}^*{\sum_{u,v}}^*\bb EL_{k,n}^{(7)}&=-{k \choose n}\frac{s_{k+1}}{k!}\frac{1}{N^{(k+3)/2}}\sum_{w=1}^5{\sum}^{(1)}_w\sum_{y} \bb E \frac{\partial^{k-n} (\partial_{z_1}^{\delta}( \widehat{G}_{xj}\widehat{G}_{yy}) \widetilde{\cal P}^{(E)}_{ij})}{\partial H_{ix}^{k-n}}\frac{\partial^n \widetilde{\cal P}'_{uv}}{\partial H_{ix}^n}\\
	&\eqd  \sum_{q=1}^r\sum_{w=1}^5{\sum}^{(1)}_w\sum_{y} \bb E \widetilde{\cal P}^{(7)}_{k,n,q,w}
\end{aligned}
\end{equation*} 
for some fixed $r\in \bb N$. 

We first check the case $w=1$. By \eqref{diff1}, we see that for each $q$, we have $\cal P^{(7)}_{k,n,q,1}\in \b P$, with $\nu(\cal P^{(7)}_{k,n,q,1})=6$, $\theta(\cal P^{(7)}_{k,n,q,1})=(k+3)/2$, $\nu_1(\cal P^{(7)}_{k,n,q,1})\geq 3$, $\nu_3(\cal P^{(7)}_{k,n,q,1})\in \{0,1\} $, $\nu_0(\cal P^{(7)}_{k,n,q,1})\leq 2\nc+\nu_1(\cal P^{(7)}_{k,n,q,1})$, and it is not hard to show that
\[
\nu_*(\cal P^{(7)}_{k,n,q,1}) \leq 2(2\nu_3(\cal P^{(7)}_{k,n,q,1})+\nu_0(\cal P^{(7)}_{k,n,q,1})-10)_+\leq  2(\nu_1(\cal P^{(7)}_{k,n,q,1})-6)_+\,.
\]
When $k \geq 3$ and $\nu_1(\cal P^{(7)}_{k,n,q,1})\geq 4$, it is easy to see from Lemma \ref{lem4.5} that $\{\bb E \cal S(\widetilde{\cal P}^{(7)}_{k,n,q,1})\}_{2\sigma}=O_{\prec}(\cal E_*(\cal P)^2)$. When $k \geq 3$ and $\nu_1(\cal P^{(7)}_{k,n,q,1})=3$, we have $\nu_5(\cal P^{(7)}_{k,n,q,1})=1$, as $\partial^n \widetilde{P}'_{uv}/\partial H_{ix}^n$ contains at least one lone factor. In addition, $\nu_0(\cal P^{(7)}_{k,n,q,1})\leq 2+3=5$, thus we can use Lemma \ref{lem5.4} and show that $\{\bb E \cal S(\widetilde{\cal P}^{(7)}_{k,n,q,1})\}_{2\sigma}=O_{\prec}(\cal E_*(\cal P)^2)$. When $k=2$ and $\nu_0({\cal P}^{(7)}_{k,n,q,1})\geq 5$, we can use Lemma \ref{lem4.5} to show that $\{\bb E \cal S(\widetilde{\cal P}^{(7)}_{k,n,q,1})\}_{2\sigma}=O_{\prec}(\cal E_*(\cal P)^2)$. When $k=2$ and $\nu_1({\cal P}^{(7)}_{k,n,q,1})=4$, we have $\nu_0(\cal P^{(7)}_{k,n,q,1})\leq 6$ and $\nu_5({\cal P}^{(7)}_{k,n,q,1})=1$, thus we can use Lemma \ref{lem5.4} to show that $\{\bb E \cal S(\widetilde{\cal P}^{(7)}_{k,n,q,1})\}_{2\sigma}=O_{\prec}(\cal E_*(\cal P)^2)$. When $k=2$ and $\nu_1({\cal P}^{(7)}_{k,n,q,1})=3$, we have $\nu_0(\cal P^{(7)}_{k,n,q,1})\leq 5$. In addition, we either have two lone factors or a happy trio, where the happy trio is of the form $\widehat{G}_{xj}$, $\widehat{F}_{xu}$, $\widehat{F}_{xv}$. This means we either have $\nu_5({\cal P}^{(7)}_{k,n,q,1})=\nu_6({\cal P}^{(7)}_{k,n,q,1})=1$, or $\nu_5({\cal P}^{(7)}_{k,n,q,1})=\nu_7({\cal P}^{(7)}_{k,n,q,1})=1$. Thus we can use Lemma \ref{lem5.4} to show that $\{\bb E \cal S(\widetilde{\cal P}^{(7)}_{k,n,q,1})\}_{2\sigma}=O_{\prec}(\cal E_*(\cal P)^2)$. This finishes the estimate for $w=1$. For $w=2,...,5$, the estimate are easier, as there are fewer summations. We omit the details. Hence,  we have showed that
\[
\Big\{{\sum_{i,j}}^*{\sum_{u,v}}^*\bb EL_{k,n}^{(7)}\Big\}_{2\sigma} \prec \cal E_*(\cal P)^2
\]
for $k\geq 2$. Together with \eqref{5.34} we have
\begin{equation} \label{5.37}
\sum_{k=1}^\ell\sum_{n=1}^k\Big\{{\sum_{i,j}}^*{\sum_{u,v}}^*\bb EL_{k,n}^{(7)}\Big\}_{2\sigma} \prec \cal E_*(\cal P)^2\,.
\end{equation}
Similarly, we can also show that
\begin{equation} \label{5.38}
\sum_{k=1}^\ell\sum_{n=1}^k\Big\{{\sum_{i,j}}^*{\sum_{u,v}}^*\bb EL_{k,n}^{(5)}\Big\}_{2\sigma} \prec \cal E_*(\cal P)^2\,.
\end{equation}

Now let us estimate $L_k^{(6)}$ and $L_k^{(8)}$. We see that they are essentially about the expectation of $\widetilde{\cal P}$, as the factor $\widetilde{\cal P}'$ is not differentiated. Similar to \eqref{L_1} and \eqref{5.15}, we also have a cancellation between the leading terms of $\bb E L_k^{(6)}$ and $\bb EL_k^{(8)}$. In addition, note that since $\nu_1(\cal P)=1$, $\widetilde{\cal P}^{(E)}$ only contains diagonal entries. Together with Lemma \ref{lem4.5}, we can show that
\begin{equation*}
\bb E L_1^{(6)}+\bb EL_1^{(8)}=\bb E \cal E^{(3)}_{ij} \widetilde{P}'_{uv}\,,\quad \sum_{k=2}^{\ell}\bb EL_{k}^{(6)}=\bb E  \cal E^{(4)}_{ij} \widetilde{P}'_{uv}\,,\quad  \sum_{k=3}^{\ell}\bb EL_{k}^{(8)}=\bb E  \cal E^{(5)}_{ij} \widetilde{P}'_{uv}
\end{equation*}
and 
\begin{equation*}
\begin{aligned}
	\bb EL_{2}^{(8)}&=-\frac{s_3}{N^{5/2}}\sum_{x,y}\bb E \partial^\delta_{z_1}(\widehat{G}_{xx}\widehat{G}_{ii}\widehat{G}_{xj}\widehat{G}_{yy})\widetilde{\cal P}^{(E)}_{ij}\widetilde{\cal P}'_{uv}\\
	&\quad -\frac{s_3}{2N^{5/2}}\sum_{x,y} \mathbb{E}\partial^{\delta}_{z_1}(\widehat{G}_{xj}\widehat{G}_{yy})\widetilde{\cal P}'_{ij}\frac{\partial^2 \widetilde{\cal P}^{(E)}_{ij}}{\partial H_{xi}^2}+\bb E  \cal E^{(6)}_{ij} \widetilde{P}'_{uv}\\
	&\eqd \bb E\cal E_{ij}^{(7,1)}\widetilde{\cal P}'_{ij}
	+\bb E\cal E_{ij}^{(7,2)}\widetilde{\cal P}'_{ij}+	\bb E  \cal E^{(6)}_{ij} \widetilde{P}'_{uv}	\,.
\end{aligned}
\end{equation*}
where 
$
\{\cal S(\cal E^{(n)})\}_\sigma \prec  \cal E_*(\cal P)
$
for $n=3,...,6$. Directly estimating $\cal E_{ij}^{(7,1)}$ by Lemma \ref{lem:intermediate} would not be enough for us. By the isotropic semicircle law Theorem \ref{refthm1}, we have
\begin{equation} \label{swd}
\sum_{x} \widehat{G}_{xx}\widehat{G}_{xj}=\sum_{x} m(z_1)\widehat{G}_{xj}+\sum_{x}(\widehat{G}_{xx}-m(z_1))\widehat{G}_{xj} \prec \frac{1}{|\eta_1|}\,.
\end{equation}
Thus
\[
\cal E_{ij}^{(7,1)}=\frac{1}{N^{5/2}}\sum_{x,y} \partial^\delta_{z_1}(\widehat{G}_{xx}\widehat{G}_{ii}\widehat{G}_{xj}\widehat{G}_{yy})\widetilde{\cal P}^{(E)}_{ij} \prec  N^{1-5/2}  \frac{1}{|\eta_1|^{2}}\cdot \frac{1}{|\eta_{2}|^{3/2}}\cdots \frac{1}{|\eta_{\sigma}|^{3/2}}\,,
\]
and together with Lemma \ref{lem:integration} we get $\{\cal S(\cal E^{(7,1)})\}_\sigma \prec  \cal E_*(\cal P)$.  The steps for $\cal E_{ij}^{(7,2)}$ are similar, since $\widetilde{P}^{(E)}$ only contains diagonal entries, we see that the worst terms in $\cal E_{ij}^{(7,2)}$  will contain factors of the form $\widehat{G}_{xj}(z_1)\widehat{G}_{xx}(z_r)$ for some $r \in \{1,...,\sigma\}$. We can use an estimate similar  to \eqref{swd} and show that $\{\cal S(\cal E^{(7,2)})\}_\sigma \prec  \cal E_*(\cal P)$. Thus we have proved that
\begin{equation} \label{5.40}
\sum_{k=2}^\ell\Big\{{\sum_{i,j}}^*{\sum_{u,v}}^*(\bb E L_k^{(6)}+\bb EL_k^{(8)})\Big\}_{2\sigma} =O_{\prec}(\cal E_*(\cal P)) \cdot \bb E |\cal S(\cal P)| \leq  O_{\prec}(\cal E_*(\cal P)) \cdot (\bb E |\cal S(\cal P)|^2)^{1/2}\,.
\end{equation}
Inserting \eqref{5.37}, \eqref{5.38} and \eqref{5.40} into \eqref{5.27}, we obtain
\[
\bb E \cal |S(\cal P)|^2 =O_{\prec}(\cal E_*(\cal P)^2)+O_{\prec}(\cal E_*(\cal P)) \cdot (\bb E |\cal S(\cal P)|^2)^{1/2}\,,
\]
which implies $\bb E \cal |S(\cal P)|^2=O_{\prec}(\cal E_*(\cal P)^2)$ as desired.

\textit{Case 3}. We consider the case $\nu_1(\cal P )\geq 2$, and as a result $\cal E_*(\cal P)=N^{1-\nu_1(\cal P)/2}$.  The discussion is very similar to Case 2. Let $\partial^{\delta_1}_{z_1}\widehat{G}^{(1)}_{ij}$,...,$\partial^{\delta_{\nu_1}}_{z_{\nu_1}}\widehat{G}^{({\nu_1})}_{ij}$ be the off-diagonal factors in $\cal P$, where $\delta_1,...,\delta_{\nu_1} \in \{0,1\}$. Let $\widetilde{\cal P}^{(F)} \deq\widetilde{\cal P}/ (\partial^{\delta_1}_{z_1}\widehat{G}^{(1)}_{ij}\cdots \partial^{\delta_{\nu_1}}_{z_{\nu_1}}\widehat{G}^{({\nu_1})}_{ij} )$. Similar to \eqref{5.27}, we have for $i\ne j$ and $u \ne v$ that    
\begin{align}
&\quad \bb E \langle\widetilde{\cal P}_{ij} \rangle \langle \widetilde{\cal P}'_{uv} \rangle\notag \\
&=\sum_{k=1}^{\ell}\sum_{n=1}^{k}{k \choose n}\frac{s_{k+1}}{k!}\frac{1}{N^{(k+3)/2}}{\sum_{x,y}}^* \bb E \frac{\partial^{k-n}(\partial^{\delta_1}_{z_1}(\widehat{G}^{(1)}_{xy}\widehat{G}^{(1)}_{ij})\partial^{\delta_{2}}_{z_{2}}\widehat{G}^{({2})}_{ij}\cdots \partial^{\delta_{\nu_1}}_{z_{\nu_1}}\widehat{G}^{({\nu_1})}_{ij} \widetilde{\cal P}^{(F)}_{ij})}{\partial H_{xy}^{k-n}} \frac{\partial^n \widetilde{\cal P}'_{uv}}{\partial H_{xy}^n}\nonumber\\
&\quad+\sum_{k=1}^{\ell}\frac{s_{k+1}}{k!}\frac{1}{N^{(k+3)/2}}{\sum_{x,y}}^* \bb E \frac{\partial^k (\partial^{\delta_1}_{z_1}(\widehat{G}^{(1)}_{xy}\widehat{G}^{(1)}_{ij})\partial^{\delta_{2}}_{z_{2}}\widehat{G}^{({2})}_{ij}\cdots \partial^{\delta_{\nu_1}}_{z_{\nu_1}}\widehat{G}^{({\nu_1})}_{ij} \widetilde{\cal P}^{(F)}_{ij})}{\partial H_{xy}^k} \langle \widetilde{\cal P}'_{uv}\rangle\nonumber\\
&\quad -\sum_{k=1}^{\ell}\sum_{n=1}^{k}{k \choose n}\frac{s_{k+1}}{k!}\frac{1}{N^{(k+3)/2}}\sum_{x,y:x \ne i} \bb E \frac{\partial^{k-n} (\partial^{\delta_1}_{z_1}(\widehat{G}^{(1)}_{xj}\widehat{G}^{(1)}_{yy})\partial^{\delta_{2}}_{z_{2}}\widehat{G}^{({2})}_{ij}\cdots \partial^{\delta_{\nu_1}}_{z_{\nu_1}}\widehat{G}^{({\nu_1})}_{ij} \widetilde{\cal P}^{(F)}_{ij})}{\partial H_{ix}^{k-n}}\frac{\partial^n \widetilde{\cal P}'_{uv}}{\partial H_{ix}^n}\nonumber\\
&\quad- \sum_{k=1}^{\ell}\frac{s_{k+1}}{k!}\frac{1}{N^{(k+3)/2}}\sum_{x,y:x \ne i} \bb E \frac{\partial^k (\partial^{\delta_1}_{z_1}(\widehat{G}^{(1)}_{xj}\widehat{G}^{(1)}_{yy})\partial^{\delta_{2}}_{z_{2}}\widehat{G}^{({2})}_{ij}\cdots \partial^{\delta_{\nu_1}}_{z_{\nu_1}}\widehat{G}^{({\nu_1})}_{ij} \widetilde{\cal P}^{(F)}_{ij})}{\partial H_{ix}^k}\langle\widetilde{\cal P}'_{uv}\rangle+O_{\prec}(\cal E_*(\cal P)^2N^{-4})\nonumber\\
&\eqd \sum_{k=1}^{\ell}\sum_{n=1}^k \bb EL_{k,n}^{(9)}+\sum_{k=1}^{\ell} \bb EL_{k}^{(10)}+\sum_{k=1}^{\ell}\sum_{n=1}^k \bb EL_{k,n}^{(11)}+\sum_{k=1}^{\ell} \bb EL_{k}^{(12)}+O_{\prec}(\cal E_*(\cal P)^2N^{-4}) \notag\,.
\end{align}
The treatment of $L_{k,n}^{(9)}$ and $L_{k,n}^{(11)}$ is similar to that of $L_{k,n}^{(7)}$ in Case 2: we use Lemma \ref{lem5.4} to estimate the worst terms, and the other terms can be estimated by the trivial bound in Lemma \ref{lem4.5}. As a result, we have
\begin{equation}  \label{5.43}
\sum_{k=1}^\ell\sum_{n=1}^k\Big\{{\sum_{i,j}}^*{\sum_{u,v}}^*\bb EL_{k,n}^{(9)}\Big\}_{2\sigma} \prec \cal E_*(\cal P)^2\quad \mbox{and}\quad 
\sum_{k=1}^\ell\sum_{n=1}^k\Big\{{\sum_{i,j}}^*{\sum_{u,v}}^*\bb EL_{k,n}^{(11)}\Big\}_{2\sigma} \prec \cal E_*(\cal P)^2\,.
\end{equation}
By Lemma \ref{lem4.5} we can also show that
\begin{equation} \label{5.44}
\sum_{k=2}^\ell\Big\{{\sum_{i,j}}^*{\sum_{u,v}}^*\bb E L_k^{(10)}\Big\}_{2\sigma} = O_{\prec}(\cal E_*(\cal P)) \cdot (\bb E |\cal S(\cal{ \mathring{P}})|^2)^{1/2}\,.
\end{equation}	
The estimates concerning $L_{k}^{(12)}$ is more complicated than what we saw for $L_{k}^{(8)}$ in Case 2, and the reason is that we have more than 1 off-diagonal entries in $\cal P$. When the derivative $\partial^k/\partial H_{ix}^k$ hits the off-diagonal entries $\partial^{\delta_{2}}_{z_{2}}\widehat{G}^{({2})}_{ij}\cdots \partial^{\delta_{\nu_1}}_{z_{\nu_1}}\widehat{G}^{({\nu_1})}_{ij} $, direct estimate by Lemma \ref{lem4.5} is not small enough for all terms. But for these terms the value of $\nu_1$ will decrease. Using the cancellation between the leading terms of $\bb EL_{1}^{(10)}$ and $\bb EL_{1}^{(12)}$, we have
\begin{align}
\bb EL_{1}^{(10)}+EL_{1}^{(12)}&=\frac{1}{N^2(\nu_1-2)!}\sum_{s\in \sym(\{2,...,\nu_1\})}\sum_{x,y}  \bb E \Big( \partial^{\delta_1}_{z_1}(\widehat{G}^{(1)}_{xj}\widehat{G}^{(1)}_{yy})\partial^{\delta_{s(2)}}_{z_{s(2)}}(\widehat{G}^{({s(2)})}_{xj}\widehat{G}^{(s(2))}_{ii}) \nonumber\\
&\quad    \times \partial^{\delta_{s(3)}}_{z_{s(3)}}\widehat{G}^{({s(3)})}_{ij}\cdots \partial^{\delta_{s(\nu_1)}}_{z_{s(\nu_1)}}\widehat{G}^{({s(\nu_1)})}_{ij}\cdot\widetilde{\cal P}^{(F)}_{ij}\langle\widetilde{\cal P}'_{uv}\rangle\Big) +\bb E \cal E^{(8)}_{ij} \langle\widetilde{\cal P}'_{uv}\rangle\nonumber\\
&\quad\eqd \sum_{s\in \sym(\{2,...,\nu_1\})}\sum_{x,y}  \bb E \widetilde{\cal P}^{(12,s)}_{1} \langle\widetilde{\cal P}'_{uv}\rangle+\bb E \cal E^{(8)}_{ij} \langle\widetilde{\cal P}'_{uv}\rangle\,, \label{5.55}
\end{align}
where $\sym(M)$ denotes the symmetric group of set $M$, and we can use Lemma \ref{lem4.5} to show that
$
\{\cal S(\cal E^{(8)}) \}_\sigma\prec \cal E_*(\cal P ).
$
In addition, we have
\begin{align}  
	\bb EL_{2}^{(12)}&=\frac{s_3}{N^{5/2}(\nu_1-3)!}\sum_{s\in \sym(\{2,...,\nu_1\})}\sum_{x:x \ne \{i,j\}}\sum_{y}  \bb E \Big( \partial^{\delta_1}_{z_1}(\widehat{G}^{(1)}_{xj}\widehat{G}^{(1)}_{yy})\partial^{\delta_{s(2)}}_{z_{s(2)}}(\widehat{G}^{({s(2)})}_{xj}\widehat{G}^{(s(2))}_{ii})\label{5.56}\\
	&\quad \times\partial^{\delta_{s(3)}}_{z_{s(3)}}(\widehat{G}^{({s(3)})}_{xj}\widehat{G}^{(s(3))}_{ii}) \partial^{\delta_{s(4)}}_{z_{s(4)}}\widehat{G}^{({s(4)})}_{ij}\cdots \partial^{\delta_{s(\nu_1)}}_{z_{s(\nu_1)}}\widehat{G}^{({s(\nu_1)})}_{ij} \widetilde{\cal P}^{(F)}_{ij}\langle\widetilde{\cal P}'_{uv}\rangle\Big)\notag\\
	&\quad  +\frac{s_3}{N^{5/2}(\nu_1-3)!}\sum_{s\in \sym(\{2,...,\nu_1\})}\sum_{y}  \bb E \Big( \partial^{\delta_1}_{z_1}(\widehat{G}^{(1)}_{jj}\widehat{G}^{(1)}_{yy})\partial^{\delta_{s(2)}}_{z_{s(2)}}(\widehat{G}^{({s(2)})}_{jj}\widehat{G}^{(s(2))}_{ii})\notag\\
	&\quad \times\partial^{\delta_{s(3)}}_{z_{s(3)}}(\widehat{G}^{({s(3)})}_{jj}\widehat{G}^{(s(3))}_{ii}) \partial^{\delta_{s(4)}}_{z_{s(4)}}\widehat{G}^{({s(4)})}_{ij}\cdots \partial^{\delta_{s(\nu_1)}}_{z_{s(\nu_1)}}\widehat{G}^{({s(\nu_1)})}_{ij} \widetilde{\cal P}^{(F)}_{ij}\langle\widetilde{\cal P}'_{uv}\rangle\Big) +\bb E \cal E^{(9)}_{ij} \langle\widetilde{\cal P}'_{uv}\rangle\notag\\
	&\eqd \sum_{s\in \sym(\{2,...,\nu_1\})}\sum_{x: x \ne\{i,j\}}\sum_{y}  \bb E \widetilde{\cal P}^{(12,s)}_{2,1} \langle\widetilde{\cal P}'_{uv}\rangle\notag\\
	&\quad  +\sum_{s\in \sym(\{2,...,\nu_1\})}\sum_{y}  \bb E \widetilde{\cal P}^{(12,s)}_{2,2} \langle\widetilde{\cal P}'_{uv}\rangle+\bb E \cal E^{(9)}_{ij} \langle\widetilde{\cal P}'_{uv}\rangle \notag
\end{align}
and 
\begin{equation} \label{5.45}
\begin{aligned}
	\bb EL_{k}^{(12)}&=
	\frac{s_{k+1}}{N^{(k+3)/2}(\nu_1-k-1)!}\\&
	\times\sum_{s\in \sym(\{2,...,\nu_1\})}\sum_{y}  \bb E \Big( \partial^{\delta_1}_{z_1}(\widehat{G}^{(1)}_{jj}\widehat{G}^{(1)}_{yy})\cdots\partial^{\delta_{s(K+1)}}_{z_{s(k+1)}}(\widehat{G}^{({s(K+1)})}_{jj}\widehat{G}^{(s(k+1))}_{ii})\\
	&\quad \times \partial^{\delta_{s(k+2)}}_{z_{s(k+2)}}\widehat{G}^{({s(k+2)})}_{ij}\cdots \partial^{\delta_{s(\nu_1)}}_{z_{s(\nu_1)}}\widehat{G}^{({s(\nu_1)})}_{ij} \widetilde{\cal P}^{(F)}_{ij}\langle\widetilde{\cal P}'_{uv}\rangle\Big) +\bb E \cal E^{(10,k)}_{ij} \langle\widetilde{\cal P}'_{uv}\rangle\\
	&\eqd\sum_{s\in \sym(\{2,...,\nu_1\})}\sum_{y}  \bb E \widetilde{\cal P}^{(12,s)}_{k} \langle\widetilde{\cal P}'_{uv}\rangle+\bb E \cal E^{(10,k)}_{ij} \langle\widetilde{\cal P}'_{uv}\rangle
\end{aligned}
\end{equation}
for $3\leq k \leq \nu_1-1$, where we can use Lemma \ref{lem4.5} and Theorem \ref{refthm1} to show that
$$\{\cal S(\cal E^{(9)}) \}_\sigma+\sum_{k=3}^{(\nu_1-1)\wedge \ell}\{\cal S(\cal E^{(10,k)}) \}_\sigma\prec \cal E_*(\cal P )\,.
$$
We see that for each $s \in \sym(\{2,...,\sigma\})$, we have $\cal E_0(\cal P^{(12,s)}_{2,1})=\cal E_0(\cal P)N^{-1/2}$, which is not enough. But we can again use 
$
\widehat{G}_{jx}=\ul{\widehat{H}\widehat{G}} \widehat{G}_{jx}-(\widehat{H}\widehat{G})_{jx}\ul{\widehat{G}}+\delta_{jx}\ul{\widehat{G}}
$ 
and Lemma \ref{lem:cumulant_expansion} to do another expansion. Estimating the result using Lemma \ref{lem4.5} and \eqref{swd}, we get
\begin{align}
&\sum_{x: x \ne\{i,j\}}\sum_{y}  \bb E \widetilde{\cal P}^{(12,s)}_{2,1} \langle\widetilde{\cal P}'_{uv}\rangle \notag\\
 =&\frac{s_3^2}{N^4(\nu_1-3)!} \sum_{x: x \notin \{i,j\}}\sum_{y}\bb E \Big( \partial^{\delta_1}_{z_1}(\widehat{G}^{(1)}_{jj}\widehat{G}^{(1)}_{yy})\partial^{\delta_{s(2)}}_{z_{s(2)}}(\widehat{G}^{({s(2)})}_{xx}\widehat{G}^{({s(2)})}_{jj}\widehat{G}^{(s(2))}_{ii})\label{5.47}\\
&\times	\partial^{\delta_{s(3)}}_{z_{s(3)}}(\widehat{G}^{({s(3)})}_{xx}\widehat{G}^{({s(3)})}_{jj}\widehat{G}^{(s(3))}_{ii}) \partial^{\delta_{s(4)}}_{z_{s(4)}}\widehat{G}^{({s(4)})}_{ij}\cdots \partial^{\delta_{s(\nu_1)}}_{z_{s(\nu_1)}}\widehat{G}^{({s(\nu_1)})}_{ij} \widetilde{\cal P}^{(F)}_{ij}\langle\widetilde{\cal P}'_{uv}\rangle\Big)\notag\\
&+\bb E \cal E_{ij}^{(11,s)}\langle \widetilde{\cal P}'_{uv}\rangle\nonumber\\
&\eqd  \sum_{x: x \ne\{i,j\}}\sum_{y}  \bb E  \widetilde{\cal P}^{(12,s)}_{2,1,1} \langle\widetilde{\cal P}'_{uv}\rangle +\bb E \cal E_{ij}^{(11,s)}\langle \widetilde{\cal P}'_{uv}\rangle\,, \notag
\end{align}
where $\{\cal S(\cal E_{ij}^{(11,s)})\}_\sigma\prec \cal E_*(\cal P)$. In addition, we also have
\begin{equation} \label{5.445}
\Big\{{\sum_{i,j}}^*{\sum_{x,y}}^*\bb E L_{k}^{(12)}\Big\}_{2\sigma} =O_{\prec}(\cal E_*(\cal P))\cdot  (\bb E |\cal S(\cal{ \mathring{P}})|^2)^{1/2}
\end{equation}
for $k \geq \nu_1$. By \eqref{5.43} -- \eqref{5.445}, together with the identity $\bb E X\langle Y \rangle =\bb E \langle X \rangle\langle Y \rangle$, we have
\begin{equation} \label{5.49}
\begin{aligned}
	\bb E |\cal S(\cal{ \mathring{P}})|^2 &=\sum_{s \in \sym(\{2,...,\nu_1\})}\bigg(\bb E \cal S({\cal { \mathring{P}}}_{1}^{(12,s)})\cal S(\cal{ \mathring{P}})+\bb E \cal S({\cal { \mathring{P}}}_{2,1,1}^{(12,s)})\cal S(\cal{ \mathring{P}})+\bb E \cal S({\cal { \mathring{P}}}_{2,2}^{(12,s)})\cal S(\cal{ \mathring{P}})\\
	&\quad +\sum_{k=3}^{(\nu_1-1)\wedge \ell}\bb E \cal S({\cal { \mathring{P}}}_{k}^{(12,s)})\cal S(\cal{ \mathring{P}})\bigg)
	+O_{\prec}(\cal E_*(\cal P)^2)+O_{\prec}(\cal E_*(\cal P)) \cdot (\bb E |\cal S(\cal{ \mathring{P}})|^2)^{1/2}\,.
\end{aligned}
\end{equation}
We see that $\cal E_0(\cal P^{(12,s)}_1)=\cal E_0(\cal P^{(12,s)}_{2,2})=\cal E_0(\cal P^{(12,s)}_{k})=\cal E_0(\cal P)$, $\cal E_0({\cal { P}}_{2,1,1}^{(12,s)})=\cal E_0(\cal P)N^{-1/2}$, and $\nu_1(\cal P^{(12,s)}_1)$, $\nu_1(\cal P_{2,1,1}^{(12,s)})$, $\nu_1(\cal P^{(12,s)}_{2,2})$, $\nu_1(\cal P^{(12,s)}_{k})\leq  \nu_1(\cal P)-2$ for all $s \in \sym\{2,...,\sigma\}$ and $k\leq \nu_1-1$. Rewriting \eqref{5.49} in an abstract form, we get 
\begin{equation} \label{5.50}
\bb E |\cal S(\cal{ \mathring{P}})|^2=\sum_{q=1}^n\bb E \cal S(\cal{ \mathring{P}}^{(q)})\cal S(\cal{ \mathring{P}})+O_{\prec}(\cal E_*(\cal P)^2)+O_{\prec}(\cal E_*(\cal P)) \cdot (\bb E |\cal S(\cal{ \mathring{P}})|^2)^{1/2}
\end{equation}
for some fixed $n$. Each $\cal P^{(q)}$ satisfies $\cal E_0(\cal P^{(q)})\leq \cal E_0(\cal P)$, $\nu_1(\cal P^{(q)})\leq \nu_1(\cal P)-2$, and each $\bb E \cal S(\cal{ \mathring{P}}^{(q)})\cal S(\cal{ \mathring{P}})$ can be expanded again using \eqref{resolvent eq for G} and Lemma \ref{lem:cumulant_expansion}. Similar to Cases 1 and 2, one can show that $\cal S(\cal P)\prec  \cal E_0(\cal P)N^{-1}=\cal E_*(\cal P)$ when $\nu_1(\cal P)=0$. Hence repeating \eqref{5.50} finitely many steps we get
\[
\bb E |\cal S(\cal{ \mathring{P}})|^2=O_{\prec}(\cal E_*(\cal P)^2)+O_{\prec}(\cal E_*(\cal P)) \cdot (\bb E |\cal S(\cal{ \mathring{P}})|^2)^{1/2}\,.
\]
This finishes the proof.

\subsection{Proof of Lemma \ref{lem5.1} (ii)} \label{sec5.3}

In this section we prove Lemma \ref{lem5.1} for $d(\cal P)=1$. In this case we have $\cal E_0(\cal P)=\cal E_*(\cal P)N^{1/2}$, thus we need to gain an improvement of $N^{-1/2}$. Comparing with the need of improving $N^{-1}$ in Section \ref{sec5.2}, the estimate in this section is easier.

\textit{Case 1.} Let us first consider the case $\nu_1(\cal P)=0$. Note that we have for $\widehat{G}\equiv \widehat{G}(z)$ that
\begin{equation*}
	\sum_i (\widehat{G}H_{\mathrm d}\widehat{G}^{1+\omega})_{ii}=\sum_{k} \widehat{G}^{2+\omega}_{kk} H_{kk} \prec \frac{1}{|\eta|^{1+\omega}}\,,
\end{equation*}
uniformly for $z=E+\ii \eta \in \b D$. Together with Theorem \ref{refthm1} we have
\begin{align}
	\cal S(\widetilde{\cal P}) &\prec  t^{\mu(\cal P)}N^{\nu(\cal P)-\theta(\cal P)-\nu_1(\cal P)/2-\nu_3(\cal P)-d(\cal P)/2} |\eta_1|^{-\nu_0(\cal G_1)/2} \cdots |\eta_\sigma|^{-\nu_0(\cal G_\sigma)/2}\notag\\
	&\quad \cdot (|N\eta_1|^{-1/2}+\cdots+|N\eta_\sigma|^{-1/2})\nonumber\\
	&\prec N |\eta_1|^{-5/2}\cdots N |\eta_\sigma|^{-5/2}\,, \label{5.51}
\end{align}
where in the second step we used $\nu_0(\cal P)\leq 4$. By Lemma \ref{lem:integration} we have
\[
\cal S(\cal P)=O_{\prec}(N)=O_{\prec}(\cal E_*(\cal P))\,,
\]
and together with Lemma \ref{prop_prec} we complete the proof.

\textit{Case 2.} Now let us consider the case when $\nu_1(\cal P)=1$. In this situation, the additional factor of $N^{-1/2}$ comes from the isotropic law. If the off-diagonal entry is in the form $\partial^{\delta}_z G_{ij}$, and we also have $(\widehat{G}H_{\mathrm d}\widehat{G}^{1+\omega})_{ii}$ in $\cal P$, then \eqref{5.51} is obviously true from Lemma \ref{prop4.4} and
\[
\sum_{j} \widehat{G}_{ij} \prec \frac{1}{|\eta|^{1/2}}\,.
\]
If the off-diagonal entry is in the form $(\widehat{G}H_{\mathrm d}\widehat{G}^{1+\omega})_{ij}$, then we use the estimate
\[
\frac{1}{\sqrt{N}} \sum_{j} (\widehat{G}H_{\mathrm d}\widehat{G}^{1+\omega})_{ij}=\sum_{k} \widehat{G}_{ik} \widehat{G}_{k\b v} H_{kk} \prec \frac{1}{|N\eta^{1+\omega}|}\,,
\]
where $\b v=(1,...,1)$. Together with Lemma \ref{prop4.4} we again obtain \eqref{5.51}. Following the steps in Case 1 we complete the proof.

\textit{Case 3.} We consider the case $\nu_1(\cal P )\geq 2$. We work under the additional assumption that $\cal P$ contains the off-diagonal factor $(\widehat{F}H_{\mathrm d}\widehat{F}^{1+\omega})_{ij}$, where $\widehat{F}\deq \widehat{G}(z_{\nu_1})$, $\omega \in \{0,1\}$; the case where $\cal P$ contains  $(\widehat{F}H_{\mathrm d}\widehat{F}^{1+\omega})_{ii}$ or $(\widehat{F}H_{\mathrm d}\widehat{F}^{1+\omega})_{jj}$ can be proved in a similar way. Let $\partial^{\delta_1}_{z_1}\widehat{G}^{(1)}_{ij}$,...,$\partial^{\delta_{\nu_1-1}}_{z_{\nu_1-1}}\widehat{G}^{({\nu_1-1})}_{ij}$ be the other off-diagonal factors in $\cal P$, where $\delta_1,...,\delta_{\nu_1-1} \in \{0,1\}$. Let 
$$
\widetilde{\cal P}^{(G)} \deq\widetilde{\cal P}/ (\partial^{\delta_1}_{z_1}\widehat{G}^{(1)}_{ij}\cdots \partial^{\delta_{\nu_1-1}}_{z_{\nu_1-1}}\widehat{G}^{({\nu_1-1})}_{ij} (\widehat{F}H_{\mathrm d}\widehat{F}^{1+\omega})_{ij})\,.
$$ 
Note that the condition $\nu_1(\cal P)\geq 2$ grantees the existence of $\partial^{\delta_1}_{z_1}\widehat{G}^{(1)}_{ij}$ in $\cal P$. Similar to \eqref{5.27}, we have for $i\ne j$ and $u \ne v$ that    
\begin{align}
	&\quad \ 	\bb E \widetilde{\cal P}_{ij}  \widetilde{\cal P}'_{uv} \nonumber\\
	&=\sum_{k=1}^{\ell}\sum_{n=1}^{k}{k \choose n}\frac{s_{k+1}}{k!}\frac{1}{N^{(k+3)/2}}{\sum_{x,y}}^* \bb E \frac{\partial^{k-n}(\partial^{\delta_1}_{z_1}(\widehat{G}^{(1)}_{xy}\widehat{G}^{(1)}_{ij})\partial^{\delta_{2}}_{z_{2}}\widehat{G}^{({2})}_{ij}\cdots \partial^{\delta_{\nu_1-1}}_{z_{\nu_1-1}}\widehat{G}^{({\nu_1-1})}_{ij} (\widehat{F}H_{\mathrm d}\widehat{F}^{1+\omega})_{ij}\widetilde{\cal P}^{(G)}_{ij})}{\partial H_{xy}^{k-n}} \frac{\partial^n \widetilde{\cal P}'_{uv}}{\partial H_{xy}^n}\nonumber\\
	&\quad+\sum_{k=1}^{\ell}\frac{s_{k+1}}{k!}\frac{1}{N^{(k+3)/2}}{\sum_{x,y}}^* \bb E \frac{\partial^k (\partial^{\delta_1}_{z_1}(\widehat{G}^{(1)}_{xy}\widehat{G}^{(1)}_{ij})\partial^{\delta_{2}}_{z_{2}}\widehat{G}^{({2})}_{ij}\cdots \partial^{\delta_{\nu_1-1}}_{z_{\nu_1-1}}\widehat{G}^{({\nu_1-1})}_{ij} (\widehat{F}H_{\mathrm d}\widehat{F}^{1+\omega})_{ij}\widetilde{\cal P}^{(G)}_{ij})}{\partial H_{xy}^k} \widetilde{\cal P}'_{uv}\nonumber\\
	&\quad -\sum_{k=1}^{\ell}\sum_{n=1}^{k}{k \choose n}\frac{s_{k+1}}{k!}\frac{1}{N^{(k+3)/2}}\sum_{x,y:x \ne i} \bb E \frac{\partial^{k-n} (\partial^{\delta_1}_{z_1}(\widehat{G}^{(1)}_{xj}\widehat{G}^{(1)}_{yy})\partial^{\delta_{2}}_{z_{2}}\widehat{G}^{({2})}_{ij}\cdots \partial^{\delta_{\nu_1-1}}_{z_{\nu_1-1}}\widehat{G}^{({\nu_1-1})}_{ij} (\widehat{F}H_{\mathrm d}\widehat{F}^{1+\omega})_{ij} \widetilde{\cal P}^{(G)}_{ij})}{\partial H_{ix}^{k-n}}\frac{\partial^n \widetilde{\cal P}'_{uv}}{\partial H_{ix}^n}\nonumber\\
	&\quad- \sum_{k=1}^{\ell}\frac{s_{k+1}}{k!}\frac{1}{N^{(k+3)/2}}\sum_{x,y:x \ne i} \bb E \frac{\partial^k (\partial^{\delta_1}_{z_1}(\widehat{G}^{(1)}_{xj}\widehat{G}^{(1)}_{yy})\partial^{\delta_{2}}_{z_{2}}\widehat{G}^{({2})}_{ij}\cdots \partial^{\delta_{\nu_1-1}}_{z_{\nu_1-1}}\widehat{G}^{({\nu_1-1})}_{ij} (\widehat{F}H_{\mathrm d}\widehat{F}^{1+\omega})_{ij}\widetilde{\cal P}^{(G)}_{ij})}{\partial H_{ix}^k}\widetilde{\cal P}'_{uv}+O_{\prec}(\cal E_*(\cal P)^2N^{-4})\nonumber\\
	&\eqd \sum_{k=1}^{\ell}\sum_{n=1}^k \bb EL_{k,n}^{(13)}+\sum_{k=1}^{\ell} \bb EL_{k}^{(14)}+\sum_{k=1}^{\ell}\sum_{n=1}^k \bb EL_{k,n}^{(15)}+\sum_{k=1}^{\ell} \bb EL_{k}^{(16)}+O_{\prec}(\cal E_*(\cal P)^2N^{-4}) \label{5.53}\,.
\end{align}
Note that the entries of $\widehat{F}H_{\mathrm d}\widehat{F}^{1+\omega}$ behaves very similar to those of $\partial_z^{\delta}\widehat{G}$, in the way that
\begin{equation} \label{ww}
	\begin{aligned}
		\frac{\partial{(\widehat{F}H_{\mathrm d}\widehat{F}^{1+\omega})_{ij}}}{\partial H_{kl}}&=-\widehat{F}_{ik}(\widehat{F}H_{\mathrm d}\widehat{F}^{1+\omega})_{lj}-\widehat{F}_{il}(\widehat{F}H_{\mathrm d}\widehat{F}^{1+\omega})_{kj}-(\widehat{F}H_{\mathrm d}\widehat{F}^{1+\omega})_{ik}\widehat{F}_{lj}-(\widehat{F}H_{\mathrm d}\widehat{F}^{1+\omega})_{il}\widehat{F}_{kj}\\
		&\quad -\omega(\widehat{F}H_{\mathrm d}\widehat{F})_{ik}\partial^{\omega}_{z_{\nu_1}}\widehat{F}_{lj}-\omega(\widehat{F}H_{\mathrm d}\widehat{F})_{il}\partial^{\omega}_{z_{\nu_1}}\widehat{F}_{kj}\,.
	\end{aligned}
\end{equation}
Unlike in Cases 2 and 3 of Section \ref{sec5.2}, we cannot use Lemma \ref{lem5.4} in the estimates of the mixed terms $L_{k,n}^{(13)}$ and $L_{k,n}^{(15)}$. On the other hand, the mixed terms in \eqref{5.53} are easier to estimate, as each of them contains two entries of $\widehat{F}H_{\mathrm d}\widehat{F}^{1+\omega}$, which gives an additional factor of $N^{-1}$. By applying the differentials carefully using \eqref{diff1}, \eqref{ww}, and estimating the result directly by Lemma \ref{lem4.5}, we can show that
\begin{equation*}  
	\sum_{k=1}^\ell\sum_{n=1}^k\Big\{{\sum_{i,j}}^*{\sum_{u,v}}^*\bb EL_{k,n}^{(13)}\Big\}_{2\sigma} \prec \cal E_*(\cal P)^2\quad \mbox{and}\quad 
	\sum_{k=1}^\ell\sum_{n=1}^k\Big\{{\sum_{i,j}}^*{\sum_{u,v}}^*\bb EL_{k,n}^{(15)}\Big\}_{2\sigma} \prec \cal E_*(\cal P)^2\,.
\end{equation*}
In addition, \eqref{diff1}, \eqref{ww}, and Lemma \ref{lem4.5} can also be used to show that
\begin{equation*}
	\sum_{k=2}^\ell\Big\{{\sum_{i,j}}^*{\sum_{u,v}}^*\bb E L_k^{(14)}\Big\}_{2\sigma} = O_{\prec}(\cal E_*(\cal P)) \cdot (\bb E |\cal S(\cal{ {P}})|^2)^{1/2}\,.
\end{equation*}
Similar to \eqref{5.55} -- \eqref{5.47}, we can show that
$L_{k}^{(16)}$ contains several terms where the parameter $\cal E_0$ is unchanged, but $\nu_1$ is reduced by at least 2. Hence similar to \eqref{5.50}, we have
\begin{equation} \label{5.500}
	\bb E |\cal S(\cal{ {P}})|^2=\sum_{q=1}^n\bb E \cal S(\cal{ {P}}^{(q)})\cal S(\cal{ {P}})+O_{\prec}(\cal E_*(\cal P)^2)+O_{\prec}(\cal E_*(\cal P)) \cdot (\bb E |\cal S(\cal{ \mathring{P}})|^2)^{1/2}
\end{equation}
for some fixed $n$. Each $\cal P^{(q)}$ satisfies $\cal E_0(\cal P^{(q)})\leq \cal E_0(\cal P)$, $\nu_1(\cal P^{(q)})\leq \nu_1(\cal P)-2$, and each $\bb E \cal S(\cal{ {P}}^{(q)})\cal S(\cal{ {P}})$ can be expanded again using \eqref{resolvent eq for G} and Lemma \ref{lem:cumulant_expansion}.  Note from Cases 1 and 2 that $\cal S(\cal P)\prec  \cal E_0(\cal P)N^{-1}=\cal E_*(\cal P)$ when $\nu_0(\cal P)\in \{0,1\}$. Hence repeating \eqref{5.500} finitely many steps we get
\[
\bb E |\cal S(\cal{ {P}})|^2=O_{\prec}(\cal E_*(\cal P)^2)+O_{\prec}(\cal E_*(\cal P)) \cdot (\bb E |\cal S(\cal{ {P}})|^2)^{1/2}\,.
\]
This concludes the proof.

\section{The lower bound --  Proof of Theorem \ref{thmlowerbound}} \label{s. lowbound}

In this section, we establish a lower bound for the convergence rate.
The key technical  step for the proof of  Theorem \ref{thmlowerbound} is the following proposition on the estimate of the three point function of the Green function.   Fix small $\delta>0$ such that $f$ is analytic in $[-2-10\delta,2+10\delta]$.  We define the domains for $a=1,2,3$,
\begin{align*}
	 \mathcal{S}_a\equiv \cal S_a(\delta)= \{z\in\mathbb{C}: \text{dist}(z, [-2,2])= a\delta\}\,,\quad \mathcal{S}_a^{\geq}\equiv\mathcal{S}_a^{\geq}(\delta)= \{z=E+\ii\eta\in \mathcal{S}_a: \eta\geq N^{-5}\}. 
\end{align*}
For brevity, we also set
\[
b_n\deq \cal C_n(\sqrt{N}H_{11})
\]
for all fixed $n \in \bb N_+$. 

Again, for brevity, in the sequel, we will focus on case $\beta=1$.

\begin{pro} \label{proGFK} Suppose that the assumptions in Theorem \ref{thmlowerbound} hold and $\beta=1$. Let $z_a\in  \mathcal{S}_a^\geq, a=1,2,3$, we have 
	\begin{align*}
		&\mathbb{E} \big\la\ul{G}(z_1)\big\ra \big\la\ul{G}(z_2)\big\ra \big\la\ul{G}(z_3)\big\ra \notag\\
		&=N^{-\frac72}b_3m'(z_1)m'(z_2)m'(z_3)+ 8N^{-4}\big(h(z_1,z_2,z_3)+h(z_3,z_2,z_1)+h(z_2,z_1,z_3)\big)\notag\\
		&\quad+N^{-4}b_4m'(z_1)m'(z_2)m'(z_3)\big(m(z_1)+m(z_2)+m(z_3)\big)+O_\prec\big(N^{-\frac92}\big), 
	\end{align*}
	where
	\begin{align*}
		h(u,v,w):=\frac{1}{(u^2-4)^{\frac32}(v^2-4)^{\frac12}(w^2-4)^{\frac12}(u-v)(w-u)}, \quad u,v,w\in \mathbb{C}\setminus [-2,2].
	\end{align*}
\end{pro}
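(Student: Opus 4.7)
The plan is to compute the third-order connected correlator
$\kappa_3 := \mathbb{E}\la\ul{G}(z_1)\ra\la\ul{G}(z_2)\ra\la\ul{G}(z_3)\ra$
by the cumulant-expansion machinery of Section \ref{sec4}, pushed to extract the explicit contributions of orders $N^{-7/2}$ and $N^{-4}$. The decisive simplification comes from Assumption \ref{122910}(i): since off-diagonal entries match GOE up to the fourth moment, $s_3 = s_4 = 0$, so above the second cumulant only the diagonal cumulants $b_3, b_4$ can contribute nontrivially at the orders of interest; all off-diagonal contributions with $k\geq 4$ and all Barbour remainders are bounded directly by Lemma \ref{prop4.4} and counting, using $\eta \geq \delta > 0$ on the contours $\cal S_a$, to give $O_\prec(N^{-9/2})$.

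I would begin from the resolvent identity $z_1\ul{G}(z_1) = -1 + \ul{HG(z_1)}$, so that, writing $X = \la\ul{G}(z_2)\ra\la\ul{G}(z_3)\ra$ and using $z_1 m(z_1)+1+m(z_1)^2=0$,
\begin{equation*}
z_1\kappa_3 = m(z_1)^2\,\mathbb{E}\la X\ra + \frac{1}{N}\sum_{i,j}\mathbb{E}\big[H_{ij}G_{ji}(z_1)\la X\ra\big].
\end{equation*}
Applying Lemma \ref{lem:cumulant_expansion} to the $H_{ij}$-expectation, the $k=1$ terms where both derivatives $\partial/\partial H_{ij}$ act on $G_{ji}(z_1)$ combine with $m(z_1)^2\mathbb{E}\la X\ra$ to produce the self-consistent closure, generating the factor $z_1+2m(z_1)=-m(z_1)/m'(z_1)$ on the left-hand side. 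Solving for $\kappa_3$ gives
\begin{equation*}
\kappa_3 = \frac{m'(z_1)}{m(z_1)}\bigg[\text{(derivatives acting on $\la X\ra$)} + \text{(diagonal $k\geq 2$ terms)} + O_\prec(N^{-9/2})\bigg].
\end{equation*}

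Next, I would extract the leading $N^{-7/2}$ contribution from the diagonal $k=2$ piece (weight $b_3/(2N^{3/2})$). The crucial identity is $\partial\ul{G}(z)/\partial H_{ii} = -(G(z)^2)_{ii}/N$, which by Lemma \ref{prop4.4} equals $-m'(z)/N + O_\prec(N^{-3/2})$; it is precisely this $(G^2)_{ii}$-factor (rather than $G_{ii}^2$) that is responsible for replacing the naive $m^3$ structure by $m'(z_1)m'(z_2)m'(z_3)$ in the answer. Distributing the two derivatives of $\partial^2/\partial H_{ii}^2$ over the factors in $G_{ii}(z_1)\la X\ra$, the configuration in which one derivative each lands on $\ul{G}(z_2)$ and $\ul{G}(z_3)$ inside $\la X\ra$ gives, after summing over $i$ and multiplying by the closure $m'(z_1)/m(z_1)$, the term $b_3 m'(z_1)m'(z_2)m'(z_3) N^{-7/2}$. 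The diagonal $k=3$ term ($b_4/(6N^2)$) is treated analogously using the further identity $\partial^2\ul{G}(z)/\partial H_{ii}^2 = 2 G_{ii}(z)(G(z)^2)_{ii}/N \approx 2m(z)m'(z)/N$: distributing four $\partial/\partial H_{ii}$'s in the $(2,1,1)$ pattern over the three factors and symmetrizing in the position of the doubled derivative produces exactly $b_4 m'(z_1)m'(z_2)m'(z_3)(m(z_1)+m(z_2)+m(z_3)) N^{-4}$.

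Finally, the triangle contribution producing the $h$-terms at $N^{-4}$ comes from the ``derivative-on-$\la X\ra$'' part of the $k=1$ expansion. Using $\partial\la\ul{G}(z_a)\ra/\partial H_{ij} = -2(G(z_a)^2)_{ij}/(N(1+\delta_{ij}))$, these terms yield mixed traces of the form $N^{-2}\mathrm{Tr}(G(z_1)G(z_2)^2)\la\ul{G}(z_3)\ra$ (and its two partners obtained by permuting the roles of the three spectral parameters); a further cumulant expansion of the remaining centered factor produces three ``triangle'' topologies, each evaluated via Lemma \ref{prop4.4} together with the identities $m(z)^2+zm(z)+1=0$ and $(z^2-4)^{-1/2}=-m'(z)/m(z)$, giving exactly $h(z_1,z_2,z_3), h(z_3,z_2,z_1), h(z_2,z_1,z_3)$ with a combinatorial prefactor $8$. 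The main obstacle will be, first, identifying the correct combinatorial prefactor $8$ and verifying the precise symmetric structure of the three $h$-terms, since this requires careful bookkeeping of the $(1+\delta_{ij})$ factors and the distinct Wick pairings; and second, confirming that all subleading corrections (to $(G^2)_{ii}\approx m'(z)$, to the closure, and to $\mathbb{E}\la X\ra$ at order $N^{-3}$) combine safely into the $O_\prec(N^{-9/2})$ remainder rather than polluting the explicit $N^{-4}$ coefficients.
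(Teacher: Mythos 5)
Your strategy is essentially the paper's: start from $z_1\,\bb E\la\ul{G(z_1)}\ra\la\ul{G(z_2)}\ra\la\ul{G(z_3)}\ra=\bb E\,\ul{HG(z_1)}\,\big\la \la\ul{G(z_2)}\ra\la\ul{G(z_3)}\ra\big\ra$, apply the cumulant expansion, close the resulting self-consistent equation with $T_1=(-z_1-2\bb E\ul{G(z_1)})^{-1}\approx m'(z_1)/m(z_1)$, and read off the explicit $b_3,b_4$ terms from the diagonal third and fourth cumulants (GOE moment matching killing the off-diagonal ones). Your diagnosis of $\partial\ul{G}/\partial H_{ii}=-(G^2)_{ii}/N\approx -m'/N$ as the source of the $m'$-factors, and the structure $m'(z_1)m'(z_2)m'(z_3)(m(z_1)+m(z_2)+m(z_3))$ for the $b_4$ term, are on target. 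Two slips worth noting. First, the piece $m(z_1)^2\bb E\la\la\ul{G(z_2)}\ra\la\ul{G(z_3)}\ra\ra$ in your opening display is identically zero, so it cannot be what generates the closure; that comes instead from $\partial G_{ji}/\partial H_{ij}\approx -G_{ii}G_{jj}$ in the second-cumulant term, combined with \eqref{100902}. Second, the fourth cumulant acts through three derivatives $\partial^3/\partial H_{ii}^3$ on $G_{ii}\,\la\la\ul{G(z_2)}\ra\la\ul{G(z_3)}\ra\ra$, with leading patterns $(1,1,1),(0,2,1),(0,1,2)$ (cf.\ \eqref{120401}), not ``four derivatives in a $(2,1,1)$ pattern''; your symmetric heuristic happens to give the right coefficient only because $T_1\cdot G_{ii}\approx m'(z_1)$ effectively supplies the missing ``derivative,'' but this reinterpretation needs to be justified rather than asserted. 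Finally, the part you flag as an obstacle is in fact where most of the paper's labor sits: solving the self-consistent equation leaves $A_4=\tfrac{2}{N^2}\bb E\,\ul{G(z_1)G(z_3)^2}\,\la\ul{G(z_2)}\ra$ and $A_5=\tfrac{2}{N^2}\bb E\,\ul{G(z_1)G(z_2)^2}\,\la\ul{G(z_3)}\ra$; the paper does not expand the remaining centered factor from scratch but rewrites $GK^2=(G-K)/(z_1-z_3)^2-K^2/(z_1-z_3)$ to reduce everything to two-point quantities, then inserts the precise formulas of Lemma \ref{lem.2-point-corr} (imported from \cite{HK2}) and uses $\sqrt{z^2-4}=-m(z)/m'(z)$ to produce the three $h$-terms; the prefactor $8$ drops out of this algebra rather than from a Wick count, so your ``$(1+\delta_{ij})$ and distinct pairings'' bookkeeping is not how it is obtained.
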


\begin{rem}
	Note that the parameter $z\in \mathcal{S}_a$ is on global scale since it is away from $[-2,2]$ by a distance of constant order. It suffices to carry out all the estimates below for Green functions on $\mathcal{S}_a$ thanks to the analyticity of the test function $f$. Most of the error terms in the estimates concerning Green functions depend on $(|E^2-4|+\eta)^{-1}$ in the sequel. But we omit this dependence for simplicity since  $|E^2-4|+\eta\sim 1$ for $z\in \mathcal{S}_a$ anyway. 
\end{rem}

In the sequel, we first prove Theorem \ref{thmlowerbound}  based on Proposition \ref{proGFK}.
\begin{proof}[Proof of Theorem \ref{thmlowerbound}]
  By Cauchy integral formula
	\begin{equation} \label{7.111}
  (\text{Var}(\text{Tr} f_{\gamma}(H)))^{\frac12} 	\mathring{\mathcal{Z}}_{f,\gamma}\mathbf{1}\big(\|H\|\leq 2+N^{-\frac12}\big)=	\frac{\ii}{2\pi} \oint_{\mathcal{S}_a} f_\gamma(z_a) \big\la \ul{G}(z_a)\big\ra {\rm d}z\mathbf{1}\big(\|H\|\leq 2+N^{-\frac12}\big)\,,
\end{equation}
   we can write  
	\begin{align}
		&\quad (\text{Var}(\text{Tr} f_{\gamma}(H)))^{\frac32}\mathbb{E} \mathring{\mathcal{Z}}_{f,\gamma}^3\mathbf{1}\big(\|H\|\leq 2+N^{-\frac12}\big)=\mathbb{E}\big\la \tr f_\gamma(H) \big\ra^3\mathbf{1}\big(\|H\|\leq 2+N^{-\frac12}\big) \notag\\
		&=N^3 \mathbb{E}\prod_{a=1}^3 \bigg[ \frac{\ii}{2\pi} \oint_{\mathcal{S}_a} f_\gamma(z_a) \big\la \ul{G}(z_a)\big\ra {\rm d}z_a\bigg]\mathbf{1}\big(\|H\|\leq 2+N^{-\frac12}\big)\notag\\
		&= N^3 \frac{\ii^3}{(2\pi)^3} \oint_{\mathcal{S}_1^{\geq }\times \mathcal{S}_2^{\geq }\times \mathcal{S}_3^\geq} f_\gamma(z_1) f_{\gamma}(z_2)f_\gamma(z_3) \mathbb{E}\big\la \ul{G}(z_1)\big\ra \big\la \ul{G}(z_2)\big\ra\big\la \ul{G}(z_3)\big\ra {\rm d}z_1{\rm d}z_2 {\rm d}z_3+O(N^{-2})
		\label{111910}
	\end{align}	
	Applying Proposition \ref{proGFK}, we have 
	\begin{align}
		&\quad N^3 \frac{\ii^3}{(2\pi)^3} \oint_{\mathcal{S}_1^{\geq }\times \mathcal{S}_2^{\geq }\times \mathcal{S}_3^\geq} f_\gamma(z_1) f_{\gamma}(z_2)f_\gamma(z_3) \mathbb{E}\big\la \ul{G}(z_1)\big\ra \big\la \ul{G}(z_2)\big\ra\big\la \ul{G}(z_3)\big\ra {\rm d}z_1{\rm d}z_2 {\rm d}z_3 \notag\\
		&= N^{-\frac12} b_3\mathcal{L}_1+N^{-1}\mathcal{L}_2+N^{-1}b_4\mathcal{L}_3+O_\prec(N^{-\frac32}),  \label{121201}
	\end{align}
	where 
	\begin{align*}
		\mathcal{L}_1= &\frac{\ii^3}{(2\pi)^3}  \oint_{ \mathcal{S}_{1}\times  \mathcal{S}_{2}\times \mathcal{S}_{3} } f_\gamma(z_1) f_\gamma(z_2)f_\gamma(z_3) m'(z_1)m'(z_2)m'(z_3)  {\rm d} z_1 {\rm d} z_2{\rm d} z_3  \notag\\
		\mathcal{L}_2= &\frac{\ii^3}{\pi^3}  \oint_{\mathcal{S}_{1}\times  \mathcal{S}_{2}\times \mathcal{S}_{3}}  f_\gamma(z_1) f_\gamma(z_2)f_\gamma(z_3) \big(h(z_1,z_2,z_3)+h(z_3,z_2,z_1)+h(z_2,z_1,z_3)\big) {\rm d} z_1 {\rm d} z_2 {\rm d} z_3,\notag\\
		\mathcal{L}_3= &\frac{\ii^3}{(2\pi)^3}  \oint_{\mathcal{S}_{1}\times  \mathcal{S}_{2}\times \mathcal{S}_{3}} f_\gamma(z_1) f_\gamma(z_2)f_\gamma(z_3) m'(z_1)m'(z_2)m'(z_3)\big(m(z_1)+m(z_2)+m(z_3)\big){\rm d} z_1 {\rm d} z_2 {\rm d} z_3.
	\end{align*}
	
	In the sequel, we estimate $\mathcal{L}_1$, $\mathcal{L}_2$ and $\mathcal{L}_3$. To this end, we denote by $m_k\equiv m(z_k)$ for simplicity and apply the identities
	\begin{align}
		z_k=-(m_k+m_k^{-1}), \qquad \sqrt{z_k^2-4}= \frac{m_k^2-1}{m_k}=-\frac{m_k}{m'_k}. \label{112002}
	\end{align}
	We further write 
	\begin{align}
		m_k=\rho_k \mathrm{e}^{\ii\theta_k}, \qquad 1>\rho_1>\rho_2>\rho_3>0.  \label{112001}
	\end{align}	
	Notice that when $z$ goes counterclockwise, $m(z)$  goes clockwise. With the above parameterization, we have 
	\begin{align}
		\mathcal{L}_1= & -\frac{\ii^3}{(2\pi)^3} \oint_{|m_1|=\rho_1}  \oint_{|m_2|=\rho_2} \oint_{|m_3|=\rho_3}\prod_{a=1}^3f_\gamma(-m_a-m_a^{-1})   {\rm d} m_1 {\rm d} m_2{\rm d} m_3\notag\\
		&=-\lim_{\rho_3\to 1}\lim_{\rho_2\to 1}\lim_{\rho_1\to 1}\frac{\ii^6}{(2\pi)^3} \prod_{k=1}^3\int_{0}^{2\pi}  \rho_k\mathrm{e}^{\ii\theta_k}f_\gamma(-\rho_k\mathrm{e}^{\ii \theta_k}-\rho_k^{-1}\mathrm{e}^{-\ii\theta_k}) {\rm d} \theta_k\notag\\
		&= -\frac{\ii^6}{(2\pi)^3} \Big(\int_{0}^{2\pi} \mathrm{e}^{\ii\theta}f_\gamma(-2\cos\theta) {\rm d} \theta\Big)^3=\frac{\ii^6}{(2\pi)^3} \Big(\int_{-\pi}^\pi  f_\gamma(2\cos\psi)\cos\psi {\rm d}\psi\Big)^3\notag\\
		&=-\frac{1}{8}(c_1^{f_\gamma})^3=-\frac{1}{8}(1-\gamma)^3(c_1^f)^3, \label{121202}
	\end{align}
	where in the first two steps we used Green's formula and the fact that $f_\gamma$ is analytic. Similarly, for $\mathcal{L}_3$, we have 
	\begin{align}
		\mathcal{L}_3= & -\frac{3\ii^6}{(2\pi)^3}\int_{-\pi}^{\pi} f_\gamma(2\cos\psi_1)\cos(2\psi_1){\rm d}\psi_1 \Big(\int_{-\pi}^{\pi}  f_\gamma(2\cos\psi_2)\cos(\psi_2) {\rm d }\psi_2\Big)^2= \frac{3}{8}c_2^{f_\gamma}(c_1^{f_\gamma})^2. \label{121203}
	\end{align}

	Next, we turn to the estimate of $\mathcal{L}_2$.  We further do the decomposition 
	\begin{align*}
		\mathcal{L}_2=\mathcal{L}_{2}^{123}+\mathcal{L}_{2}^{321}+\mathcal{L}_{2}^{213}, 
	\end{align*}
	where 
	\begin{align*}
		\mathcal{L}_{2}^{abc}= \frac{\ii^3}{\pi^3}  \oint_{\mathcal{S}_1\times \mathcal{S}_2\times \mathcal{S}_3 } f_\gamma(z_1)f_\gamma(z_2)f_\gamma(z_3)h(z_a,z_b,z_c) {\rm d} z_1 {\rm d} z_2 {\rm d} z_3, \quad \{a,b,c\}=\{1,2,3\}. 
	\end{align*}
	We emphasize that the integral with three $h(z_a,z_b,z_c)$'s are not symmetric due to the assumptions in $\rho_k$'s in (\ref{112001}). However, the estimates of $\mathcal{L}_{2}^{abc}$'s are similar. We only state the details for $\mathcal{L}_{2}^{123}$ in the sequel. Applying (\ref{112002}) and Green's formula, we have 
	\begin{align*}
		\mathcal{L}_{2}^{123}&= \frac{\ii^3}{\pi^3}  \oint_{\mathcal{S}_1\times \mathcal{S}_2\times \mathcal{S}_3 }  \frac{f_\gamma(z_1)f_\gamma(z_2)f_\gamma(z_3)}{(z_1^2-4)^{\frac{3}{2}}(z_2^2-4)^{\frac12}(z_3^2-4)^{\frac12}(z_1-z_2)(z_3-z_1)} {\rm d} z_1 {\rm d}z_2 {\rm d}z_3\notag\\
		&=- \lim_{\rho_3\to 1}\lim_{\rho_2\to 1}\lim_{\rho_1\to 1}\frac{\ii^6}{\pi^3}\int_{[0,2\pi)^3} \rho_1^2\rho_2\rho_3    \mathrm{e}^{\ii (2\theta_1+\theta_2+\theta_3)} \notag\\
		&\qquad\times \frac{f_\gamma(-\rho_1\mathrm{e}^{\ii\theta_1}-\rho_1^{-1}\mathrm{e}^{-\ii\theta_1})f_\gamma(-\rho_2\mathrm{e}^{\ii\theta_2}-\rho_2^{-1}\mathrm{e}^{-\ii\theta_2})f_\gamma(-\rho_3\mathrm{e}^{\ii\theta_3}-\rho_3^{-1}\mathrm{e}^{-\ii\theta_3}) {\rm d} \theta_1{\rm d}\theta_2 {\rm d}\theta_3}{(1-\rho_1^2\mathrm{e}^{2\ii\theta_1})^2 (1-\rho_1^{-1}\rho_2\mathrm{e}^{\ii(\theta_2-\theta_1)})(1-\rho_1\rho_2\mathrm{e}^{\ii(\theta_1+\theta_2)})(1-\rho_1^{-1}\rho_3\mathrm{e}^{\ii(\theta_3-\theta_1)})(1-\rho_1\rho_3\mathrm{e}^{\ii(\theta_1+\theta_3)})}\notag\\
		&=  -\lim_{\rho_3\to 1}\lim_{\rho_2\to 1}\lim_{\rho_1\to 1}\frac{\ii^6}{\pi^3}\sum_{\alpha, \tau, \gamma,\sigma, \psi} (\psi+1) \notag\\
		&\qquad \times \int_{0}^{2\pi} \rho_1^{\tau-\alpha+\sigma-\gamma+2\psi+2}\mathrm{e}^{\ii(\tau-\alpha+\sigma-\gamma+2\psi+2)\theta_1} f_\gamma(-\rho_1\mathrm{e}^{\ii\theta_1}-\rho_1^{-1}\mathrm{e}^{-\ii\theta_1}) {\rm d}\theta_1\notag\\
		&\qquad \times  \int_{0}^{2\pi} \rho_2^{\alpha+\tau+1}\mathrm{e}^{\ii (\alpha+\tau+1)\theta_2} f_\gamma(-\rho_1\mathrm{e}^{\ii\theta_2}-\rho_1^{-1}\mathrm{e}^{-\ii\theta_2}) {\rm d} \theta_2\notag\\
		&\qquad \times \int_{0}^{2\pi} \rho_3^{\gamma+\sigma+1} \mathrm{e}^{\ii (\gamma+\sigma+1)\theta_3} f_\gamma(-\rho_3\mathrm{e}^{\ii\theta_3}-\rho_3^{-1}\mathrm{e}^{-\ii\theta_3}) {\rm d} \theta_3. 
	\end{align*}
	Hence,  we have
	\begin{align*}
		\mathcal{L}_{2}^{123}
		&=-\frac{\ii^6}{\pi^3} \sum_{\alpha, \tau, \gamma,\sigma, \psi=0}^{\infty}(\psi+1)\int_{-\pi}^{\pi} \cos\big((\tau-\alpha+\sigma-\gamma+2\psi+2)\theta_1\big) f_\gamma(2\cos\theta_1) {\rm d}\theta_1\notag\\
		&\qquad \times  \int_{-\pi}^{\pi} \cos \big((\alpha+\tau+1)\theta_2\big) f_\gamma(2\cos\theta_2) {\rm d} \theta_2 \int_{-\pi}^{\pi}  \cos \big((\gamma+\sigma+1)\theta_3\big) f_\gamma(2\cos\theta_3) {\rm d} \theta_3\notag\\
		&= \sum_{\alpha, \tau, \gamma,\sigma, \psi=0}^{\infty}(\psi+1) c_{\tau-\alpha+\sigma-\gamma+2\psi+2}^{f_\gamma} c_{\alpha+\tau+1}^{f_\gamma}c_{\gamma+\sigma+1}^{f_\gamma},
	\end{align*}
	Similarly , we can derive
	\begin{align*}
		&\mathcal{L}_{2}^{321}
		=   \sum_{\alpha,\tau,\gamma,\sigma,\psi=0}^{\infty}(\psi+1) c_{\sigma-\gamma}^{f_\gamma} c_{\tau-\alpha}^{f_\gamma} c_{\alpha+\tau+\gamma+\sigma+2\psi+4}^{f_\gamma}\,,\notag\\
		&\mathcal{L}_{2}^{213}
		=-\sum_{\alpha,\tau,\gamma,\sigma,\psi}(\psi+1) c_{\tau-\alpha}^{f_\gamma}c_{\gamma+\sigma+1}^{f_\gamma} c_{\alpha+\tau-\gamma+\sigma+2\psi+3}^{f_\gamma}. 
	\end{align*}
	In summary, we have 
	\begin{align}
		\mathcal{L}_2=&\sum_{\alpha, \tau, \gamma,\sigma, \psi=0}^{\infty}(\psi+1)\Big( c_{\tau-\alpha+\sigma-\gamma+2\psi+2}^{f_\gamma} c_{\alpha+\tau+1}^{f_\gamma}c_{\gamma+\sigma+1}^{f_\gamma}+c_{\sigma-\gamma}^{f_\gamma} c_{\tau-\alpha}^{f_\gamma} c_{\alpha+\tau+\gamma+\sigma+2\psi+4}^{f_\gamma}\notag\\
		&\qquad\qquad\qquad\qquad-c_{\tau-\alpha}^{f_\gamma}c_{\gamma+\sigma+1}^{f_\gamma} c_{\alpha+\tau-\gamma+\sigma+2\psi+3}^{f_\gamma}\Big). \label{121204}
	\end{align}
	Inserting \eqref{121201}, (\ref{121202}) -- (\ref{121204}) into \eqref{111910}, we have 
	\begin{align}
		\mathbb{E} \mathring{\mathcal{Z}}_{f,\gamma}^3\mathbf{1}\big(\|H\|\leq 2+N^{-\frac12}\big)= (\text{Var}(\text{Tr} f_{\gamma}(H)))^{-\frac32}\big(-r_1^{f_\gamma}N^{-\frac12}+r_2^{f_\gamma} N^{-1}\big)+O(N^{-\frac32})\,,
		\label{121215}
	\end{align}
and by another use of Corollary \ref{cor:spectral norm} we obtain \eqref{www} as desired. Next, we prove (\ref{121101}) by contradiction. 
	Denote by $F_{N,\gamma}(x)$ and $\Phi(x)$ the distribution function of $\mathring{\mathcal{Z}}_{f,\gamma}$ and standard normal, respectively. Further denote by $\widehat{F}_{N,\gamma}(x)$ the distribution function of $\mathring{\mathcal{Z}}_{f,\gamma}\mathbf{1}\big(\|H\|\leq 2+N^{-\frac12}\big)$. According to Corollary \ref{cor:spectral norm},  we have
	\[
\sup_{x \in \bb R}|	F_{N,\gamma}(x)-\widehat{F}_{N,\gamma}(x)|=O(N^{-2})\,,
	\] 
	thus it suffices to provide a lower bound for the distance between $\widehat{F}_{N,\gamma}$ and $\Phi$. 
	First, we write
	\begin{align}
		& \mathbb{E} \mathring{\mathcal{Z}}_{f,\gamma}^3\mathbf{1}\big(\|H\|\leq 2+N^{-\frac12}\big)= \int x^3 {\rm d} \big(\widehat{F}_{N,\gamma}(x)-\Phi(x)\big) \notag\\
		&\qquad\qquad= \int_{-N^{\frac{\kappa}{4}}}^{N^{\frac{\kappa}{4}}} x^3 {\rm d} \big(\widehat{F}_{N,\gamma}(x)-\Phi(x)\big)+ \mathbb{E} \mathring{\mathcal{Z}}_{f,\gamma}^3\mathbf{1}\big(\|H\|\leq 2+N^{-\frac12}, |\mathring{\mathcal{Z}}_{f,\gamma}|> N^{\frac{\kappa}{4}}\big). \label{121210}
	\end{align}
	For the second term in the RHS, it is easy to see from \eqref{7.111} and Theorem \ref{refthm1} that
	 $$
	\mathring{\mathcal{Z}}_{f,\gamma}\mathbf{1}\big(\|H\|\leq 2+N^{-\frac12}\big) \prec 1\,.
	$$ 
Hence
	\begin{align*}
		|\mathbb{E} \mathring{\mathcal{Z}}_{f,\gamma}^3\mathbf{1}\big(\|H\|\leq 2+N^{-\frac12}, |\mathring{\mathcal{Z}}_{f,\gamma}|> N^{\frac{\kappa}{4}}\big)|  \prec \bb P (\|H\|\leq 2+N^{-\frac12}, |\mathring{\mathcal{Z}}_{f,\gamma}|> N^{\frac{\kappa}{4}}\big) \prec N^{-2}\,.
	\end{align*}
For the first term in the RHS of (\ref{121210}), using integration by parts and rigidity, we have
	\begin{align*}
		\int_{-N^{\frac{\kappa}{4}}}^{N^\frac{\kappa}{4}} x^3 {\rm d} \big(\widehat{F}_{N,\gamma}(x)-\Phi(x)\big)=3\int_{-N^{\frac{\kappa}{4}}}^{N^\frac{\kappa}{4}}  x^2\big(\widehat{F}_{N,\gamma}(x)-\Phi(x)\big){\rm d}x+O(N^{-2})\,.
	\end{align*}
 Then, apparently, if (\ref{121101}) does not hold, (\ref{121215}) would not be true. Hence, we conclude (\ref{121101}) by contradiction. 
\end{proof}

In the sequel, we prove Proposition \ref{proGFK}. 
To facilitate our analysis, we first state an estimate for the two point functions. With certain abuse of notation, we set 
\begin{align*}
	T_i:=(-z_i-2\mathbb{E}\ul{G}(z_i))^{-1}. 
\end{align*}
Further, for brevity, we will use the following shorthand notations in the sequel
\begin{align*}
G\equiv G(z_1), \quad F\equiv G(z_2), \quad K\equiv G(z_3). 
\end{align*}
The following result is a trivial adaption of \cite[(4.11),(4.23)]{HK2} to our settings, whose proof is omitted.
\begin{lem} \label{lem.2-point-corr} Under the assumption of Proposition \ref{proGFK}, we have for $z_a\in \mathcal{S}_a, a=1,2$, we have
	\begin{align*} 
		\mathbb{E}\la \ul{G}\ra\la\ul{F}\ra= \frac{2}{N^2} T_2\mathbb{E} \ul{FG^2}+O_\prec \Big(N^{-\frac52}\Big)\,,
	\end{align*}
and
\begin{align*}
		\mathbb{E} \la \ul{G^2}\ra \la F\ra=\frac{4}{N^2} T_2\mathbb{E}\ul{FG^3}+O_\prec \Big(N^{-\frac52}\Big).    
	\end{align*}
\end{lem}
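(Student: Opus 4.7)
My plan is to derive both identities via one and the same resolvent-plus-cumulant expansion carried out in the variable $z_2$, which is precisely what produces the $T_2$ prefactor. Starting from $z_2 \ul{F} = \frac{1}{N}\sum_{ij} H_{ij} F_{ji} - 1$, I multiply by $\la \ul{G}\ra$ (resp.\ $\la \ul{G^2}\ra$), take expectation, and use $\bb E \la \ul{G}\ra = \bb E \la \ul{G^2}\ra = 0$ to reduce matters to
\[
z_2\,\bb E \la \ul{G}\ra \la \ul{F}\ra \;=\; \frac{1}{N}\sum_{ij} \bb E\, H_{ij} F_{ji} \la \ul{G}\ra
\]
and its $\la \ul{G^2}\ra$ counterpart. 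I then apply Lemma~\ref{lem:cumulant_expansion} with $h = H_{ij}$ and $\cal F = F_{ji}\la\ul{G}\ra$ (respectively $F_{ji}\la\ul{G^2}\ra$), using the differentiation rule~\eqref{diff1}.

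At the second-cumulant level the derivative either strikes $F_{ji}$, producing $-(F_{ji}^2 + F_{ii}F_{jj})/(1+\delta_{ij})$, or strikes the centered Green-function factor, producing $-\tfrac{2}{N}F_{ji}(G^2)_{ji}/(1+\delta_{ij})$ in the first identity and $-\tfrac{4}{N}F_{ji}(G^3)_{ji}/(1+\delta_{ij})$ in the second (the extra factor of $2$ coming from $\partial\tr G^2/\partial H_{ij} = -4(G^3)_{ji}/(1+\delta_{ij})$). Summing the $F_{ii}F_{jj}$ contribution over $i,j$ produces $N^2\ul{F}^2$, and the decomposition $\ul{F}^2 = 2\bb E\ul{F}\cdot \ul{F} - (\bb E\ul{F})^2 + \la\ul{F}\ra^2$ lets me absorb $2\bb E\ul{F}\cdot \bb E\la\ul{G}\ra\la\ul{F}\ra$ into the left-hand side to assemble $(z_2 + 2\bb E\ul{F}) = -T_2^{-1}$. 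Dividing out yields the claimed leading terms $\frac{2}{N^2}T_2\,\bb E\ul{FG^2}$ and $\frac{4}{N^2}T_2\,\bb E\ul{FG^3}$, together with residuals of the form $\tfrac{T_2}{N}\bb E\la\ul{F^2}\ra\la\ul{G}\ra$ (resp.\ with $\la\ul{G^2}\ra$) and $T_2\,\bb E\la\ul{F}\ra^2\la\ul{G}\ra$ from the quadratic centering. All of these are $O_\prec(N^{-3})$ by Cauchy--Schwarz using $\la\ul{F^2}\ra,\la\ul{G}\ra,\la\ul{G^2}\ra \prec N^{-1}$ from Theorem~\ref{refthm1} and Lemma~\ref{prop4.4}, hence well within tolerance.

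The main obstacle is showing that cumulants of order $k\geq 3$ contribute $O_\prec(N^{-5/2})$. A brute-force bound on the third-cumulant term
\[
\frac{s_3\,T_2}{2N^{5/2}}\sum_{ij}\bb E \frac{\partial^2 (F_{ji}\la\,\cdot\,\ra)}{\partial H_{ij}^2}
\]
using only $|F_{ji}|\prec N^{-1/2}$ and $|\la\ul{G}\ra|\prec N^{-1}$ yields only $O_\prec(N^{-2})$; the missing $N^{-1/2}$ must be won from the centering. For the dominant summand $\sum_{ij}\bb E F_{ii}F_{jj}F_{ji}\la\ul{G}\ra$, I would peel off $m(z_2)^2$ from $F_{ii}F_{jj}$ (each entrywise replacement costing $O_\prec(N^{-1/2})$ by Theorem~\ref{refthm1}) and recognize $\sum_{ij}F_{ji} = \la\b 1, F\b 1\ra$ as a quadratic form whose centering has $O(1)$ variance; paired with $\mathrm{Var}\la\ul{G}\ra = O(N^{-2})$ via Cauchy--Schwarz this delivers $O_\prec(N^{-7/2})$ for that summand. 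The other summands in the third-cumulant term (those with $F_{ji}^3$, or with at least one derivative hitting $\la\ul{G}\ra$) carry additional off-diagonal $F$- or $G$-factors and are handled by the same centering mechanism; cumulants of order $k\geq 4$ are smaller still by the $N^{-(k-3)/2}$-gain from $s_{k+1}/N^{(k+1)/2}$, and the Barbour remainder in Lemma~\ref{lem:cumulant_expansion} is driven below any polynomial threshold by choosing $\ell$ large. The second identity follows line by line from the same argument with $\la\ul{G}\ra\mapsto\la\ul{G^2}\ra$.
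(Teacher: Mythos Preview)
The paper does not give its own proof of this lemma; it simply cites \cite[(4.11),(4.23)]{HK2} and omits the argument. Your resolvent--cumulant expansion in the $z_2$ variable is the standard route and produces the correct leading terms $\tfrac{2}{N^2}T_2\,\bb E\,\ul{FG^2}$ and $\tfrac{4}{N^2}T_2\,\bb E\,\ul{FG^3}$; the second-cumulant bookkeeping and the absorption of $2\bb E\,\ul{F}$ into $-T_2^{-1}$ are carried out correctly.

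Where you make life unnecessarily hard is in the third and fourth cumulant terms. The lemma is stated \emph{under the assumption of Proposition~\ref{proGFK}}, hence under Assumption~\ref{122910}(i): the off-diagonal entries match GOE up to the fourth moment, so $s_3=s_4=0$. Consequently the $k=2,3$ cumulant contributions reduce to single diagonal sums
\[
\frac{b_3}{2N^{5/2}}\sum_i \bb E\,\partial_{H_{ii}}^2\big(F_{ii}\la\ul{G}\ra\big)=O_\prec(N^{-5/2}),\qquad
\frac{b_4}{6N^{3}}\sum_i \bb E\,\partial_{H_{ii}}^3\big(F_{ii}\la\ul{G}\ra\big)=O_\prec(N^{-3}),
\]
which fall inside the stated error without any centering gymnastics. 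This is exactly how the paper handles the analogous terms $W_2$ and $W_3$ in the three-point computation immediately following the lemma. Your workaround via centering of $\langle\b 1,F\b 1\rangle$ is therefore not needed; it is also slightly misstated, since the isotropic local law only gives $\la\langle\b 1,F\b 1\rangle\ra\prec N^{1/2}$ (so variance $O_\prec(N)$, not $O(1)$), though the Cauchy--Schwarz bound with $\mathrm{Var}\,\ul{G}\prec N^{-2}$ would still land at $O_\prec(N^{-3})$ and suffice.
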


In the following discussion, we fix $z_a=E_a+\ii \eta_a\in \mathcal{S}_a, a=1,2,3$. We remark here that all the error terms in the following discussion  may be proportional to certain fixed power of $(|E_a^2-4|+\eta_a)^{-1}$ and $|z_a-z_b|^{-1}$ with $a\neq b$. But we omit this dependence for simplicity of the presentation since they are all order 1 quantities, thanks to our definition of the domain $\mathcal{S}_{a}$'s. Applying the identity $zG=HG-I$ and Lemma \ref{lem:cumulant_expansion}, we have 
\begin{align}
	z_1\mathbb{E}\la \ul{G}\ra \la \ul{F}\ra \la \ul{K}\ra =\mathbb{E} \ul{HG} \big\la \la \ul{F}\ra \la \ul{K}\ra\big\ra=\frac{1}{N}\sum_{ij} \mathbb{E} H_{ji} G_{ij} \big\la \la \ul{F}\ra \la \ul{K}\ra\big\ra= W_1+W_2+W_3+\frac{1}{N}\sum_{ij} R_{ij},  \label{100903}
\end{align}
where 
\begin{align*}
	W_k:= \frac{1}{N}\sum_{ij} \frac{1}{k!} \mathcal{C}_{k+1}(H_{ji}) \mathbb{E} \frac{\partial^k}{\partial H_{ji}^k}\Big( G_{ij}\big\la \la \ul{F}\ra \la \ul{K}\ra\big\ra\Big), 
\end{align*}
 Analogously to (\ref{diff}), we shall use
\begin{align}
	\frac{\partial}{\partial H_{k\ell}} G_{ij}= -(1+\delta_{k\ell})^{-1}\big(G_{ik}G_{\ell j}+G_{i\ell}G_{kj}\big)\,. \label{101301}
\end{align} 
A routine verification of the remainder term shows
$
N^{-1}\sum_{ij} R_{ij}=O_\prec(N^{-\frac92}).
$
In the squeal we deal with $W_1,W_2,W_3$.

\subsection{The term $W_1$} Recall the setting in Assumption \ref{122910} (i). 
Using (\ref{101301}) repeatedly, it is  straightforward to derive 
\begin{align}
	W_1= -\mathbb{E}\la(\ul{G})^2 \ra \la \ul{F}\ra \la \ul{K}\ra-\frac{1}{N}\mathbb{E} \la \ul{G^2} \ra \la \ul{F}\ra \la \ul{K}\ra - \frac{2}{N^2}\mathbb{E}\ul{GF^2}\la \ul{K}\ra -\frac{2}{N^2} \mathbb{E} \ul{GK^2}\la \ul{F}\ra.  \label{100901}
\end{align}
Notice that 
\begin{align}
	\mathbb{E}\la(\ul{G})^2 \ra \la \ul{F}\ra \la \ul{K}\ra= \mathbb{E}\la\ul{G}\ra^2  \la \ul{F}\ra \la \ul{K}\ra-\mathbb{E} \la\ul{G}\ra^2 \mathbb{E}\la\ul{F}\ra\la \ul{K}\ra+2\mathbb{E}\ul{G} \mathbb{E}\la \ul{G}\ra\la \ul{F}\ra \la \ul{K}\ra. \label{100902}
\end{align}
Plugging  (\ref{100901}) and (\ref{100902}) into (\ref{100903}), we have 
\begin{align}
	T_1^{-1} \mathbb{E}\la \ul{G}\ra \la \ul{F}\ra \la \ul{K}\ra&= \mathbb{E}\la\ul{G}\ra^2  \la \ul{F}\ra \la \ul{K}\ra-\mathbb{E} \la\ul{G}\ra^2 \mathbb{E}\la\ul{F}\ra\la \ul{K}\ra+\frac{1}{N}\mathbb{E} \la \ul{G^2} \ra \la \ul{F}\ra \la \ul{K}\ra\notag\\
	&\quad  +\frac{2}{N^2} \mathbb{E} \ul{GK^2}\la \ul{F}\ra+ \frac{2}{N^2}\mathbb{E}\ul{GF^2}\la \ul{K}\ra-W_2-W_3+O_\prec(N^{-\frac92})\notag\\
	&=: A_1+\cdots+A_5-W_2-W_3+O_\prec(N^{-\frac92}). \label{100910}
\end{align}
Then, we can proceed with a further estimate of the terms $A_i$'s, by similar calculations. For instance
\begin{align*}
	z_1A_1 &=\mathbb{E}\la\ul{HG}\ra \la\ul{G}\ra  \la \ul{F}\ra \la \ul{K}\ra=\frac{1}{N}\sum_{ij} \mathbb{E} H_{ji} G_{ij} \big\la \la\ul{G}\ra  \la \ul{F}\ra \la \ul{K}\ra\big\ra= W_{1,1}+W_{1,2}+W_{1,3}+O_\prec(N^{-\frac{11}{2}}),
\end{align*}
where 
\begin{align*}
	W_{1,k}= \frac{1}{k!N} \sum_{ij} \mathcal{C}_{k+1}(H_{ji}) \mathbb{E} \frac{\partial^k}{\partial H_{ji}^k} \Big( G_{ij}\big\la \la\ul{G}\ra\la \ul{F}\ra \la \ul{K}\ra\big\ra\Big) \,.
\end{align*}
Then similarly to (\ref{100910}), we can derive 
\begin{align*}
	T_1^{-1}A_1 &=
	\frac{2}{N^2}\mathbb{E} \ul{G^3} \la \ul{F}\ra\la \ul{K}\ra+\frac{2}{N^2} \mathbb{E} \ul{GF^2} \la\ul{G}\ra\la \ul{K}\ra +\frac{2}{N^2} \mathbb{E} \ul{GK^2}\la \ul{G}\ra\la \ul{F}\ra-W_{1,2}-W_{1,3}+O_\prec(N^{-5})\notag\\
	&\quad +\frac{1}{N}\mathbb{E}\la \ul{G^2}\ra \la \ul{G}\ra \la \ul{F}\ra \la \ul{K}\ra+\mathbb{E}\la \ul{G}\ra^3\la \ul{F}\ra\la \ul{K}\ra-\mathbb{E}\la\ul{G}\ra^2\mathbb{E}\la\ul{G}\ra\la \ul{F}\ra\la\ul{K}\ra\notag\\
	&=A_{1,1}+\cdots+A_{1,3}-W_{1,2}-W_{1,3}+O_\prec(N^{-5}).
\end{align*}
Applying Lemmas \ref{prop4.4}, we can easily get 
\begin{align*}
	A_{1,1} =\frac{2}{N^2}\mathbb{E} \ul{G^3}  \mathbb{E}\la \ul{F}\ra\la \ul{K}\ra+O_\prec(N^{-5})\,,\quad
	A_{1,2} =\frac{2}{N^2} \mathbb{E} \ul{GF^2} \mathbb{E}\la\ul{G}\ra\la \ul{K}\ra+O_\prec(N^{-5})\,, 
\end{align*}
and
\[
	A_{1,3}= \frac{2}{N^2} \mathbb{E} \ul{GK^2} \mathbb{E}\la \ul{G}\ra\la \ul{F}\ra+O_\prec(N^{-5})\,.
\]
Furthermore, we can derive  
\begin{align*}
	W_{1,2}
	=&\,\frac{b_3}{N^{5/2}} \sum_{i}  \mathbb{E}\bigg[  (G_{ii})^3\big\la \la\ul{G}\ra\la \ul{F}\ra \la \ul{K}\ra\big\ra+\frac{2}{N}(G_{ii})^2(G^2)_{ii}\la \ul{F}\ra\la \ul{K}\ra+ \frac{1}{N} (G_{ii})^2 (F^2)_{ii}\la \ul{G}\ra\la \ul{K}\ra\notag\\
	&+\frac{1}{N}(G_{ii})^2(K^2)_{ii}\la \ul{G}\ra\la\ul{F}\ra+\frac{1}{N^2} G_{ii} (G^2)_{ii} (F^2)_{ii}\la \ul{K}\ra+\frac{1}{N^2} G_{ii} (G^2)_{ii} (K^2)_{ii}\la \ul{F}\ra  \notag\\
	&+ \frac{1}{N^2} G_{ii} (F^2)_{ii} (K^2)_{ii}\la \ul{G}\ra + \frac{1}{N}G_{ii}F_{ii}(F^2)_{ii} \la \ul{G}\ra \la \ul{K}\ra+\frac{1}{N}G_{ii}K_{ii}(K^2)_{ii}\la \ul{G}\ra \la \ul{F}\ra\bigg]=O_\prec(N^{-9/2})
\end{align*}
and  
\begin{align*}
	W_{1,3}
	=&-\frac{b_4}{N^3} \sum_{i} \mathbb{E} \bigg[ (G_{ii})^4 \big\la \la \ul{G}\ra \la \ul{F}\ra \la \ul{K}\ra\big\ra+\frac{3}{N}(G_{ii})^3 (G^2)_{ii} \la \ul{F}\ra \la \ul{K}\ra+\frac{1}{N}G_{ii} (F_{ii})^2(F^2)_{ii} \la \ul{G}\ra \la \ul{K}\ra\notag\\
	&+\frac{1}{N}G_{ii}  (K_{ii})^2(K^2)_{ii}\la \ul{G}\ra \la \ul{F}\ra+\frac{1}{N}(G_{ii})^3(F^2)_{ii} \la\ul{G}\ra \la\ul{K}\ra+\frac{1}{N}(G_{ii})^3(K^2)_{ii} \la\ul{G}\ra\la \ul{F}\ra\notag\\
	&+\frac{1}{N}(G_{ii})^2F_{ii}(F^2)_{ii}\la\ul{G}\ra\la\ul{K}\ra+\frac{1}{N}(G_{ii})^2K_{ii}(K^2)_{ii} \la\ul{G}\ra\la\ul{F}\ra+\frac{2}{N^2}(G_{ii})^2(G^2)_{ii}(F^2)_{ii}\la\ul{K}\ra\notag\\
	&+\frac{2}{N^2}(G_{ii})^2(G^2)_{ii} (K^2)_{ii}\la\ul{F}\ra+\frac{1}{N^2}G_{ii}(G^2)_{ii}F_{ii}(F^2)_{ii} \la\ul{K}\ra+\frac{1}{N^2}G_{ii} (G^2)_{ii}  K_{ii}(K^2)_{ii}\la\ul{F}\ra\notag\\
	&+\frac{1}{N^2}G_{ii}(F^2)_{ii}K_{ii}(K^2)_{ii}\la\ul{G}\ra+\frac{1}{N^2}G_{ii} F_{ii}(F^2)_{ii}(K^2)_{ii}\la\ul{G}\ra+\frac{1}{N^2}(G_{ii})^2 (F^2)_{ii}(K^2)_{ii}\la\ul{G}\ra\notag\\
	& +\frac{1}{N^3}G_{ii} (G^2)_{ii}(F^2)_{ii}(K^2)_{ii}\bigg]=O_\prec(N^{-5})\,.
\end{align*}
To sum up, we have
\begin{align*}
	A_1=\frac{2}{N^2}T_1\mathbb{E} \ul{G^3}  \mathbb{E}\la \ul{F}\ra\la \ul{K}\ra+\frac{2}{N^2} T_1\mathbb{E} \ul{GF^2} \mathbb{E}\la\ul{G}\ra\la \ul{K}\ra+\frac{2}{N^2} T_1\mathbb{E} \ul{GK^2} \mathbb{E}\la \ul{G}\ra\la \ul{F} \ra +O_{\prec}(N^{-9/2})\,.
\end{align*}
Similarly, we have 
\begin{align*}
	&A_2=- \frac{2}{N^2} T_1\mathbb{E} \ul{G^3}  \mathbb{E}\la \ul{F}\ra\la \ul{K}\ra+O_\prec(N^{-5}) \quad \mbox{and} \quad A_3=O_\prec(N^{-5})\,.
\end{align*}
Finally, the resolvent identity shows
\begin{align*}
	&A_4=\frac{2}{N^2} \frac{1}{(z_1-z_3)^2}\mathbb{E} \la\ul{G} \ra\la \ul{F}\ra-\frac{2}{N^2}\frac{1}{(z_1-z_3)^2} \mathbb{E} \la \ul{K}\ra \la \ul{F}\ra-\frac{2}{N^2}\frac{1}{z_1-z_3} \mathbb{E} \la \ul{K^2}\ra \la \ul{F}\ra,\notag\\
	&A_5=\frac{2}{N^2} \frac{1}{(z_1-z_2)^2}\mathbb{E} \la\ul{G} \ra\la \ul{K}\ra-\frac{2}{N^2}\frac{1}{(z_1-z_2)^2} \mathbb{E} \la \ul{F}\ra \la \ul{K}\ra-\frac{2}{N^2}\frac{1}{z_1-z_2} \mathbb{E} \la \ul{F^2}\ra \la \ul{K}\ra.
\end{align*}
Combining the above estimates, we can get from  (\ref{100910}) that 
\begin{align}
	T_1^{-1} \mathbb{E}\la \ul{G}\ra \la \ul{F}\ra \la \ul{K}\ra
	=&\,\frac{2}{N^2}T_1 \mathbb{E} \ul{GF^2} \mathbb{E}\la\ul{G}\ra\la \ul{K}\ra+\frac{2}{N^2}T_1 \mathbb{E} \ul{GK^2} \mathbb{E}\la \ul{G}\ra\la \ul{F}\ra\notag\\
	&+\frac{2}{N^2} \frac{1}{(z_1-z_3)^2}\mathbb{E} \la\ul{G} \ra\la \ul{F}\ra-\frac{2}{N^2}\frac{1}{(z_1-z_3)^2} \mathbb{E} \la \ul{K}\ra \la \ul{F}\ra-\frac{2}{N^2}\frac{1}{z_1-z_3} \mathbb{E} \la \ul{K^2}\ra \la \ul{F}\ra\notag\\
	&+\frac{2}{N^2} \frac{1}{(z_1-z_2)^2}\mathbb{E} \la\ul{G} \ra\la \ul{K}\ra-\frac{2}{N^2}\frac{1}{(z_1-z_2)^2} \mathbb{E} \la \ul{F}\ra \la \ul{K}\ra-\frac{2}{N^2}\frac{1}{z_1-z_2} \mathbb{E} \la \ul{F^2}\ra \la \ul{K}\ra\notag\\
	&-W_2-W_3+O_\prec(N^{-\frac92}). \label{102301}
\end{align}

\subsection{The term $W_2$}
From the definition and  Assumption \ref{122910} (i), we have 
\begin{align*}
	W_2:= \frac{b_3}{2N^{5/2}}\sum_{i}   \mathbb{E} \frac{\partial^2}{\partial H_{ii}^2}\Big( G_{ii}\big\la \la \ul{F}\ra \la \ul{K}\ra\big\ra\Big). 
\end{align*}
By \eqref{101301}, we see that
\begin{align}
	\frac{\partial^2}{\partial H_{ii}^2}\Big( G_{ii}\big\la \la \ul{F}\ra \la \ul{K}\ra\big\ra\Big)= &\,2(G_{ii})^3 \big\la \la \ul{F}\ra \la \ul{K}\ra\big\ra +\frac{2}{N}(G_{ii})^2(F^2)_{ii}\la \ul{K}\ra+\frac{2}{N}(G_{ii})^2(K^2)_{ii}\la \ul{F}\ra \notag\\
	&+\frac{2}{N} G_{ii}(F_{ii})^3 \la \ul{K}\ra+\frac{2}{N} G_{ii} (K_{ii})^3\la \ul{F}\ra+\frac{2}{N^2}  G_{ii}(F^2)_{ii}(K^2)_{ii}\,. \label{101201}
\end{align}
We claim all but the last term on RHS of (\ref{101201}) are negligible. For instance, one can estimate the contribution from the second term in the RHS of (\ref{101201}) as 
\begin{align}
	&\frac{b_3}{N^{5/2}}  \mathbb{E} \Big\la \frac{1}{N}\sum_{i} (G_{ii})^2(F^2)_{ii}\Big\ra \la \ul{K}\ra\notag\\
	=&\,\frac{b_3}{N^{5/2}} \bigg( \mathbb{E}\Big\la \frac{1}{N}\sum_{i}(G_{ii}-m(z_1))^2 (F^2)_{ii}\Big\ra \la \ul{K}\ra+2m(z_1)  \mathbb{E} \Big\la \frac{1}{N}\sum_i (G_{ii}-m(z_1))((F^2)_{ii}-m'(z_2))\Big\ra \la \ul{K}\ra\notag\\
	&+2m(z_1)m'(z_2) \mathbb{E} \la\ul{G}\ra \la \ul{K}\ra+m_1^2\mathbb{E} \la \ul{F^2}\ra \la \ul{K}\ra\bigg)=O_\prec(N^{-\frac92}).  \label{121601}
\end{align}
Similarly, applying Lemma \ref{prop4.4} to all the other terms, we have 
\begin{align}
	W_2=\frac{b_3}{N^{9/2}}\sum_{i}     \mathbb{E} G_{ii} (F^2)_{ii}(K^2)_{ii}+O_\prec\big(N^{-\frac92}\big)=\frac{b_3}{N^{\frac72}}m(z_1)m'(z_2)m'(z_3)+O_\prec\big(N^{-\frac92}\big). 
	\label{101903}
\end{align}

\subsection{The term $W_3$} Recall the definition 
\begin{align*}
	W_3=\frac{b_4}{6N^3}\sum_{i}   \mathbb{E} \frac{\partial^3}{\partial H_{ii}^3}\Big( G_{ii}\big\la \la \ul{F}\ra \la \ul{K}\ra\big\ra\Big).
\end{align*}
By \eqref{101301}, we see that
\begin{align}
	&\frac{\partial^3}{\partial H_{ii}^3}\Big( G_{ii}\big\la \la \ul{F}\ra \la \ul{K}\ra\big\ra\Big)= -4(G_{ii})^4\big\la \la \ul{F}\ra \la \ul{K}\ra\big\ra-\frac{6}{N}(G_{ii})^2(F^2)_{ii}\la \ul{K}\ra-\frac{6}{N}(G_{ii})^2(K^2)_{ii}\la \ul{F}\ra \notag\\
	&\qquad-\frac{6}{N} (G_{ii})^2F_{ii}(F^2)_{ii}\la \ul{K}\ra-\frac{6}{N}(G_{ii})^2K_{ii}(K^2)_{ii}\la\ul{F}\ra-\frac{6}{N} G_{ii}(F_{ii})^2(F^2)_{ii}\la\ul{K}\ra-\frac{6}{N}G_{ii} (K_{ii})^2(K^2)_{ii}\la \ul{F}\ra \notag\\
	&\qquad-\frac{6}{N^2}(G_{ii})^2(F^2)_{ii}(K^2)_{ii}-\frac{6}{N^2} G_{ii} F_{ii}(F^2)_{ii}(K^2)_{ii}-\frac{6}{N^2} G_{ii} K_{ii}(K^2)_{ii}(F^2)_{ii}. \label{120401}
\end{align}
We claim that except for the last three  terms, all the other terms on the RHS of \eqref{120401} will have  negligible contribution to $W_3$. The estimate of these negligible terms is similar to (\ref{121601}), and thus we omit the details.  Finally, we can conclude
\begin{align}
	W_3=&-\frac{b_4}{N^5}\sum_i\Big((G_{ii})^2(F^2)_{ii}(K^2)_{ii}+G_{ii} F_{ii}(F^2)_{ii}(K^2)_{ii}+G_{ii} K_{ii}(K^2)_{ii}(F^2)_{ii} \Big)+O_\prec(N^{-5})\notag\\
	=&-\frac{b_4}{N^4}m(z_1)m'(z_2)m'(z_3)\big(m(z_1)+m(z_2)+m(z_3)\big)+O_\prec(N^{-5}).  \label{101904}
\end{align}

\subsection{Summing up}  Combining (\ref{102301}), (\ref{101903}) and (\ref{101904}), we arrive at 
\begin{align}
	\mathbb{E}\la \ul{G}\ra\la \ul{F}\ra \la \ul{K}\ra= &\,\frac{2}{N^2} T_1^2\mathbb{E} \ul{GF^2} \mathbb{E}\la\ul{G}\ra\la \ul{K}\ra+\frac{2}{N^2} T_1^2\mathbb{E} \ul{GK^2} \mathbb{E}\la \ul{G}\ra\la \ul{F}\ra\notag\\
	&+\frac{2}{N^2}T_1 \frac{1}{(z_1-z_3)^2}\mathbb{E} \la\ul{G} \ra\la \ul{F}\ra-\frac{2}{N^2}T_1\frac{1}{(z_1-z_3)^2} \mathbb{E} \la \ul{K}\ra \la \ul{F}\ra-\frac{2}{N^2}T_1\frac{1}{z_1-z_3} \mathbb{E} \la \ul{K^2}\ra \la \ul{F}\ra\notag\\
	&+\frac{2}{N^2} T_1\frac{1}{(z_1-z_2)^2}\mathbb{E} \la\ul{G} \ra\la \ul{K}\ra-\frac{2}{N^2} T_1\frac{1}{(z_1-z_2)^2} \mathbb{E} \la \ul{F}\ra \la \ul{K}\ra-\frac{2}{N^2} T_1\frac{1}{z_1-z_2} \mathbb{E} \la \ul{F^2}\ra \la \ul{K}\ra\notag\\
	&+\frac{b_3}{N^{7/2} } T_1m(z_1)m'(z_2)m'(z_3) +\frac{b_4}{N^4}T_1m(z_1)m'(z_2)m'(z_3)\big(m(z_1)+m(z_2)+m(z_3)\big)+O_\prec\big(N^{-\frac92}\big)\notag\\
	&=:D_2+D_3+D_4+O_\prec\big(N^{-\frac92}\big)\label{7.25}\,, 
\end{align}
where we denote the terms on first three lines on RHS of \eqref{7.25} by $D_2$. In order to simplify these terms, we need the following elementary identities
\begin{align*}
	m'(z_i)= \frac{z_i-\sqrt{z_i^2-4}}{2\sqrt{z_i^2-4}}, \quad  m''(z_i)=-\frac{2}{(z_i^2-4)^\frac{3}{2}}\,, 
\end{align*}
and
\begin{equation} \label{120501}
	T_i= -\frac{1}{\sqrt{z_i^2-4}}+O_\prec(N^{-1})=\frac{m'(z_i)}{m(z_i)}+O_\prec(N^{-1})\,. 
\end{equation}
Applying the above identities and the local law, we have 
\begin{align*}
	\mathbb{E}\ul{G^2F} &= \frac{\sqrt{z_1^2-4}\sqrt{z_2^2-4}-z_1z_2+4}{2\sqrt{z_1^2-4}(z_1-z_2)^2}+O_\prec(N^{-1}),
\end{align*}
and
\[
\mathbb{E} \ul{G^3F} =\frac{(z_1z_2-4)(z_1^2-4)-(z_1^2-4)^{\frac32}\sqrt{z_2^2-4}-2(z_1-z_2)^2}{2(z_1^2-4)^{\frac32}(z_1-z_2)^3}+O_\prec(N^{-1}).
\]
Plugging the above estimates into the the definition of $D_2$, we can get via a tedious but elementary calculation that 
\begin{align*}
	D_2= & \frac{8}{N^4}\big(h(z_1,z_2,z_3)+h(z_3,z_2,z_1)+h(z_2,z_1,z_3)\big)+O_\prec(N^{-1}).
\end{align*}
Further, by (\ref{120501}), it is easy to see
\begin{align*}
	D_3&= \frac{b_3}{N^{7/2} }m'(z_1)m'(z_2)m'(z_3)+O_\prec(N^{-1})\,,
\end{align*}
and
\[
D_4= \frac{b_4}{N^4}m'(z_1)m'(z_2)m'(z_3)\big(m(z_1)+m(z_2)+m(z_3)\big)+O_\prec(N^{-1})\,.
\]
Inserting the above three relations into \eqref{7.25} we conclude the proof of  Proposition \ref{proGFK}.

\vspace{2em}

\appendix 

\section{Proof of Proposition \ref{lem 2148}} \label{sec6}
By the resolvent identity, we have
\begin{equation} \label{7.1}
	\tr G=\sum_{k=0}^3(-1)^{k}\tr \widehat{G}(H_{\mathrm d}\widehat{G})^k-\tr \widehat{G}H_{\mathrm d}G(H_{\mathrm d}\widehat{G})^3\eqd \sum_{k=0}^3E_k\,.
\end{equation}
Let us check each term on RHS of \eqref{7.1}.   We shall repeatedly use the formula
\begin{equation} \label{7.2}
	AB-\bb E AB=A(B-\bb E B)+(A-\bb EA) \bb E B- \bb E (A-\bb EA)(B-\bb E B)\,.
\end{equation}

For brevity, we will focus on $z\in \b S_c^+$ (c.f. (\ref{sc+})). The case of $z\in \b S_c\setminus \b S_c^+$ is almost the same. 

\subsection{The estimate of $\langle E_1 \rangle $.}
By \eqref{7.2}, we have
\begin{multline} \label{7.3}
	\langle \tr \widehat{G}H_{\mathrm d}\widehat{G}\rangle=\Big\langle\sum_i (\widehat{G}^2)_{ii}H_{ii}\Big\rangle=\sum_i (\widehat{G}^2)_{ii}H_{ii}\\
	=\sum_i \langle(\widehat{G}^2)_{ii}\rangle H_{ii}+\sum_i (\bb E(\widehat{G}^2)_{ii}-\bb E \ul{\widehat{G}^2})H_{ii}+(\bb E \ul{\widehat{G}^2}-m' )\sum_i H_{ii}+m' \sum_i H_{ii}\,.
\end{multline}
Note that
$
\widehat{G}_{ii}=\ul{\widehat{G}}+\ul{\widehat{H}\widehat{G}}\widehat{G}_{ii}-(\widehat{H}\widehat{G})_{ii}\ul{\widehat{G}},
$
thus 
\begin{equation*}
	\begin{aligned}
		&\quad \bb E (\widehat{G}^2)_{ii}-\bb E \ul{\widehat{G}^2}=\partial_z (\bb E \widehat{G}_{ii}-\bb E \ul{\widehat{G}})=\partial_z\big(\bb E  \ul{\widehat{H}\widehat{G}}\widehat{G}_{ii}-\bb E(\widehat{H}\widehat{G})_{ii}\ul{\widehat{G}}\big)\\
		&=\partial_z\bigg(\frac{1}{N} {\sum_{k,j}}^*\bb E H_{jk}\widehat{G}_{kj}\widehat{G}_{ii}-\frac{1}{N}\sum_{j,k: j \ne i} \bb E H_{ij}\widehat{G}_{ji}\widehat{G}_{kk}\bigg)\,.
	\end{aligned}
\end{equation*}
Expanding the RHS of the above using Lemma \ref{lem:cumulant_expansion}, we can show that
\begin{equation} \label{7.4}
	\bb E (\widehat{G}^2)_{ii}-\bb E \ul{\widehat{G}^2} =O_{\prec}\Big(\frac{1}{N\eta^2}\Big)\,.
\end{equation}
In addition, Theorem \ref{refthm1} implies
\begin{equation} \label{7.5}
	\bb E \ul{\widehat{G}^2}-m'=O_{\prec}\Big(\frac{1}{N\eta^2}\Big)\,.
\end{equation}
Inserting \eqref{7.4} and \eqref{7.5} into \eqref{7.3}, we have
\begin{equation} \label{7.6}
	\langle E_1 \rangle =-\langle \tr \widehat{G}H_{\mathrm d}\widehat{G}\rangle=-\sum_i \langle(\widehat{G}^2)_{ii}\rangle H_{ii}-m' \tr H+O_{\prec}\Big(\frac{1}{N\eta^2}\Big)\,.
\end{equation}

\subsection{The estimate of $\langle E_2\rangle$}
We have
\begin{equation*} 
	E_2=	\tr \widehat{G}H_{\mathrm d}\widehat{G}H_{\mathrm d}\widehat{G}=\sum_{i,j} (\widehat{G}^2)_{ij}\widehat{G}_{ij}H_{ii}H_{jj}	
	=\sum_{i} (\widehat{G}^2)_{ii}\widehat{G}_{ii}H_{ii}^2+{\sum_{i,j}}^* (\widehat{G}^2)_{ij}\widehat{G}_{ij}H_{ii}H_{jj}\eqd E_{2,1}+E_{2,2}\,.
\end{equation*}
By Theorem \ref{refthm1} we see that 
\[
(\widehat{G}^2)_{ij} \prec \frac{1}{\sqrt{N\eta^3}}\,,\quad \widehat{G}_{ij}\prec \frac{1}{\sqrt{N\eta}}\,. 
\]
Together with the fact that $(\widehat{G}^2)_{ij} $, $\widehat{G}_{ij}$ are independent from $H_{\mathrm d}$. We can easily get that 
\begin{equation} \label{7.7}
	E_{2,2}\prec \frac{1}{N\eta^2}\,.
\end{equation}
By \eqref{7.2} we have
\begin{equation*}
	\begin{aligned}
		\langle E_{2,1} \rangle &=\sum_i \langle (\widehat{G}^2)_{ii}\widehat{G}_{ii} \rangle H^2_{ii}+\sum_{i} (H_{ii}^2-a_2N^{-1})\bb E(\widehat{G}^2)_{ii}\widehat{G}_{ii}\\
		&=a_2N^{-1}\sum_{i} \langle (\widehat{G}^2)_{ii}\widehat{G}_{ii} \rangle+ \sum_i \langle (\widehat{G}^2)_{ii}\widehat{G}_{ii} \rangle (H^2_{ii}-a_2N^{-1})+\sum_{i} (H_{ii}^2-a_2N^{-1})m'(z)m(z)\\
		&\quad + \sum_{i} (H_{ii}^2-a_2N^{-1})(\bb E(\widehat{G}^2)_{ii}\widehat{G}_{ii} -m'(z)m(z))\,.
	\end{aligned}
\end{equation*}
By Theorem \ref{refthm1} and Lemma \ref{prop4.4}, it is easy to check that
\[
\sum_i \langle (\widehat{G}^2)_{ii}\widehat{G}_{ii} \rangle (H^2_{ii}-a_2N^{-1}) \prec \frac{1}{N\eta^{3/2}}\quad \mbox{and} \quad \sum_{i} (H_{ii}^2-a_2N^{-1})(\bb E(\widehat{G}^2)_{ii}\widehat{G}_{ii} -m'(z)m(z)) \prec \frac{1}{N\eta^{3/2}}\,.
\]
In addition, Lemma \ref{prop4.4} shows
\[
a_2N^{-1}\sum_{i} \langle (\widehat{G}^2)_{ii}\widehat{G}_{ii} \rangle \prec \frac{1}{N\eta^2}\,.
\]
Thus we have
\[
\langle E_{2,1} \rangle=\sum_{i} (H_{ii}^2-a_2N^{-1})m'(z)m(z)+O_\prec \Big(\frac{1}{N\eta^2}\Big)\,,
\]
and together with \eqref{7.7} we get
\begin{equation} \label{7.8}
	\langle E_{2} \rangle=\sum_{i} (H_{ii}^2-a_2N^{-1})m'(z)m(z)+O_\prec \Big(\frac{1}{N\eta^2}\Big)\,.
\end{equation}

\subsection{The estimate of $\langle E_3 \rangle$} We have
\begin{multline*}
	-E_3=\sum_{i,j,k} (\widehat{G}^2)_{ij} \widehat{G}_{jk}\widehat{G}_{ki}H_{ii}H_{jj}H_{kk}={\sum_{i,j,k}}^* (\widehat{G}^2)_{ij} \widehat{G}_{jk}\widehat{G}_{ki}H_{ii}H_{jj}H_{kk}+2{\sum_{i,j}}^* (\widehat{G}^2)_{ij} \widehat{G}_{ji}\widehat{G}_{ii}H_{ii}^2H_{jj}\\
	+{\sum_{i,k}}^* (\widehat{G}^2)_{ii} \widehat{G}_{ik}\widehat{G}_{ki}H_{ii}^2H_{kk}+\sum_i (\widehat{G}^2)_{ii} \widehat{G}_{ii}^2H_{ii}^3\eqd E_{3,1}+\cdots+E_{3,4}\,.
\end{multline*}
By Theorem\ref{refthm1}, it is easy to check that $E_{3,1} \prec N^{-1}\eta^{-5/2}$. We also have
\[
\sum_{j: j \ne i}  (\widehat{G}^2)_{ij} \widehat{G}_{ji}\widehat{G}_{ii}H_{jj} \prec \frac{1}{N\eta^2}\,,
\]
and together with $H_{ii}^2\prec N ^{-1}$ we have $E_{3,2} \prec N^{-1}\eta^{-2}$. Similarly, $E_{3,3} \prec N^{-1}\eta^{-2}$. By \eqref{7.2}, we have
\[
\langle E_{3,4} \rangle =a_3N^{-3/2}\sum_i \langle (\widehat{G}^2)_{ii}\widehat{G}^2_{ii} \rangle +\sum_{i} (H_{ii}^3-a_3N^{-3/2})(\widehat{G}^2)_{ii}\widehat{G}^2_{ii} \prec \frac{1}{N\eta^2}\,.
\]
Hence we have
\begin{equation} \label{7.9}
	\langle E_3 \rangle \prec \frac{1}{N\eta^{5/2}}\,.
\end{equation}

\subsection{The estimate of $E_4$}
By Theorem \ref{refthm1}, we have
\[
E_4=\sum_{i,j} (\widehat{G}^2H_{\mathrm d}\widehat{G}H_{\mathrm d}\widehat{G})_{ij}(H_{\mathrm d}{G}H_{\mathrm d})_{ji} \prec N^{1/2}\eta^{-1/2} \max_{i,j:i \ne j} |(\widehat{G}^2H_{\mathrm d}\widehat{G}H_{\mathrm d}\widehat{G})_{ij}|+\max_i|(\widehat{G}^2H_{\mathrm d}\widehat{G}H_{\mathrm d}\widehat{G})_{ii}|\,.
\]
For $i \ne j$, we have
\begin{multline} \label{7.10}
	(\widehat{G}^2H_{\mathrm d}\widehat{G}H_{\mathrm d}\widehat{G})_{ij}=\sum_{k,l} (\widehat{G}^2)_{ik}H_{kk}\widehat{G}_{kl}H_{ll}\widehat{G}_{lj}={\sum_{k,l}}^*(\widehat{G}^2)_{ik}H_{kk}\widehat{G}_{kl}H_{ll}\widehat{G}_{lj}+\sum_{k} (\widehat{G}^2)_{ik}H_{kk}\widehat{G}_{kk}H_{kk}\widehat{G}_{kj}\\
	=\sum_{k:k \ne l}\sum_{l:\l \ne j}(\widehat{G}^2)_{ik}H_{kk}\widehat{G}_{kl}H_{ll}\widehat{G}_{lj}+\sum_{k:k \ne j}  (\widehat{G}^2)_{ik}H_{kk}\widehat{G}_{kj}H_{jj}\widehat{G}_{jj}+\sum_{k} (\widehat{G}^2)_{ik}(H_{kk}^2-a_2N^{-1}) \widehat{G}_{kk}\widehat{G}_{kj}\\
	+a_2N^{-1}\sum_{k} (\widehat{G}^2)_{ik} (\widehat{G}_{kk}-m(z))\widehat{G}_{kj}+a_2N^{-1}m(z)(\widehat{G}^3)_{ij}\,,
\end{multline}
and one can check that each term on RHS of \eqref{7.10} is bounded by $O_{\prec}(1/(N^{3/2}\eta^{5/2}))$. Thus
\[
\max_{i\ne j }\big|(\widehat{G}^2H_{\mathrm d}\widehat{G}H_{\mathrm d}\widehat{G})_{ij}\big| =O_{\prec}\Big(\frac{1}{N^{3/2}\eta^{5/2}}\Big)\,.
\]
Similarly, we can also show that
\[
\max_{i }\big|(\widehat{G}^2H_{\mathrm d}\widehat{G}H_{\mathrm d}\widehat{G})_{ii}\big| =O_{\prec}\Big(\frac{1}{N\eta^{2}}\Big)\,.
\]
Thus we have
\begin{equation} \label{7.11}
	E_4 =O_{\prec}\Big(\frac{1}{N\eta^3}\Big)\,.
\end{equation}

\subsection{Conclusion}

Combining \eqref{7.1}, \eqref{7.6}, \eqref{7.8}, \eqref{7.9} and \eqref{7.11}, we have
\[
\langle \tr G \rangle= \langle \tr \widehat{G} \rangle -\sum_i \langle(\widehat{G}^2)_{ii}\rangle H_{ii}-m' \tr H+\sum_{i} (H_{ii}^2-a_2N^{-1})m'(z)m(z)+O_{\prec}\Big(\frac{1}{N\eta^3}\Big)
\]
as desired. This finishes the proof.

\section{Proof of Lemma \ref{lem:exp tr f(H)}} \label{appA}
In this section, we estimate $\mathbb{E} \text{Tr} f(H)$. A key technical result is the following expansion of  $\mathbb{E} \ul{G}$.  

\begin{lem} \label{lem.EG}
Suppose the assumptions in Definition \ref{def:Wigner} hold, and recall the definition of $\b S_c^+$ in \eqref{sc+}. For $z\in \mathbf{S}_c^+$, we have 
\begin{align*}
\mathbb{E} \ul{G}=m-\frac{m'}{m}\Big( -\frac{1}{N} m'-\frac{a_2-2}{N} m^2-\frac{s_4}{N} m^4+\frac{a_3}{N^{3/2}}m^3\Big)+O_\prec\Big(\frac{1}{N^2\eta^{2}}\Big)\,.
\end{align*}
\end{lem}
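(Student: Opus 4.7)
The plan is to derive Lemma \ref{lem.EG} via a cumulant-expansion on the trace identity
\[
z\,\bb E\ul{G}+1=\bb E\ul{HG}=\frac{1}{N}\sum_{i,j}\bb E\big[H_{ij}G_{ji}\big].
\]
I would split the right hand side into diagonal ($i=j$) and off-diagonal ($i\ne j$) pieces, which carry distinct cumulants ($\mathcal{C}_k(H_{ij})=s_k N^{-k/2}$ versus $\mathcal{C}_k(H_{ii})=a_k N^{-k/2}$ in the notation of \eqref{022701}), and apply Lemma \ref{lem:cumulant_expansion} to each piece up to and including the fourth cumulant. The Barbour remainder beyond order four is standard to bound by $O_\prec(N^{-2}\eta^{-2})$ using the crude entry bounds from Theorem \ref{refthm1}.

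At second cumulant order, the differentiation rule \eqref{diff1} gives $\partial_{H_{ij}}G_{ji}=-G_{ji}^2-G_{ii}G_{jj}$ for $i\ne j$ and $\partial_{H_{ii}}G_{ii}=-G_{ii}^2$. Combining the two sums and replacing the diagonal weight $a_2/N$ by the off-diagonal weight $1/N$ up to a correction yields
\[
-(\bb E\ul{G})^2-\bb E\la\ul{G}\ra^2-\tfrac{1}{N}\bb E\ul{G^2}+\tfrac{2-a_2}{N^2}\sum_i\bb E G_{ii}^2.
\]
Invoking Theorem \ref{refthm1} and Lemma \ref{prop4.4}, this collapses to $-(\bb E\ul{G})^2-m'/N-(a_2-2)m^2/N+O_\prec(N^{-2}\eta^{-2})$. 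At third cumulant order the diagonal part $\frac{a_3}{2N^{5/2}}\sum_i\bb E\partial^2_{H_{ii}}G_{ii}=\frac{a_3}{N^{5/2}}\sum_i\bb E G_{ii}^3$ gives $a_3 m^3/N^{3/2}$, while the off-diagonal part, proportional to $\sum_{i\ne j}\bb E[G_{ij}^3+3G_{ii}G_{jj}G_{ij}]$, is absorbed into the error. At fourth cumulant order, the direct computation
\[
\partial^3_{H_{ij}}G_{ji}=-6G_{ij}^4-36G_{ii}G_{jj}G_{ij}^2-6G_{ii}^2G_{jj}^2
\]
produces the leading contribution $-s_4 m^4/N$ from the $-6G_{ii}^2G_{jj}^2$ piece; the diagonal fourth-cumulant contribution is already of order $N^{-2}$ and enters only the error.

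Collecting all contributions yields the self-consistent equation
\[
(\bb E\ul{G})^2+z\,\bb E\ul{G}+1=-\frac{m'}{N}-\frac{(a_2-2)m^2}{N}-\frac{s_4 m^4}{N}+\frac{a_3 m^3}{N^{3/2}}+O_\prec\Big(\frac{1}{N^2\eta^2}\Big).
\]
Since $m$ solves $m^2+zm+1=0$ and $2m+z=-m/m'$, writing $\bb E\ul{G}=m+\delta$ turns this into a perturbed quadratic in $\delta$ with linearization coefficient $(2m+z)$, yielding $\delta=-(m'/m)\cdot(\text{RHS})+O(\delta^2)$. The a priori bound $\delta=O_\prec(N^{-1}\eta^{-1})$ from Theorem \ref{refthm1} forces $\delta^2=O_\prec(N^{-2}\eta^{-2})$, compatible with the claimed error, and the identity simplifies to exactly the formula of Lemma \ref{lem.EG}.

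The main technical nuisance is the third-cumulant off-diagonal sum $\sum_{i\ne j}\bb E[G_{ii}G_{jj}G_{ij}]$ multiplied by the prefactor $s_3 N^{-5/2}$: the naive bound $|G_{ij}|\prec(N\eta)^{-1/2}$ only gives $O_\prec(N^{-3/2}\eta^{-1/2})$, which is too crude. One must exploit that $\bb E G_{ij}$ for $i\ne j$ is a full factor $(N\eta)^{-1/2}$ smaller than the typical fluctuation size; this can be seen by a short secondary cumulant expansion of $-z\,\bb E G_{ij}=\sum_k\bb E H_{ik}G_{kj}$, giving $\bb E G_{ij}=O_\prec(N^{-1}\eta^{-1})$ for $i\ne j$, whereupon the sum is indeed $O_\prec(N^{-2}\eta^{-2})$. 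All remaining steps are routine bookkeeping in the same spirit as the expansions performed in Sections \ref{sec4.1}--\ref{sec4.6}.
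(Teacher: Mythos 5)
Your overall strategy matches the paper's proof in Appendix~A: expand $1+z\,\bb E\ul G = \bb E\ul{HG}$ by cumulants through fourth order, collect the leading diagonal and off-diagonal contributions, and invert the perturbed quadratic self-consistent equation via $2m+z=-m/m'$. Your treatment of the second- and fourth-cumulant terms and of the final linearization step is correct.

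The gap is in the third-cumulant off-diagonal term $\frac{s_3}{N^{5/2}}\sum_{i\ne j}\bb E[G_{ii}G_{jj}G_{ij}]$. You propose to control it by showing $\bb E G_{ij}=O_\prec(N^{-1}\eta^{-1})$ for $i\ne j$, but this does not suffice. First, the three factors are correlated: after centering $G_{ii}=m+(G_{ii}-m)$ and $G_{jj}=m+(G_{jj}-m)$ you must still bound $\sum_{i\ne j}\bb E(G_{jj}-m)G_{ij}$ and $\sum_{i\ne j}\bb E(G_{ii}-m)(G_{jj}-m)G_{ij}$, which an entrywise bound on $\bb E G_{ij}$ does not address. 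Second, even the decoupled piece $m^2\sum_{i\ne j}\bb E G_{ij}$, bounded entrywise by $N^{2}\cdot N^{-1}\eta^{-1}=N\eta^{-1}$, contributes $\prec N^{-3/2}\eta^{-1}$ after the prefactor, which exceeds the allowed $O_\prec(N^{-2}\eta^{-2})$ error throughout $\b S_c^+$. The paper instead decomposes the sum as above and controls each piece via the \emph{isotropic} local law applied to index sums: with $\b v=N^{-1/2}(1,\dots,1)$ one has $\sum_{i\ne j}\bb E G_{ij}=N\bb E G_{\b v\b v}-\bb E\tr G=O_\prec(N^{1/2}\eta^{-1/2}+\eta^{-1})$ and $\sum_{i\ne j}(G_{jj}-m)G_{ij}=\sum_j(G_{jj}-m)(\sqrt N\,G_{\b v j}-G_{jj})\prec N^{1/2}\eta^{-1}$, and similarly for the remaining piece; the cancellation in the index sums is precisely what an entrywise bound on $\bb E G_{ij}$ cannot see. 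To close the argument you would need to adopt this isotropic decomposition, or alternatively run a secondary cumulant expansion on the full product $\bb E[G_{ii}G_{jj}G_{ij}]$ rather than on $\bb E G_{ij}$ alone. (Also a minor arithmetic slip: your stated naive bound $O_\prec(N^{-3/2}\eta^{-1/2})$ for this term should read $O_\prec(N^{-1}\eta^{-1/2})$.)
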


\begin{proof}
By resolvent identity and Lemma \ref{lem:cumulant_expansion}, we have
\begin{align}
1+z\mathbb{E} \ul{G}= \mathbb{E} \ul{HG}=\frac{1}{N}\sum_{i,j} \mathbb{E} H_{ji} G_{ij}=\wt{W}_1+\wt{W}_2+\wt{W}_3++O_\prec\Big(\frac{1}{N^2\eta^{\frac12}}\Big),  \label{122320}
\end{align}
where
\begin{align*}
\wt{W}_k\deq\frac{1}{N}\sum_{i,j} \frac{1}{k!} \mathcal{C}_{k+1}(H_{ji}) \mathbb{E} \frac{\partial^k}{\partial H_{ji}^k} G_{ij}\,,
\end{align*}
 we used a routine estimate to bound the remainder term by $O_{\prec}(N^{-2}\eta^{-1/2})$. Applying (\ref{101301}) repeatedly, we have 
\begin{align*}
\wt{W}_1= -\mathbb{E}(\ul{G})^2-\frac{1}{N}\mathbb{E}\ul{G^2}-\frac{a_2-2}{N^2}\sum_{i}\mathbb{E}(G_{ii})^2,\quad \wt{W}_2= \frac{1}{2N}\sum_{ij}   \frac{\mathcal{C}_{3}(H_{ji})}{(1+\delta_{ij})^2} \mathbb{E}\big(6G_{ii}G_{jj}G_{ij}+2(G_{ij})^3\big),\notag
\end{align*}
and
\begin{equation*}
	\wt{W}_3= -\frac{1}{6N} \sum_{ij} \frac{\mathcal{C}_4(H_{ji})}{(1+\delta_{ij})^3} \big(36 G_{ii}G_{jj}(G_{ij})^2+6(G_{ii})^2(G_{jj})^2+6(G_{ij})^4\big)\,.
\end{equation*}
It is easy to see from Theorem \ref{refthm1} that

\[
\wt{W}_1= -(\mathbb{E}\ul{G})^2-\frac{1}{N}m'-\frac{a_2-2}{N}m^2+O_\prec\Big(\frac{1}{N^2\eta}\Big),\quad  \mbox{and} \quad \wt{W}_3= -\frac{s_4}{N}m^4+O_\prec\Big(\frac{1}{N^2\eta}\Big)\,.
\]
In addition, we have
\[
\wt{W_2}=\frac{a_3}{N^{5/2}}\sum_{i}\bb E G_{ii}^3+\frac{s_3}{N^{5/2}}{\sum_{i,j}}^* \bb E (3G_{ii}G_{jj}G_{ij}+(G_{ij})^3\big)=\frac{a_3}{N^{3/2}}m^3+\frac{3s_3}{N^{5/2}}{\sum_{i,j}}^* \bb E G_{ii}G_{jj}G_{ij}+O_{\prec}\Big(\frac{1}{N^2\eta^{3/2}}\Big)\,.
\]
Note that by the isotropic law Theorem \ref{refthm1},
\begin{align*}
{\sum_{i,j}}^*
\bb E G_{ii}G_{jj}G_{ij} &= {\sum_{i,j}}^* \bb E (G_{ii}-m)(G_{jj}-m)G_{ij}+m{\sum_{i,j}}^* \bb E (G_{jj}-m)G_{ij}+m{\sum_{i,j}}^* \bb E (G_{ii}-m)G_{ij}+m^2{\sum_{i,j}}^*\mathbb{E} G_{ij}
\notag\\
&=O_{\prec}(N^{1/2}\eta^{-1}) \,,
\end{align*} 
and thus
\[
\wt{W}_2=\frac{a_3}{N^{3/2}}m^3+O_{\prec}\Big(\frac{1}{N^2\eta^{3/2}}\Big)
\]
Plugging the computations of $\wt{W}_1$, $\wt{W}_2$ and $\wt{W}_3$ into (\ref{122320}) , we have 
\begin{align*}
1+z\mathbb{E} \ul{G}+(\mathbb{E}\ul{G})^2=-\frac{1}{N} m'-\frac{a_2-2}{N} m^2-\frac{s_4}{N} m^4+\frac{a_3}{N^{3/2}}m^3+O_\prec\Big(\frac{1}{N^2\eta^{3/2}}\Big)\,.
\end{align*}
Solving the equation, we can get 
\begin{align*}
\mathbb{E}\ul{G}-m= &\frac{1}{z+2m} \Big( -\frac{1}{N} m'-\frac{a_2-2}{N} m^2-\frac{s_4}{N} m^4+\frac{a_3}{N^{3/2}}m^3\Big)+O_\prec\Big(\frac{1}{N^2\eta^{2}}\Big) \notag\\
=& -\frac{m'}{m}\Big( -\frac{1}{N} m'-\frac{a_2-2}{N} m^2-\frac{s_4}{N} m^4+\frac{a_3}{N^{3/2}}m^3\Big)+O_\prec\Big(\frac{1}{N^2\eta^{2}}\Big)
\end{align*}
as desired.
\end{proof}
With the aid of Lemma \ref{lem.EG}, Lemma \ref{lem:exp tr f(H)} follows from a standard use of Helffer-Sj\"{o}strand formula Lemma \ref{HS}.

\vspace{2em}

\section{Proof of Lemma \ref{lem1558}} \label{app B}

In the sequel, for brevity, we set 
\begin{align*}
\phi_d(t):=\varphi_2(t)\varphi_3(t). 
\end{align*}

\textit{Case 1.} We first consider the case 
\begin{align}
(1-\gamma)c_1^f\neq 0.  \label{012101}
\end{align}
Further, we set 
\begin{align}
\wh{f}_i:= -\frac{1}{\pi}\int_{\mathbf{D}} \frac{\partial}{\partial \bar{z}} \tilde{f}(z)\la (\wh{G}^2)_{ii}\ra {\rm d}^2 z+\frac12(1-\gamma)c_1^f, \label{f_i}
\end{align}
and thus 
\begin{align*}
\sigma_{f,\gamma}(\wh{Z}_{f,\gamma,2}+\wh{Z}_{f,\gamma,3})= \sum_i \wh{f}_{i}H_{ii}+\frac12c_2^f\sum_i\Big(H_{ii}^2-\frac{a_2}{N}\Big). 
\end{align*}
By Lemma \ref{lem:cumulant_expansion}, we have
\begin{align}
\mathbb{E}_{d}\phi_d'(t)= \ii\sum_i \mathbb{E}_d H_{ii} \Big(\wh{f}_i +\frac{c_2^f}{2}H_{ii}\Big) \phi_d(t)- \frac{\ii}{2}c_2^fa_2 \mathbb{E}_d\phi_d(t) = \sum_{k=1}^{\ell} L_{d,k}+O_{\prec}\Big(\frac{t+1}{N}\Big)
\label{122901}
\end{align}
for some fixed $\ell \in \bb N_+$, where 
\begin{align*}
L_{d,k}=\ii \sum_i \frac{\mathcal{C}_{k+1}(H_{ii})}{k!} \mathbb{E}_d \frac{\partial^k}{\partial H_{ii}^k} \Big(\big(\wh{f}_i +\frac{c_2^f}{2}H_{ii}\big) \phi_d(t)\Big)-\delta_{k1}\frac{\ii}{2} c_2^fa_2\mathbb{E}_d\phi_d(t).
\end{align*}
It is straightforward to compute
\[
L_{d,1}= -t \frac{a_2}{N} \sum_i \mathbb{E}_d(\wh{f}_i+\frac{c_2^f}{2} H_{ii})(\wh{f}_i+c_2^fH_{ii}) \phi_d(t)\,,
\]
and
\begin{align*}
 & L_{d,2}= \ii\sum_i \frac{\mathcal{C}_3(H_{ii})}{2} \mathbb{E}_d \Big(\ii t\big(2c_2^f \wh{f}_i+\frac32 (c_2^f)^2H_{ii}\big) \notag\\
&\qquad\qquad+(\ii t)^2\big((\wh{f}_i)^3+\frac52 c_2^f (\wh{f}_i)^2 H_{ii}+2(c_2^f)^2 \wh{f}_iH_{ii}^2+\frac12(c_2^f)^3 H_{ii}^3\big)\Big)\phi_d(t)\,,
\end{align*}
as well as
\begin{align*}
	&L_{d,3}=\ii\sum_i \frac{\mathcal{C}_4(H_{ii})}{6} \mathbb{E}_d \bigg(\ii t\frac{3}{2}(c_2^f)^2+(\ii t)^2\Big(\frac{9}{2}c_2^f(\hat{f}_i)^2+\frac{15}{2}(c_2^f)^2\hat{f}_iH_{ii}+3(c_2^f)^3 H_{ii}^2\Big)\notag\\
	&\qquad\qquad+(\ii t)^3\Big((\hat{f}_i)^3+\frac{7}{2}c_2^f(\hat{f}_i)^3H_{ii}+\frac{9}{2}(c_2^f)^2(\hat{f}_i)^2H_{ii}^2+\frac{5}{2}\hat{f}_i(c_2^f)^3H_{ii}^3+\frac{1}{2}(c_2^f)^4H_{ii}^4\Big)\bigg)\phi_d(t)\,.
\end{align*}
According to the definition in (\ref{f_i}) and Theorem \ref{refthm1},  it is easy to check
\begin{align*}
&L_{d,1}=-a_2\frac{t}{4}\big((1-\gamma)c_1^f\big)^2\mathbb{E}_d\phi_d(t)+O_\prec\Big(\frac{t}{N}\Big). 
\end{align*}
Similarly, we have 
\begin{align*}
L_{d,2}=\ii\bigg[\Big(-\frac{t^2}{8}\big((1-\gamma)c_1^f\big)^3 +\ii t (1-\gamma)c_1^f c_2^f\Big)N\mathcal{C}_3(H_{11})+O_\prec\Big(\frac{t^2}{N}\Big)\bigg]\mathbb{E}_d\phi_d(t)+O_\prec\Big(\frac{t+1}{N}\Big)
\end{align*}
and 
\begin{align*}
L_{d,3}= O_\prec\Big(\frac{t^3}{N}\Big)\mathbb{E}_d \phi_d(t)+O_\prec\Big(\frac{t}{N}\Big)
\end{align*}
for $t\in [0, N^{(1-c)/2}]$.  One can similarly show that for any fixed $k\geq 4$, 
\begin{align*}
L_{d,k}= O_\prec\Big(\frac{t^k}{N^{\frac{k-1}{2}}}\Big)\mathbb{E}_d \phi_d(t)+O_\prec\Big(\frac{t^k}{N^{\frac{k+1}{2}}}\Big)=O_{\prec}\Big(\frac{t^3}{N}\Big)\mathbb{E}_d \phi_d(t)+O_\prec\Big(\frac{t}{N}\Big)
\end{align*}
for $t\in [0, N^{(1-c)/2}]$. Plugging the above estimates into (\ref{122901}), we get
\begin{align}
\mathbb{E}_d \phi_d'(t)= &\Big(-\frac14 a_2  \big((1-\gamma)c_1^f\big)^2 t +\wh{b}(t)\Big)\mathbb{E}_d\phi_d(t)+\wh{\mathcal{E}}(t), \label{123001}
\end{align}
where 
\begin{align*}
\wh{b}(t):=\ii \Big(-\frac{t^2}{8}\big((1-\gamma)c_1^f\big)^3 +\ii t (1-\gamma)c_1^f c_2^f\Big)N\mathcal{C}_3(H_{11})+O_\prec(\frac{t^3}{N})=O\Big(\mathcal{X} \frac{t^2+1}{\sqrt{N}}\Big)+O_\prec\Big(\frac{t^3}{N}\Big)\,,
\end{align*}
and
$
\wh{\mathcal{E}}(t)=O_{\prec}\big(\frac{t+1}{N}\big).
$
Further we denote by $\wt{B}(t):= \int_0^t \wh{b}(t) {\rm d} t$, and it is easy see
\begin{align*}
\wt{B}(t)-\wt{B}(s)=O(\mathcal{X}(t-s)(t^2+1) N^{-\frac12})+O_\prec((t-s)t^3N^{-1}), \quad 0\leq s\leq t\leq N^{(1-c)/2}.
\end{align*}
Solving the equation (\ref{123001}), we get 
\begin{align}
\mathbb{E}_d\phi_d(t)= &\exp\Big(-\frac18 a_2  \big((1-\gamma)c_1^f\big)^2 t^2+\wh{B}(t)\Big)\notag\\
 &+\int_{0}^t \exp\Big(-\frac18 a_2  \big((1-\gamma)c_1^f\big)^2 (t^2-s^2)+\wh{B}(t)-\wh{B}(s)\Big)\wh{\mathcal{E}}(s) {\rm d} s \label{012102}
\end{align}
for $0\leq t\leq N^{(1-c)/2}$. By the assumption (\ref{012101}), we see that 
\begin{align*}
\exp\Big(-\frac18 a_2  \big((1-\gamma)c_1^f\big)^2 t^2+\wh{B}(t)\Big)=\exp\Big(-\frac18 a_2  \big((1-\gamma)c_1^f\big)^2 t^2\Big)+O_\prec({\cal X}N^{-\frac12})+O_\prec(N^{-1})
\end{align*}
and
\[
\exp\Big(-\frac18 a_2  \big((1-\gamma)c_1^f\big)^2 (t^2-s^2)+\wh{B}(t)-\wh{B}(s)\Big)\leq C\exp\Big(-\frac{1}{16} a_2  \big((1-\gamma)c_1^f\big)^2 (t^2-s^2)\Big)
\]
for $0\leq s\leq t\leq N^{(1-c)/2}$. Plugging the above estimates to (\ref{012102}), we get 
\begin{align*}
\mathbb{E}_d\phi_d(t)=&\exp\Big(-\frac18 a_2  \big((1-\gamma)c_1^f\big)^2 t^2\Big)+O_\prec({\cal X}N^{-\frac12})+O_\prec(N^{-1}) \notag\\
&+ O_\prec(N^{-1}) \int_0^t \exp\Big(-\frac{1}{16} a_2  \big((1-\gamma)c_1^f\big)^2 (t^2-s^2)\Big)
(s+1) {\rm d} s.
\end{align*}
Similarly to (\ref{012106}) and (\ref{012105}), by discussing the case $t\in [0, \log N]$ and $t\in (\log N, N^{(1-c)/2}]$ separately, one can easily conclude
\begin{align*}
\mathbb{E}_d\phi_d(t)=\exp\Big(-\frac18 a_2  \big((1-\gamma)c_1^f\big)^2 t^2\Big)+O_\prec({\cal X}N^{-\frac12})+O_\prec(N^{-1}). 
\end{align*}
Further, under the assumption \eqref{012101}, we have 
\begin{align*}
\exp\Big(-\frac18 a_2  \big((1-\gamma)c_1^f\big)^2 t^2\Big)\varphi_4(t)=\exp\Big(-\frac18 a_2  \big((1-\gamma)c_1^f\big)^2 t^2\Big)+O_\prec(N^{-1}),
\end{align*}
by observing $X_{i}=O_\prec(N^{-1})$ (c.f. (\ref{def of Xi})).
This concludes the proof of (\ref{012110}) for the case $(1-\gamma)c_1^f\neq 0$.

\textit{Case 2.} Let us consider the case
\begin{align*}
(1-\gamma)c_1^f=0. 
\end{align*}
We have
\begin{align*}
\mathbb{E}_d\phi_d(t)=\mathbb{E}_d \exp\Big(\ii t \sum_i X_{i}\Big).
\end{align*}
For $t\in[0, N^{(1-c)/2}]$, by a simple Taylor expansion and the fact $X_{i}=O_\prec(N^{-1})$, we have 
\begin{align*}
\mathbb{E}_d\phi_d(t)=&\,\prod_{i=1}^N \Big(1-\frac{t^2}{2} \mathbb{E}_d X_{ii}^2+\frac{(\ii t)^3}{6} \mathbb{E}_dX_{ii}^3+O_\prec(t^4N^{-4})\Big)\notag\\
=& \,\exp\Big(-\frac{t^2}{2}\sum_i \mathbb{E}_d X_{ii}^2+\frac{(\ii t)^3}{6}\sum_{i}\mathbb{E}_d X_{ii}^3\Big)+O_\prec(N^{-1})\notag\\
=&\,\varphi_4(t)+O_\prec(N^{-1}). 
\end{align*}
This completes the proof of Lemma \ref{lem1558}. 

\vspace{2em}

\section{Fluctuation averaging of the Green functions} \label{appD}
In this section we prove some technical estimates on Green functions that we used in this paper.

\subsection{Proof of \eqref{2.12}} \label{sec:9.1}
Recall that we have the assumption $|\eta_1| \leq |\eta_2|\leq \cdots \leq |\eta_l|$. Let us prove
	\[
	\tr\big( G_1^{k_1}\cdots G_l^{k_l}\big)-Nm_{l}((z_1,k_1),...,(z_l,k_l)) \prec  \frac{1}{\big|\eta_1^{k_1} \cdots \eta_l^{k_l}\big|}\,,
	\]
	and the desired result follows from triangle inequality and Lemma \ref{prop_prec}.
	
	\textit{Case 1}. Suppose $2|\eta_{j}| \geq |\eta_{j+1}|$ for $j=1,...,l-1$. Then $|\eta_l|\leq 2^{l-1}|\eta_1|\leq 2^{l-1}|\eta_l|$. Then by Lemma \ref{HS}, we have
	\[
	\tr\big( G_1^{k_1}\cdots G_l^{k_l}\big)-Nm_{l}((z_1,k_1),...,(z_l,k_l))=\frac{1}{\pi} \int_{\bb C } \partial_{\bar{z}}(\tilde{f}(z)\chi(z/|\eta_1|)) (\tr G(z)-Nm(z)) \dd^2z
	\]
	where
	\[
	f(x)=\frac{1}{(x-z_1)^{k_1}\cdots (x-z_l)^{k_l}}\,,
	\]
	$\widetilde{f}$ is defined as in \eqref{def of f tilde}, and $\chi \in \cal C^{\infty}_c(\R)$ is a cutoff function satisfying $\chi(0) = 1$ and $\chi(y)=0$ for $|y|\geq 1$. Using Theorem \ref{refthm1}, the desired estimate then follows from the steps in \cite[Lemma 4.4]{HK}.
	
	\textit{Case 2.} Suppose we have integers $0=i_1<i_2<\cdots <i_{n-1}<i_n=l$ such that for each $q \in \{1,...,n-1\}$, we have $2|\eta_{j}| \geq |\eta_{j+1}|$ for $j=i_q+1,...,i_{q+1}-1$ and $2|\eta_{i_{q+1}}|<|\eta_{i_{q+1}+1}|$. We have the disjoint union
	\[
	\{1,...,l\}=\cup_{q=1}^{n-1} \{i_q+1,...,i_{q+1}\} \eqd \cup_{q=1}^{n-1}I_q\,.
	\]
	For $z_j$ and $z_{j'}$, we say that $z_j\sim z_{j'}$ if there exists $q$ such that $j,j' \in I_q$; otherwise $z_j\not\sim z_{j'}$. It is easy to check that this is an equivalent relation. Then we can see from Case 1 that
\begin{equation} \label{C2}
	\tr\big( G_{j_1}^{k_1}\cdots G_{j_p}^{k_p}\big)-Nm_{p}((z_{j_1},k_1),...,(z_{j_p},k_p)) \prec  \frac{1}{\big|\eta_{j_1}^{k_1} \cdots \eta_{j_p}^{k_p}\big|}
\end{equation}
	whenever $z_{j_1}\sim z_{j_2}\sim  \cdots \sim z_{j_p}$. Note that for $z_j \not\sim z_{j'}$, we have the resolvent identity
	$
	G_{j}G_{j'}=\frac{G_j-G_{j'}}{z_j-z_{j'}},
	$
	and 
\begin{equation} \label{C1}
	\frac{1}{z_j-z_{j'}} \prec \frac{1}{|\eta_j|+|\eta_{j'}|}\,.
\end{equation}
Repeatedly using the resolvent identity, \eqref{C2} and \eqref{C1}, we get the desired result.
	
\subsection{Proof of \eqref{2.13}} \label{sec:9.2}
From \cite[Lemma 4.4]{HK}, we see that
\begin{equation}\label{C3}
	(G_{j}^{k_j})_{ii}-m^{k_j-1}(z_j) \prec \frac{1}{\sqrt{N|\eta_{j}|}} \frac{1}{|\eta_j|^{k_j-1}}\quad  \mbox{and} \quad
	\tr (G_{j}^{k_j})-Nm(z_j) \prec \frac{1}{|\eta_j|^{k_j-1}}\,.
\end{equation}
The proof follows from \eqref{C3} and the decomposition
\[
(G_1^{k_1})_{ii}\cdots (G^{k_l}_l)_{ii}=[((G_1^{k_1})_{ii}-m^{k_1-1}(z_1))+m^{k_1-1}(z_1)]\cdots [((G_1^{k_l})_{ii}-m^{k_l-1}(z_l))+m^{k_l-1}(z_l)]\,.
\]

\subsection{Proof of \eqref{2.14}}
The steps are very similar to those of \eqref{2.12}. We omit the details.

\end{document}